\numberwithin{equation}{section}
\newcommand\bovermat[2]{%
    \makebox[0pt][l]{$\smash{\overbrace{\phantom{%
                     #2}}^{\text{#1}}}$}#2}
\newcommand{\rdots}[2]{
 \raisebox{#2}{\rotatebox{#1}{$\ddots$}}}
\newcommand{\Z}{\mathbb Z}
\newcommand{\R}{\mathbb R}
\newcommand{\Hom}{\text{Hom}}
\newcommand{\Aut}{\mbox{Aut}}
\newcommand{\ad}{\text{\textnormal{ad}}}
\newcommand*\bigcdot{\mathpalette\bigcdot@{.7}}
\newcommand*\bigcdot@[2]{\mathbin{\vcenter{\hbox{\scalebox{#2}{$\m@th#1\bullet$}}}}}
\newtheorem{theorem}{Theorem}[section]
\newtheorem{lemma}[theorem]{Lemma}
\newtheorem{definition}[theorem]{Definition}
\newtheorem{corollary}[theorem]{Corollary}
\newtheorem{remark}[theorem]{Remark}
\newcommand{\checkxpos}[3][]{%
  \ifdim \zposx{#2}sp < 20000000sp%
    \mynote[#1]{#3}%
  \else%
    \note[#1]{#3}%
  \fi%
}
\newcommand{\mytodo}[2][]{%
  \zsaveposx{todo\the\todocount}%
  \checkxpos[#1]{todo\the\todocount}{#2}%
  \global\advance\todocount1\relax
}
\newcommand{\mynote}[2][]{{%
  \let\marginpar\marginnote
  \reversemarginpar
  \renewcommand{\baselinestretch}{0.8}%
  \todo[#1]{#2}}}
\newcommand{\note}[2][]{\renewcommand{\baselinestretch}{0.8}\todo[#1]{#2}}
\begin{document}
\title[Maximal   dimension of symmetries of homogeneous  2-nondegenerate CR structures] {Maximal  dimension of groups of symmetries  of homogeneous  2-nondegenerate CR structures of hypersurface type with a 1-dimensional Levi kernel}

\thanks{I.\ Zelenko is supported by Simons Foundation Collaboration Grant for Mathematicians 524213.}

\author{David Sykes}
\address{David Sykes,
	Department of Mathematics
	Texas A\&M University
	College Station
	Texas, 77843
	USA}\email{ dgsykes@tamu.edu}
\urladdr{\url{http://www.math.tamu.edu/~dgsykes}}

\author{Igor Zelenko}
\address{Igor Zelenko, Department of Mathematics
	Texas A\&M University
	College Station
	Texas, 77843
	USA}\email{ zelenko@math.tamu.edu}
\urladdr{\url{http://www.math.tamu.edu/~zelenko}}

\subjclass[2020]{32V05, 32V40, 53C30, 15A21}
\keywords{$2$-nondegenerate CR structures, homogeneous models, infinitesimal symmetry algebra, Tanaka  prolongation, canonical forms in linear and multilinear algebra}

 \begin{abstract}
We prove that for every $n\geq 3$ the sharp upper bound  for the dimension of the symmetry groups of homogeneous, 2-nondegenerate, $(2n+1)$-dimensional CR manifolds of hypersurface type with a $1$-dimensional Levi kernel is equal to $n^2+7$, {and simultaneously establish the same result for a more general class of structures characterized by weakening the homogeneity condition.}
 This supports  Beloshapka's conjecture stating that hypersurface models with a maximal finite dimensional group of symmetries for a given dimension of the underlying manifold are Levi nondegenerate.

\end{abstract}

\maketitle

\section{Introduction}\label{introduction}

A classical problem setting in differential geometry is to find  homogeneous structures  with the symmetry group of maximal dimension among all geometric structure of a certain class. Homogeneity here means, as usual, that the symmetry group of the structure acts transitively. In Cauchy-Riemann (CR) geometry this problem is classically solved for the class of Levi nondegenerate CR structures of hypersurface type of arbitrary dimension (\cite{tanakaCR, chernmoserCR}).  The present paper solves this problem for  $2$-nondegenerate CR structures of hypersurface type  with a $1$-dimensional Levi kernel. This class can be seen as the next one in a hierarchy of nondegeneracies to the class of Levi nondegenerate CR structures of hypersurface type.  {We furthermore obtain this result for structures that are not necessarily homogeneous, but that rather satisfy a weaker condition we term \emph{admitting a constant reduced modified CR symbol (Definition \ref{reduced modified symbol def} below).}} Previously the answer to this problem was given only  in the $5$-dimensional case \cite{isaev, medori, pocchiola}, which is the case of the smallest possible dimension in which $2$-nondegenrate structures exist.  We give the answer for arbitrary dimension (which a priori is odd) greater than $5$ extending the previous result of \cite{porter2017absolute} that  worked under additional restrictions of regularity of the CR symbol. The definition of the CR symbol an its regularity was introduced in \cite{porter2017absolute} and is discussed in Section \ref{CRsec} below. This result supports  Beloshapka's conjecture \cite[Conjecture 5.6]{isaev} stating that the hypersurface models with maximal finite dimensional groups of symmetries for a given dimension of the underlying manifold are Levi nondegenerate.

In more detail,  let $M$ be a $(2n+1)$-dimensional CR manifold with CR structure $H$ of hypersurface type, meaning that $H$ is an integrable,  totally real, complex rank $n$ distribution contained in the complexified tangent bundle $\mathbb C TM$ of $M$, that is,
\begin{align}\label{CR structure axioms}
[H,H]\subset H\quad\mbox{ and } \quad H\cap\overline{H}=0
\end{align}
where the overline in $\overline{H}$ denotes the natural complex conjugation in $\mathbb C TM$. 

Recall that the  \emph{Levi form} of the structure $H$ is a field over $M$ of Hermitian forms defined on fibers of $H$ by the formula
\begin{equation}
\label{Leviformdef}
\mathcal{L}(X_x,Y_x):=\frac{i}{2}\left[X,\overline{Y}\right]_x\mod H_x\oplus \overline{H}_x\quad\quad\forall\, X,Y\in \Gamma(H)\mbox{ and }x\in M.
\end{equation}
Here we are using the notation $\Gamma(E)$ to denote sections of a fiber bundle $E$.
The \emph{kernel} of the Levi form $\mathcal L$ is called the \emph{Levi kernel} and will be denoted by $K$. CR-structures with $K=0$ are called Levi-nondegenerate.

For a Levi-nondegenerate structure, if the Levi form has signature $(p,q)$ with $p+q=n$ then a maximally symmetric model can be obtained as a real hypersurface in the complex projective space $\mathbb C\mathbb P^{n+1}$, obtained by the  complex projectivization of the cone of nonzero vectors in $\mathbb C^{n+2}$ that are isotropic with respect to a Hermitian form of signature $(p+1, q+1)$, and the algebra of infinitesimal symmetries of this model is isomorphic to $\mathfrak {su}(p+1, q+1)$, having dimension $(n+2)^2-1$. 

In the present paper we assume that the fiber $K_x$ of the Levi kernel is $1$-dimensional at every point $x\in M$, that is, $K$ is a rank $1$ distribution, and that the following nondegeneracy condition holds: 
If for $v\in K_x$ and $y\in\overline{H}_x/\overline{K}_x$, we take $V\in\Gamma(K)$ and $Y\in\Gamma(\overline{H})$ such that $V(p)=v$ and $ Y(p)\equiv y\mod\overline{K}$, and define a linear map $\ad_v:\overline{H}_x/\overline{K}_x\to H_x/K_x$ by
\begin{equation}\label{adv}
 \ad_v(y):= [V,Y]_x \mod{ K_x\oplus \overline {H}_x},
\end{equation}
and similarly define a linear map $\ad_{v}:H_x/K_x\to \overline{H}_x/\overline{K}_x$ for $v\in \overline K_x$ (or simply take complex conjugates), then there is no nonzero $v\in K_x$ (equivalently, no nonzero $v\in \overline{K}_x$) such that  $\ad_v=0$. A CR-structure is called \emph{$2$-nondegenerate} if this last condition holds. 

The term \emph{2-nondegeneracy} comes from the more general notion of \emph{k-nondegeneracy}, see, for example,  \cite{freeman1977local} for the generalization of this definition to arbitrary  $k\geq 1$  and arbitrary dimension of Levi kernels via the \emph{Freeman sequence} under analogous constant rank assumptions, \cite[chapter XI]{BER99} for more general definition without the assumption that $K$ is a distribution, and \cite[Appendix]{kaupzaitsev} for the equivalence of the definitions in \cite{freeman1977local} and \cite[chapter XI]{BER99} under the constant rank assumptions. 


The focus of the present paper is on finding the sharp upper bound for the  dimension of the Lie group $\Aut(M,H)$ of symmetries of  2-nondegenerate CR structures $(M,H)$ of hypersurface type with a 1-dimensional Levi kernel {admiting a constant reduced modified symbol}, { which is a property with a rather technical definition given in Section \ref{modsec} (Definition \ref{reduced modified symbol def}). Until we give the exact definition of this property, it will suffice to note that structures admitting constant reduced modified symbols are uniformly $2$-nondegenerate and have constant CR symbols. In particular, if $(M,H)$ is homogeneous then it admits a constant reduced modified symbol.}
As shown in \cite{isaev, medori, pocchiola} for the lowest dimensional case, that is  when $\dim M=5$,  this sharp upper bound is equal to $10$,  and for the maximally symmetric model the algebra of infinitesimal symmetries is isomorphic to 
$\mathfrak {so}(3,2)$.
The main result here, see Theorem \ref{main theorem} below, gives this sharp upper bound 
expressed as a function of $\dim M\geq 7$ (equivalently, $n=\frac{1}{2}(\dim M-1)\geq 3$), namely
\begin{align}\label{symmetry group upper bound}
\dim \Aut(M,H)\leq \frac{1}{4}(\dim M-1)^2+7=n^2+7.
\end{align}
We also show that symmetries of $(M,H)$ are all determined by their third  weighted  jet. By the \emph{weighted jet} we mean that the derivatives in various directions are calculated according to the 
filtration 
\[
(K\oplus \overline{K})\cap TM \subset (H\oplus \overline{H})\cap TM\subset TM
\]
of $TM$ so that each derivative in a direction in  $(K\oplus \overline{K})\cap TM$ is assigned weight zero, each derivative in a direction in  $\Big((H\oplus \overline{H})\setminus (K\oplus \overline{K})\Big)\cap TM$ is assigned weight $1$, and each derivative in a direction in  $TM \setminus H\oplus \overline{H}$ is assigned weight $2$.
These results (even without assumption of homogeneity) were previously obtained in \cite{porter2017absolute} for the special class of CR structures whose symbols are known as \emph{regular},
wherein it was  shown by example that the upper bound in \eqref{symmetry group upper bound} is achieved.

The essential technical bulk of this paper consists of  showing that the dimension of $\Aut(M,H)$ for homogeneous structures with non-regular symbol is strictly less than the right side of \eqref{symmetry group upper bound} (in fact it is shown in Theorem \ref{first prolongation for non-regular structures is zero} below  that it is strictly less than $(n-1)^2+7$) and that in the non-regular case symmetries of $(M,H)$ are all determined by their first weighted jet. The notion of CR symbols and their regularity is explained in Section \ref{CRsec}. Note that, for the considered case $n\geq 3$, the previously treated regular symbols constitute only a finite subset in the space of all CR symbols for each $n$, which itself depends on continuous parameters.

In the proof of the bound \eqref{symmetry group upper bound} 
we use two   
main results from our previous papers \cite{sykes2020canonical} and \cite{SZ2020}: the classification of CR symbols \cite{sykes2020canonical} and the description of the upper bound for the dimension of symmetry groups in terms of a Tanaka prolongation of the symbol or its reduced version  \cite{SZ2020}. In the sequel, we calculate these prolongations and their dimensions for each reduced modified symbol corresponding to a non-regular CR symbol. In particular, we show (Theorem \ref{first prolongation for non-regular structures is zero}) that the first Tanaka prolongation of each reduced modified symbol corresponding to a non-regular CR symbol is equal to zero and we find the upper bound for the dimension of its  (entire)  Tanaka prolongation.  Analogous  analysis for regular CR symbols was previously obtained in \cite{porter2017absolute} with the help of the theory of biagraded Tanaka prolongation. The result on the $j$th-jet determinacy follows from its equivalence to the vanishing of the $j$th Tanaka prolongation. In Theorem \ref{more precise bound} for each reduced modified symbol corresponding to a non-regular CR symbol we give  more precise  upper bound for the dimension of its  (entire)  Tanaka prolongation in terms of the parameters of this non-regular symbol.
 
Note that at this moment for structures with non-regular CR symbols (and therefore in the general case) we are not able to remove completely  the assumption of {admitting a constant  reduced modified symbol} in our results, as this assumption 
implies that 
the reduced modified symbols 
are  Lie algebras, and we strongly use the latter fact. 
So the question of whether or not  there exist CR structures from the considered class not admitting a constant reduced modified symbol (Definition \ref{reduced modified symbol def}) and with symmetry group of dimension higher than the bound in \eqref{symmetry group upper bound} is still open, although the positive answer to this question is highly unlikely.

In the very recent paper \cite{beloshapka}  it was shown that for $\dim M=7$, without the homogeneity assumption, the upper bound for the dimension of the group of symmetries of $2$-nondegenerate CR structures of hypersurface type with a 1-dimensional Levi kernel is $17$. Our sharp bound  \eqref{symmetry group upper bound} for the homogeneous case is $16$ and an example of the structure from the considered class with 17-dimensional symmetry group is unknown. 
The  result of the present paper (communicated in a private correspondence)  was in fact used in \cite{beloshapka} to reduce the bound from $18$, obtained initially by the methods of normal forms, to $17$, see Proposition 16 there.    

In contrast to the case of $\dim M=5$, in the case where $M$ is of (odd) dimension greater than or equal to $7$, the infinitesimal symmetry algebras of the maximally symmetric homogeneous models are  not semisimple.
These algebras were calculated  in  some form in \cite[Subsection 5.3]{porter2017absolute}. A more visual description together with a hypersurface realizations of these models  will  feature in future joint work \cite{DPZ}. 

In the case where $\dim M=7$, the infinitesimal symmetry algebra of the maximally symmetric models is isomorphic to one of the real forms of the following complex Lie algebra: Let $\mathfrak{s} =\mathbb C\oplus \mathfrak{sl}(2, \mathbb C)\oplus \mathfrak{sl}(2, \mathbb C)$. The complexification of our algebra of interest is isomorphic to the natural semidirect sum of $\mathfrak{s}$ and the $9$-dimensional abelian Lie algebra $\mathbb C^9\cong \mathbb C^3\otimes \mathbb C^3$ so that  the first $\mathfrak{sl}(2, \mathbb C)$ component in $\mathfrak s$  acts irreducibly on the first factor  $\mathbb C^3$ in $\mathbb C^3\otimes \mathbb C^3$,  the second component $\mathfrak {sl}(2, \mathbb C)$ in $\mathfrak s$  acts irreducibly on the second factor  of $\mathbb C^3$ in $\mathbb C^3\otimes \mathbb C^3$, and the component $\mathbb C$ in $\mathfrak s$ acts just by rescaling. The desired real Lie algebra is the natural semidirect sum of the conformal Lorenzian algebra  $ \mathfrak{co}(3,1)$  
and  the $9$-dimensional real abelian  Lie algebra $\R^9$,  where $\mathfrak{co}(3,1)$ acts irreducibly on $\R^9$. This unique irreducible action is naturally induced from the standard action of  $\mathfrak{co}(3,1)$ on the Minkowski space, if one identifies $\R^9$ with the space of the traceless symmetric bilinear forms  on the Minkowski space.

Finally, for completeness, we offer without proof the (local) hypersurface realizations of the maximally symmetric homogeneous models in the considered class (the details will be given in \cite{DPZ}). If, as before, $n=\frac{1}{2}(\dim M-1)$, and the signature of the form obtained by the reduction of the Levi form at each point $x$ to the space $H_x/K_x$  is equal to $(p, q)$ with $p+q=n-1$, then in coordinates $(z_1,\ldots, z_n, w)$ for $\mathbb C^{n+1}$ these are the hypersurfaces are given by the equation
\begin{equation}
\label{maxmod}
\mathrm{Im}(w+z_1^2 \bar z_n)=z_1 \bar z_2 +\bar z_1 z_2 +\sum_{i=3}^{n-1} \varepsilon_i z_i\bar z_i,
\end{equation}
where $\varepsilon_i \in\{-1, 1\}$ and $\{\varepsilon _i\}_{i=3}^{n-1}$ consists of $p-1$ terms equal to $1$ and $q-1$ terms equal to $-1$ (note that, for $\dim M=7$, the last sum in the right side of \eqref{maxmod} disappears).


 \section{CR symbols and the main results}
\label{CRsec}
Our analysis branches depending on properties of the CR structure's local invariants. A basic local invariant of a hypersurface-type CR structure called the \emph{CR symbol} is introduced in \cite{porter2017absolute}. The CR symbol of $H$ (at a point $x$ in $M$) is a bigraded vector space 
\begin{equation}
\label{bigrade_g0}
\mathfrak{g}^0:=\mathfrak{g}_{-2,0}\oplus\mathfrak{g}_{-1,-1}\oplus\mathfrak{g}_{-1,1}\oplus\mathfrak{g}_{0,-2}\oplus\mathfrak{g}_{0,0}\oplus\mathfrak{g}_{0,2}
\end{equation}
 with involution $\bar{~}$ whose bigraded components $\mathfrak{g}_{i,j}$ are defined as follows. Ultimately our definitions of $\mathfrak{g}_{i,j}$ will not depend on the point $x$ because {going forward we will consider only structures with constant CR symbols}, but we still fix $x$ to state the initial definitions.
 We let $\ell$ denote the \emph{reduced Levi form}, which is the field of nondegenerate Hermitian forms defined on fibers of the quotient bundle $H/K$ by 
\[
\ell(X_x+K_x):=\mathcal{L}(X_x).
\]
 We define the coset spaces
\[
\mathfrak{g}_{-2,0}:=\mathbb C T_xM/H_x, \quad \mathfrak{g}_{-1,-1}:=\overline{H}_x/\overline{K}_x, \quad\mbox{ and }\quad \mathfrak{g}_{-1,1}:=H_x/K_x.
\]
The space 
\begin{equation}
\label{Heisenberg}
\mathfrak{g}_{-}:=\mathfrak{g}_{-2,0}\oplus\mathfrak{g}_{-1,-1}\oplus\mathfrak{g}_{-1,1}
\end{equation}
inherits a Heisenberg algebra structure with  nontrivial Lie brackets defined in terms of the reduced Levi form by 
\[
[v,w]:=i\ell(v,w)\quad\quad\forall v\in \mathfrak{g}_{-1,1},\, w\in \mathfrak{g}_{-1,-1}.
\]
Note that $\ell$ formally takes values in $\mathfrak{g}_{-2,0}$. By identifying $\mathfrak{g}_{-2,0}$ and $\mathbb C$, we regard $\ell$ as a $\mathbb C$-valued Hermitian form, but, since this identification is not naturally determined by the CR structure, in the sequel we consider the real line $\R \ell$ of $\mathbb C$-valued Hermitian forms spanned by $\ell$. While the one $\mathbb C$-valued form $\ell$ is not an invariant of the CR structure, the line $\R \ell$ is.

To define $\mathfrak{g}_{0,2}$, we consider special operators associated with vectors in $K_x$. For a vector $v$ in $K_x$,
define the antilinear operator $A_{v}:\mathfrak{g}_{-1,1}\to \mathfrak{g}_{-1,1}$ by
\begin{equation}\label{antilinear op}
A_{v}(x):= \mathrm{ad}_v(\overline{x}).
\end{equation}
The dependence of $A_{v}$  on $v$ is linear, that is,
\[
A_{\lambda v}=\lambda A_{v}\quad\quad\forall \lambda\in\mathbb C,
\]
so if the rank of $K$ is equal to 1 then there exists  an antilinear operator $\boldsymbol{A}$ such that
\[
\{A_v\,|\, v\in K_x\}=\mathbb C \boldsymbol{A}.
\]
The fact that $H$ is $2$-nondegenerate implies that $\boldsymbol{A}\neq 0$.

The reduced Levi form $\ell$ naturally extends to define a symplectic form on the space $\mathfrak{g}_{-1}:=\mathfrak{g}_{-1,-1}\oplus \mathfrak{g}_{-1,1}$ via a standard construction from the study of Heisenberg algebras. Hence $\mathfrak{g}_{-1}$ inherets a symplectic structure from the CR structure with respect to which we obtain the conformal symplectic algebra $\mathfrak{csp}(\mathfrak{g}_{-1})$ defined in the standard way. We define $\mathfrak{g}_{0,2}$ to be the subspace of $\mathfrak{csp}(\mathfrak{g}_{-1})$  given by the formula
\[
\mathfrak{g}_{0,2}:=\left\{\varphi:\mathfrak{g}_{-1}\to \mathfrak{g}_{-1}\,\left|\parbox{5cm}{$\varphi(v)=0\quad\forall v\in \mathfrak{g}_{-1,1}$ and there exists $\lambda\in \mathbb C$ such that \\ $\varphi(v)= \lambda \boldsymbol{A}(\overline{v})\quad\forall v \in \mathfrak{g}_{-1,-1}$}\right.\right\}.
\]
The natural complex conjugation on $\mathbb C T_xM$ induces an antilinear involution $v\mapsto \overline{v}$ on $\mathfrak{g}_{-1}$, which in turn induces an antilinear involution on $\mathfrak{csp}(\mathfrak{g}_{-1})$ by the rule
\begin{equation}
\label{conj}
\overline{\varphi}(v):=\overline{\varphi(\overline{v})}.
\end{equation}
Using this involution, we define
\[
\mathfrak{g}_{0,-2}:=\{\varphi\,|\,\overline{\varphi}\in \mathfrak{g}_{0,2}\}.
\]
Lastly, using the standard Lie brackets of $\mathfrak{csp}(\mathfrak{g}_{-1})$ we define 
\begin{equation}
\label{g00}
\mathfrak{g}_{0,0}:=\big\{v\in \mathfrak{csp}(\mathfrak{g}_{-1})\,\left|\, [v,\mathfrak{g}_{0,i}]\subset \mathfrak{g}_{0,i}\quad\forall \, i\in\{-2,2\}\big\}\right.,
\end{equation}
which completes our definition of the CR symbol $\mathfrak{g}^0$ of $H$ (at the point $x$).
Note that by construction 
\begin{equation}
\label{bigradedcond}
[\mathfrak g_{i_1, j_1},\mathfrak g_{i_2, j_2}]\subset \mathfrak g_{i_1+i_2, j_1+j_2}, \quad\quad\forall\, \{(i_1, j_1), (i_2, j_2)\}\neq \{(0,2), (0, -2)\}.
\end{equation}

Conversely a vector space $\mathfrak{g}^0$ as in \eqref{bigrade_g0} with $\mathfrak{g}_-$ as in \eqref{Heisenberg} being the Heisenberg algebra 
 is called an \emph{abstract CR symbol} for $2$-nondegenerate, hypersurface-type CR structures if it satisfies \eqref{bigradedcond}, $\mathfrak g_{0,0}$ is the maximal subalgebra of $\mathfrak{csp}(\mathfrak{g}_{-})$ satisfying \eqref{g00}, and it is endowed with an antilinear involution $\bar{~}$ satisfying \eqref{conj}.
\begin{remark}
The CR symbol $\mathfrak{g}^0$ of a CR structure with a $1$-dimensional kernel encodes and is encoded by the pair $(\R\ell, \mathbb{C}\boldsymbol{A})$. 
\end{remark}
Note that an abstract CR symbol $\mathfrak g^0$ is not necessarily a Lie algebra, as the bigrading  conditions in \eqref{bigradedcond}  are only applied for $\{(i_1, j_1), (i_2, j_2)\}\neq \{(0,2), (0, -2)\}$, so that $[\mathfrak g_{0,-2},\mathfrak g_{0,2}]$ does not necessarily belong to $\mathfrak g_{0,0}$  and therefore does not necessarily belong to $\mathfrak g^0$.  
Following the terminology of \cite{porter2017absolute}, we say that a CR symbol is \emph{regular} if it is a subalgebra of $\mathfrak{g}_{-}\rtimes\mathfrak{csp}(\mathfrak{g}_{-})$ and \emph{non-regular} otherwise.  As shown in \cite[Lemma 4.2]{porter2017absolute}, the symbol $\mathfrak{g}^0$ of a CR structure with a  $1$-dimensional kernel corresponding to the pair $(\R\ell, \mathbb{C}\boldsymbol{A})$ is regular if and only if \begin{equation}
\label{cube}
\boldsymbol{A}^3\in\mathbb{C}\boldsymbol{A}.
\end{equation}  
 
To any abstract regular CR symbol  $\mathfrak g^{0}$, we construct a corresponding special homogeneous CR structure as follows.   Denote by $G^{0}$ and $G_{0,0}$ connected Lie groups with Lie algebras $\mathfrak  g^{0}$ and $\mathfrak g_{0,0}$, respectively, such that $G_{0,0}\subset G^{0}$, and denote by  $\Re G^{0}$ and $\Re G_{0,0}$ the corresponding real parts with respect to the involution on $\mathfrak g^{0}$, meaning that $\Re G^{0}$ and $\Re G_{0,0}$ are the maximal subgroups of $ G^{0}$ and $ G_{0,0}$ whose tangent spaces belong to the left translations of the fixed point set of the involution on $\mathfrak g^{0}$ on $G^{0}$.

Let $ M_0^{\mathbb C} = G^{0}/ G_{0,0}$ and $M_0= \Re G^{0}/\Re G_{0,0}$. In both cases here we  use left cosets. Let $\widehat D_{i,j}^{\mathrm{flat}}$  be the left-invariant distribution on $G^{0}$ such that it is equal to $\mathfrak{g}_{i,j}$ at the identity. 
Since all $\mathfrak{g}_{i,j}$ are invariant under the adjoint action of $G_{0,0}$, the push-forward of each $\widehat D_{i,j}^{\mathrm{flat}}$ to $M_0^\mathbb C$  is a well defined distribution, which we denote by $D^{\mathrm{flat}}_{i,j}$. Let $D_{-1}^{\mathrm{flat}}$ be the distribution that is the sum of $D_{i,j}^{\mathrm{flat}}$ with $i=-1$. We restrict all of these distributions to $M_0$, considering them as subbundles of the complexified tangent bundle of $M_0$. The distribution $ H^{\mathrm{flat}}:=D_{-1,1}^{\mathrm{flat}}\oplus D_{0,2}^{\mathrm{flat}}$ defines a CR structure of hypersurface type on $M_0$
called the \emph{flat CR structure with constant CR symbol $\mathfrak g^{0}$}.

As a consequence of \cite{porter2017absolute}, see Theorems 3.2, 5.1, 5.3 and the last paragraph of section 5 there, one gets

\begin{theorem}[\citet{porter2017absolute}]\label{main theorem for regular symbols}
If $(M,H)$ is a 2-nondegenerate CR structure of hypersurface type with a 1-dimensional Levi kernel and constant regular symbol, then 
\begin{enumerate}
    \item the dimension of the algebra of infinitesimal symmetries of $(M,H)$  is not greater than  $\tfrac{1}{4}(\dim M-1)^2+7$; 
     \item these symmetries are determined by their third weighted jet;
    \item the dimension of the algebra of infinitesimal symmetries of $(M,H)$  is equal to   $\tfrac{1}{4}(\dim M-1)^2+7$ if and only if $(M, H)$ is locally equivalent to the flat structure with CR symbol such that the corresponding line of antilinear operators consists of nilpotent ones of rank 1.
    \end{enumerate}
\end{theorem}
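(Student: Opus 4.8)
The plan is to reduce the theorem to a computation of Tanaka prolongations, which is the strategy of \cite{porter2017absolute}. Since $(M,H)$ has a constant regular CR symbol $\mathfrak g^0$, regularity means precisely that $\mathfrak g^0$ is a graded (indeed bigraded) Lie subalgebra of $\mathfrak g_-\rtimes\mathfrak{csp}(\mathfrak g_-)$, so one is in the classical setting of Tanaka's prolongation theory. In that setting the algebra of infinitesimal symmetries $\mathfrak{aut}(M,H)$ inherits a filtration compatible with the weighted filtration of $TM$ recalled in the introduction, and its associated graded Lie algebra embeds into the universal (algebraic) Tanaka prolongation $\mathfrak g^{(\infty)}$ of the non-positive part $\mathfrak g_{\le 0}:=\mathfrak g_-\oplus\mathfrak g_0$, where $\mathfrak g_0:=\mathfrak g_{0,-2}\oplus\mathfrak g_{0,0}\oplus\mathfrak g_{0,2}$. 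Hence $\dim\mathfrak{aut}(M,H)\le\dim\mathfrak g^{(\infty)}$, and, by the equivalence between $j$-th weighted-jet determinacy and the vanishing of the prolongation in degrees $\ge j$, statements (1) and (2) will follow once one proves, for every regular symbol, that $\dim\mathfrak g^{(\infty)}\le n^2+7$ and that $\mathfrak g_k=0$ for $k\ge 3$; statement (3) will follow once one identifies the unique regular symbol attaining the bound and establishes the accompanying rigidity.

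First I would set up the prolongation keeping track of the second grading index, the ``$j$'' index: by \eqref{bigradedcond} this grading is respected by every bracket for which it is defined, and regularity is exactly the extra hypothesis making $[\mathfrak g_{0,-2},\mathfrak g_{0,2}]\subset\mathfrak g_{0,0}$, so the whole prolongation $\mathfrak g^{(\infty)}=\bigoplus_{k\ge -2}\mathfrak g_k$ refines into bigraded pieces; this is the biagraded Tanaka prolongation used in \cite{porter2017absolute}. The dimensions of $\mathfrak g_{-2}$ and $\mathfrak g_{-1}$ are known ($\mathfrak g_-$ is the Heisenberg algebra), and $\dim\mathfrak g_0$ is read off from the explicit descriptions of $\mathfrak g_{0,0}$ and $\mathfrak g_{0,\pm 2}$ in terms of the pair $(\R\ell,\mathbb C\boldsymbol A)$; the discrete invariant entering here is the Jordan type of $\boldsymbol A$ together with its eigenvalue-multiplicity list, i.e.\ the combinatorial datum classifying the symbol in \cite{sykes2020canonical}. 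Then the main work is to compute the first and second prolongation spaces $\mathfrak g_1$ and $\mathfrak g_2$ --- equivalently, certain Spencer-type cohomology groups of $\mathfrak g_{\le 0}$ --- and to check that all higher prolongations vanish. Summing the pieces gives $\dim\mathfrak g^{(\infty)}\le n^2+7$, proving (1), and $\mathfrak g_{\ge 3}=0$ proves (2).

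For (3) I would revisit the dimension count and identify the degeneration locus on which every summand is simultaneously maximal; this forces $\boldsymbol A$ to be a single nilpotent Jordan block of rank one, that is, $\boldsymbol A\ne 0$, $\boldsymbol A^2=0$, $\mathrm{rank}\,\boldsymbol A=1$ (which is indeed a regular symbol, since then $\boldsymbol A^3=0\in\mathbb C\boldsymbol A$). For this symbol the flat model $M_0=\Re G^0/\Re G_{0,0}$ is realized explicitly and its entire prolongation is realized by genuine symmetries, giving $\dim\mathfrak{aut}=\dim\mathfrak g^{(\infty)}=n^2+7$; this is the ``if'' direction of (3). The ``only if'' direction is a rigidity statement obtained from Tanaka's equivalence theorem: the gap between the realized symmetry dimension and $\dim\mathfrak g^{(\infty)}$ is governed by the curvature of the canonical Cartan-type connection attached to the symbol, and maximal symmetry dimension forces this curvature to vanish identically, whence local equivalence to $M_0$.

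The hard part will be the prolongation computation of the second paragraph. It has to be carried out uniformly over the whole continuous family of regular symbols (parametrized by the eigenvalues and Jordan type of $\boldsymbol A$), keeping control of how $\dim\mathfrak g_1$ and $\dim\mathfrak g_2$ jump across the degeneration strata, and one must then prove that the total never exceeds $n^2+7$ and equals it on exactly one stratum. A second, subtler obstacle is the ``only if'' half of (3): turning ``maximal symmetry dimension'' into ``flat'' requires a semicontinuity or normalization argument on the curvature module, showing it cannot vanish on only part of its irreducible components while the symmetry dimension stays maximal.
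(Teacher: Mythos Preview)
Your outline is essentially the approach of \cite{porter2017absolute}, and indeed the present paper does not give its own proof of this theorem: it is stated as a direct consequence of \cite[Theorems 3.2, 5.1, 5.3 and the last paragraph of Section 5]{porter2017absolute}. So there is nothing in the paper to compare against beyond that citation, and your description of the bigraded Tanaka prolongation strategy, the bound via $\dim\mathfrak g^{(\infty)}$, the vanishing of $\mathfrak g_{\ge 3}$ for jet determinacy, and the flat model/curvature argument for rigidity is an accurate high-level summary of what that reference does.

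One factual correction, however: you describe the obstacle as carrying out the prolongation computation ``uniformly over the whole continuous family of regular symbols.'' This mischaracterizes the problem. The regularity condition $\boldsymbol A^3\in\mathbb C\boldsymbol A$ is quite rigid once one passes to the normalized invariant $(\R\ell,\mathbb C\boldsymbol A)$, and, as noted explicitly in the introduction of the present paper, for each fixed $n\ge 3$ the regular symbols constitute only a \emph{finite} set inside the (continuous) space of all CR symbols. So the case analysis in \cite{porter2017absolute} is a finite one, not a deformation-theoretic argument over a moduli space; this is in fact what makes the regular case tractable by the bigraded method, whereas the non-regular case treated in the present paper genuinely involves continuous parameters and requires the different machinery of reduced modified symbols.
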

A natural question is whether or not the assumption of regularity of symbol can be removed in the previous theorem. Addressing this question, the main result of the present paper is the following.

\begin{theorem}\label{main theorem}
If $(M,H)$ is a 2-nondegenerate CR structure of hypersurface type with a 1-dimensional Levi kernel {admitting a constant reduced modified symbol as in Definition \ref{reduced modified symbol def}} (and, in particular, if it is homogeneous), then 
\begin{enumerate}
    \item statements (1) and (3) of Theorem \ref{main theorem for regular symbols} are valid;
    \item if the symbol is non-regular then the (infinitesimal) symmetries of $(M, H)$ are determined by their first  weighted jet.
\end{enumerate}
\end{theorem}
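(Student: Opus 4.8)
The plan is to reduce the statement to a computation of Tanaka prolongations of reduced modified symbols, dispose of the regular case by citing Theorem~\ref{main theorem for regular symbols}, and concentrate all of the work on non-regular symbols. Indeed, if the CR symbol of $(M,H)$ is regular, then statements (1) and (3) of Theorem~\ref{main theorem for regular symbols} apply directly; moreover the extremal symbol occurring in statement (3) --- the one whose line of antilinear operators $\mathbb C\boldsymbol A$ consists of nilpotent operators of rank $1$ --- is itself regular, since a rank-$1$ nilpotent antilinear operator squares to zero, so $\boldsymbol A^3=0\in\mathbb C\boldsymbol A$ and \eqref{cube} applies. Hence it suffices to prove, for $(M,H)$ with a \emph{non-regular} constant reduced modified symbol, the two sharper assertions: (a) $\dim\Aut(M,H)<n^2+7$, which makes equality in \eqref{symmetry group upper bound} impossible and so leaves statement (3) intact; and (b) the symmetries of $(M,H)$ are determined by their first weighted jet.

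Both (a) and (b) will be derived from the framework of \cite{SZ2020}. Under the standing hypothesis the reduced modified symbol $\widehat{\mathfrak g}$ (Definition~\ref{reduced modified symbol def}) is a graded Lie algebra $\widehat{\mathfrak g}=\mathfrak g_{-2,0}\oplus\mathfrak g_{-1}\oplus\widehat{\mathfrak g}_0$ extending the Heisenberg algebra $\mathfrak g_-$, with $\widehat{\mathfrak g}_0\subset\mathfrak{csp}(\mathfrak g_{-1})$; by \cite{SZ2020} the infinitesimal symmetry algebra of $(M,H)$ embeds as a filtered subalgebra of the universal Tanaka prolongation $\mathfrak T(\widehat{\mathfrak g})$, and for each $j\ge 1$ the structure is determined by its $j$-th weighted jet exactly when the $j$-th Tanaka prolongation $\mathfrak T_j(\widehat{\mathfrak g})$ vanishes. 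Since the Heisenberg grading is fundamental ($\mathfrak g_{-2,0}=[\mathfrak g_{-1},\mathfrak g_{-1}]$, because the reduced Levi form is nondegenerate), vanishing of $\mathfrak T_1(\widehat{\mathfrak g})$ forces $\mathfrak T_k(\widehat{\mathfrak g})=0$ for all $k\ge 1$, so that $\mathfrak T(\widehat{\mathfrak g})=\widehat{\mathfrak g}$. Thus (a) and (b) both follow at once from the two claims that the first Tanaka prolongation of every reduced modified symbol associated to a non-regular CR symbol is zero, and that $\dim\widehat{\mathfrak g}<(n-1)^2+7$ (note $(n-1)^2+7<n^2+7$); these are the substance of Theorems~\ref{first prolongation for non-regular structures is zero} and~\ref{more precise bound}.

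To establish those claims we proceed in three steps. First, using the classification of CR symbols from \cite{sykes2020canonical}, reduce the non-regular case to a list of normal forms for the pair $(\R\ell,\mathbb C\boldsymbol A)$ --- equivalently, canonical forms of the antilinear operator $\boldsymbol A$ relative to $\ell$ subject to $\boldsymbol A^3\notin\mathbb C\boldsymbol A$. Second, for each normal form describe $\widehat{\mathfrak g}_0$ explicitly as the reduced modified degree-$0$ subalgebra of $\mathfrak{csp}(\mathfrak g_{-1})$ cut out by the tensors built from $\ell$ and $\boldsymbol A$, compute $\dim\widehat{\mathfrak g}_0$, and verify $\dim\widehat{\mathfrak g}=\dim\mathfrak g_-+\dim\widehat{\mathfrak g}_0<(n-1)^2+7$, keeping track of the dependence on the parameters of the symbol for the sharper bound. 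Third, solve the degree-$1$ Tanaka prolongation equations for $\widehat{\mathfrak g}$: an element of $\mathfrak T_1(\widehat{\mathfrak g})$ is a degree-$1$ linear map $\mathfrak g_-\to\widehat{\mathfrak g}$ satisfying the derivation identity relative to the Heisenberg brackets and the $\widehat{\mathfrak g}_0$-action, and we show, normal form by normal form, that the only such map is $0$.

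The main difficulty will be this last step: proving that the first prolongation vanishes uniformly across all non-regular normal forms of $\boldsymbol A$. The failure of the relation $\boldsymbol A^3\in\mathbb C\boldsymbol A$ is precisely the algebraic input that should eliminate the degree-$1$ cocycles --- geometrically, it rules out the extra prolongation directions present in the regular flat models of Theorem~\ref{main theorem for regular symbols} --- but turning this into a clean argument, or into a manageable case analysis, is delicate, and the dimension bookkeeping needed for claim (a) (especially in the refined form of Theorem~\ref{more precise bound}) must be carried out carefully alongside it.
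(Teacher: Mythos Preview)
Your outline is correct and matches the paper's own strategy essentially step for step: dispose of the regular case by Theorem~\ref{main theorem for regular symbols}, invoke the Tanaka-prolongation bound of \cite{SZ2020} (Theorem~\ref{main theorem for non-regular symbols}) to reduce the non-regular case to the vanishing of $\mathfrak g_1^{\mathrm{red}}$ and a dimension estimate on $\mathfrak g^{0,\mathrm{red}}$, and then prove these two claims (the paper's Theorem~\ref{first prolongation for non-regular structures is zero}) by a case analysis over the canonical forms of \cite{sykes2020canonical}. The paper carries this out exactly as you anticipate, with the bulk of the work in Section~\ref{First prolongation of non-regular symbols} and the appendix devoted to the normal-form-by-normal-form verification that $\mathfrak g_1^{\mathrm{red}}=0$ and to the explicit description of the algebra $\mathscr A$ needed for the dimension count.
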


The proof of this theorem is given in Sections \ref{modsec} through \ref{First prolongation of non-regular symbols} and the appendix. 
In Section \ref{modsec} we give the 
scheme of the proof of this theorem, based on the constructions and results of our previous paper \cite{SZ2020}, namely the construction of reduced modified symbols for sufficiently symmetric CR structures and the application of Tanaka prolongation of these reduced modified symbols to obtain an upper bound for the dimension of their infinitesimal symmetry algebras (see Theorem \ref{main theorem for non-regular symbols} below). In this way Theorem \ref{main theorem} will be essentially reduced to Theorem \ref{first prolongation for non-regular structures is zero}. The latter theorem is proved in Section \ref{First prolongation of non-regular symbols} with the help of the appendix (Section \ref{Matrix representations of intersection algebra general formula}). In this proof we also use the classification of symbols from our previous paper \cite{sykes2020canonical}  and the system of matrix equations for the reduced modified symbols derived in \cite [section 5]{SZ2020}. The latter two topics are briefly reviewed in Section \ref{Matrix representations of local invariants} below.


\section {Reduced modified symbol and the significance of its Tanaka prolongation} 
\label{modsec}

Now we will discuss the scheme of the proof of Theorem \ref{main theorem}, based on the constructions and results of our previous paper \cite{SZ2020}. 
In particular, there 
we introduced other local invariants of sufficiently symmetric hypersurface-type CR structures encoded in objects called \emph{modified CR symbols} and \emph{reduced modified CR symbols}  (see sections 4 and 6 of  \cite{SZ2020}, respectively). Although modified and reduced modified CR symbols are defined in \cite{SZ2020}, we outline their definitions here for completeness because these objects (especially the latter one) are both nonstandard and fundamental for the present study. Some technical details that are not essential for understanding the principal concepts are omitted here and we refer to \cite{SZ2020} for those gaps. {Following these definitions, we introduce Theorem \ref{first prolongation for non-regular structures is zero}, and describe how Theorem \ref{main theorem} essentially follows from Theorem \ref{first prolongation for non-regular structures is zero}. The subsequent sections of this paper are dedicated to the proof of Theorem \ref{first prolongation for non-regular structures is zero}.}

{Proceeding, we assume that $(M,H)$ has a constant CR symbol.} Let $\mathfrak{g}^0$ be {an abstract CR symbol isomorphic to} the CR symbol {$\mathfrak{g}^0(x)$ of $(M,H)$ at every point $x$ in $M$}. 
And write $\mathfrak{g}_{i, j}(x)$ to denote the bigraded components of $\mathfrak{g}^0(x)$.

There is a natural way to locally \emph{complexify} $M$ by working in local coordinates and replacing real coordinates with complex ones, and, moreover, the CR structure $H$, as well as the distributions $\overline{H}$, $K$, and $\overline{K}$, naturally extend to this complexified manifold (see \cite{SZ2020} for full details) yielding a so-called \emph{complexified CR manifold} that we denote by $\mathbb C M$ (a detail omitted here is that, since the construction is local, this may only be well defined after replacing $M$ with some neighborhood in $M$). Note that $\dim_\R(\mathbb C M)=2\dim (M)$ and there is a submanifold in $\mathbb C M$ that can be naturally identified with $M$. The distribution $K+\overline{K}$ on $\mathbb C M$ is involutive. We let $\mathcal{N}$ be the leaf space of the foliation of $\mathbb C M$ generated by $K+\overline{K}$, sometimes called the \emph{Levi leaf space}, and let $\pi:\mathbb C M\to \mathcal{N}$ denote the natural projection. That is, points in $\mathcal{N}$ are maximal integral submanifolds of $K+\overline{K}$ in $\mathbb C M$.

From the resulting construction, $\mathfrak{g}^0(x)$ remains well defined (in terms of $H$) for all $x$ in $\mathbb C M$. We define the fiber bundle $\mathrm{pr}: P^0\to \mathbb C M$ whose fiber $\mathrm {pr}^{-1} (x)$ over a point $x$ in $\mathbb C M$ is comprised of what we call \emph{adapted frames}, that is, 
\begin{align}
\label{fiberP0}
\mathrm{pr}^{-1}(x)=\left\{
\varphi:\mathfrak g_-\to \mathfrak g_-(x) \left|\ 
\parbox{7.9cm}{
$\varphi(\mathfrak g_{i,j})=\mathfrak g_{i,j}(x)\quad\forall\, (i,j)\in \{(-1, \pm 1),(-2, 0)\}$,\\
$\varphi^{-1}\circ \mathfrak g_{0,\pm2}(x)\circ\varphi=\mathfrak g_{0,\pm2}$, and \\
$\varphi([ y_1, y_2])=[\varphi( y_1),\varphi( y_2)] \quad\forall\, y_1, y_2\in\mathfrak g_-$
}
\right.\right\}.
\end{align}
We also consider a second fiber bundle $\pi\circ \mathrm{pr}:P^0\to \mathcal{N}$, a bundle with total space $P^0$ and base space $\mathcal{N}$.

For any $\psi\in P^0$ and $\gamma=\pi\circ\mathrm{pr}(\psi)$, the tangent space of the fiber $(P^0)_{\gamma}=(\pi\circ \mathrm{pr})^{-1}(\gamma)$ of the second bundle at $\psi$ can be identified with a subspace of $\mathfrak{csp}(\mathfrak{g}_{-1})$ by the map $\theta_0:T_\psi(P^0)_{\gamma}\to \mathfrak{csp}(\mathfrak{g}_{-1})$ given by
\begin{align}\label{degree zero soldering form section 4}
\theta_0\big(\psi^\prime(0)\big):=\big(\psi(0)\big)^{-1}\psi^\prime(0)
\end{align}
where $\psi:(-\epsilon,\epsilon)\to (P^0)_{\gamma}$  denotes an arbitrary curve in $(P^0)_{\gamma}$ with $\psi(0)=\psi$. The notation $\theta_0$ is used here to match the notation in \cite{SZ2020}. Let 
\begin{equation}
\label{modsymeq}
\mathfrak g_0^{\mathrm{mod}}(\psi):=\theta_0(T_\psi(P^0)_{\gamma}).
\end{equation}

\begin{definition}\label{modified CR symbol def}
The space $\mathfrak g^{0, \mathrm{mod}}(\psi):=\mathfrak g_-\oplus \mathfrak g_0^{\mathrm{mod}}(\psi) $ is called the \emph{modified CR symbol} of the CR structure $H$ at the point $\psi\in P^0$.  
\end{definition}

\begin{remark}
Modified CR symbols depend on points in the bundle $P^0$ rather than points in the original CR manifold. Accordingly, a modified CR symbol is not itself a local invariant of the CR strucuture from which it arises, but rather, for $x\in M$, the set $\{\mathfrak{g}^{0, \mathrm{mod}}(\psi)\,|\, \mathrm{pr}(\psi)=x\}$ is a local invariant at $x$. This invariant encodes more data than is encoded in the corresponding CR symbol.
\end{remark}
\begin{remark}
Definition \ref{modified CR symbol def} can be made without assuming that $(M,H)$ is homogeneous, and instead assuming only that the CR symbols $\mathfrak{g}^0(x)$ are constant on $M$.
\end{remark}

We consider the map $\psi\mapsto \varphi_0(\psi)$ sending each point in $P^0$ to a subspace of $\mathfrak{csp}(\mathfrak{g}_{-})$. If, for some subspace $\widetilde{\mathfrak{g}_0}\subset\mathfrak{csp}(\mathfrak{g}_{-})$, there is a maximal connected submanifold $\widetilde{P^0}$ of $P^0$ belonging to the level set  
\[
\left\{\psi\in P^0\,\left|\,\theta_0\left(T_\psi\left( P^{0}\right)_{\pi\circ\mathrm{pr}(\psi)}\right)=\widetilde{\mathfrak{g}_0}\right.\right\}
\]
such that $\mathrm{pr}(\widetilde{P^0})=\mathbb C M$, then we call $\widetilde{P^0}$ a reduction of $P^0$. After, replacing $P^0$ and $\theta_0$ with $\widetilde{P^0}$ and the restriction of $\theta_0$ to the vertical tangent vectors of $\pi\circ \mathrm{pr}:\widetilde{P^0}\to\mathcal{N}$, we can repeat this reduction procedure by finding a maximal connected submanifold of $P^0$ that is in the level set of the new mapping $\psi\mapsto \theta_0\left(T_{\psi}\left(\widetilde{P^0}\right)_{\pi\circ\mathrm{pr}(\psi)}\right)$ also covering $\mathbb C M$ under the projection $\mathrm{pr}$, which we again call a reduction of $P^0$. In general this reduction procedure can be repeated many times, and eventually terminates in the sense that iterating the reduction procedure again will not yield new reductions. For a reduction $P^{0,\mathrm{red}}$ of $P^0$ we label the corresponding space
\begin{align}\label{reduced modified symbol zero degree def}
\mathfrak{g}_{0}^\mathrm{red}(\psi):=\theta_0\left(T_\psi\left( P^{0,\mathrm{red}}\right)_{\pi\circ\mathrm{pr}(\psi)}\right)
\quad\quad\forall\,\psi\in P^{0,\mathrm{red}}.
\end{align}

\begin{definition}\label{reduced modified symbol def}
If $P^{0,\mathrm{red}}$ is a reduction of $P^0$ then the space $\mathfrak g^{0, \mathrm{red}}(\psi):=\mathfrak g_-\oplus \mathfrak g_0^{\mathrm{red}}(\psi)$, with $\mathfrak{g}_0^{\mathrm{red}}(\psi)$ given by \eqref{reduced modified symbol zero degree def}, is called a \emph{reduced modified CR symbol} of the CR structure $H$ at $\psi$. We say that $H$ \emph{admits a constant reduced modified CR symbol} $\mathfrak{g}^{0,\mathrm{red}}$ if there exists a reduction $P^{0,\mathrm{red}}$ of $P^0$ together with $\mathfrak{g}_0^{\mathrm{red}}(\psi)$ given by \eqref{reduced modified symbol zero degree def} such that
\[
\mathfrak{g}^{0,\mathrm{red}}=\mathfrak{g}^{0,\mathrm{red}}(\psi)\quad\quad\forall\, \psi\in P^{0,\mathrm{red}}.
\]
\end{definition}

\begin{lemma}
If $(M,H)$ is homogeneous then it admits a constant reduced modified symbol, that is, there exists a reduction $P^{0,\mathrm{red}}$ of $P^0$ such that the map $\psi\mapsto \mathfrak{g}_{0}^\mathrm{red}(\psi)$ given by \eqref{reduced modified symbol zero degree def} is constant.
\end{lemma}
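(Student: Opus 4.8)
The plan is to exploit homogeneity to guarantee that the reduction procedure described above stabilizes at a stage where the resulting $\mathfrak{g}_0^{\mathrm{red}}$ is forced to be constant. First I would fix a transitive Lie group $G$ of CR symmetries of $(M,H)$. Each symmetry of $H$ lifts canonically to a bundle automorphism of $\mathrm{pr}:P^0\to \mathbb{C}M$ (after passing to the complexification, symmetries extend to $\mathbb{C}M$ and act on adapted frames by post-composition), and this lifted action commutes with the structure maps $\theta_0$ and $\pi\circ\mathrm{pr}$ because those are defined intrinsically from $H$ and the bracket on $\mathfrak{g}_-$. Consequently the function $\psi\mapsto \mathfrak{g}_0^{\mathrm{mod}}(\psi)$, and at each subsequent stage the function $\psi\mapsto \theta_0(T_\psi(\widetilde{P^0})_{\pi\circ\mathrm{pr}(\psi)})$, is invariant under the lifted $G$-action.

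Next I would observe that since $G$ acts transitively on $M$ (hence, via the extension, its orbits cover $\mathbb{C}M$ in the sense needed for the ``$\mathrm{pr}(\widetilde{P^0})=\mathbb{C}M$'' condition), the level sets of these $G$-invariant functions on $P^0$ are themselves $G$-invariant and, in particular, project \emph{onto} $\mathbb{C}M$: if one point of $\mathbb{C}M$ lies under a given level set, then so does every point, by moving it around with $G$. Thus at every stage of the reduction there is at least one level set whose closure contains a maximal connected submanifold covering $\mathbb{C}M$ — i.e., a genuine reduction exists at each stage — and the reduction procedure is non-vacuous until it terminates. When it terminates, meaning a further iteration produces no new reduction, the map $\psi\mapsto \mathfrak{g}_0^{\mathrm{red}}(\psi)$ on the terminal $P^{0,\mathrm{red}}$ must be constant: if it took two distinct values, its level sets would give a strictly finer $G$-invariant partition of $P^{0,\mathrm{red}}$ into pieces still covering $\mathbb{C}M$ (again using transitivity to push level sets onto all of $\mathbb{C}M$), yielding a further proper reduction and contradicting termination. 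Taking $\mathfrak{g}^{0,\mathrm{red}}:=\mathfrak{g}_-\oplus\mathfrak{g}_0^{\mathrm{red}}(\psi)$ for any $\psi$ in the terminal reduction then gives the claimed constant reduced modified symbol.

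The step I expect to be the main obstacle is the careful bookkeeping around the complexification and the precise meaning of ``covering $\mathbb{C}M$'': one must check that a transitive symmetry action on the real CR manifold $M$ induces an action on $\mathbb{C}M$ whose orbits are large enough that $G$-invariant level sets in $P^0$ which are nonempty over one point of $\mathbb{C}M$ are automatically nonempty (and of full-dimensional fiber type) over all of $\mathbb{C}M$, so that they contain a submanifold $\widetilde{P^0}$ with $\mathrm{pr}(\widetilde{P^0})=\mathbb{C}M$. A secondary subtlety is ensuring the reduction procedure actually terminates after finitely many steps — this should follow from the fact that $\mathfrak{csp}(\mathfrak{g}_{-1})$ is finite dimensional so the chain of subspaces $\widetilde{\mathfrak{g}_0}$ appearing cannot strictly decrease forever, but one should confirm that each genuine reduction strictly decreases the relevant dimension (or else stabilizes), matching the ``eventually terminates'' assertion already made in the text preceding the lemma. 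Modulo these points, the argument is a soft invariance-of-canonical-constructions argument and requires no computation.
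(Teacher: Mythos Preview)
Your proposal is correct and follows essentially the same approach as the paper's proof: lift the transitive symmetry group to $P^0$, observe that the map $\psi\mapsto\theta_0(T_\psi(\cdot))$ is invariant under this action so that its level sets project onto the whole base, and then iterate reductions until the map is forced to be constant (the paper phrases the termination endgame as ``either the map is constant or the fibers are zero-dimensional,'' but this is the same idea you give). Your flagged subtleties about extending real symmetries to $\mathbb{C}M$ and about strict dimension drop are indeed glossed over in the paper's version as well, so your caution there is appropriate rather than a defect.
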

\begin{proof}
Since $(M, H)$ is homogeneous, so is $P^0$, and hence each reduction $\widetilde{P^0}$ of $P^0$ can be taken so that its fibers $\left(\widetilde{P^0}\right)_{x}:=\left\{\psi\in\widetilde{P^0}\,|\, \pi(\psi)=x \right\}$ have the same image under the mapping $\psi\mapsto \theta_0\left(T_\psi \widetilde{P^0}\right)$. Therefore, if $\psi\mapsto \theta_0\left(T_\psi \widetilde{P^0}\right)$ is not already constant on $\widetilde{P^0}$ then we can repeat the reduction procedure to find a proper submanifold of $\widetilde{P^0}$ that is also a reduction of $P^0$. Eventually, this iterated procedure ends with a reduction for which either the image of $\theta_0$ applied to its tangent spaces is constant, or a its fibers are 0-dimensional. But, in the latter case, using homogeneity, we can take this final reduction $P^{0,\mathrm{red}}$ such that its fibers have the same image under the mapping $\psi\mapsto \theta_0\left(T_\psi P^{0,\mathrm{red}}\right)$. Accordingly, $\psi\mapsto \theta_0\left(T_\psi P^{0,\mathrm{red}}\right)$ would be constant on $P^{0,\mathrm{red}}$ because it is constant on fibers and the fibers are singletons.
\end{proof}

For the remainder of this paper, we let $\mathfrak{g}^{0,\mathrm{red}}$ denote a constant reduced modified CR symbol of $H$. Like the CR symbol of $H$, $\mathfrak{g}^{0,\mathrm{red}}$ is also a graded subspace of $\mathfrak{g}_{-}\rtimes \mathfrak{csp}(\mathfrak{g}_{-1})$. It has the decomposition $\mathfrak{g}^{0,\mathrm{red}}=\mathfrak{g}_{-2,0}\oplus\mathfrak{g}_{-1,-1}\oplus\mathfrak{g}_{-1,1}\oplus\mathfrak{g}^{\mathrm{red}}_{0}$ where the components whose first weight is negative coincide with those of the CR symbol. 
Here we state some of the properties of $\mathfrak{g}_0^{\mathrm{red}}$. For this we consider weighted components of $\mathfrak{csp}(\mathfrak{g}_{-1})$ defined by
\[
\big(\mathfrak{csp}(\mathfrak{g}_{-1})\big)_{0,i}=\big\{\varphi\in\mathfrak{csp}(\mathfrak{g}_{-1})\,\left|\, \varphi(\mathfrak{g}_{-1,j})\subset \mathfrak{g}_{-1,i+j}\, \forall j\in\{-1,1\} \big\}\right..
\]

The space $\mathfrak{g}_0^{\mathrm{red}}$ is a subspace of $\mathfrak{csp}(\mathfrak{g}_{-1})$ with a decomposition  
\begin{align}\label{gZeroRed splitting}
\mathfrak{g}_0^{\mathrm{red}}=\mathfrak{g}^{\mathrm{red}}_{0,0}\oplus\mathfrak{g}^{\mathrm{red}}_{0,-}\oplus\mathfrak{g}^{\mathrm{red}}_{0,+}
\end{align} such that 
\begin{enumerate}
    \item $\mathfrak{g}_{0,0}^{\mathrm{red}}\subset\mathfrak{g}_{0,0}$;
    \item $\mathfrak{g}^{\mathrm{red}}_{0,+}=\overline{\mathfrak{g}^{\mathrm{red}}_{0,-}}$;
    \item the natural projection of $\mathfrak{csp}(\mathfrak{g}_{-1})$ onto $\big(\mathfrak{csp}(\mathfrak{g}_{-1})\big)_{0,2}$ defines an isomorphism between $\mathfrak{g}^{\mathrm{red}}_{0,+}$ and $\mathfrak{g}_{0,2}$;
    \item  The subspace $\mathfrak g_0^{\mathrm{red}}$ is invariant with respect to the involution on $\mathfrak {csp}(\mathfrak g_{-1})$
    \item  The subspace $\mathfrak g_0^{\mathrm{red}}$ is a subalgebra of $\mathfrak {csp}(\mathfrak g_{-1})$.
\end{enumerate}
We stress that the decomposition $\mathfrak{g}_0^{\mathrm{red}}=\mathfrak{g}^{\mathrm{red}}_{0,0}\oplus\mathfrak{g}^{\mathrm{red}}_{0,-}\oplus\mathfrak{g}^{\mathrm{red}}_{0,+}$ satisfying these properties is not unique, and, furthermore, no such splitting is naturally determined by the CR structure.

\begin{remark}
The CR symbol of $(M,H)$ is determined by any of its modified CR symbols, which in turn are all determined by any constant reduced modified CR symbol $\mathfrak{g}^{0,\mathrm{red}}$ that $(M,H)$ admits. 
\end{remark}

The underlying theory that we will apply to treat structures with non-regular CR symbols is developed in \cite{SZ2020}, wherein it is shown that the upper bounds that we wish to compute can be found by computing the universal Tanaka prolongation \cite{tanaka1} of $\mathfrak{g}^{0,\mathrm{red}}$, which is defined as follows. Starting with $k=1$ and setting $\mathfrak{g}_{-2}=\mathfrak{g}_{-2,0}$, we recursively define the vector spaces
\begin{align}\label{kth prolongation}
\mathfrak g_k^{\mathrm{red}} :=\left\{\varphi\in \bigoplus_{i=-2}^{-1}{\rm Hom}(\mathfrak  g_i,\mathfrak g_{i+k})\left|\,\parbox{6cm}{$\varphi ([v_1,v_2])=[\varphi (v_1),v_2]+[v_1, \varphi( v_2)]$\\$\forall\, v_1, v_2 \in \mathfrak g_-$}\right.\right\} \quad\quad\forall\, k\geq 1,
\end{align}
The \emph{universal Tanaka prolongation} of $\mathfrak g^{0, \mathrm {red}}$
is the vector space
\begin{equation}
\label{universal prolongation definition}
\mathfrak u(\mathfrak g^{0, \mathrm {red}}):=\mathfrak g_-\oplus \bigoplus_{k\geq 0} \mathfrak g_k^{\mathrm{red}}.
\end{equation}
\begin{theorem}[follows immediately from  {\cite[Corollary 2.8 and Theorem 6.2]{SZ2020}}]\label{main theorem for non-regular symbols}
If $(M,H)$ is a 2-nondegenerate CR structure of hypersurface type with a 1-dimensional Levi kernel and constant reduced modified symbol $\mathfrak g^{0, \mathrm {red}}$, then the dimension of the algebra of infinitesimal symmetries of $(M,H)$  is not greater than $\dim \mathfrak{u}(\mathfrak g^{0, \mathrm {red}})$.
\end{theorem}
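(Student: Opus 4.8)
The statement is that $\dim\operatorname{Aut}(M,H)$ (more precisely, the dimension of the infinitesimal symmetry algebra) is bounded above by $\dim\mathfrak u(\mathfrak g^{0,\mathrm{red}})$. Since the excerpt explicitly tells us that this theorem ``follows immediately from Corollary 2.8 and Theorem 6.2 of \cite{SZ2020}'', the proof is meant to be a short assembly of those two previously established facts, together with a verification that the hypotheses match. So the plan is to make that assembly explicit rather than to reprove the prolongation bound from scratch.

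First I would recall the content of \cite[Corollary 2.8]{SZ2020}: for a filtered structure equipped with a constant (reduced modified) symbol, the infinitesimal symmetries form a filtered Lie algebra whose associated graded embeds into the universal Tanaka prolongation of that symbol, so that its dimension is at most $\dim\mathfrak u$ of the symbol. Then I would invoke \cite[Theorem 6.2]{SZ2020}, which (in the setting of $2$-nondegenerate hypersurface-type CR structures with a $1$-dimensional Levi kernel) identifies the relevant symbol governing the symmetry algebra with exactly the reduced modified CR symbol $\mathfrak g^{0,\mathrm{red}}$ constructed in Section \ref{modsec} above — this is the step that links the abstract prolongation machinery to the concrete CR-geometric object. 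The remaining work is bookkeeping: one checks that the hypothesis ``$(M,H)$ admits a constant reduced modified symbol $\mathfrak g^{0,\mathrm{red}}$'' in the present theorem is precisely the hypothesis under which Theorem 6.2 of \cite{SZ2020} applies, that the universal Tanaka prolongation $\mathfrak u(\mathfrak g^{0,\mathrm{red}})$ defined in \eqref{universal prolongation definition} coincides with the prolongation appearing there, and that property (5) above (that $\mathfrak g_0^{\mathrm{red}}$ is a subalgebra of $\mathfrak{csp}(\mathfrak g_{-1})$) is what makes $\mathfrak g^{0,\mathrm{red}}$ a genuine graded Lie algebra so that Tanaka's prolongation theory is applicable.

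Concretely, I would structure the argument as: (i) by Definition \ref{reduced modified symbol def} and the surrounding discussion, $H$ admitting a constant reduced modified symbol means there is a reduction $P^{0,\mathrm{red}}$ of $P^0$ on which $\psi\mapsto\mathfrak g_0^{\mathrm{red}}(\psi)$ is constant and equal to $\mathfrak g_0^{\mathrm{red}}$; combined with properties (1)--(5), $\mathfrak g^{0,\mathrm{red}}=\mathfrak g_-\oplus\mathfrak g_0^{\mathrm{red}}$ is a graded Lie subalgebra of $\mathfrak g_-\rtimes\mathfrak{csp}(\mathfrak g_{-1})$ in nonpositive degrees; (ii) apply \cite[Theorem 6.2]{SZ2020} to get that any infinitesimal symmetry of $(M,H)$ is determined by, and its jet data is constrained by, the prolongation structure of $\mathfrak g^{0,\mathrm{red}}$; (iii) apply \cite[Corollary 2.8]{SZ2020} to convert this into the dimension inequality $\dim(\text{infinitesimal symmetry algebra})\le\dim\mathfrak g_-+\sum_{k\ge 0}\dim\mathfrak g_k^{\mathrm{red}}=\dim\mathfrak u(\mathfrak g^{0,\mathrm{red}})$, using the recursive definition \eqref{kth prolongation} of the prolongation spaces.

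The only genuine obstacle is a semantic one rather than a computational one: ensuring that the definition of the universal Tanaka prolongation used here — which prolongs only the nonpositively-graded part $\mathfrak g_-\oplus\mathfrak g_0^{\mathrm{red}}$, treating $\mathfrak g_0^{\mathrm{red}}\subset\mathfrak{csp}(\mathfrak g_{-1})$ acting on $\mathfrak g_-$, and where $\mathfrak g_{-2}=\mathfrak g_{-2,0}$ while $\mathfrak g_{-1}=\mathfrak g_{-1,-1}\oplus\mathfrak g_{-1,1}$ — agrees exactly with the one in \cite{SZ2020}, including the subtlety that the bracket $[\mathfrak g_{0,-2},\mathfrak g_{0,2}]$ which fails for the CR symbol does close for the reduced modified symbol (property (5)), so that no ``non-regularity'' defect obstructs the prolongation from being well defined. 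Once this identification is in place the theorem is immediate, so I would keep this proof to a few lines, citing \cite{SZ2020} for the substantive content.
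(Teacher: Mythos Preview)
Your proposal is correct and matches the paper's approach exactly: the paper gives no proof of this theorem beyond the parenthetical attribution ``follows immediately from [Corollary 2.8 and Theorem 6.2]{SZ2020}'', and your write-up simply makes that citation explicit by spelling out what each cited result contributes and checking that the hypotheses line up. The only difference is that you have elaborated the assembly in a few sentences where the paper leaves it entirely to the reader.
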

Hence, if we can explicitly calculate $\dim \mathfrak{u}(\mathfrak g^{0, \mathrm {red}})$ for non-regular CR symbols, then we can obtain an upper bound for the algebra of infinitesimal symmetries of $(M,H)$. This motivates the following theorem, proved in Section \ref{First prolongation of non-regular symbols}.

\begin{theorem}\label{first prolongation for non-regular structures is zero}
If a {constant} reduced modified CR symbol $\mathfrak g^{0, \mathrm {red}}$ corresponds to a non-regular CR symbol  then the following statements hold:
	\begin{enumerate} 
	 \item The first Tanaka  prolongation $\mathfrak{g}_1^{\mathrm{red}}$ of $\mathfrak g^{0, \mathrm {red}}$ vanishes or, equivalently,  the universal Tanaka prolongation $\mathfrak u(\mathfrak g^{0, \mathrm {red}})$  of $\mathfrak g^{0, \mathrm {red}}$ is equal to $\mathfrak g^{0, \mathrm {red}}$. 
	 \item  $\dim\, \mathfrak g^{0, \mathrm {red}}$ and therefore  the dimension of the algebra of infinitesimal symmetries of a $(2n+1)$-dimensional $2$-nondegenerate CR structure of hypersurface type with rank $1$ Levi kernel and  non-regular CR symbol {admitting a constant reduced modified symbol} is strictly less than $(n-1)^2+7$.  
	 \item {For $(M,H)$ as in item ($2$), the bundle $\mathrm{pr}:\Re(P^0)\to M$, consisting of frames in $P^0$ that commute with complex conjugation on the CR symbols, is a principal bundle over $M$ whose structure group has the Lie algebra $\mathfrak{g}_{0,0}^{\mathrm{red}}$ and it is equipped with an absolute parallelism invariant under the structure group's action and under the natural induced action of symmetries of $(M,H)$.}
     \end{enumerate}
\end{theorem}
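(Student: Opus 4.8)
The plan is to prove the three items of Theorem \ref{first prolongation for non-regular structures is zero} in order, treating item (1) as the essential analytic/algebraic core and deriving items (2) and (3) as consequences. For item (1), I would start from the defining system \eqref{kth prolongation} for $\mathfrak{g}_1^{\mathrm{red}}$: an element is a degree-$(+1)$ map $\varphi$ on $\mathfrak{g}_-=\mathfrak{g}_{-2,0}\oplus\mathfrak{g}_{-1,-1}\oplus\mathfrak{g}_{-1,1}$ satisfying the Leibniz rule with respect to the Heisenberg bracket, whose only nontrivial component is $[\cdot,\cdot]:\mathfrak{g}_{-1,1}\times\mathfrak{g}_{-1,-1}\to\mathfrak{g}_{-2,0}$ given by $i\ell$. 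Since $\varphi$ raises the first weight by $1$, it is determined by $\varphi|_{\mathfrak{g}_{-1,1}}:\mathfrak{g}_{-1,1}\to\mathfrak{g}_{0}^{\mathrm{red}}$ and $\varphi|_{\mathfrak{g}_{-1,-1}}:\mathfrak{g}_{-1,-1}\to\mathfrak{g}_{0}^{\mathrm{red}}$ (its value on $\mathfrak{g}_{-2,0}$ lands in $\mathfrak{g}_{1}$-degree objects which do not yet exist in degrees we can match, so that component is forced to be zero), together with the compatibility that the bracket relation reproduces $\ell$. The standard Tanaka computation then identifies the space of such $\varphi$ with a subspace of $\mathrm{Hom}(\mathfrak{g}_{-1},\mathfrak{g}_0^{\mathrm{red}})$ cut out by the condition that the induced symmetric-type form built from $\ell$ and $\varphi$ vanish; concretely $\varphi$ must satisfy $\ell(\varphi(v)w,\,u)+\ell(w,\varphi(u)v)=$ (symmetrization in the appropriate arguments) matching the bracket identity, which is the usual first-prolongation equation for the pair $(\mathfrak{g}_0^{\mathrm{red}},\mathfrak{g}_{-1})$ sitting inside $\mathfrak{csp}(\mathfrak{g}_{-1})$.

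The crux — and the step I expect to be the main obstacle — is showing this space is $\{0\}$ precisely when the CR symbol is non-regular, i.e. when $\boldsymbol{A}^3\notin\mathbb{C}\boldsymbol{A}$ (by \cite[Lemma 4.2]{porter2017absolute}, \eqref{cube}). For this I would use the explicit structure of $\mathfrak{g}_0^{\mathrm{red}}$ recorded in properties (1)--(5) above: $\mathfrak{g}_{0,0}^{\mathrm{red}}\subset\mathfrak{g}_{0,0}$, the raising part $\mathfrak{g}_{0,+}^{\mathrm{red}}$ projects isomorphically onto $\mathfrak{g}_{0,2}$ (a line spanned by the operator built from $\boldsymbol{A}$), and $\mathfrak{g}_{0,-}^{\mathrm{red}}=\overline{\mathfrak{g}_{0,+}^{\mathrm{red}}}$. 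I would then invoke the classification of CR symbols from \cite{sykes2020canonical} and the matrix equations for reduced modified symbols from \cite[section 5]{SZ2020} (reviewed in Section \ref{Matrix representations of local invariants}) to reduce the vanishing of $\mathfrak{g}_1^{\mathrm{red}}$ to a finite check on canonical matrix forms of $\boldsymbol{A}$ — split according to whether the CR symbol is regular or not. In the non-regular cases the Jordan-type normal form of $\boldsymbol{A}$ (viewed via $\boldsymbol{A}^2$, a genuine $\mathbb{C}$-linear operator) has a block forcing every prolongation element to kill the $\mathfrak{g}_{0,2}$-direction and then, by the Heisenberg-bracket compatibility, to vanish identically; the appendix (Section \ref{Matrix representations of intersection algebra general formula}) supplies the general matrix representation needed to run this argument uniformly in $n$. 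The equivalence stated in item (1), that $\mathfrak{g}_1^{\mathrm{red}}=0$ implies $\mathfrak{u}(\mathfrak{g}^{0,\mathrm{red}})=\mathfrak{g}^{0,\mathrm{red}}$, is then immediate from the inductive definition \eqref{kth prolongation}: $\mathfrak{g}_{k}^{\mathrm{red}}$ injects into $\mathrm{Hom}(\mathfrak{g}_{-1},\mathfrak{g}_{k-1}^{\mathrm{red}})$-type data, so $\mathfrak{g}_1^{\mathrm{red}}=0$ forces $\mathfrak{g}_k^{\mathrm{red}}=0$ for all $k\geq 1$ by the standard Tanaka prolongation argument (no nonzero element can have all brackets with $\mathfrak{g}_{-1}$ vanish once the first prolongation is trivial).

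For item (2), once $\mathfrak{u}(\mathfrak{g}^{0,\mathrm{red}})=\mathfrak{g}^{0,\mathrm{red}}=\mathfrak{g}_-\oplus\mathfrak{g}_0^{\mathrm{red}}$, Theorem \ref{main theorem for non-regular symbols} gives $\dim\Aut\leq\dim\mathfrak{g}^{0,\mathrm{red}}$, so it suffices to bound $\dim\mathfrak{g}^{0,\mathrm{red}}=\dim\mathfrak{g}_-+\dim\mathfrak{g}_0^{\mathrm{red}}=(2n+1)+\dim\mathfrak{g}_0^{\mathrm{red}}$. I would bound $\dim\mathfrak{g}_0^{\mathrm{red}}$ using property (3) ($\dim\mathfrak{g}_{0,+}^{\mathrm{red}}=\dim\mathfrak{g}_{0,2}$, a small number — $1$ in the relevant cases) and property (1) together with a dimension count of $\mathfrak{g}_{0,0}$, which is the stabilizer in $\mathfrak{csp}(\mathfrak{g}_{-1})$ of the line $\mathbb{C}\boldsymbol{A}$ and whose dimension is controlled by the centralizer of $\boldsymbol{A}^2$ in the relevant symplectic/unitary algebra; for non-regular symbols this centralizer is strictly smaller than in the regular rank-one nilpotent case, yielding the strict inequality $\dim\mathfrak{g}^{0,\mathrm{red}}<(n-1)^2+7$. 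This again reduces to the finite list of non-regular canonical forms from \cite{sykes2020canonical}, with the sharper bound of Theorem \ref{more precise bound} obtained by keeping track of the parameters. For item (3), I would note that item (1) says the Tanaka prolongation terminates at degree $0$, so the construction of \cite{SZ2020} produces the canonical Cartan-type geometry on $\Re(P^0)$ of \emph{finite type}: since $\mathfrak{g}_0^{\mathrm{red}}$ is a subalgebra (property (5)) and is invariant under the involution (property (4)), the real form $\Re(P^0)\to M$ is a principal bundle with structure group integrating $\mathfrak{g}_{0,0}^{\mathrm{red}}$, and the soldering forms $\theta_{-2},\theta_{-1,\pm1}$ together with $\theta_0$ (the restriction of the form in \eqref{degree zero soldering form section 4}) assemble into an absolute parallelism; its naturality — equivariance under the structure group and under lifts of CR symmetries — is inherited directly from the functoriality of the frame bundle $P^0$ and the canonicity of each reduction step in Section \ref{modsec}.
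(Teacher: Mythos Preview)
Your overall plan matches the paper's: prove item (1) by a case analysis on the canonical form of $(\ell,\boldsymbol{A})$ from \cite{sykes2020canonical}, derive item (2) from item (1) plus a dimension count on $\mathfrak{g}_0^{\mathrm{red}}$ via the appendix formulas, and derive item (3) from item (1) together with the parallelism construction of \cite{SZ2020}. That architecture is correct. However, there are two concrete errors you should fix, one of which is a genuine gap.

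\textbf{The substantive gap.} Your claim that the component $\varphi|_{\mathfrak{g}_{-2,0}}$ ``lands in $\mathfrak{g}_1$-degree objects which do not yet exist'' and is therefore ``forced to be zero'' is wrong. An element $\varphi\in\mathfrak{g}_1^{\mathrm{red}}$ raises the first weight by $+1$, so $\varphi|_{\mathfrak{g}_{-2,0}}$ takes values in $\mathfrak{g}_{-1}=\mathfrak{g}_{-1,1}\oplus\mathfrak{g}_{-1,-1}$, which very much exists; there is no a priori reason for it to vanish. In the paper, proving $\varphi(1)=0$ (where $1$ generates $\mathfrak{g}_{-2,0}$) is in fact the \emph{first} nontrivial step and occupies Lemmas \ref{case 1 lemma} and \ref{case 2 lemma}: one writes $\varphi(1)=\sum\varphi_i e_i$, uses the Leibniz identity $\varphi([e_i,e_j])=\varphi(e_i)e_j-\varphi(e_j)e_i$ to constrain the $\varphi_i$ via the explicit block structure of the matrices $\alpha_j$ (which represent the $\mathfrak{g}_{0,0}^{\mathrm{red}}$-part of $\varphi(e_j)$), and then runs a careful index-by-index argument in each canonical form to force all $\varphi_i=0$. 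Only after that does the paper proceed to show $\varphi(e_i)=0$ for $i\geq n$ (via the coefficients $\kappa_i$ in \eqref{alpha_through_C_and _H} and the structure of $\mathscr{A}$ described in the appendix), and then finish with Lemma \ref{case 1 main result}. Your sketch skips the $\varphi(1)=0$ step entirely, and without it the rest does not go through: the rank estimates on $\alpha_j A H_\ell^{-1}$ and the $\kappa$-arguments all use $\varphi(1)=0$ as input.

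\textbf{A dimension slip in item (2).} You write $\dim\mathfrak{g}_-=2n+1$; in fact $\dim\mathfrak{g}_-=2(n-1)+1=2n-1$, since $\mathfrak{g}_{-1,\pm1}\cong H_x/K_x$ has dimension $n-1$. The paper's computation is
\[
\dim\mathfrak{g}^{0,\mathrm{red}}=(2n-1)+\dim\mathfrak{g}_{0,0}^{\mathrm{red}}+2<(2n-1)+(n^2-4n+7)+2=(n-1)^2+7,
\]
using Lemma \ref{dimension_cor} (which gives $\dim\mathscr{A}\leq n^2-4n+6$, with equality only for a regular symbol) and $\dim\mathfrak{g}_{0,0}^{\mathrm{red}}\leq\dim\mathscr{A}+1$. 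With $2n+1$ in place of $2n-1$ your bound would be off by $2$.
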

{\begin{corollary}
The dimension of the algebra of infinitesimal symmetries of a homogeneous  $(2n+1)$-dimensional $2$-nondegenerate CR structure of hypersurface type with rank $1$ Levi kernel and  non-regular CR symbol is strictly less than $(n-1)^2+7$.  
\end{corollary}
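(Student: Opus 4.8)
The plan is to obtain this corollary as an immediate specialization of Theorem \ref{first prolongation for non-regular structures is zero}, so no new computation is required. First I would observe that, since $(M,H)$ is homogeneous, the Lemma following Definition \ref{reduced modified symbol def} guarantees that $(M,H)$ admits a constant reduced modified CR symbol $\mathfrak{g}^{0,\mathrm{red}}$; in particular the CR symbol of $(M,H)$ is constant, so the hypothesis that it is non-regular is a statement about the single CR symbol to which $\mathfrak{g}^{0,\mathrm{red}}$ corresponds. Hence $(M,H)$ satisfies the hypotheses of part (2) of Theorem \ref{first prolongation for non-regular structures is zero}.

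Next I would simply chain the conclusions. Part (2) of that theorem yields $\dim\mathfrak{g}^{0,\mathrm{red}}<(n-1)^2+7$, and part (1) identifies the universal Tanaka prolongation $\mathfrak{u}(\mathfrak{g}^{0,\mathrm{red}})$ with $\mathfrak{g}^{0,\mathrm{red}}$ itself. Combining this with Theorem \ref{main theorem for non-regular symbols}, which bounds the dimension of the algebra of infinitesimal symmetries of $(M,H)$ above by $\dim\mathfrak{u}(\mathfrak{g}^{0,\mathrm{red}})$, gives $\dim\Aut(M,H)\le\dim\mathfrak{u}(\mathfrak{g}^{0,\mathrm{red}})=\dim\mathfrak{g}^{0,\mathrm{red}}<(n-1)^2+7$, which is the assertion.

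There is essentially no obstacle internal to the corollary itself: once Theorem \ref{first prolongation for non-regular structures is zero} is in hand, the only things to verify are the routine bookkeeping points that homogeneity supplies a constant reduced modified symbol (the Lemma) and that this symbol inherits the non-regularity of the CR symbol of $(M,H)$. All of the genuine difficulty — the vanishing of the first prolongation and the dimension estimate for $\mathfrak{g}^{0,\mathrm{red}}$ — is deferred to the proof of Theorem \ref{first prolongation for non-regular structures is zero} in Section \ref{First prolongation of non-regular symbols}, which relies on the classification of CR symbols from \cite{sykes2020canonical} and the matrix equations for reduced modified symbols from \cite{SZ2020}.
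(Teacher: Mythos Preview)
Your proposal is correct and matches the paper's approach: the corollary is stated immediately after Theorem \ref{first prolongation for non-regular structures is zero} with no separate proof, precisely because it follows at once from item (2) of that theorem together with the Lemma after Definition \ref{reduced modified symbol def} (homogeneity implies a constant reduced modified symbol). Your chain through item (1) and Theorem \ref{main theorem for non-regular symbols} is also how the paper itself justifies item (2), so the elaboration is consistent though not strictly needed once item (2) is quoted.
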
}

Theorem \ref{first prolongation for non-regular structures is zero} is proved in Section \ref{First prolongation of non-regular symbols} with the help of preliminary results established in Section \ref{Matrix representations of local invariants} and the appendix (Section \ref{Matrix representations of intersection algebra general formula}). In section \ref{Matrix representations of local invariants}, we introduce a standardized matrix representation of abstract reduced modified symbols, which is necessary for our study because there is no previously developed structure theory for these Lie algebras. In the appendix (Section \ref{Matrix representations of intersection algebra general formula}), we give explicit general formulas for matrix representations of elements in $\mathfrak{g}_{0,0}^{\mathrm{red}}$, and we use these formulas to calculate upper bounds for the dimension of $\mathfrak{g}^{0,\mathrm{red}}$, which are necessary for item (2) of Theorem \ref{first prolongation for non-regular structures is zero}. These results of Section \ref{Matrix representations of intersection algebra general formula} are differed to the appendix because their proofs require somewhat  digressive linear algebra that readers may wish to initially take for granted when studying the main points of this paper. Lastly, in Section \ref{First prolongation of non-regular symbols}, we apply the matrix representation formulas derived in Section \ref{Matrix representations of intersection algebra general formula} to prove item (1) of Theorem \ref{first prolongation for non-regular structures is zero} by directly calculating $\mathfrak{g}_1^{\mathrm{red}}=0$.

Based on the well-known fact \cite[Section 6]{tanaka1} that an infinitesimal symmetry of a filtered structure is determined by the $j$th weighted jet, where $j$ is the minimal nonnegative integer for which the $j$th Tanaka prolongation is equal to zero, this theorem immediately implies item (2) of Theorem \ref{main theorem}. Item (1) of Theorem \ref{main theorem} will follow from combining Theorems \ref{first prolongation for non-regular structures is zero} and \ref{main theorem for non-regular symbols}. In Theorem \ref{more precise bound} below, for each reduced modified symbol corresponding to a non-regular CR symbol, we give  more precise  upper bounds (than the ones in item (2) of Theorem \ref{first prolongation for non-regular structures is zero}) for the dimension of its  (entire)  Tanaka prolongation in terms of the parameters of this non-regular symbol.

\begin{remark}
To establish Theorem \ref{first prolongation for non-regular structures is zero}, we appeal to Theorem \ref{main theorem for non-regular symbols} and the Tanaka-theoretic prolongation procedures developed in \cite{SZ2020} which constructs a tower $\Re(P^s)\to\Re(P^{s-1})\to \cdots \to\Re(P^{0})\to M$ of fiber bundles (geometric prolongations) and confers an absolute parallelism onto largest prolongation $\Re(P^s)$. The familiar reader will notice that item (1) in Theorem \ref{first prolongation for non-regular structures is zero} implies that $\Re (P^0)$ is diffeomorphic to the largest prolongation, and may wonder if we can construct a parallelism on $P^0$ directly without invoking the full prolongation procedure theory. We stress, however, that
in general, for a Tanaka structure of depth $\mu$, where $\mu$ is the number of negatively graded components, if the $l$ is the maximal integer such that the $l$th algebraic prolongation is not equal to zero, then the parallelism construction requires constructing $(l+\mu)$th geometric prolongation, and in our setting $\mu=2$. Contrastingly, the classical prolongation theory for $G$-structures (whose depth is $\mu=1$) enjoys greater simplification whenever $\mathfrak{g}_1=0$, so that in this case the construction of the parallelism requires the first geometric prolongation only. See \cite{alekseevsky2015tanaka,alekseevsky2001prolongations,SZ2020,tanaka1,zelenko2009tanaka} for detailed exposition of the prolongation procedure.
\end{remark}






\section{Matrix representations of CR and reduced modified CR symbols} \label{Matrix representations of local invariants}

Throughout this section, we work with  a fixed CR symbol given by the pair $(\R\ell, \mathbb{C}\boldsymbol{A})$, where $\ell$ is an Hermitian  form and $\boldsymbol{A}$ is a self-adjoint antilinear operator on $\mathfrak g_{-1,1}$.  Let us fix a basis of $\mathfrak{g}_{-1}$. This basis can be fixed such that the pair $(\ell,\boldsymbol{A})$ is represented with respect to it by matrices in a canonical form, which is shown in \cite{sykes2020canonical}. We recall one such canonical form below in Theorem \ref{simultaneous canonical form theorem} (there are actually two canonical forms given in \cite{sykes2020canonical}).

For $\lambda\in\mathbb C$ and a positive integer $m$, let $J_{\lambda,m}$ denote the $m\times m$ Jordan matrix with a single eigenvalue $\lambda$ and this eigenvalue has geometric multiplicity 1; let $T_m=J_{0,m}$, and let $S_m$ be the $m\times m$ matrix whose $(i,j)$ entry is 1 if $j+i=m+1$ and zero otherwise, that is

\begingroup\setlength{\abovedisplayskip}{8pt}
\begin{equation}\label{Matrix representations first notations}
J_{\lambda,m}:=\left.\left(\bovermat{$m$ columns}{ \begin{matrix}
\lambda&1&0&\cdots &0\\
0&\ddots&\ddots&\ddots&\vdots\\
\vdots&\ddots&\ddots&\ddots&0\\
\vdots&&\ddots&\ddots&1\\
0&\cdots&\cdots&0&\lambda
\end{matrix}}\right)\right\} \text{\scriptsize $m$ rows}
\quad\mbox{ and }\quad
S_m=
\left.
\left(
\bovermat{$m$ columns}{\mbox{$\begin{matrix}
0&\cdots&0&1\\
\vdots&\iddots&\iddots&0\\
0& \iddots&\iddots&\vdots\\
1&0&\cdots&0
\end{matrix}$}}
\right)
\right\}\mbox{\scriptsize $m$ rows}.
\end{equation}
\endgroup
In the sequel, given square matrices $D_1, \ldots D_N$ we will denote by $D_1\oplus \ldots\oplus D_N$ the block diagonal matrix with diagonal blocks 
$D_1, \ldots, D_N$ in the order from the top left to the bottom right and all off-diagonal block equal to zero.

For $\lambda\in\mathbb C$, we define the $k\times k$ or $2k\times 2k$  matrix $M_{\lambda,k}$ by
\begin{align}\label{antilinear op canon form}
M_{\lambda,k}:= 
\begin{cases}
J_{\lambda, k}&\mbox{ if } \lambda\in\R\\
\left(
\begin{array}{cc}
0&  J_{\lambda^2,k}\\
I& 0
\end{array}
\right)
&\mbox{ otherwise},
\end{cases}
\end{align}
where  $0$ denotes a matrix of appropriate size with zero in all entries and $I$ denotes the identity matrix. We define corresponding matrices $N_{\lambda, k}$ by
\begin{equation}
\label{N_def}
N_{\lambda,k}:= 
\begin{cases}
S_{k}&\mbox{ if } \lambda\in\R\\ 
S_{2k}&\mbox{ otherwise}.
\end{cases}
\end{equation}
For the $\ell$-self-adjoint antilinear operator $\boldsymbol{A}$ referred to in the following theorem, let us enumerate the eigenvalues of $\boldsymbol{A}^2$ (counting them with multiplicity) that are contained in the upper-half plane $\{z\in\mathbb C\,|\, \Re(\Z)\geq0\}$ of $\mathbb C$, labeling them as $\lambda_{1}^2,\ldots,\lambda_\gamma^2$. Furthermore, we take each $\lambda_i$ to be the principle square root of $\lambda_i^2$.
\begin{theorem}[immediate consequence of the main result in \cite{sykes2020canonical}]\label{simultaneous canonical form theorem}
Given a nondegenerate Hermitian form $\ell$ on a vector space $V$ and an $\ell$-self-adjoing antilinear operator $\boldsymbol{A}$, there exists a basis of $V$ with respect to which $\ell$ and $\boldsymbol{A}$ are respectively represented by the matrices $H_\ell$ and $A$ given by 
\begin{align}\label{simultaneous canonical form theorem eqn}
H_\ell=\bigoplus_{i=1}^\gamma \epsilon_iN_{\lambda_i,m_i}\quad\mbox{ and }\quad A=\bigoplus_{i=1}^\gamma M_{\lambda_i,m_i},
\end{align}
for some sequence $\epsilon_1,\ldots, \epsilon_\gamma$ satisfying $\epsilon_i=\pm1$ and some sequence of positive integers $m_1,\ldots, m_\gamma$.
\end{theorem}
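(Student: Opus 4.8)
The plan is to obtain this as a restatement of the simultaneous canonical form theorem of \cite{sykes2020canonical}, so that the only genuinely new work is to match the block normalizations fixed there with the matrices $M_{\lambda,k}$ and $N_{\lambda,k}$ of \eqref{antilinear op canon form}--\eqref{N_def}. To describe the argument behind that classification: since $\boldsymbol A$ is antilinear, $\boldsymbol A^2$ is a $\mathbb C$-linear operator on $V$, and a one-line computation from the self-adjointness of $\boldsymbol A$ (i.e.\ from $\ell(\boldsymbol A v,w)=\ell(\boldsymbol A w,v)$) shows that $\boldsymbol A^2$ is $\ell$-self-adjoint in the usual sense. I would first decompose $V=\bigoplus_\mu V_\mu$ into the generalized eigenspaces of $\boldsymbol A^2$; antilinearity of $\boldsymbol A$ forces $\boldsymbol A(V_\mu)=V_{\bar\mu}$, and $\ell$-self-adjointness of $\boldsymbol A^2$ forces $\ell$ to pair $V_\mu$ nondegenerately with $V_{\bar\mu}$ and to vanish on $V_\mu\times V_{\mu'}$ for $\mu'\neq\bar\mu$. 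This reduces everything to handling one conjugate orbit $\{\mu,\bar\mu\}$ of eigenvalues of $\boldsymbol A^2$ at a time, each contributing the sub-sum of blocks in \eqref{simultaneous canonical form theorem eqn} with $\lambda_i^2\in\{\mu,\bar\mu\}$.

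Next I would treat the ``generic'' orbits. When $\mu$ is not real, $\boldsymbol A$ interchanges $V_\mu$ and $V_{\bar\mu}$ antilinearly while $\ell$ pairs them hyperbolically; taking a Jordan basis of $\boldsymbol A^2|_{V_\mu}$, transporting it by $\boldsymbol A$ to $V_{\bar\mu}$, and normalizing against the pairing yields a basis in which $\boldsymbol A$ is a direct sum of blocks $\left(\begin{smallmatrix}0&J_{\mu,k}\\I&0\end{smallmatrix}\right)$ and $\ell$ is the matching sum of $S_{2k}$'s, which are exactly $M_{\lambda,k}$ and $N_{\lambda,k}$ for $\lambda$ the principal square root of $\mu$. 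When $\mu$ is a positive real, $\boldsymbol A^2|_{V_\mu}=\mu I+N$ with $N$ nilpotent admits a square root $R=\sqrt{\mu}\,I+\cdots$ that is a real-coefficient polynomial in $N$ and commutes with $\boldsymbol A$, so $B:=R^{-1}\boldsymbol A$ is an antilinear involution and $\ell$ is compatible with the real structure $B$; restricting to the real form $V_\mu^{B}$ turns the problem into the classical Weierstrass--Kronecker canonical form for a nondegenerate real symmetric form together with the self-adjoint operator $R$, whose sign characteristic produces the $\epsilon_i$, and since $R-\lambda I$ is nilpotent with nonzero linear term in $N$ each of its Jordan blocks is conjugate to $J_{\lambda,m}$, giving $M_{\lambda,m}=J_{\lambda,m}$ and $N_{\lambda,m}=S_m$.

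The hard part will be the remaining orbits $\mu<0$ (so $\lambda$ purely imaginary) and $\mu=0$. For $\mu<0$ the same square-root construction gives $B^2=-I$, a quaternionic rather than real structure, and for $\mu=0$ the square-root trick fails outright; in particular the block sizes $m_i$ are then no longer read off from the Jordan type of $\boldsymbol A^2$ alone (for instance a single block $T_m$ for $\boldsymbol A$ gives $\boldsymbol A^2\cong T_m^2$, of Jordan type $(\lceil m/2\rceil,\lfloor m/2\rfloor)$), so one must analyze $\boldsymbol A$ together with the induced form directly, while also keeping track of the signs $\epsilon_i$ in every case. This finer analysis is precisely the technical heart of \cite{sykes2020canonical}. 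Granting it, I would finish by collecting the blocks over all $\mu$, relabelling the eigenvalues of $\boldsymbol A^2$ as $\lambda_1^2,\dots,\lambda_\gamma^2$ with principal square roots $\lambda_i$ as in the enumeration preceding the theorem, and checking the purely routine point that the block normalization used in \cite{sykes2020canonical} agrees with $(\epsilon_i N_{\lambda_i,m_i},M_{\lambda_i,m_i})$.
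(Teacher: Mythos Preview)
The paper does not give a proof of this theorem at all: it is stated as an immediate consequence of the main result of \cite{sykes2020canonical} and simply cited. Your proposal therefore goes well beyond what the paper does here, since you sketch the actual strategy behind the cited classification (decomposing $V$ by generalized eigenspaces of $\boldsymbol{A}^2$, handling the orbits $\{\mu,\bar\mu\}$ separately, reducing the $\mu>0$ case to the classical Weierstrass--Kronecker form via an antilinear involution, and flagging the genuinely delicate cases $\mu<0$ and $\mu=0$). As a summary of the argument in \cite{sykes2020canonical} your outline is accurate in spirit, and you correctly identify that the only work specific to the present paper would be matching the block normalizations there with the matrices $M_{\lambda,k}$, $N_{\lambda,k}$ defined in \eqref{antilinear op canon form}--\eqref{N_def}. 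But since the paper itself offers nothing beyond the citation, there is no ``paper's proof'' to compare against; your proposal is essentially a gloss on the external reference, not on anything done in this paper.
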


Letting $H_\ell$ and $A$ be matrices representing $\ell$ and $\boldsymbol{A}$ respectively in some basis of $\mathfrak{g}_{-1}$, we consider the Lie algebras of square matrices $\alpha$ satisfying
\begin{align}\label{firstalgebra}
\alpha AH_\ell ^{-1} +   AH_\ell ^{-1}\alpha^T  =\eta AH_\ell ^{-1} \quad\mbox{ for some }\eta\in\mathbb C
\end{align}
and respectively
\begin{align}\label{secondalgebra}
  \alpha^TH_\ell \overline{A}+H_\ell \overline{A} \alpha=\eta H_\ell \overline{A}\quad\mbox{ for some }\eta\in\mathbb C,
\end{align}
and define the algebra $\mathscr{A}$ to be their intersection, that is,
\begin{align}\label{intersection algebra}
\mathscr{A}:=\left\{\alpha\,\left| \,\parbox{7.4cm}{$ \alpha AH_\ell ^{-1} +   AH_\ell ^{-1}\alpha^T  =\eta AH_\ell ^{-1}$ and \\$ \alpha^TH_\ell \overline{A}+H_\ell \overline{A} \alpha=\eta^\prime H_\ell \overline{A}$  for some $\eta,\eta^\prime\in \mathbb C$}\right.\right\}.
\end{align}




Let us fix a splitting of $\mathfrak{g}^{\mathrm{red}}_0$ as given in \eqref{gZeroRed splitting}. With respect to the basis of $\mathfrak{g}_{-1}$ fixed above, there exists some $(n-1)\times (n-1)$ matrix $\Omega$ such that $\mathfrak{g}_{0,+}^{\mathrm{red}}$ and $\mathfrak{g}_{0,-}^{\mathrm{red}}$ have the matrix representations
\begin{equation}\label{Xzero2}
\mathfrak{g}_{0,+}^{\mathrm{red}}=\text{span}_{\mathbb C}
\left\{
\left(
\begin{array}{cc}
 \Omega& A\\
 0& -H_\ell^{-1}\Omega^TH_\ell
\end{array}
\right)
\right\}
\quad\mbox{ and }\quad
\mathfrak{g}_{0,-}^{\mathrm{red}}=\text{span}_{\mathbb C}
\left\{
\left(
\begin{array}{cc}
-\overline{H_\ell}^{-1}\Omega^* \overline{H_\ell}&  0\\
\overline{A}&  {\overline{\Omega}}
\end{array}
\right)
\right\}.
\end{equation}
In \cite{SZ2020}, we show that $\mathfrak{g}^{\mathrm{red}}_0$ is a subalgebra of $\mathfrak{csp}(\mathfrak{g}_{-1})$ and establish the following lemma.

\begin{lemma}[{\cite[Proposition 5.4]{SZ2020}}]\label{system of conditions for homogeneity lemma}
There exists a subalgebra $\mathscr{A}_0$ of $\mathscr{A}$ invariant under the transformation $\alpha\mapsto \overline{H_\ell}^{-1}\alpha^*\overline{H_\ell}$ such that 
\begin{align}\label{g00 subalg. representation}
\mathfrak{g}_{0,0}^{\mathrm{red}}=
\left\{\left.
\left(
\begin{array}{cc}
\alpha & 0\\
0 & -H_\ell^{-1} \alpha^T H_\ell
\end{array}
\right)+cI\, \right|\, \alpha\in \mathscr{A}_0, \mbox{ and } c\in \mathbb C
\right\}, 
\end{align}
and there exist coefficients $\{\eta_\alpha\}_{\alpha\in\mathscr{A}_0}\subset\mathbb C$ and $\mu\in\mathbb C$ such that the system of relations
\begin{align}\label{system}
\left.\mbox{
\begin{minipage}{.56\textwidth} 
\textit{i)}\quad\quad$\displaystyle \,\,\,\,\alpha AH_\ell^{-1} +   AH_\ell^{-1}\alpha^T  =\eta_\alpha
 AH_\ell^{-1}$\\
\textit{ii)}\quad\quad$\displaystyle  [\alpha,\Omega]-\eta_\alpha\Omega\in\mathscr{A}_0 $\\
\textit{iii)}\quad\quad$\displaystyle  \,\, \, \Omega^TH_\ell\overline{A}+H_\ell\overline{A} \Omega=\mu H_\ell\overline{A} $\\
\textit{iv)}\quad\quad$\displaystyle \left[\overline{H_\ell}^{-1}\Omega^* \overline{H_\ell},\Omega \right]+A\overline{A}-\overline{\mu}\Omega -\mu \overline{H_\ell}^{-1}\Omega^* \overline{H_\ell} \in\mathscr{A}_0 $\\
\end{minipage}
}
\right\}
\end{align}
holds for all $ \alpha\in\mathscr{A}_0$.
\end{lemma}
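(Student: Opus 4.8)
\textbf{Proof plan for Lemma \ref{system of conditions for homogeneity lemma}.} The statement is essentially an unwinding of the condition that $\mathfrak g_0^{\mathrm{red}}$, with the decomposition \eqref{gZeroRed splitting}, be a subalgebra of $\mathfrak{csp}(\mathfrak g_{-1})$, combined with the matrix normalizations \eqref{g00 subalg. representation} and \eqref{Xzero2}. So the plan is to turn each bracket relation among the three summands $\mathfrak g_{0,0}^{\mathrm{red}}$, $\mathfrak g_{0,+}^{\mathrm{red}}$, $\mathfrak g_{0,-}^{\mathrm{red}}$ into an explicit matrix identity and read off the claimed conditions \textit{i)}--\textit{iv)}. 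First I would record what $\mathfrak{csp}(\mathfrak g_{-1})$ looks like in the chosen basis: using the symplectic form built from $\ell$ (whose matrix in the basis adapted to the splitting $\mathfrak g_{-1,1}\oplus\mathfrak g_{-1,-1}$ is the block form $\left(\begin{smallmatrix}0 & H_\ell^{-1}\\ -\overline{H_\ell}^{-1} & 0\end{smallmatrix}\right)$ up to the standard conventions of \cite{SZ2020}), an endomorphism lies in $\mathfrak{csp}$ iff its block matrix $\left(\begin{smallmatrix}\alpha & \beta\\ \gamma & \delta\end{smallmatrix}\right)$ satisfies the conformal-symplectic relations, which force $\delta=-H_\ell^{-1}\alpha^TH_\ell+cI$ and impose symmetry-type conditions on $\beta,\gamma$. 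This is where the shapes in \eqref{g00 subalg. representation} and \eqref{Xzero2} come from, and I would simply cite \cite[Section 5]{SZ2020} for these normal forms rather than re-derive them.

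Next, the bracket $[\mathfrak g_{0,0}^{\mathrm{red}},\mathfrak g_{0,+}^{\mathrm{red}}]\subset\mathfrak g_{0,+}^{\mathrm{red}}$ must be imposed (this holds already for the CR symbol by the bigrading \eqref{bigradedcond}, but for the \emph{reduced modified} symbol it becomes a genuine constraint because $\mathfrak g_{0,+}^{\mathrm{red}}$ is a one-dimensional span of a specific matrix involving $\Omega$ and $A$, not the full $\mathfrak g_{0,2}$). Computing $\left[\left(\begin{smallmatrix}\alpha+cI & 0\\ 0& -H_\ell^{-1}\alpha^TH_\ell+cI\end{smallmatrix}\right),\left(\begin{smallmatrix}\Omega & A\\ 0& -H_\ell^{-1}\Omega^TH_\ell\end{smallmatrix}\right)\right]$ block-by-block, the upper-left block gives $[\alpha,\Omega]$, the upper-right block gives $\alpha A + A\,H_\ell^{-1}\alpha^TH_\ell$, and requiring the result to again lie in $\mathrm{span}_{\mathbb C}\left(\begin{smallmatrix}\Omega & A\\ 0& -H_\ell^{-1}\Omega^TH_\ell\end{smallmatrix}\right)$ forces the upper-right block to be a scalar multiple $\eta_\alpha$ of $A$ and the upper-left block to be $\eta_\alpha\Omega$ modulo the zero-degree part $\mathscr A_0$; rewriting $\alpha A+A\,H_\ell^{-1}\alpha^TH_\ell=\eta_\alpha A$ by multiplying on the right by $H_\ell^{-1}$ and using self-adjointness conventions yields exactly \textit{i)}, while the upper-left block condition is \textit{ii)}. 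Doing the analogous computation for $[\mathfrak g_{0,+}^{\mathrm{red}},\mathfrak g_{0,-}^{\mathrm{red}}]$ — which for a genuine Lie algebra must land in $\mathfrak g_{0,0}^{\mathrm{red}}$ — produces \textit{iii)} (from matching the off-diagonal blocks, where $A\overline A$ and the $\Omega$-commutator appear) and \textit{iv)} (from matching the diagonal blocks against the shape \eqref{g00 subalg. representation}, i.e.\ membership in $\mathscr A_0$ modulo $cI$). The involution-invariance of $\mathscr A_0$ and property (4) in the list preceding the lemma are what let one package conditions \textit{i)} and its conjugate together, and conditions \textit{iii)}/\textit{iv)} and their conjugates together, so that only one representative of each conjugate pair needs to be written.

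Finally I would verify that $\mathscr A_0$, defined as the set of $\alpha$ appearing in the upper-left blocks of $\mathfrak g_{0,0}^{\mathrm{red}}$, is a subalgebra of $\mathscr A$: membership in $\mathscr A$ is immediate since \textit{i)} is precisely \eqref{firstalgebra} and its conjugate (via involution-invariance) gives \eqref{secondalgebra}, hence the intersection condition \eqref{intersection algebra}; closure under bracket follows from $\mathfrak g_{0,0}^{\mathrm{red}}$ being a subalgebra and the bracket of block-diagonal matrices being block-diagonal with the expected blocks. The main obstacle is bookkeeping rather than conceptual: one must be scrupulous about the symplectic-form conventions and the precise identification of $\mathfrak{csp}(\mathfrak g_{-1})$ with block matrices (signs, transposes versus conjugate-transposes, and where $H_\ell$ versus $\overline{H_\ell}$ enters), since a wrong convention will scramble conditions \textit{iii)} and \textit{iv)}; I would pin these down once at the start by reference to \cite{SZ2020} and then the rest is a disciplined block-matrix computation.
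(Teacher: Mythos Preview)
Your approach is correct and is essentially the argument behind \cite[Proposition 5.4]{SZ2020}: the conditions \textit{i)}--\textit{iv)} are precisely what falls out of imposing closure of $\mathfrak g_0^{\mathrm{red}}$ under Lie brackets, read off block-by-block from the matrix representations \eqref{g00 subalg. representation} and \eqref{Xzero2}. Note, however, that the present paper does not actually prove this lemma---it is quoted from \cite{SZ2020} without argument---so there is no ``paper's own proof'' here to compare against beyond the citation. One small point to tighten: when you extract condition \textit{iv)} from the diagonal block of $[\mathfrak g_{0,+}^{\mathrm{red}},\mathfrak g_{0,-}^{\mathrm{red}}]$, the residual after subtracting the $\mathfrak g_{0,\pm}^{\mathrm{red}}$ components must lie in the upper-left blocks of $\mathfrak g_{0,0}^{\mathrm{red}}$, which a priori is $\mathscr A_0+\mathbb C I$ rather than $\mathscr A_0$; you should either check that the identity contribution can be absorbed (e.g.\ because $I\in\mathscr A$ and one may arrange $I\in\mathscr A_0$, or because the trace condition forces the scalar part), or note that the statement in \cite{SZ2020} handles this normalization.
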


We have the following basic lemma.
\begin{lemma}[{\cite[Proposition 3.6]{SZ2020}}]\label{omega in A implies regularity}
The following are equivalent. 
\begin{enumerate}
    \item $\mathfrak{g}^0$ is regular.
    \item $A\overline{A}A $ is a scalar multiple of $A$.
\end{enumerate}
Moreover, if $\Omega$ is in $\mathscr{A}$ then $\mathfrak{g}^0$ is regular.
\end{lemma}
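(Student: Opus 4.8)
\textbf{Proof plan for Lemma \ref{omega in A implies regularity}.}
The plan is to prove the equivalence (1)$\Leftrightarrow$(2) and then deduce the final ``moreover'' claim as an easy corollary. For the equivalence, recall from \eqref{cube} and \cite[Lemma 4.2]{porter2017absolute} that $\mathfrak{g}^0$ is regular precisely when $\boldsymbol{A}^3\in\mathbb{C}\boldsymbol{A}$; since $\boldsymbol{A}$ is antilinear, its matrix $A$ with respect to a fixed basis satisfies $\boldsymbol{A}^2\leftrightarrow A\overline{A}$ and $\boldsymbol{A}^3\leftrightarrow A\overline{A}\,\overline{\overline{A}}$... more carefully: composing three antilinear maps is antilinear, and the matrix of $\boldsymbol{A}^3$ is $A\,\overline{A}\,A$ (track how complex conjugation on coordinates interleaves with the matrices $A,\overline A$). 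So the condition $\boldsymbol A^3\in\mathbb C\boldsymbol A$ translates exactly to: $A\overline{A}A$ is a scalar multiple of $A$. This identifies (1) with (2) essentially by bookkeeping; I expect the only subtlety is getting the conjugation pattern right, which I would pin down once at the start and reuse.

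For the ``moreover'' statement, suppose $\Omega\in\mathscr{A}$. The idea is to feed $\alpha=\Omega$ into the two defining relations \eqref{firstalgebra} and \eqref{secondalgebra} of $\mathscr{A}$ and combine them with relations \textit{iii)} of \eqref{system} to force $A\overline A A\in\mathbb C A$. Concretely: \eqref{secondalgebra} applied to $\Omega$ gives $\Omega^T H_\ell\overline{A}+H_\ell\overline{A}\,\Omega=\eta'H_\ell\overline{A}$ for some $\eta'$, which rearranges to an expression for $H_\ell\overline A\,\Omega$ in terms of $\Omega^T H_\ell\overline A$; similarly \eqref{firstalgebra} gives a relation for $\Omega A H_\ell^{-1}$ in terms of $A H_\ell^{-1}\Omega^T$. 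Then I would compute $A\overline{A}A$ by writing $\overline A$ via the $\mathscr{A}$-relation for $\Omega$ and pushing the $\Omega$'s past the $A$'s and $H_\ell$'s using these identities, so that all the $\Omega$ dependence cancels into scalar multiples, leaving $A\overline A A$ proportional to $A$. By the already-proven equivalence, this yields regularity.

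The main obstacle I anticipate is the ``moreover'' part: it is not literally a substitution but requires chaining the two quadratic matrix identities in \eqref{firstalgebra}--\eqref{secondalgebra} in the right order so that the non-scalar terms telescope. A cleaner route, which I would try first, is to recall that $\mathscr{A}$ is (by construction in \cite{SZ2020}) essentially the algebra preserving the relevant structure, and that for $\Omega\in\mathscr{A}$ the corresponding element of $\mathfrak g_{0,+}^{\mathrm{red}}$ from \eqref{Xzero2} would then lie in $\mathfrak g_{0,0}$, i.e.\ the symbol's ``$(0,2)$ part'' degenerates into the $(0,0)$ part; by the definition of regularity (being a subalgebra of $\mathfrak g_-\rtimes\mathfrak{csp}(\mathfrak g_-)$, equivalently $[\mathfrak g_{0,-2},\mathfrak g_{0,2}]\subset\mathfrak g_{0,0}$) this collapse is exactly regularity. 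I would phrase the argument through whichever of these two presentations—direct matrix manipulation or the structural observation about $\mathfrak g_{0,\pm2}$—turns out to require the least computation, and cite \cite[Proposition 3.6]{SZ2020} for the details that are genuinely linear-algebraic.
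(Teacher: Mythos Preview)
Your treatment of the equivalence (1)$\Leftrightarrow$(2) is correct and matches the paper: it is indeed just the matrix transcription of the condition $\boldsymbol{A}^3\in\mathbb{C}\boldsymbol{A}$ from \eqref{cube}.

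For the ``moreover'' statement, your second (structural) route is the one the paper actually takes, but your sketch contains a misstatement and omits the key step. You write that for $\Omega\in\mathscr{A}$ the generator of $\mathfrak{g}_{0,+}^{\mathrm{red}}$ in \eqref{Xzero2} ``would then lie in $\mathfrak{g}_{0,0}$''. It does not: the off-diagonal $A$-block is still there, so what $\Omega\in\mathscr{A}$ actually buys you is that this generator lies in $\mathfrak{g}_{0,0}\oplus\mathfrak{g}_{0,2}\subset\mathfrak{g}_0$ (and similarly for $\mathfrak{g}_{0,-}^{\mathrm{red}}$). Equivalently, one can write $v_\pm\in\mathfrak{g}_{0,\pm}^{\mathrm{red}}$ as $v_\pm=(v_\pm+w_\pm)-w_\pm$ with $v_\pm+w_\pm\in\mathfrak{g}_{0,\pm 2}$ and $w_\pm\in\mathfrak{g}_{0,0}$. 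From this decomposition alone, however, you cannot yet conclude $[\mathfrak{g}_{0,2},\mathfrak{g}_{0,-2}]\subset\mathfrak{g}_{0,0}$; expanding $[v_++w_+,v_-+w_-]$ by bilinearity, the cross terms land in $\mathfrak{g}_0$ by the normalizer property \eqref{g00}, but you are left with $[v_+,v_-]$, and the only reason this term is under control is that $\mathfrak{g}_0^{\mathrm{red}}$ is a Lie subalgebra of $\mathfrak{csp}(\mathfrak{g}_{-1})$ (property (5) listed just before Lemma \ref{system of conditions for homogeneity lemma}). That closure is the genuine input you are missing; it is what places $[v_+,v_-]\in\mathfrak{g}_0^{\mathrm{red}}\subset\mathfrak{g}_0$ and finishes the argument. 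Your first route, pushing $\Omega$'s through the identities \eqref{firstalgebra}--\eqref{secondalgebra} and item \textit{iii)} of \eqref{system}, does not by itself produce $A\overline{A}A\in\mathbb{C}A$: those relations only say $\Omega$ conformally normalizes $A$ and $\overline{A}$ separately, and without invoking item \textit{iv)} of \eqref{system} (which is precisely the matrix shadow of the subalgebra property) there is nothing to make the $\Omega$-dependence telescope.
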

\begin{proof}
Equivalence of (1) and (2) follows from \eqref{cube}. 
The latter statement is also shown in \cite{SZ2020}, although we prove it more directly here because it is not given as a numbered result there. For this, let $v_{+}$ and $v_{-}$ be elements in $\mathfrak{g}_{0,2}^{\mathrm{red}}$ and $ \mathfrak{g}_{0,-2}^{\mathrm{red}}$ respectively. Note that if $\Omega$ is in $\mathscr{A}$ then there exist vectors $w_+,w_{-}\in \mathfrak{g}_{0,0}$ such that $v_{\pm}+w_{\pm}$ belongs to $\mathfrak{g}_{0,\pm 2}$. Accordingly,
\[
[v_{+}+w_{+},v_{-}+w_{-}]=[w_{-},w_{+}]+[v_{+}+w_{+},w_{-}]+[w_{+},v_{-}+w_{-}]+[v_{+},v_{-}].
\]
Since 
\begin{align}\label{g00 def consequence}
[\mathfrak{g}_{0,0},\mathfrak{g}_{0}]\subset \mathfrak{g}_{0}
\end{align}
by the definition of $\mathfrak{g}_{0,0}$, the first three terms in the right side of this last equation belong to $\mathfrak{g}_{0}$. Since $\mathfrak{g}_{0}^{\mathrm{red}}$ is closed under Lie brackets, $[v_{+},v_{-}]$ belongs to $\mathfrak{g}_{0}^{\mathrm{red}}$. Hence if $\Omega$ is in $\mathscr{A}$ then $[\mathfrak{g}_{0,2},\mathfrak{g}_{0,-2}]\subset \mathfrak{g}_{0}+\mathfrak{g}_{0}^{\mathrm{red}}$. On the other hand if $\Omega$ is in $\mathscr{A}$ then $\mathfrak{g}_{0}^{\mathrm{red}}\subset \mathfrak{g}_{0}$. Therefore, if $\Omega$ is in $\mathscr{A}$ then $[\mathfrak{g}_{0,2},\mathfrak{g}_{0,-2}]\subset \mathfrak{g}_{0}$. Noting \eqref{g00 def consequence}, it follows that if $\Omega$ is in $\mathscr{A}$ then $[\mathfrak{g}_{0},\mathfrak{g}_{0}]\subset \mathfrak{g}_{0}$, that is, $\mathfrak{g}^0$ is regular.
\end{proof}

Now, for completeness, given a non-regular CR symbol $\mathfrak g^0$ encoded by the pair $(\ell, \boldsymbol{A})$, represented by the pair of matrices  $(H_\ell, A)$ in the canonical basis as in Theorem \ref{simultaneous canonical form theorem} we will give a more precise (i.e., in terms of integers $m_1, \ldots m_\gamma$ and numbers $\lambda_1, \ldots,\lambda_\gamma$) upper bound  for the dimension of the algebra of infinitesimal symmetries of a $2$-nondegenerate $(2n+1)$-dimensional CR structure of hypersurface type with $1$-dimensional Levi kernel admitting a constant reduced modified symbol corresponding to CR symbol $\mathfrak g^0$. For this, for every $1\leq i, j\leq \gamma$,
let 
\begin{align}
\label{di,j}
d(i,j)=\begin{cases}0, &  (\lambda_i\neq \lambda_j) \mbox{ or }  (i=j \mbox{ and } \lambda_i^2 \mbox{ is  not a nonpositive real number)}\\
\min\{m_i, m_j\} & (i\neq j \mbox { and } \lambda_i=\lambda_j>0) \mbox { or } (i=j \mbox{ and } \lambda_i^2<0)\\
2\min\{m_i, m_j\} & i\neq j, \lambda_i=\lambda_j\mbox{ and } (\lambda_i^2\notin \mathbb R \mbox{ or } \lambda_i=0) \\
4\min\{m_i, m_j\} & i\neq j, \lambda_i=\lambda_j\mbox{ and } \lambda_i^2<0\\
\left\lceil \frac{m_i}{2}\right\rceil & i=j \mbox { and } \lambda_i=0
\end{cases}	
\end{align}
where $\lceil \tfrac{m}{2}\rceil$ denotes the ceiling function, i.e. the smallest integer not less than $\tfrac{m_i}{2}$.

Let 
\begin{equation} 
\label{dtotal}
d_{\mathrm{total}}:=\sum_{i\leq j} d(i,j).
\end{equation}

Then the following theorem is the direct consequence of item (1) of Theorem \ref{first prolongation for non-regular structures is zero} and Lemmas \ref{equal eValue off diag dimension}, \ref{Bij formula with zero lambda},  Corollary \ref{Bii formula}, and Lemma \ref{condition on C for 2-dimensional scaling component}: 
\begin{theorem}
\label{more precise bound}
Given a non-regular CR symbol $\mathfrak g^0$ encoded by the pair $(\ell, \boldsymbol{A})$ in the canonical basis as in Theorem \ref{simultaneous canonical form theorem}, the dimension of the algebra of infinitesimal symmetries of a $2$-nondegenerate $(2n+1)$-dimensional CR structure of hypersurface type with 1-dimensional Levi kernel admitting a constant reduced modified symbol corresponding to CR symbol $\mathfrak g^0$ is not greater than 
$d_{\mathrm{total}}+2n+3$ if the operator $\boldsymbol{A}$ is not nilpotent, and it is not greater than $d_{\mathrm{total}}+2n+4$, if the operator $\boldsymbol{A}$ is nilpotent. 
\end{theorem}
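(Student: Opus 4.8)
The plan is to reduce Theorem \ref{more precise bound} to a dimension count for $\mathfrak{g}^{0,\mathrm{red}}$, using item (1) of Theorem \ref{first prolongation for non-regular structures is zero} together with Theorem \ref{main theorem for non-regular symbols}. Indeed, once item (1) of Theorem \ref{first prolongation for non-regular structures is zero} is established, we have $\mathfrak{u}(\mathfrak{g}^{0,\mathrm{red}})=\mathfrak{g}^{0,\mathrm{red}}$, so Theorem \ref{main theorem for non-regular symbols} gives that the dimension of the infinitesimal symmetry algebra is at most $\dim\mathfrak{g}^{0,\mathrm{red}}$. Since $\mathfrak{g}^{0,\mathrm{red}}=\mathfrak{g}_{-2,0}\oplus\mathfrak{g}_{-1,-1}\oplus\mathfrak{g}_{-1,1}\oplus\mathfrak{g}_0^{\mathrm{red}}$ with $\dim_\R(\mathfrak{g}_{-2,0}\oplus\mathfrak{g}_{-1,-1}\oplus\mathfrak{g}_{-1,1})=2n-1$ (one real dimension for $\mathfrak{g}_{-2,0}$ after taking the real part, and $2(n-1)$ real dimensions for the $\mathfrak{g}_{-1,\pm1}$ part), the whole task becomes bounding $\dim_\R\mathfrak{g}_0^{\mathrm{red}}$. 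Using the splitting \eqref{gZeroRed splitting} and property (2)--(3) of that splitting, $\dim_\R\mathfrak{g}_0^{\mathrm{red}}=\dim_\R\mathfrak{g}_{0,0}^{\mathrm{red}}+2\dim_\R\mathfrak{g}_{0,+}^{\mathrm{red}}$, and $\dim_\R\mathfrak{g}_{0,+}^{\mathrm{red}}=\dim_\R\mathfrak{g}_{0,2}$, which for a non-regular symbol with $\boldsymbol{A}\neq0$ is always $2$ (one complex dimension, corresponding to the scalar $\lambda$ in the definition of $\mathfrak{g}_{0,2}$). This contributes the additive constant and reduces everything to bounding $\dim_\R\mathfrak{g}_{0,0}^{\mathrm{red}}$.

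Next I would use Lemma \ref{system of conditions for homogeneity lemma}, which expresses $\mathfrak{g}_{0,0}^{\mathrm{red}}$ via the subalgebra $\mathscr{A}_0\subset\mathscr{A}$ plus the one-dimensional scaling term $cI$. Thus $\dim_\R\mathfrak{g}_{0,0}^{\mathrm{red}}=\dim_\R\mathscr{A}_0+\dim_\R(\text{scaling component})$. The scaling component is generically $1$-complex-dimensional but can degenerate; the case split in the theorem statement between $\boldsymbol{A}$ nilpotent and not nilpotent reflects exactly whether the extra scaling freedom is present, and this dichotomy is precisely what Lemma \ref{condition on C for 2-dimensional scaling component} (cited in the hypotheses) pins down. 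The core estimate is then $\dim_\R\mathscr{A}_0\le\dim_\R\mathscr{A}$, and $\dim_\R\mathscr{A}$ is computed blockwise against the canonical form $(H_\ell,A)=\bigl(\bigoplus\epsilon_iN_{\lambda_i,m_i},\bigoplus M_{\lambda_i,m_i}\bigr)$ of Theorem \ref{simultaneous canonical form theorem}. Writing $\alpha=(\alpha_{ij})$ in blocks adapted to this decomposition, the defining equations \eqref{firstalgebra}--\eqref{secondalgebra} for $\mathscr{A}$ decouple into conditions on each block $\alpha_{ij}$, and the explicit matrix-equation analysis carried out in the appendix (Lemmas \ref{equal eValue off diag dimension}, \ref{Bij formula with zero lambda}, Corollary \ref{Bii formula}) shows that the off-diagonal block $\alpha_{ij}$ can be nonzero only when $\lambda_i=\lambda_j$, and in that case the dimension of the allowed space of blocks is exactly the number $d(i,j)$ defined in \eqref{di,j}, with the various subcases ($\lambda_i$ real and positive, $\lambda_i^2$ non-real, $\lambda_i=0$, $i=j$ with $\lambda_i^2<0$, $i=j$ with $\lambda_i=0$) matching the five lines of that definition. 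Summing over $i\le j$ gives $\dim_\R\mathscr{A}\le d_{\mathrm{total}}$ (counted correctly as real dimension — the intersection with \eqref{secondalgebra} effectively realifies what would be complex blocks in $\mathscr{A}$ satisfying only \eqref{firstalgebra}, so the bookkeeping in \eqref{di,j} is the real count).

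Assembling the pieces: $\dim_\R\mathfrak{g}^{0,\mathrm{red}}\le(2n-1)+\dim_\R\mathscr{A}_0+(\text{scaling})+2\cdot2\le(2n-1)+d_{\mathrm{total}}+(\text{scaling})+4$, which equals $d_{\mathrm{total}}+2n+3$ when the scaling component is $1$-dimensional and $d_{\mathrm{total}}+2n+4$ when it is $2$-dimensional, and Lemma \ref{condition on C for 2-dimensional scaling component} identifies the latter as exactly the nilpotent case; this is the claimed bound. The main obstacle is not the reduction, which is bookkeeping, but the blockwise dimension computation for $\mathscr{A}$ against the canonical form — in particular handling the interaction between the two separate self-adjointness-type constraints \eqref{firstalgebra} and \eqref{secondalgebra} (one involving $AH_\ell^{-1}$, the other $H_\ell\overline{A}$) on the same $\alpha$, and correctly accounting for real versus complex parameters in the blocks attached to non-real $\lambda_i$, where the $2\times2$ companion-type form of $M_{\lambda_i,m_i}$ in \eqref{antilinear op canon form} doubles the block sizes. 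Since these linear-algebra computations are exactly the content of the appendix lemmas we are permitted to invoke, the proof here consists of organizing their output into the sum $d_{\mathrm{total}}$ and adding the contributions of the graded pieces $\mathfrak{g}_-$, $\mathfrak{g}_{0,\pm2}$, and the scaling term.
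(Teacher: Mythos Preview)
Your overall strategy is the paper's own: use item (1) of Theorem \ref{first prolongation for non-regular structures is zero} to get $\mathfrak{u}(\mathfrak{g}^{0,\mathrm{red}})=\mathfrak{g}^{0,\mathrm{red}}$, invoke Theorem \ref{main theorem for non-regular symbols}, and then count dimensions of the graded pieces using \eqref{g00 subalg. representation} and the appendix lemmas. The dimension bookkeeping, however, has genuine errors.

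All dimensions here are complex. The algebra $\mathscr{A}$ in \eqref{intersection algebra} is cut out by $\mathbb C$-linear equations, so condition \eqref{secondalgebra} does not ``realify'' anything; the numbers $d(i,j)$ in \eqref{di,j} are complex dimensions of the block spaces in the bi-orthogonal subalgebra $\mathscr{A}^o\subset\mathscr{A}$, and $d_{\mathrm{total}}=\dim_{\mathbb C}\mathscr{A}^o$, not $\dim_{\mathbb C}\mathscr{A}$. The nilpotent/non-nilpotent dichotomy enters through the complement $\mathscr{A}^s$ in $\mathscr{A}=\mathscr{A}^o\oplus\mathscr{A}^s$: Lemma \ref{condition on C for 2-dimensional scaling component} gives $\dim\mathscr{A}^s=2$ exactly when $A$ is nilpotent and $\dim\mathscr{A}^s=1$ otherwise. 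It does \emph{not} come from the $cI$ term in \eqref{g00 subalg. representation}, which always contributes one complex dimension (the map $(\alpha,c)\mapsto\bigl(\begin{smallmatrix}\alpha&0\\0&-H_\ell^{-1}\alpha^TH_\ell\end{smallmatrix}\bigr)+cI$ is injective). Likewise $\dim_{\mathbb C}\mathfrak{g}_{0,+}^{\mathrm{red}}=\dim_{\mathbb C}\mathfrak{g}_{0,-}^{\mathrm{red}}=1$, so those pieces together contribute $2$, not $4$; your own arithmetic shows the slip, since $(2n-1)+d_{\mathrm{total}}+1+4=d_{\mathrm{total}}+2n+4$, not $+3$. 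The correct chain is
\[
\dim\mathfrak{g}_{0,0}^{\mathrm{red}}=\dim\mathscr{A}_0+1\le\dim\mathscr{A}+1=d_{\mathrm{total}}+\dim\mathscr{A}^s+1,
\]
and then, with $\dim(\mathfrak g_-\oplus\mathfrak g_{0,+}^{\mathrm{red}}\oplus\mathfrak g_{0,-}^{\mathrm{red}})=2n+1$,
\[
\dim\mathfrak{g}^{0,\mathrm{red}}\le(2n+1)+(d_{\mathrm{total}}+\dim\mathscr{A}^s+1)=d_{\mathrm{total}}+2n+2+\dim\mathscr{A}^s,
\]
which is $d_{\mathrm{total}}+2n+3$ when $\boldsymbol{A}$ is not nilpotent and $d_{\mathrm{total}}+2n+4$ when it is --- exactly as stated.
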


Note that the mentioned Lemmas and Corollaries from the appendix (Section \ref{Matrix representations of intersection algebra general formula}) together with \eqref{g00 subalg. representation} imply  that  $\dim \mathfrak g_{0,0}^{\mbox{red}}$ is either not greater than  $d_{\mbox{total}}+2$ or $d_{\mbox{total}}+3$ depending whether or not $A$ is nilpotent. The estimate  for $\mathfrak u(\mathfrak g^{0, \mbox{red}})= \mathfrak g^{0, \mbox{red}}$ in Theorem \ref{more precise bound} follows from this and the fact that $\dim (\mathfrak g_-+ \mathfrak g_{0, -2}+\mathfrak g_{0, 2}) =2n+1$.

\section{Proof of Theorem \ref{first prolongation for non-regular structures is zero}}\label{First prolongation of non-regular symbols}

Here we carry out the final steps in the proof of Theorem \ref{first prolongation for non-regular structures is zero}. The analysis relies on formulas derived in the appendix (section \ref{Matrix representations of intersection algebra general formula}) below, and we have deferred deriving these formulas to the appendix because it requires somewhat digressive linear algebra ancillary to this paper's main results.

\subsection{Preparatory lemmas and notations}
\label{preparsec}
Let $\sigma: \mathfrak{g}^{0,\mathrm{red}} \to  \mathfrak{g}^{0,\mathrm{red}}$ denote the antilinear involution induced by the natural complex conjugation of $\mathbb C T M$. We introduce this $\sigma$ notation to avoid confusion because while working with matrix representations in coordinates we will use the overline notation to denote the standard complex conjugation of coordinates, which is a different involution. 
Let  
\begin{align}\label{fixed basis of V}
(e_1,\ldots, e_{2n-2})
\end{align}
be a basis of $\mathfrak{g}_{-1}$ with respect to which we get the matrix representation of $\mathfrak{g}_0^{\mathrm{red}}$ given by \eqref{Xzero2} and \eqref{g00 subalg. representation}. Notice in particular that $(e_1,\ldots, e_{n-1})$ spans $\mathfrak{g}_{-1,1}$ and 
\[
\sigma(e_i)=e_{n+i-1}\quad\quad\forall\, 1\leq i\leq n-1.
\]
Note that $\sigma$ extends to an involution defined of $\mathfrak{g}^{\mathrm{red}}_1$ by same formula (see \eqref{conj}) that we used to extend the natural conjugation from $\mathfrak{g}_{-}$ to be defined on $\mathfrak{csp}(\mathfrak{g}_{-1})$, that is
\begin{align}\label{conj on g1}
\sigma(\varphi)(v):=\sigma\circ\varphi\circ\sigma(v)
\quad\quad \forall\, v\in\mathfrak{g}^{0,\mathrm{red}},\,\varphi\in\mathfrak{g}^{\mathrm{red}}_1
\end{align}
defines an involution of $\mathfrak{g}^{\mathrm{red}}_1$.

An element $\varphi$ in $\Hom(\mathfrak{g}_{-2},\mathfrak{g}_{-1})\oplus \Hom(\mathfrak{g}_{-1},\mathfrak{g}_{0}^{\mathrm{red}})$ belongs to $\mathfrak{g}_{1}^{\mathrm{red}}$ if and only if 
\begin{equation}
\label{symmetry of tensor property}
\varphi([e_i,e_j])=\big(\varphi(e_i)\big)(e_j)-\big(\varphi(e_j)\big)(e_i) \quad \forall \, i, j\in \{1,\ldots, 2n-2\}.
\end{equation}
Note, here $\phi(e_i)\in \mathfrak{g}_0^{\mathrm{red}}\subset \mathfrak{csp} (\mathfrak g_{-1})$. 

Given any element $v\in \mathfrak g_{-1}$ let $v_-$ and $v_+$ be the canonical projections of $v$ to $\mathfrak g_{-1,-1}$ and $\mathfrak g_{-1,1}$, respectively, with respect to the splitting $\mathfrak g_{-1}=\mathfrak g_{-1,-1}\oplus \mathfrak g_{-1,1}$. 

As a direct consequence of  \eqref{symmetry of tensor property} and \eqref{Xzero2}, if $n\leq j\leq 2n-2$  and $1\leq i\leq n-1$, then 
\begin{align}
\label{matrix rep of larger index rows of tensor phi lemma prereq}
~&\big(\big(\varphi(e_j)\big)e_i)_+\in
\mathrm{span}\{\boldsymbol{A} e_{j-n+1}\}-\big(\varphi([e_i, e_j])\big)_{+} \subset 
\mathrm{span}\{\boldsymbol{A} e_{j-n+1}, \big(\varphi(1)\big)_+\},\\
~&\big(\big(\varphi(e_i)\big)e_j)_-\in
\mathrm{span}\left\{\sigma\big( \boldsymbol{A} e_{i}\big)\right\}- \big(\varphi([e_i, e_j])\big)_{-}\subset
\mathrm{span}\{\boldsymbol{A} e_{i}, \big(\varphi(1)\big)_-\}
\end{align}
In particular, the upper left $(n-1)\times(n-1)$ block in the matrix $\varphi(e_j)$ and  the lower right $(n-1)\times(n-1)$ block in the matrix $\varphi(e_i)$ both have rank at most 2.

Also from \eqref{symmetry of tensor property} and the fact that $[e_i, e_j]=0$ for $n\leq i, j\leq 2n-2$, we immediately have that
\begin{equation}
\label{symmetry of tensor property lower right}
 \varphi (e_i) e_j=\varphi(e_j) e_i, \quad  n\leq i, j\leq 2n-2. 
\end{equation}
\begin{lemma}\label{matrix rep of larger index rows of tensor phi lemma}
If the antilinear operator  $\boldsymbol{A}$ (or, equivalently the matrix $A$) has rank greater than 1 and $i\geq n$ then $\varphi(e_i)\in \mathfrak{g}_{0,0}^{\mathrm{red}}\oplus \mathfrak{g}_{0,-}^{\mathrm{red}}$, or, equivalently,
\begin{align}\label{matrix rep of larger index rows of tensor phi}
\varphi(e_i)=
\left(
\begin{array}{cc}
\alpha_i &0\\
c\overline{A}&-H_\ell^{-1}\alpha_i^T H_\ell
\end{array}
\right)\quad\quad\mbox{for some $c\in \mathbb C$ and }\alpha_i\in\mathscr{A}_0+\mathbb C(\overline{H_\ell}^{-1}\Omega^*\overline{H_\ell}).
\end{align}
\end{lemma}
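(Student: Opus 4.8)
\textbf{Proof plan for Lemma \ref{matrix rep of larger index rows of tensor phi lemma}.}

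The plan is to show that for $i\geq n$, the element $\varphi(e_i)\in\mathfrak{g}_0^{\mathrm{red}}=\mathfrak{g}_{0,0}^{\mathrm{red}}\oplus\mathfrak{g}_{0,-}^{\mathrm{red}}\oplus\mathfrak{g}_{0,+}^{\mathrm{red}}$ has vanishing $\mathfrak{g}_{0,+}^{\mathrm{red}}$-component, from which the stated matrix form follows by reading off \eqref{Xzero2} and \eqref{g00 subalg. representation}. Write $\varphi(e_i)=\psi_{0}+\psi_{-}+\psi_{+}$ according to the splitting \eqref{gZeroRed splitting}; by property (3) of that splitting, $\psi_+$ is determined by its weight $(0,2)$ part, which maps $\mathfrak{g}_{-1,-1}$ into $\mathfrak{g}_{-1,1}$, so by \eqref{Xzero2} it is the map $v_-\mapsto \lambda \boldsymbol{A}(\overline{v_-})$ for some $\lambda\in\mathbb{C}$, together with the correlated weight-$0$ tail $-H_\ell^{-1}\Omega^TH_\ell$ piece. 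The goal is to force $\lambda=0$ when $\operatorname{rank}\boldsymbol{A}>1$.

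The key input is the rank restriction \eqref{matrix rep of larger index rows of tensor phi lemma prereq}: for $i\geq n$, the upper-left $(n-1)\times(n-1)$ block of $\varphi(e_i)$ — which is the operator $v_+\mapsto \big((\varphi(e_i))v\big)_+$ restricted to $\mathfrak{g}_{-1,1}$ — has rank at most $2$. First I would compute this block in terms of the splitting: the $\mathfrak{g}_{0,0}^{\mathrm{red}}$-part contributes $\alpha_i + cI$ (an arbitrary element of $\mathscr{A}_0$ plus scalar), the $\mathfrak{g}_{0,-}^{\mathrm{red}}$-part contributes the upper-left block $-\overline{H_\ell}^{-1}\Omega^*\overline{H_\ell}$ scaled by some constant, and the $\mathfrak{g}_{0,+}^{\mathrm{red}}$-part contributes $\lambda\Omega$ in the upper-left block. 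So the rank-$\leq 2$ condition constrains the sum $\beta_i+\lambda\Omega+cI$ where $\beta_i\in\mathscr{A}_0+\mathbb{C}(\overline{H_\ell}^{-1}\Omega^*\overline{H_\ell})$. Next I would use \eqref{symmetry of tensor property lower right}: since $[e_i,e_j]=0$ for all $i,j\geq n$, the collection $\{\varphi(e_i)\}_{i\geq n}$ satisfies the symmetry $\varphi(e_i)e_j=\varphi(e_j)e_i$, meaning the tensor $(i,j)\mapsto \varphi(e_i)e_j$ (with $i,j$ ranging over $\{n,\dots,2n-2\}$, i.e. over a basis of $\mathfrak{g}_{-1,-1}$) is symmetric. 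Combined with the fact that the $\mathfrak{g}_{0,+}^{\mathrm{red}}$-component of each $\varphi(e_i)$ sends $e_j$ ($j\geq n$) into $\mathrm{span}\{\boldsymbol{A}e_{j-n+1}\}\subset\mathfrak{g}_{-1,1}$, projecting the symmetry relation to $\mathfrak{g}_{-1,1}$ gives that the bilinear form $(v,w)\mapsto \lambda_v\,\ell(\boldsymbol{A}\bar v,\cdot)$-type expression is symmetric; since $\boldsymbol{A}$ is $\ell$-self-adjoint, this is automatic and carries no information, so the real content must come from combining the symmetry with the rank bound.

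The main obstacle, and the heart of the argument, is converting "rank $\leq 2$ for every $i$" plus "$\operatorname{rank}\boldsymbol{A}>1$" into "$\lambda=0$." The idea is: if $\lambda\neq 0$ for some $i$, then after subtracting the lower-triangular pieces (which, being in $\mathscr{A}_0+\mathbb{C}\overline{H_\ell}^{-1}\Omega^*\overline{H_\ell}$, are themselves constrained — in fact by Lemma \ref{omega in A implies regularity} we know $\Omega\notin\mathscr{A}$ in the non-regular case, and $A\overline{A}A$ is not a scalar multiple of $A$), the matrix $\lambda\Omega+(\text{stuff of bounded rank})$ would have to have rank $\leq 2$, which should pin down $\Omega$ up to a rank-$\leq 2$ perturbation and contradict non-regularity — or more directly, one uses that varying $i$ over $n,\dots,2n-2$ and using the symmetry \eqref{symmetry of tensor property lower right} produces enough independent rank constraints. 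Concretely, I would evaluate $\varphi(e_i)e_j$ and $\varphi(e_j)e_i$ for $i,j\geq n$, extract from \eqref{matrix rep of larger index rows of tensor phi lemma prereq} that $(\varphi(e_i)e_j)_+\in\mathrm{span}\{\boldsymbol{A}e_{j-n+1},(\varphi(1))_+\}$, and use the symmetry to deduce $\boldsymbol{A}e_{j-n+1}$ and $\boldsymbol{A}e_{i-n+1}$ both lie in a common 2-dimensional space modulo a fixed vector — forcing the image of $\boldsymbol{A}$ to be at most $2$-dimensional unless the relevant coefficients vanish; then the hypothesis $\operatorname{rank}\boldsymbol{A}>1$ forces those coefficients, hence $\lambda$, to be zero. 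I expect the bookkeeping of which span each projected vector lands in, and ruling out degenerate coincidences among the $\boldsymbol{A}e_k$, to be the fiddly part, but the structural mechanism is the rank-2 bound competing against $\operatorname{rank}\boldsymbol{A}>1$.
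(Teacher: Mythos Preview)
Your overall mechanism (symmetry \eqref{symmetry of tensor property lower right} versus the rank of $\boldsymbol{A}$) is the right one, but you are working with the wrong block and end up with too weak a bound. The $\mathfrak{g}_{0,+}^{\mathrm{red}}$-component of $\varphi(e_i)$ is detected not by the upper-left block (the $\mathfrak{g}_{-1,1}\to\mathfrak{g}_{-1,1}$ piece, where $\Omega$ and elements of $\mathscr{A}_0$ all pile up and the rank-$\leq 2$ statement from \eqref{matrix rep of larger index rows of tensor phi lemma prereq} is hard to exploit), but by the upper-\emph{right} block, the map $\mathfrak{g}_{-1,-1}\to\mathfrak{g}_{-1,1}$. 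By \eqref{Xzero2} and \eqref{g00 subalg. representation}, only $\mathfrak{g}_{0,+}^{\mathrm{red}}$ contributes to that block, and it contributes exactly $cA$. Hence for $i,j\geq n$ one has the \emph{equality} $(\varphi(e_i)e_j)_+ = c_i\,\boldsymbol{A}e_{j-n+1}$, not merely containment in a span. The symmetry \eqref{symmetry of tensor property lower right} then reads $c_i\,\boldsymbol{A}e_{j-n+1}=c_j\,\boldsymbol{A}e_{i-n+1}$ for all $j\geq n$, so if some $c_i\neq 0$ the image of $\boldsymbol{A}$ is one-dimensional, contradicting $\mathrm{rank}\,\boldsymbol{A}>1$. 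This is exactly the paper's two-line argument.

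Note also that your invocation of \eqref{matrix rep of larger index rows of tensor phi lemma prereq} for $i,j\geq n$ is a misreading: that display is stated for $1\leq i\leq n-1$ and $n\leq j$, and the extra $(\varphi(1))_+$ direction there comes from $[e_i,e_j]\neq 0$. For $i,j\geq n$ one has $[e_i,e_j]=0$, so no $(\varphi(1))_+$ term appears. With your two-dimensional span you would only conclude $\mathrm{rank}\,\boldsymbol{A}\leq 2$, which does not contradict the hypothesis $\mathrm{rank}\,\boldsymbol{A}>1$; the sharp one-dimensional conclusion is essential here, and it drops out immediately once you look at the correct block.
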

\begin{proof}
By \eqref{Xzero2}, there exists $c\in \mathbb C$ such that for every  $n\leq j\leq 2n-2$ 
\begin{equation}
\label{inter1}
\big(\big(\varphi(e_i)\big)e_j\big)_+= c\boldsymbol{A} e_{j-n+1}
\quad\mbox{ and }\quad
\big(\big(\varphi(e_j)\big)e_i\big)_+\in \mathrm{span} \{\boldsymbol{A} e_{i-n+1}\}.    
\end{equation}
By \eqref{symmetry of tensor property lower right}, for all $n\leq j\leq 2n-2$,
\[
c \boldsymbol{A} e_{j-n+1}\in  \mathrm{span} \{\boldsymbol{A} e_{i-n+1}\}.
\]
This implies that $c=0$, because otherwise  $\mathrm{rank} \,\boldsymbol{A} \leq 1$, contradicting our assumption. Therefore, $\big(\varphi(e_i) v\big)_+=0$ for all $v\in \mathfrak g_{-1,-1}$, which is equivalent to the statement of the lemma.
\end{proof}

Similarly, we have the following Lemma.
\begin{lemma}\label{matrix rep of smaller index rows of tensor phi lemma}
If the antilinear operator  $\boldsymbol{A}$ (or, equivalently the matrix $A$) has rank greater than 1 and $i< n$ then $\varphi(e_i)\in \mathfrak{g}_{0,0}^{\mathrm{red}}\oplus \mathfrak{g}_{0,+}^{\mathrm{red}}$ or, equivalently,
\begin{align}\label{matrix rep of smaller index rows of tensor phi}
\varphi(e_i)=
\left(
\begin{array}{cc}
\alpha_i &cA\\
0&-H_\ell^{-1}\alpha_i^T H_\ell
\end{array}
\right)\quad\quad\mbox{for some $c\in \mathbb C$ and }\alpha_i\in\mathscr{A}_0+\mathbb C \Omega.
\end{align}
\end{lemma}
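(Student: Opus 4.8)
The plan is to mirror the argument of Lemma \ref{matrix rep of larger index rows of tensor phi lemma} but with the roles of $\mathfrak{g}_{-1,1}$ and $\mathfrak{g}_{-1,-1}$ interchanged, using the second inclusion in \eqref{matrix rep of larger index rows of tensor phi lemma prereq} in place of the first. Concretely, I first record what it means for $\varphi(e_i)$ with $i<n$ to lie in $\mathfrak{g}_{0,0}^{\mathrm{red}}\oplus\mathfrak{g}_{0,+}^{\mathrm{red}}$: by the matrix representations \eqref{Xzero2} and \eqref{g00 subalg. representation}, an element of $\mathfrak{g}_0^{\mathrm{red}}$ has the block form in \eqref{matrix rep of smaller index rows of tensor phi} precisely when its lower-left $(n-1)\times(n-1)$ block vanishes, i.e. when $\bigl(\varphi(e_i)v\bigr)_- = 0$ for all $v\in\mathfrak{g}_{-1,1}$. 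So it suffices to show this vanishing.

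Next I would extract the relevant scalar relation. Since $e_k$ for $1\le k\le n-1$ span $\mathfrak{g}_{-1,1}$, fix $1\le i,k\le n-1$ and consider $\bigl(\varphi(e_i)e_k\bigr)_-$. The splitting $\mathfrak{g}_0^{\mathrm{red}}=\mathfrak{g}_{0,0}^{\mathrm{red}}\oplus\mathfrak{g}_{0,-}^{\mathrm{red}}\oplus\mathfrak{g}_{0,+}^{\mathrm{red}}$ together with property (3) of \eqref{gZeroRed splitting} and the representation \eqref{Xzero2} shows that for each $1\le i\le n-1$ there is a scalar $c$ (the $\mathfrak{g}_{0,-}^{\mathrm{red}}$-coefficient of $\varphi(e_i)$) with $\bigl(\varphi(e_i)e_k\bigr)_- = c\,\sigma(\boldsymbol{A}e_i)$ depending only on $i$, not $k$; one also reads off from \eqref{Xzero2}, applied to $\varphi(e_k)$, that $\bigl(\varphi(e_k)e_i\bigr)_-\in\mathrm{span}\{\sigma(\boldsymbol{A}e_k)\}$. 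Now apply the symmetry relation \eqref{symmetry of tensor property} for the pair $(e_i,e_k)$ with $1\le i,k\le n-1$: because $[e_i,e_k]=0$ in $\mathfrak{g}_-$ for $i,k$ both in this range (both lie in $\mathfrak{g}_{-1,1}$ and the Heisenberg bracket pairs $\mathfrak{g}_{-1,1}$ only with $\mathfrak{g}_{-1,-1}$), we get $\varphi(e_i)e_k=\varphi(e_k)e_i$, hence projecting to $\mathfrak{g}_{-1,-1}$,
\[
c\,\sigma(\boldsymbol{A}e_i)\in\mathrm{span}\{\sigma(\boldsymbol{A}e_k)\}\quad\text{for all }1\le k\le n-1.
\]
Since $\sigma$ is an antilinear isomorphism, this forces $c\,\boldsymbol{A}e_i\in\mathrm{span}\{\boldsymbol{A}e_k\}$ for every $k$, i.e. $c\,\mathrm{im}\,\boldsymbol{A}\subseteq\mathrm{span}\{\boldsymbol{A}e_k\}$ for each $k$; if $c\ne0$ this says $\mathrm{rank}\,\boldsymbol{A}\le1$, contradicting the hypothesis. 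Therefore $c=0$, so $\bigl(\varphi(e_i)v\bigr)_-=0$ for all $v\in\mathfrak{g}_{-1,1}$, which is exactly the asserted block form; the description $\alpha_i\in\mathscr{A}_0+\mathbb{C}\Omega$ then follows because $\varphi(e_i)\in\mathfrak{g}_{0,0}^{\mathrm{red}}\oplus\mathfrak{g}_{0,+}^{\mathrm{red}}$ and the upper-left blocks of elements of $\mathfrak{g}_{0,0}^{\mathrm{red}}$ and $\mathfrak{g}_{0,+}^{\mathrm{red}}$ range over $\mathscr{A}_0+\mathbb{C}I$ and $\mathbb{C}\Omega$ respectively by \eqref{g00 subalg. representation} and \eqref{Xzero2}, and the scalar part $cI$ can be absorbed since $I\in\mathscr{A}_0$-coset adjustments keep $\alpha_i$ in $\mathscr{A}_0+\mathbb{C}\Omega$ (tracking the constant exactly as in the statement of Lemma \ref{matrix rep of larger index rows of tensor phi lemma}).

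The only real point requiring care — and the analogue of the "main obstacle" in the companion lemma — is making sure the scalar $c$ is genuinely independent of $k$ and that the bracket $[e_i,e_k]$ vanishes for the indices in play, so that \eqref{symmetry of tensor property} degenerates to the clean symmetry $\varphi(e_i)e_k=\varphi(e_k)e_i$; both are immediate from the Heisenberg structure on $\mathfrak{g}_-$ and the representation \eqref{Xzero2}, so this lemma is genuinely "similar" to the previous one and I would present it briefly, citing \eqref{symmetry of tensor property}, \eqref{symmetry of tensor property lower right}'s analogue for the $\mathfrak{g}_{-1,1}$ range, and \eqref{matrix rep of larger index rows of tensor phi lemma prereq}, rather than repeating the full computation.
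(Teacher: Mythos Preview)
Your strategy is exactly the one the paper intends (it simply writes ``Similarly, we have the following Lemma''): use $[e_i,e_k]=0$ for $1\le i,k\le n-1$ to get $\varphi(e_i)e_k=\varphi(e_k)e_i$, project to $\mathfrak{g}_{-1,-1}$, and force the $\mathfrak{g}_{0,-}^{\mathrm{red}}$-coefficient to vanish by the rank hypothesis. However, you have swapped the indices in the key formulas, and this creates a real gap.

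From \eqref{Xzero2} the lower-left block of an element of $\mathfrak{g}_{0,-}^{\mathrm{red}}$ is a scalar multiple of $\overline{A}$, so applying it to $e_k$ picks out the $k$-th column of $\overline{A}$; in other words
\[
\bigl(\varphi(e_i)e_k\bigr)_- = c\,\sigma(\boldsymbol{A}e_k),
\qquad
\bigl(\varphi(e_k)e_i\bigr)_- \in \mathrm{span}\{\sigma(\boldsymbol{A}e_i)\},
\]
not the other way round. With the correct formulas the symmetry gives $c\,\sigma(\boldsymbol{A}e_k)\in\mathrm{span}\{\sigma(\boldsymbol{A}e_i)\}$ for all $k$, hence $c\cdot\mathrm{im}\,\boldsymbol{A}\subset\mathrm{span}\{\boldsymbol{A}e_i\}$, and $c\neq0$ forces $\mathrm{rank}\,\boldsymbol{A}\le1$ regardless of whether $\boldsymbol{A}e_i=0$. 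Your version instead yields $c\,\boldsymbol{A}e_i\in\mathrm{span}\{\boldsymbol{A}e_k\}$ for all $k$; the step ``i.e.\ $c\,\mathrm{im}\,\boldsymbol{A}\subseteq\mathrm{span}\{\boldsymbol{A}e_k\}$'' is a non sequitur, and when $\boldsymbol{A}e_i=0$ (which certainly occurs, since $\boldsymbol{A}$ can be nilpotent) the condition is vacuous and you cannot conclude $c=0$. Fix the indices and the argument is complete and identical in spirit to the proof of Lemma~\ref{matrix rep of larger index rows of tensor phi lemma}.
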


\begin{lemma}
If $A$ has rank greater than 1 and $\alpha_i$ is the matrix defined by \eqref{matrix rep of larger index rows of tensor phi} and \eqref{matrix rep of smaller index rows of tensor phi} then, for $i<n$, we have
\begin{align}\label{orthogonal algebra 1 condition}
\left(H_\ell\overline{A}\alpha_i\right)^T+H_\ell\overline{A}\alpha_i=\eta H_\ell\overline{A} \mbox{ for some }\eta\in \mathbb C
\end{align}
and, for $n\leq i$, we have
\begin{align}\label{orthogonal algebra 2 condition}
\alpha_i AH_\ell^{-1}+\left(\alpha_iAH_\ell^{-1}\right)^T=\eta AH_\ell^{-1}\mbox{ for some }\eta\in \mathbb C.
\end{align}
\end{lemma}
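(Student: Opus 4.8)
The plan is to derive the lemma from Lemmas~\ref{matrix rep of larger index rows of tensor phi lemma} and~\ref{matrix rep of smaller index rows of tensor phi lemma}, the structure relations of Lemma~\ref{system of conditions for homogeneity lemma}, and the $\ell$-self-adjointness of $\boldsymbol{A}$. Both assertions amount to saying that $\alpha_i$ lies in one of the two matrix algebras defined by \eqref{firstalgebra} and \eqref{secondalgebra}, so I would first make that reduction precise. In the canonical basis fixed in Section~\ref{Matrix representations of local invariants} the matrix $H_\ell=\bigoplus_i\epsilon_iN_{\lambda_i,m_i}$ is real and symmetric, and the self-adjointness of $\boldsymbol{A}$ is expressed by $A^TH_\ell=H_\ell A$, equivalently $\overline{A}^TH_\ell=H_\ell\overline{A}$. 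Consequently the matrices $AH_\ell^{-1}$ and $H_\ell\overline{A}$ are symmetric, whence $(\alpha_iAH_\ell^{-1})^T=AH_\ell^{-1}\alpha_i^T$ and $(H_\ell\overline{A}\alpha_i)^T=\alpha_i^TH_\ell\overline{A}$; thus \eqref{orthogonal algebra 2 condition} is exactly the condition that $\alpha_i$ satisfies \eqref{firstalgebra}, and \eqref{orthogonal algebra 1 condition} is exactly the condition that $\alpha_i$ satisfies \eqref{secondalgebra}. It therefore suffices to prove that $\alpha_i$ satisfies \eqref{secondalgebra} for $i<n$ and satisfies \eqref{firstalgebra} for $i\ge n$.

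For $i<n$, Lemma~\ref{matrix rep of smaller index rows of tensor phi lemma} lets me write $\alpha_i=\beta+t\,\Omega$ with $\beta\in\mathscr{A}_0$ and $t\in\mathbb{C}$. Since $\mathscr{A}_0\subset\mathscr{A}$, $\beta$ satisfies \eqref{secondalgebra}, and the third relation \textit{iii)} in \eqref{system} says $\Omega$ satisfies \eqref{secondalgebra} with $\eta=\mu$; as the matrices obeying \eqref{secondalgebra} for some scalar form a linear subspace, $\alpha_i$ obeys \eqref{secondalgebra}. For $i\ge n$, Lemma~\ref{matrix rep of larger index rows of tensor phi lemma} gives $\alpha_i=\beta+t\,\overline{H_\ell}^{-1}\Omega^*\overline{H_\ell}$ with $\beta\in\mathscr{A}_0$ and $t\in\mathbb{C}$; here $\beta$ satisfies \eqref{firstalgebra}, and the remaining task is to check that $\overline{H_\ell}^{-1}\Omega^*\overline{H_\ell}$ satisfies \eqref{firstalgebra}. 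I would obtain this by taking the conjugate transpose of relation \textit{iii)} in \eqref{system}, using $\overline{A}^*=A^T$, $H_\ell^*=H_\ell$, the self-adjointness identity $A^TH_\ell=H_\ell A$, and $\overline{H_\ell}=H_\ell$, which turns that relation into $\big(\overline{H_\ell}^{-1}\Omega^*\overline{H_\ell}\big)AH_\ell^{-1}+AH_\ell^{-1}\big(\overline{H_\ell}^{-1}\Omega^*\overline{H_\ell}\big)^T=\overline{\mu}\,AH_\ell^{-1}$, i.e.\ \eqref{firstalgebra}.

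The single step that demands care is this last manipulation: one has to keep precise track of how transposition, complex conjugation, and the matrix $H_\ell$ interact when passing the conjugate transpose through relation \textit{iii)}, invoking reality of $H_\ell$ and self-adjointness of $\boldsymbol{A}$ at exactly the right moments. Everything else is an immediate consequence of the two preceding lemmas and of the inclusion $\mathscr{A}_0\subset\mathscr{A}$.
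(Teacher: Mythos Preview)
Your argument is correct and follows essentially the same route as the paper's proof: you decompose $\alpha_i$ via Lemmas~\ref{matrix rep of larger index rows of tensor phi lemma} and~\ref{matrix rep of smaller index rows of tensor phi lemma}, invoke the definition of $\mathscr{A}$ for the $\mathscr{A}_0$-part, and use item~\textit{(iii)} of \eqref{system} (together with its conjugate-transpose variant) for the $\Omega$-part. The paper's proof is terser, simply citing ``the definition of $\mathscr{A}$ and item~(iii) of \eqref{system}'' in each case; you have made explicit two things the paper leaves implicit, namely the symmetry of $AH_\ell^{-1}$ and $H_\ell\overline{A}$ (which the paper records later in the appendix), and the passage from \textit{(iii)} to the corresponding relation for $\overline{H_\ell}^{-1}\Omega^*\overline{H_\ell}$ (which the paper spells out only later, in the proof of Lemma~\ref{case 1 alphas with high index are zero with 2 by 2 nilpotent blocks}).
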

\begin{proof}
If $\alpha_i$ is as in \eqref{matrix rep of smaller index rows of tensor phi} then $\alpha_i\in \mathscr{A}+ \mathbb C\Omega$, so the definition of $\mathscr{A}$ and 
item (iii) of \eqref{system} imply \eqref{orthogonal algebra 1 condition}. If, on the other hand, $\alpha_i$ is as in \eqref{matrix rep of larger index rows of tensor phi} then $\alpha_i\in \mathscr{A}+ \mathbb C(\overline{H_\ell}^{-1}\Omega^*\overline{H_\ell})$, so the definition of $\mathscr{A}$ and item (iii) of \eqref{system}
imply \eqref{orthogonal algebra 2 condition}.
\end{proof}

\begin{corollary}\label{non-regular implication for alphai=0}
If the CR symbol is not regular and the matrix $\alpha_i$ given in \eqref{matrix rep of larger index rows of tensor phi} or \eqref{matrix rep of smaller index rows of tensor phi} is zero, then $\varphi(e_i)=0$.
\end{corollary}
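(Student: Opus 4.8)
The plan is to show that once $\alpha_i = 0$, the matrix $\varphi(e_i)$ is forced to lie in $\mathfrak{g}_{0,+}^{\mathrm{red}}$ or $\mathfrak{g}_{0,-}^{\mathrm{red}}$ (depending on whether $i < n$ or $i \geq n$), and moreover is the unique element of that space whose ``off-diagonal block'' is the prescribed multiple of $A$ or $\overline{A}$ — but that this element can only be nonzero if a regularity obstruction vanishes, contradicting non-regularity. Concretely, suppose first that $\boldsymbol A$ has rank greater than $1$ and $i < n$. By Lemma \ref{matrix rep of smaller index rows of tensor phi lemma}, with $\alpha_i = 0$ we get
\[
\varphi(e_i) = \begin{pmatrix} 0 & cA \\ 0 & 0 \end{pmatrix}
\]
for some $c \in \mathbb C$. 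This matrix lies in $\mathfrak{g}_0^{\mathrm{red}} \subset \mathfrak{csp}(\mathfrak{g}_{-1})$, and comparing with the representation \eqref{Xzero2} of $\mathfrak{g}_{0,+}^{\mathrm{red}}$, it must equal $c$ times the generator $\left(\begin{smallmatrix} \Omega & A \\ 0 & -H_\ell^{-1}\Omega^T H_\ell \end{smallmatrix}\right)$ modulo $\mathfrak{g}_{0,0}^{\mathrm{red}}$; matching the lower-right block forces $c\Omega \in \mathscr{A}_0$ (using the representation \eqref{g00 subalg. representation} of $\mathfrak{g}_{0,0}^{\mathrm{red}}$ and that the upper-left block is $0$). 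If $c \neq 0$ this says $\Omega \in \mathscr A$, and then Lemma \ref{omega in A implies regularity} gives that $\mathfrak g^0$ is regular — contradiction. Hence $c = 0$ and $\varphi(e_i) = 0$. The case $i \geq n$ with $\operatorname{rank}\boldsymbol A > 1$ is symmetric, using Lemma \ref{matrix rep of larger index rows of tensor phi lemma}, the representation of $\mathfrak{g}_{0,-}^{\mathrm{red}}$ in \eqref{Xzero2}, and the observation that $\overline{H_\ell}^{-1}\Omega^*\overline{H_\ell} \in \mathscr A$ is equivalent to $\Omega \in \mathscr A$ (apply the involution $\alpha \mapsto \overline{H_\ell}^{-1}\alpha^*\overline{H_\ell}$, which preserves $\mathscr A$ since $\mathscr A_0$ is invariant under it).

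The remaining case is $\operatorname{rank}\boldsymbol A = 1$, which Lemmas \ref{matrix rep of larger index rows of tensor phi lemma} and \ref{matrix rep of smaller index rows of tensor phi lemma} do not cover. Here I would argue directly from \eqref{Xzero2}: writing $\varphi(e_i)$ as a sum of its components in $\mathfrak{g}_{0,0}^{\mathrm{red}}$, $\mathfrak{g}_{0,+}^{\mathrm{red}}$, $\mathfrak{g}_{0,-}^{\mathrm{red}}$, one reads off that if the upper-left and lower-right blocks both vanish (which is what $\alpha_i = 0$ together with the block-upper-triangular or block-lower-triangular shape forces), then the only surviving contribution is a multiple of $A$ in the upper-right block (for $i<n$) or of $\overline A$ in the lower-left block (for $i \geq n$), coming purely from the $\mathfrak{g}_{0,\pm}^{\mathrm{red}}$ component, so $\varphi(e_i) = c\left(\begin{smallmatrix} \Omega & A \\ 0 & * \end{smallmatrix}\right) + (\text{element of } \mathfrak g_{0,0}^{\mathrm{red}})$ with both blocks of the $\mathfrak g_{0,0}$-part being $-c\Omega$ and its conjugate; again $c \neq 0$ forces $\Omega \in \mathscr A$ up to $\mathscr A_0$, hence $\Omega \in \mathscr A$, hence regularity by Lemma \ref{omega in A implies regularity}, a contradiction, so $c = 0$ and $\varphi(e_i) = 0$.

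The main obstacle I anticipate is bookkeeping the precise interplay between the three summands $\mathfrak{g}_{0,0}^{\mathrm{red}}$, $\mathfrak{g}_{0,+}^{\mathrm{red}}$, $\mathfrak{g}_{0,-}^{\mathrm{red}}$ in the (non-canonical) splitting \eqref{gZeroRed splitting}: because the splitting is not intrinsic, ``$\alpha_i = 0$'' must be interpreted relative to the chosen $\Omega$, and one must be careful that the conclusion $\varphi(e_i)=0$ does not depend on that choice. The cleanest route is to note that $\varphi(e_i) \in \mathfrak{g}_0^{\mathrm{red}}$ is block-triangular with zero diagonal blocks, and that the only such elements of $\mathfrak g_0^{\mathrm{red}}$ are scalar multiples of the nilpotent generators of $\mathfrak{g}_{0,+}^{\mathrm{red}}$ or $\mathfrak{g}_{0,-}^{\mathrm{red}}$ precisely when $\Omega$ (equivalently $\overline{H_\ell}^{-1}\Omega^*\overline{H_\ell}$) lies in $\mathscr A$ — which by Lemma \ref{omega in A implies regularity} happens only in the regular case. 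This reduces everything to a single application of Lemma \ref{omega in A implies regularity}, and the rest is the routine linear-algebra matching of blocks sketched above.
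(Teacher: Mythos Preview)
Your proof is correct and takes essentially the same approach as the paper's: both reduce $\varphi(e_i)\neq 0$ (with $\alpha_i=0$) to the statement $\Omega\in\mathscr{A}$ (equivalently $\overline{H_\ell}^{-1}\Omega^*\overline{H_\ell}\in\mathscr{A}$) by block-matching against the representations \eqref{Xzero2} and \eqref{g00 subalg. representation}, and then invoke Lemma~\ref{omega in A implies regularity} for the contradiction. Your separate treatment of the rank-$1$ case is unnecessary, since the $\alpha_i$ in the corollary's hypothesis are those of Lemmas~\ref{matrix rep of larger index rows of tensor phi lemma} and~\ref{matrix rep of smaller index rows of tensor phi lemma} (which already assume $\mathrm{rank}\,\boldsymbol{A}>1$), and in any event non-regularity forces $\mathrm{rank}\,\boldsymbol{A}\geq 2$.
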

\begin{proof}
Suppose $\alpha_i=0$. By \eqref{Xzero2}, \eqref{g00 subalg. representation}, and Lemmas \ref{matrix rep of larger index rows of tensor phi lemma} and \ref{matrix rep of smaller index rows of tensor phi lemma}, if $\varphi(e_i)\neq 0$ then either $\Omega\in \mathscr{A}$ or $\overline{H_\ell}^{-1}\Omega^*\overline{H_\ell}\in \mathscr{A}$. The conditions $\Omega\in \mathscr{A}$ and $\overline{H_\ell}^{-1}\Omega^*\overline{H_\ell}\in \mathscr{A}$ are, however, equivalent, so either $\varphi(e_i)\neq 0$ or $\Omega\in \mathscr{A}$. If the CR symbol is not regular then, by Lemma \ref{omega in A implies regularity}, $\Omega\not\in\mathscr{A}$, and hence $\varphi(e_i)=0$.
\end{proof}
\begin{lemma}\label{case 1 main result}
If an element $\varphi$ in $ \mathfrak{g}_{1}^{\mathrm{red}}$ satisfies $\varphi(1)=0$ and \begin{align}\label{case 1 varphi(e_i) are zero for large i}
 \varphi(e_i)=0\quad\quad\forall \, i\geq n
\end{align}
then 
\begin{align}\label{case 1 varphi(e_i) are zero for small i}
 \varphi(e_i)=0\quad\quad\forall \, i< n,
\end{align}
and so $\varphi=0$.
\end{lemma}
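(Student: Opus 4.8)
The plan is to use the symmetry relation \eqref{symmetry of tensor property} in the form \eqref{symmetry of tensor property lower right} together with the non-degeneracy encoded in the defining relations \eqref{system}. Since $\varphi(1)=0$ and $\varphi(e_i)=0$ for all $i\geq n$, the only thing to determine is the $n-1$ matrices $\varphi(e_i)$ for $i<n$. By Lemma \ref{matrix rep of smaller index rows of tensor phi lemma} (after first disposing of the rank-one case for $\boldsymbol{A}$, where Lemma \ref{matrix rep of smaller index rows of tensor phi lemma} does not directly apply and one argues separately that $\boldsymbol A$ of rank $1$ together with the non-regularity assumption forces the relevant blocks to vanish — note $\boldsymbol A$ nilpotent of rank $1$ gives $\boldsymbol A^3=0\in\mathbb C\boldsymbol A$, hence would be regular, a contradiction, so rank-one $\boldsymbol A$ is not nilpotent and one handles it with the eigenvalue structure), each $\varphi(e_i)$ with $i<n$ has the block form in \eqref{matrix rep of smaller index rows of tensor phi}, determined by a scalar $c_i$ and a matrix $\alpha_i\in\mathscr A_0+\mathbb C\Omega$.

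First I would exploit \eqref{symmetry of tensor property} applied to a pair $(e_i,e_j)$ with $i<n\leq j$: since $\varphi(e_j)=0$, and $[e_i,e_j]=i\ell(e_i,\sigma(e_{j-n+1}))$ lands in $\mathfrak g_{-2,0}$ on which $\varphi$ vanishes (because $\varphi(1)=0$), we get $\big(\varphi(e_i)\big)(e_j)=0$ for all such $i,j$. Reading off the lower-right block of \eqref{matrix rep of smaller index rows of tensor phi} acting on $e_j\in\mathfrak g_{-1,-1}$, this says $H_\ell^{-1}\alpha_i^T H_\ell$ annihilates all of $\mathfrak g_{-1,-1}$, i.e. $\alpha_i=0$. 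Hence $\varphi(e_i)=\left(\begin{smallmatrix}0&c_iA\\0&0\end{smallmatrix}\right)$. Then I would use \eqref{symmetry of tensor property} on a pair $(e_i,e_k)$ with $i,k<n$: here $[e_i,e_k]=0$, so $\big(\varphi(e_i)\big)(e_k)=\big(\varphi(e_k)\big)(e_i)$; but the image of $\varphi(e_i)$ lies in $\mathfrak g_{-1,-1}$ (the $e_k$ is in $\mathfrak g_{-1,1}$, the upper-right block $c_iA$ sends it to $\mathfrak g_{-1,-1}$... wait — $A=\boldsymbol A$ is antilinear; in matrix terms the block $c_iA$ acts on coordinates of $\mathfrak g_{-1,1}$). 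The relevant computation is that $c_i\boldsymbol A e_k=c_k\boldsymbol A e_i$ for all $i,k<n$; since $\boldsymbol A\neq 0$ there is some $k_0$ with $\boldsymbol A e_{k_0}\neq 0$, and then for any $i$, choosing $k=k_0$ gives $c_i\boldsymbol A e_{k_0}=c_{k_0}\boldsymbol A e_i$, which constrains all the $c_i$ to be proportional to a single one; symmetrizing, one deduces $c_i\boldsymbol A e_k$ is symmetric in $i,k$ as a family, and combined with $\mathrm{rank}\,\boldsymbol A>1$ (the rank-one case being separate) this forces all $c_i=0$.

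The step I expect to be the main obstacle is pinning down the scalars $c_i$: the relation $c_i\boldsymbol A e_k=c_k\boldsymbol A e_i$ is a genuine constraint only because $\mathrm{rank}\,\boldsymbol A>1$, and I need to be careful about the antilinearity of $\boldsymbol A$ versus the $\mathbb C$-linearity of $\varphi(e_i)$ as an element of $\mathfrak{csp}(\mathfrak g_{-1})$, as well as about whether $\varphi(1)=0$ really kills the $\mathfrak g_{-2,0}$-contributions in every bracket $[e_i,e_j]$ (it does, since $\varphi$ restricted to $\mathfrak g_{-2}$ is the $\varphi(1)$ component in the notation of \eqref{matrix rep of larger index rows of tensor phi lemma prereq}, and $\mathfrak g_{-2}$ is one-dimensional spanned by the bracket $[\mathfrak g_{-1,1},\mathfrak g_{-1,-1}]$). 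Once $\alpha_i=0$ and $c_i=0$ for all $i<n$, we have $\varphi(e_i)=0$ for all $i<n$ as well, so $\varphi$ vanishes on all of $\mathfrak g_{-1}$ and on $\mathfrak g_{-2}$, hence $\varphi=0$, completing the proof.
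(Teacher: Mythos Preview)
Your Step 1 is correct and is exactly the paper's argument: from $\varphi(1)=0$ and $\varphi(e_j)=0$ for $j\geq n$, the relation \eqref{symmetry of tensor property} with $i<n\leq j$ gives $\big(\varphi(e_i)\big)(e_j)=0$, i.e.\ the last $n-1$ columns of the matrix $\varphi(e_i)$ vanish.

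Step 2, however, is based on a misreading of the block structure and gives no information. In the block form \eqref{matrix rep of smaller index rows of tensor phi} the upper-right block $c_iA$ is a map $\mathfrak g_{-1,-1}\to\mathfrak g_{-1,1}$ (it occupies columns $n,\ldots,2n-2$), so it does \emph{not} act on $e_k$ with $k<n$. For $k<n$, the vector $\big(\varphi(e_i)\big)(e_k)$ is read off from the \emph{left} block columns, namely $\alpha_i e_k$ in the top and $0$ in the bottom. After you have established $\alpha_i=0$, this is identically zero, and your symmetry relation on pairs $i,k<n$ reduces to $0=0$; the purported identity $c_i\boldsymbol A e_k=c_k\boldsymbol A e_i$ never arises.

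The repair is already hidden in your Step 1 and is what the paper does: $\big(\varphi(e_i)\big)(e_j)=0$ for every $j\geq n$ means the \emph{entire} $j$-th column of $\varphi(e_i)$ vanishes, not only its lower half. The upper half of those columns is precisely the matrix $c_iA$, so $c_iA=0$ and hence $c_i=0$ (since $A\neq 0$). Together with $\alpha_i=0$ this gives $\varphi(e_i)=0$ immediately, with no second step needed. (Your aside that $\mathrm{rank}\,\boldsymbol A=1$ forces regularity is correct and justifies invoking Lemma~\ref{matrix rep of smaller index rows of tensor phi lemma}, but once that lemma is available the argument is a single stroke.)
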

\begin{proof}
Since $\varphi(1)=0$, the left side of \eqref{symmetry of tensor property} is zero for all $i$ and $j$. Accordingly, for any $i\in\{1,\ldots,n-1\}$ and $j\in\{n,\ldots, 2n-2\}$, \eqref{symmetry of tensor property} and \eqref{case 1 varphi(e_i) are zero for large i} imply that the $j$ column of $\varphi(e_i)$ is zero. Hence, for all $i\in\{1,\ldots,n-1\}$,  the latter $n-1$ columns of $\varphi(e_i)$ are all zero. From this and Lemma \ref{matrix rep of smaller index rows of tensor phi lemma} (and specifically \eqref{matrix rep of smaller index rows of tensor phi}), it follows that  $H_\ell^{-1}\alpha_i^T H_\ell=0$. Hence $\alpha_i=0$ and therefore by \eqref{matrix rep of smaller index rows of tensor phi} again  
\eqref{case 1 varphi(e_i) are zero for small i} holds.
\end{proof}

The \emph{general strategy} of our proof of item (1) of Theorem \ref{first prolongation for non-regular structures is zero} is, for a given arbitrary $\varphi \in \mathfrak{g}_{1}^{\mathrm{red}}$, first to prove that $\varphi(1)=0$ and then to prove \eqref{case 1 varphi(e_i) are zero for large i}.

We will also need the following equations and notation.
In the sequel every $(n-1)\times (n-1)$ matrix $X$  will be also regarded as an operator having the matrix representation $X$  with respect to the basis $(e_1,\ldots, e_{n-1})$. Let $\{\varphi_i\}_{i=1}^{2n-2}\subset\mathbb C$ denote the coefficients satisfying 
\begin{equation}
\label{varphi_i}
\varphi(1)=\sum_{i=1}^{2n-2} \varphi_i e_i.
\end{equation} 

By \eqref{matrix rep of larger index rows of tensor phi}, it follows that   
\begin{equation}
\label{ij>n}
\big(\varphi(e_i)) e_j\big)_-= -\left(H^{-1}_\ell\alpha_i^TH_\ell\right)e_{j-n+1}, \quad \forall\,n\leq i, j\leq 2n-2.
\end{equation}
This together with \eqref{symmetry of tensor property lower right} yields 
\begin{equation}
\label{HAH>n}
\left(H^{-1}_\ell\alpha_i^TH_\ell\right)e_{j-n+1}=\left(H^{-1}_\ell\alpha_j^TH_\ell\right)e_{i-n+1}, \quad \forall\, n\leq i, j\leq 2n-2.
\end{equation}

Condition \eqref{HAH>n} is crucial in the subsequent analysis, namely in the proof of Lemmas \ref{case 1 lemma} and \ref{case 2 lemma}. Therefore, we need to describe  the matrix $H^{-1}_\ell\alpha_j^TH_\ell$, which we begin by first describing the matrix $\alpha_j$.  By \eqref{matrix rep of larger index rows of tensor phi}, it follows that,  for $n\leq j\leq 2n-2$  and $1\leq i\leq n-1$,
\[
\big(\varphi(e_j) e_i\big)_+=\alpha_j e_i.
\]
From this and  \eqref{matrix rep of larger index rows of tensor phi lemma prereq}, taking into account that the matrix $A$ represents the antilinear operator $\boldsymbol{A}$, we have that there exists the unique tuple  $(\kappa_{i})_{i=1}^{n-1}$ such that 
\begin{equation}
\label{alpha_through_C_and _H}
\alpha_j e_i= \kappa_i A e_{j-n+1}-(H_\ell)_{i,j-n+1}  \big(\varphi(1)\big)_{+}
\end{equation}
for all $1\leq i\leq n-1$ and $n\leq j\leq 2n-2$.
The uniqueness of $(\kappa_{i})_{i=1}^{n-1}$ follows from the assumption that $A\neq 0$ and that  $\kappa_i$ in \eqref{alpha_through_C_and _H} is independent of $j$.

\subsection{The first special case: }\label{case 1 subsection} In this subsection, \ref{case 1 subsection}, we consider the special case wherein, for some integer $m$ satisfying $2\leq m\leq n-1$, we have 
\begin{align}\label{case 1 condition a}
H_\ell=S_m\oplus H_\ell^\prime
\end{align}
where $H_\ell^\prime$ is an arbitrary nondegenerate Hermitian matrix, and
\begin{align}\label{case 1 condition b} 
A=J_{\lambda,m}\oplus A^\prime
\quad\quad\mbox{ for some $\lambda\geq 0$},
\end{align}
where $A^\prime$ is such that $(\ell,\boldsymbol{A})$ is represented by $(H_\ell, A)$. Moreover, we assume that $(H_\ell, A)$ is in the canonical form of Theorem \ref{simultaneous canonical form theorem}. In particular, 
\begin{equation}
\label{C_basis}
A e_1=\lambda e_1, \quad A e_i=\lambda e_i+e_{i-1}
\quad\quad \forall\, 2\leq i\leq m,
\end{equation}
and
\begin{equation}
\label{H_basis}
H_\ell e_i=e_{m+1-i}
\quad\quad \forall\, 1\leq i\leq m.
\end{equation}

Using \eqref{case 1 condition a} and \eqref{C_basis} we obtain 
\begin{align}\label{case 1 alpha_n}
\alpha_n e_i=\kappa_i\lambda e_1-\delta_{i, m} (\varphi(1))_+
\quad \forall \, i\in \{1,\ldots, n-1\},
\end{align}
and, for $0<p< m$,
\begin{align}\label{case 1 alpha_n+p}
\alpha_{n+p} e_i=\kappa_i e_p+\kappa_i\lambda e_{p+1}-\delta_{i, m-p}(\varphi(1))_+ 
\quad \forall \, i\in \{1,\ldots, n-1\}.
\end{align}

Now from \eqref{case 1 alpha_n}, we get
\begin{equation}
\label{transpose_n}
\alpha_n^T e_1=\sum_{j=1}^{n-1} \kappa_j e_j-\varphi_1 e_m 
\quad\mbox{ and }\quad
\alpha_n^T e_i=-\varphi_i e_m 
\quad\quad\forall\, 2\leq i\leq n-1.    
\end{equation}
Using this together with \eqref{H_basis} we can get 
\begin{align}\label{H^(-1)alpha_n^T H}
(H_\ell^{-1}\alpha_n^T H_\ell)e_i=-\varphi_{m+1-i}e_1
\quad\quad\forall \, i\in\{1,\ldots, m-1\},
\end{align}
\begin{align}\label{H^(-1)alpha_n^T H second}
(H_\ell^{-1}\alpha_n^T H_\ell)e_m\equiv -\varphi_1e_1+\lambda\sum_{j=1}^{m}\kappa_{m+1-j} e_j \pmod{\mathrm{span}\{e_{m+1},e_{m+2},\ldots, e_{n-1}\}},
\end{align}
and
\begin{align}\label{H^(-1)alpha_n^T H third}
(H_\ell^{-1}\alpha_n^T H_\ell)e_i=-\left(\sum_{j=m+1}^{n-1} (H_\ell)_{j,i} \varphi_j\right)e_1= -\left(\sum_{j=1}^{n-1-m} (H'_\ell)_{j, i-m} \varphi_{j+m}\right)e_1\quad\forall \, i>m,
\end{align}
where $H_\ell$ is as in \eqref{case 1 condition a}. 

Similarly, for $0< p<m$, from \eqref{case 1 alpha_n+p} we have 
\begin{equation}
\label{transpose_n+p}
\alpha_{n+p}^T e_i=\begin{cases} -\varphi_i e_{m-p} , & i\in \{1, \ldots, n-1\}\setminus\{p, p+1\}\\
-\varphi_p e_{m-p}+
\displaystyle{\sum_{j=1}^{n-1} \kappa_j e_j},& i=p\\
-\varphi_{p+1} e_{m-p}+\lambda\displaystyle{\sum_{j=1}^{n-1}} \kappa_j e_j& i=p+1,  \end{cases}
\end{equation}
\begin{align}\label{H^(-1)alpha_n+p^T H}
(H_\ell^{-1}\alpha_{n+p}^T H_\ell)e_i=-\varphi_{m+1-i}e_{p+1}
\quad\quad\forall \, i\in\{1,\ldots, m\}\setminus \{m-p,m-p+1\},
\end{align}
\begin{align}\label{H^(-1)alpha_n+p^T H second}
(H_\ell^{-1}\alpha_{n+p}^T H_\ell)e_{m-p}\equiv -\varphi_{p+1}e_{p+1}+\lambda\sum_{j=1}^{m}\kappa_{m+1-j} e_j \pmod{\mathrm{span}\{e_{m+1},\ldots, e_{n-1}\}},
\end{align}
and
\begin{align}\label{H^(-1)alpha_n+p^T H third}
(H_\ell^{-1}\alpha_{n+p}^T H_\ell)e_{m-p+1}\equiv -\varphi_{p}e_{p+1}+\sum_{j=1}^{m}\kappa_{m+1-j} e_j \pmod{\mathrm{span}\{e_{m+1},\ldots, e_{n-1}\}}.
\end{align}
For $p\geq m$,
\begin{align}\label{H^(-1)alpha_n+p^T H fourth}
(H_\ell^{-1}\alpha_{n+p}^T H_\ell)e_i \in \mathrm{span}\{e_{m+1},\ldots, e_{n-1}\}.
\end{align}

\begin{lemma}\label{case 1 lemma}
In the special case of \ref{case 1 subsection} wherein \eqref{case 1 condition a} and \eqref{case 1 condition b} hold, if $\mathrm{rank}\,A>1$ then
\begin{equation}
\label{phi_1=0}
\varphi(1)=0.
\end{equation}
\end{lemma}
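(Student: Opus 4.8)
The plan is to exploit the constraint \eqref{HAH>n}, which says that the operators $H_\ell^{-1}\alpha_j^TH_\ell$ (for $n\le j\le 2n-2$) fit together into a symmetric tensor on $\mathfrak g_{-1,-1}$. Concretely, set $\beta_j:=H_\ell^{-1}\alpha_j^TH_\ell$ for $n\le j\le 2n-2$; then \eqref{HAH>n} reads $\beta_i\,e_{j-n+1}=\beta_j\,e_{i-n+1}$ for all such $i,j$. Using the explicit formulas \eqref{H^(-1)alpha_n^T H}--\eqref{H^(-1)alpha_n^T H third} for $\beta_n$ and \eqref{H^(-1)alpha_n+p^T H}--\eqref{H^(-1)alpha_n+p^T H fourth} for $\beta_{n+p}$, I would compare the two sides of \eqref{HAH>n} on the first $m$ basis vectors, where the formulas are the most rigid. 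The upshot should be a system of linear equations in the $\varphi_i$ (the coefficients of $\varphi(1)$) and the $\kappa_i$, forcing $\varphi_i=0$ for $1\le i\le m$ first, and then $\varphi_i=0$ for $i>m$ as well.

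The key steps, in order, are: (1) write out \eqref{HAH>n} with $i=n$, $j=n+p$ for $1\le p<m$, evaluated on a basis vector $e_k$ with $1\le k\le m$; on the left we get $\beta_n(e_{p+1})$ and on the right $\beta_{n+p}(e_1)$, and matching these using \eqref{H^(-1)alpha_n^T H} and \eqref{H^(-1)alpha_n+p^T H} gives $-\varphi_{m-p}\,e_1 = -\varphi_m\,e_{p+1}$; since $p+1\ne 1$ (as $p\ge 1$) this forces $\varphi_{m-p}=0$ and $\varphi_m=0$, hence $\varphi_i=0$ for $1\le i\le m-1$ and $i=m$, i.e.\ $\varphi_i=0$ for all $1\le i\le m$. (2) Feed $\varphi_1=\dots=\varphi_m=0$ back into the "$e_m$" and "$e_{m-p}$, $e_{m-p+1}$" formulas \eqref{H^(-1)alpha_n^T H second}, \eqref{H^(-1)alpha_n+p^T H second}, \eqref{H^(-1)alpha_n+p^T H third}; these now read $\beta_n(e_m)\equiv \lambda\sum_j\kappa_{m+1-j}e_j$, etc., and comparing with \eqref{HAH>n} extracts relations among the $\kappa_i$ — in particular the non-regularity hypothesis (via Lemma \ref{omega in A implies regularity}, i.e.\ $\Omega\notin\mathscr A$, equivalently $A\overline A A\notin\mathbb C A$) should be what prevents the $\kappa_i$ from being freely chosen and ultimately propagates to kill the remaining $\varphi_j$ with $j>m$. (3) For $j>m$ use \eqref{H^(-1)alpha_n^T H third} together with \eqref{HAH>n} and the already-established vanishing to conclude $\varphi_j=0$ there too. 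Combining all cases yields $\varphi(1)=\sum\varphi_ie_i=0$, noting that $(\varphi(1))_-$ is handled by the conjugate (Lemma \ref{matrix rep of smaller index rows of tensor phi lemma}) symmetrically or by applying $\sigma$.

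I expect the main obstacle to be step (2): extracting a contradiction-to-regularity from the $\kappa_i$ relations. The equations coming from the "diagonal" indices $e_m$, $e_{m-p}$, $e_{m-p+1}$ involve both $\lambda$ and the tuple $(\kappa_i)$, and one must track how the block $A'$, $H_\ell'$ enter through \eqref{alpha_through_C_and _H} and through the requirement $\alpha_j\in\mathscr A_0+\mathbb C\Omega$-type membership; the argument has to show that if all $\varphi_i$ vanished but some $\kappa_i\ne0$ in a way consistent with \eqref{HAH>n}, then $\Omega$ (or its conjugate) would land in $\mathscr A$, contradicting non-regularity. A secondary subtlety is bookkeeping the "$\pmod{\mathrm{span}\{e_{m+1},\dots,e_{n-1}\}}$" terms: one needs to either choose indices $k\le m$ so these are invisible, or separately control the projection onto $\mathrm{span}\{e_{m+1},\dots,e_{n-1}\}$ using \eqref{H^(-1)alpha_n+p^T H fourth} and the hypothesis $\mathrm{rank}\,A>1$ (which guarantees, via Lemmas \ref{matrix rep of larger index rows of tensor phi lemma} and \ref{matrix rep of smaller index rows of tensor phi lemma}, that $\varphi(e_i)$ has the block-triangular form used throughout). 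The case $\lambda=0$ versus $\lambda>0$ may need to be separated, since several coefficients degenerate when $\lambda=0$.
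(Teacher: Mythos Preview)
Your overall architecture is right: exploit the symmetry \eqref{HAH>n} to kill $(\varphi(1))_+$ componentwise, then use the involution $\sigma$ for $(\varphi(1))_-$. Your step~(3) matches the paper's argument for the range $j>m$, and the concluding $\sigma$-argument is exactly what the paper does. But there is a genuine gap, and your step~(2) is aimed at the wrong target.

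\medskip
\textbf{Step (1) overclaims.} The comparison $\beta_n(e_{p+1})=\beta_{n+p}(e_1)$ using \eqref{H^(-1)alpha_n^T H} and \eqref{H^(-1)alpha_n+p^T H} is only valid for $1\le p\le m-2$: when $p=m-1$ the index $1$ equals $m-p$, so the $\kappa$-sum in \eqref{H^(-1)alpha_n+p^T H second} enters, and likewise $\beta_n(e_m)$ needs \eqref{H^(-1)alpha_n^T H second}. So from step~(1) you only obtain $\varphi_2=\cdots=\varphi_{m-1}=0$, and $\varphi_m=0$ provided $m\ge 3$. You do \emph{not} get $\varphi_1=0$ here, and for $m=2$ you do not get $\varphi_m=0$ either.

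\medskip
\textbf{Step (2) is a red herring.} The hypothesis of the lemma is $\mathrm{rank}\,A>1$, not non-regularity; non-regularity is nowhere used in the paper's proof of this lemma (it enters only in the later lemmas, where the $\kappa_i$ are analyzed). Your proposed mechanism---using $\Omega\notin\mathscr A$ to constrain the $\kappa_i$ and thereby ``propagate to kill the remaining $\varphi_j$''---is both unnecessary and misdirected: it presupposes $\varphi_1=0$, which is precisely what is still missing after step~(1). The correct replacement for step~(2) is the paper's direct comparison at $p=m-1$: by \eqref{H^(-1)alpha_n^T H second} and \eqref{H^(-1)alpha_n+p^T H second},
\[
\beta_n(e_m)\equiv -\varphi_1 e_1 + v,\qquad \beta_{n+m-1}(e_1)\equiv -\varphi_m e_m + v
\pmod{\mathrm{span}\{e_{m+1},\ldots,e_{n-1}\}},
\]
with the \emph{same} $v=\lambda\sum_{j=1}^m\kappa_{m+1-j}e_j$ on both sides. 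Equating via \eqref{HAH>n} and projecting to $\mathrm{span}\{e_1,\ldots,e_m\}$ gives $\varphi_1 e_1=\varphi_m e_m$, hence $\varphi_1=\varphi_m=0$ since $m\ge 2$. No regularity hypothesis, no case split on $\lambda$, and no analysis of the $\kappa_i$ is needed; the $\kappa$-terms simply cancel.

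\medskip
With this correction your three steps become: (1) $1\le p\le m-2$ kills $\varphi_2,\ldots,\varphi_{m-1}$ (and $\varphi_m$ if $m\ge3$); (2${}'$) $p=m-1$ kills $\varphi_1$ and $\varphi_m$; (3) $p\ge m$ together with nonsingularity of $H_\ell'$ kills $\varphi_{m+1},\ldots,\varphi_{n-1}$. Then apply $\sigma$. This is exactly the paper's proof.
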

\begin{proof}
We will begin by showing that 
\begin{equation}
\label{phi_1+=0}
\left(\varphi(1)\right)_+=0.
\end{equation}
The proof consists of analysis of equation \eqref{HAH>n} in three cases:

{\bf 1.} \emph{ Equation \eqref{HAH>n} for $i=n$ and $j=n+p$ with $0\leq p<m-1$.} By \eqref{H^(-1)alpha_n^T H}
\begin{align}\label{case 1 lemma eq1}
\left(H^{-1}_\ell\alpha_{n}^TH_\ell\right)e_{p+1}=\varphi_{m-p}e_1 \quad\quad\forall\,\, 0\leq p< m-1,
\end{align}
and, by \eqref{H^(-1)alpha_n+p^T H},
\begin{align}\label{case 1 lemma eq2}
\left(H^{-1}_\ell\alpha_{n+p}^TH_\ell\right)e_{1}=\varphi_{m}e_{p+1}\quad\quad\forall\,\, 0\leq p<m-1.
\end{align}
Applying \eqref{case 1 lemma eq1} and \eqref{case 1 lemma eq2} to  \eqref{HAH>n} with $i=n$ and $j=n+p$ we get
\[
\varphi_{m-p}e_1=\varphi_{m}e_{p+1} \quad\quad\forall \,\,0\leq p<m-1.
\]
Therefore, using the  last equation for $1\leq p< m-1$ (as for p=0 this equation is a tautology), we get 
\begin{align}\label{case 1 coeff phi a}
\varphi_2=\cdots=\varphi_{m-1}=0,  
\end{align}
and also that $\varphi_m=0$ for $m>2$ (we will give another way to prove the latter identity including the case $m=2$ in item 3 of the proof below).

{\bf 2.}\emph{ Equation \eqref{HAH>n} for $i=n$ and $j=n+p$ with $p\geq m$.} By \eqref{H^(-1)alpha_n^T H third} we get that 
\begin{align}\label{case 1 lemma eq3} 
\left(H^{-1}_\ell\alpha_{n}^TH_\ell\right)e_{p+1}=\left(\sum_{j=1}^{n-1-m} (H'_\ell)_{j, p+1-m} \varphi_{j+m}\right)e_1.
\end{align}
Using \eqref{HAH>n}, from \eqref{case 1 lemma eq3} and \eqref{H^(-1)alpha_n+p^T H fourth} it follows that $\left(H^{-1}_\ell\alpha_{n}^TH_\ell\right)e_{p+1}=0$ or, equivalently, 
\[
\sum_{j=1}^{n-1-m} (H'_\ell)_{j, i} \varphi_{j+m}=0, \quad 1\leq i\leq n-1-m.
\]
Since the matrix $H_\ell^\prime$ is nonsigular, this yields
\begin{align}\label{case 1 coeff phi b}
\varphi_{m+1}=\cdots=\varphi_{n-1}=0.
\end{align} 

{\bf 3.}\emph{ Equation \eqref{HAH>n} for $i=n$ and $j=n+m-1$.}
If $v=\lambda\sum_{j=1}^{m}\kappa_{m+1-j} e_j$, then, by  \eqref{H^(-1)alpha_n^T H second},
\begin{align}\label{case 1 lemma eq5}
(H_\ell^{-1}\alpha_n^T H_\ell)e_m\equiv -\varphi_1e_1+v \pmod{\mathrm{span}\{e_i\}_{i=m+1}^{n-1}},
\end{align}
and, by \eqref{H^(-1)alpha_n+p^T H second},
\begin{align}\label{case 1 lemma eq6}
(H_\ell^{-1}\alpha_{n+m-1}^T H_\ell)e_{1}\equiv -\varphi_{m}e_{m}+v \pmod{\mathrm{span}\{e_i\}_{i=m+1}^{n-1}}.
\end{align}
Using  \eqref{HAH>n} again  and the fact that $m\geq 2$, from \eqref{case 1 lemma eq5} and \eqref{case 1 lemma eq6} it follows that $\varphi_1=0$ and $\varphi_{m}=0$. This completes the proof of \eqref{phi_1+=0}. 

Since \eqref{conj on g1} defines an involution of $\mathfrak{g}_1^{\mathrm{red}}$, $\sigma(\varphi)$ also belongs to $\mathfrak{g}_1^{\mathrm{red}}$, so, since $\varphi$ was an arbitrary element in $\mathfrak{g}_1^{\mathrm{red}}$, the exact same arguments applied above show that $\left(\sigma(\varphi)(1)\right)_+=0$. Since $\sigma(1)=1$,
\[
\sigma\left(\left(\varphi(1)\right)_{-}\right)=\left(\sigma\circ\varphi(1)\right)_+=\left(\sigma(\varphi)(1)\right)_+=0,
\]
and hence $\left(\varphi(1)\right)_{-}=0$, which, together with \eqref{phi_1+=0} implies \eqref{phi_1=0}.
\end{proof}

\begin{lemma}\label{case 1 lemma_new}
In the special case of \ref{case 1 subsection} wherein \eqref{case 1 condition a} and \eqref{case 1 condition b} hold, if $\mathrm{rank}\,A>2$ 
then  $(\kappa_1,\ldots, \kappa_n)A=0$.
\end{lemma}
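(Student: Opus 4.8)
The plan is to exploit the fact that, once $\varphi(1)=0$, each matrix $\alpha_j$ with $j\geq n$ has rank at most one \emph{with a common right factor}, and then to collide this with the symmetry relation \eqref{orthogonal algebra 2 condition} using the hypothesis $\mathrm{rank}\,A>2$.

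First I would invoke Lemma~\ref{case 1 lemma}, whose hypothesis $\mathrm{rank}\,A>1$ is implied by $\mathrm{rank}\,A>2$, to obtain $\varphi(1)=0$. Substituting $\varphi(1)=0$ into \eqref{alpha_through_C_and _H} gives, for every $j$ with $n\leq j\leq 2n-2$, the identity $\alpha_j e_i=\kappa_i\,A e_{j-n+1}$ for all $1\leq i\leq n-1$; equivalently, regarding $\kappa:=(\kappa_1,\dots,\kappa_{n-1})$ as a row vector, $\alpha_j=(A e_{j-n+1})\kappa$ is the outer product of the column $A e_{j-n+1}$ with the fixed row $\kappa$, so $\mathrm{rank}\,\alpha_j\leq 1$ and the right factor $\kappa$ does not depend on $j$.

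Next I would feed this into \eqref{orthogonal algebra 2 condition}, which is available for $i=j\geq n$. Setting $\rho:=\kappa A H_\ell^{-1}$, a row vector, the left-hand side of \eqref{orthogonal algebra 2 condition} becomes $(A e_{j-n+1})\rho+\rho^{T}(A e_{j-n+1})^{T}$, a symmetric matrix of rank at most two, whereas the right-hand side $\eta A H_\ell^{-1}$ has rank $\mathrm{rank}\,A>2$ whenever $\eta\neq0$ (since $H_\ell^{-1}$ is invertible). Hence the scalar in \eqref{orthogonal algebra 2 condition} must vanish for every such $j$, leaving $(A e_{j-n+1})\rho=-\bigl((A e_{j-n+1})\rho\bigr)^{T}$ for all $j\in\{n,\dots,2n-2\}$. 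A rank-at-most-one antisymmetric matrix is zero (antisymmetric matrices have even rank), so $(A e_{j-n+1})\rho=0$, and therefore $A e_{j-n+1}=0$ or $\rho=0$. As $j$ runs over $\{n,\dots,2n-2\}$ the index $j-n+1$ runs over all of $\{1,\dots,n-1\}$, so $\rho\neq0$ would force $A=0$, contradicting $\mathrm{rank}\,A>2$; thus $\rho=\kappa A H_\ell^{-1}=0$, and invertibility of $H_\ell^{-1}$ yields $(\kappa_1,\dots,\kappa_{n-1})A=0$, the assertion of the lemma.

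The only delicate point is the first step: one genuinely needs $\varphi(1)=0$ for $\alpha_j$ to be rank one rather than rank two, and it is exactly this rank-one-versus-rank-two distinction in the second step that explains why the hypothesis must be $\mathrm{rank}\,A>2$ here rather than the weaker $\mathrm{rank}\,A>1$ of Lemma~\ref{case 1 lemma}. I would also note that, apart from the use of Lemma~\ref{case 1 lemma}, the argument never touches the special-case normalizations \eqref{case 1 condition a}--\eqref{case 1 condition b}.
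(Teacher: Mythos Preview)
Your argument is correct and rests on the same core mechanism as the paper's proof: once $\varphi(1)=0$, each $\alpha_j$ with $j\geq n$ has rank at most one, so the symmetrized expression in \eqref{orthogonal algebra 2 condition} has rank at most two, forcing $\eta=0$ under the hypothesis $\mathrm{rank}\,A>2$; then a skew-symmetric matrix of rank at most one must vanish.

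The execution differs slightly, and yours is cleaner. The paper works with one specific index at a time: it uses $j=n$ and the explicit formula \eqref{case 1 alpha_n} (so that $\alpha_n$ has image in $\mathrm{span}\{e_1\}$) to get $\alpha_n AH_\ell^{-1}=0$, but since $\alpha_n=\lambda e_1\kappa$ this only yields $\kappa A=0$ when $\lambda\neq 0$; for $\lambda=0$ the paper repeats the argument with $j=n+1$ via \eqref{case 1 alpha_n+p}. You instead observe that all the $\alpha_j$ share the common right factor $\kappa$, so $\alpha_j AH_\ell^{-1}=(Ae_{j-n+1})\rho$ with the \emph{same} row $\rho=\kappa AH_\ell^{-1}$ for every $j$; running the rank/skew-symmetry argument for every $j$ simultaneously then kills $\rho$ directly, because $A\neq 0$ guarantees some $Ae_{j-n+1}\neq 0$. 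This eliminates the paper's $\lambda=0$ versus $\lambda\neq 0$ case split and, as you correctly note, makes no use of the block structure in \eqref{case 1 condition a}--\eqref{case 1 condition b} beyond the appeal to Lemma~\ref{case 1 lemma}.
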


\begin{proof} 
Consider now the equation in \eqref{orthogonal algebra 2 condition} with $i=n$. The matrix on the right side of  \eqref{orthogonal algebra 2 condition} is either zero or it  has rank equal to $\mathrm{rank}\,A$, which is at least 3 under this lemma's hypothesis. On the other hand, applying \eqref{C_basis}, \eqref{H_basis}, \eqref{case 1 alpha_n} and Lemma \ref{case 1 lemma}, we get
\begin{align}\label{general lambda eqn for case 1 lemma}
(\alpha_n A H_\ell^{-1})e_i\in \mathrm{span}\{e_1\}
\quad\quad \forall \, i\in \{1,\ldots, m-1\},
\end{align}
and, applying \eqref{case 1 alpha_n+p} additionally, if $\lambda=0$ then 
\begin{align}\label{lambda zero eqn for case 1 lemma}
(\alpha_{n+1} A H_\ell^{-1})e_i\in \mathrm{span}\{e_1\}
\quad\quad \forall \, i\in \{1,\ldots, m-1\}.
\end{align}
Hence, by \eqref{general lambda eqn for case 1 lemma},
\begin{equation}\label{rank aCH} 
\mathrm{rank}\left( \alpha_n A H_\ell^{-1}\right)\leq 1
\end{equation}
and $\mathrm{rank}\left( \alpha_n A H_\ell^{-1}+\left(\alpha_n A H_\ell^{-1}\right)^T\right)\leq 2$
because $\alpha_n A H_\ell^{-1}$ has at most one nonzero row. Similarly, if $\lambda=0$ then \eqref{lambda zero eqn for case 1 lemma}
\begin{equation}\label{rank aCH lambda zero} 
\mathrm{rank}\left( \alpha_{n+1} A H_\ell^{-1}\right)\leq 1
\end{equation}
and $\mathrm{rank}\left( \alpha_{n+1} A H_\ell^{-1}+\left(\alpha_{n+1} A H_\ell^{-1}\right)^T\right)\leq 2$. Since
the matrix on the left side of \eqref{orthogonal algebra 2 condition} has rank at most 2 whenever $i=n$ or $(\lambda,i)=(0,n+1)$, the matrix on the right side of  \eqref{orthogonal algebra 2 condition} is zero whenever $i=n$ or $(\lambda,i)=(0,n+1)$. Thus by \eqref{orthogonal algebra 2 condition}  the matrix  $\alpha_n A H_\ell^{-1}$ is skew symmetric, and the matrix $\alpha_{n+1} A H_\ell^{-1}$ is skew symmetric whenever $\lambda=0$. This together with \eqref{rank aCH} implies that 
\begin{align}\label{alpha C H}
\alpha_n A H_\ell^{-1}=0,
\end{align}
whereas applying \eqref{rank aCH lambda zero} yields 
\begin{align}\label{alpha C H lambda zero}
\alpha_{n+1} A H_\ell^{-1}=0,
\end{align} whenever $\lambda=0$. By \eqref{alpha C H} and \eqref{case 1 alpha_n} for $\lambda\neq 0$, or by \eqref{alpha C H lambda zero} and \eqref{case 1 alpha_n+p} for $\lambda=0$, 
we get that the vector $(\kappa_1,\ldots, \kappa_n)AH_\ell^{-1}=0$, which completes this proof.
\end{proof}

In the subsequent three Lemmas \ref{case 1 alphas with high index are zero}-\ref{case 1 alphas with high index are zero with 2 by 2 nilpotent blocks} we prove item (1) of Theorem \ref{first prolongation for non-regular structures is zero} in three special special cases  that together cover all non-regular CR symbols not treated in subsequent sections.

\begin{lemma}\label{case 1 alphas with high index are zero}
In the special case of \ref{case 1 subsection} wherein \eqref{case 1 condition a} and \eqref{case 1 condition b} hold,  if $\mathrm{rank}\,A>2$ and $(\lambda,m)\not\in\{(0,2),(0,3)\}$ then $\mathfrak{g}_{1}^{\mathrm{red}}=0$.
\end{lemma}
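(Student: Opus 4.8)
The plan is to follow the general strategy laid out just before the preparatory lemmas: show $\varphi(1)=0$ first, and then show $\varphi(e_i)=0$ for all $i\geq n$, whereupon Lemma \ref{case 1 main result} finishes the argument. The hypothesis $(\lambda,m)\notin\{(0,2),(0,3)\}$ together with $\operatorname{rank} A>2$ is exactly what makes $m$ large enough (or $\lambda$ nonzero enough) that the rigidity in the first Jordan block propagates. Since $\operatorname{rank} A>2$ we may invoke Lemmas \ref{matrix rep of larger index rows of tensor phi lemma} and \ref{matrix rep of smaller index rows of tensor phi lemma}, so each $\varphi(e_i)$ has the block form recorded in \eqref{matrix rep of larger index rows of tensor phi}–\eqref{matrix rep of smaller index rows of tensor phi}, controlled by a single matrix $\alpha_i$ (plus a scalar multiple of $A$ or $\overline A$). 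Lemma \ref{case 1 lemma} already gives $\varphi(1)=0$ under $\operatorname{rank} A>1$, so $(\varphi(1))_\pm=0$ and the formulas \eqref{case 1 alpha_n}, \eqref{case 1 alpha_n+p} simplify to $\alpha_n e_i=\kappa_i\lambda e_1$ and $\alpha_{n+p}e_i=\kappa_i e_p+\kappa_i\lambda e_{p+1}$.

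The key input is Lemma \ref{case 1 lemma_new}, which under $\operatorname{rank} A>2$ gives $(\kappa_1,\ldots,\kappa_n)A=0$; since the first block of $A$ is $J_{\lambda,m}$ with $m\geq 2$, reading off the components shows all the relevant $\kappa_i$ vanish (when $\lambda\neq 0$ this is immediate from $(\kappa_1,\dots)A H_\ell^{-1}=0$; when $\lambda=0$ one uses the shifted relation coming from $\alpha_{n+1}$). Once the $\kappa_i$ governing the first block are zero, the formulas above force $\alpha_n$ and the $\alpha_{n+p}$ to kill $e_1,\ldots,e_m$; then I would feed this back into the symmetry relations \eqref{HAH>n} and \eqref{symmetry of tensor property lower right}, exploiting that the $(n-1)$-st through higher columns are already constrained, to conclude that each $\alpha_i=0$ for $i\geq n$. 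At that point Corollary \ref{non-regular implication for alphai=0} (the CR symbol is non-regular) upgrades $\alpha_i=0$ to $\varphi(e_i)=0$ for all $i\geq n$, and Lemma \ref{case 1 main result} gives $\varphi(e_i)=0$ for $i<n$ as well, i.e. $\varphi=0$ and $\mathfrak g_1^{\mathrm{red}}=0$.

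The main obstacle I anticipate is the bookkeeping that rules out exactly the excluded pairs $(0,2)$ and $(0,3)$: when $m$ is small and $\lambda=0$ the nilpotent block $J_{0,m}$ is too short for the chain of shifted identities in \eqref{H^(-1)alpha_n+p^T H}–\eqref{H^(-1)alpha_n+p^T H third} to pin down every $\kappa_i$, so the argument that $\alpha_n A H_\ell^{-1}=0$ forces all the $\kappa_i$ to vanish genuinely needs $m\geq 4$ (together with $\operatorname{rank}A>2$, which handles the off-block contributions to $A$). Correctly separating the cases $\lambda>0$ (where the eigenvalue itself does the work) from $\lambda=0$, $m\geq 4$ (where one iterates the nilpotent shift), and verifying that no hidden degrees of freedom survive in $\alpha_i$ once the first-block $\kappa_i$ are killed, is where the care is required; the remaining small nilpotent cases $(0,2),(0,3)$ are deferred to Lemmas \ref{case 1 alphas with high index are zero with 2 by 2 nilpotent blocks} (and its companion) and to the later sections.
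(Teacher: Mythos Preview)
Your outline correctly identifies the reduction: by Lemma \ref{case 1 lemma} we have $\varphi(1)=0$, and it suffices to show $\kappa_i=0$ for all $i$, since then \eqref{alpha_through_C_and _H} gives $\alpha_j=0$ for $j\geq n$, Corollary \ref{non-regular implication for alphai=0} gives $\varphi(e_j)=0$, and Lemma \ref{case 1 main result} finishes. You also correctly invoke Lemma \ref{case 1 lemma_new} to obtain $(\kappa_1,\dots,\kappa_{n-1})A=0$, which kills $\kappa_1,\dots,\kappa_m$ when $\lambda\neq 0$ and $\kappa_1,\dots,\kappa_{m-1}$ when $\lambda=0$.

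The genuine gap is the next step. The relation $(\kappa)A=0$ only constrains the $\kappa_i$ with $i\leq m$ via the first block $J_{\lambda,m}$; the remaining $\kappa_{m+1},\dots,\kappa_{n-1}$ are constrained only through $A'$, which may well be singular (e.g.\ if $A'$ contains $J_{0,1}$ blocks, the corresponding $\kappa_i$ are entirely unconstrained by $(\kappa)A=0$). Your proposal to ``feed this back into the symmetry relations \eqref{HAH>n} and \eqref{symmetry of tensor property lower right}'' does not close this: those relations are precisely what produced the rank-one description $\alpha_j e_i=\kappa_i A e_{j-n+1}$ in the first place, and they are automatically compatible with \emph{any} choice of $(\kappa_i)$ satisfying that formula. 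No further constraint on the surviving $\kappa_i$ comes from \eqref{HAH>n} alone.

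The missing ingredient is the membership $\alpha_j\in\mathscr{A}_0+\mathbb C\bigl(\overline{H_\ell}^{-1}\Omega^*\overline{H_\ell}\bigr)$ from Lemma \ref{matrix rep of larger index rows of tensor phi lemma}, combined with the explicit block description of $\mathscr{A}$ in the appendix. The paper's argument takes the minimal index $r$ with $\kappa_r\neq 0$, observes that some nontrivial combination $b_1\alpha_{n}+b_2\alpha_{n+1}$ (or $b_1\alpha_{n+2}+b_2\alpha_{n+3}$ when $\lambda=0$) lies in $\mathscr{A}$, and then uses the Toeplitz-type formulas of Corollaries \ref{Bij formula with nonzero lambda}, \ref{Bii formula} and Lemma \ref{Bij formula with zero lambda} to show that a nonzero entry in the $r$th column of such a matrix forces a nonzero entry among the first $r-1$ columns, contradicting minimality of $r$. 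The exclusion $(\lambda,m)\notin\{(0,2),(0,3)\}$ enters precisely here: when $\lambda=0$ one needs $r>3$ (hence $\kappa_1=\kappa_2=\kappa_3=0$, which requires $m\geq 4$) so that the $r$th column sits far enough into the block for the appendix formulas to bite. Without invoking the structure of $\mathscr{A}$, there is no mechanism in your sketch that kills the $\kappa_i$ beyond the first block.
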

\begin{proof}
Let $\varphi\in \mathfrak{g}_{1}^{\mathrm{red}}$ and let $(\kappa_i)_{i=1}^{n-1}$ be as in \eqref{alpha_through_C_and _H}. 
It will suffice to show that $\kappa_i=0$ for every $1\leq i\leq n-1$. Indeed, first plugging this condition and the conclusion \eqref{phi_1=0} of Lemma  \ref{case 1 lemma}  into relation \eqref{alpha_through_C_and _H} we obtain that $\alpha_j=0$ for all $n\leq j\leq 2n-2$. This and  Corollary \ref{non-regular implication for alphai=0} imply \eqref{case 1 varphi(e_i) are zero for large i}. Thus,  the conclusion of the present lemma will follow from \eqref{phi_1=0} and Lemma \ref{case 1 main result}.

Notice that since $(\kappa_1,\ldots, \kappa_n)A=0$,
we have that $\kappa_i=0$ for $1\leq i\leq m$ if $\lambda\neq 0$, and $\kappa_i=0$ for $1\leq i\leq m-1$ if $\lambda=0$ . In particular, as $m\geq 2$ we have $\kappa_1=\kappa_2=0$ always, and, since it is assumed that $m>3$ when $\lambda=0$,  if $\lambda=0$ then $\kappa_3=0$ as well.

To produce a contradiction, assume that there exists an index $r$ such that $\kappa_r\neq 0$ and let $r$ be the minimal such index.
By \eqref{case 1 alpha_n},
\begin{align}\label{first columns of alpha are zero, two special cases}
\alpha_n e_i=\delta_{i, r}\kappa_i\lambda e_1
\quad \forall \, i\leq r,
\end{align}
and, by \eqref{case 1 alpha_n+p}, for $0<p< m$,
\begin{align}\label{first columns of alpha are zero, two special cases second}
\alpha_{n+p} e_i=\delta_{i,r}(\kappa_i e_p+\kappa_i\lambda e_{p+1})
\quad \forall \, i\leq r.
\end{align}

Note that, by Lemma \ref{matrix rep of larger index rows of tensor phi lemma}, $\mathrm{span} \{\alpha_{n}, \alpha_{n+1}\}$ is a $2$-dimensional subspace in $\mathscr{A}+\mathbb C (\overline{H}^{-1}_\ell\Omega^* \overline{H}_\ell)$. Since $\mathscr{A}$ is a subspace in $\mathscr{A}+\mathbb C (\overline{H}^{-1}_\ell\Omega^* \overline{H}_\ell)$ of codimension at most  $1$, the subspaces 
$\mathrm{span} \{\alpha_n, \alpha_{n+1}\}$ and $\mathscr{A}$ have a nontrivial intersection. That is,
there exist  $b_1,b_2\in\mathbb C$ such that  $(b_1,b_2)\neq(0,0)$ and 
\begin{equation}
\label{induction on r setup eqn}
b_1\alpha_n+b_2\alpha_{n+1}\in \mathscr{A}.
\end{equation}
By \eqref{first columns of alpha are zero, two special cases} and \eqref{first columns of alpha are zero, two special cases second} again the first $r-1$ columns of the matrix $b_1\alpha_n+b_2\alpha_n$ vanish and 
\begin{equation}
\label{induction on r Lemma eqn}
(b_1 \alpha_n+b_2 \alpha_{n+1})e_r=\kappa_r\Bigl ((\lambda b_1+b_2)e_1+\lambda b_2 e_2\Bigr) \end{equation}

By applying formulas from the appendix (i.e., Section \ref{Matrix representations of intersection algebra general formula}), we can derive a contradiction from the assumption $\lambda\neq 0$ as follows. Let $b_1\alpha_n+b_2\alpha_{n+1}$ be partitioned as a block matrix whose diagonal blocks have the same size as the diagonal blocks of $A$ (referring to the block diagonal partition of $A$ given in \eqref{simultaneous canonical form theorem eqn}).

By \eqref{induction on r setup eqn}, if $\lambda> 0$ then each $(i,j)$ block of $b_1\alpha_n+b_2\alpha_{n+1}$ is either characterized by Lemma \ref{lemma for reduction to eSpaces} or Corollary \ref{Bii formula}  and identically zero or it is characterized by Corollary \ref{Bij formula with nonzero lambda} and more specifically characterized by \eqref{intersection algebra lambda real off diag}.
In particular, if the $(1,j)$ block of $b_1\alpha_n+b_2\alpha_{n+1}$ is nonzero (and therefore characterized by \eqref{intersection algebra lambda real off diag}) and contains part of the $r$ column of $b_1\alpha_n+b_2\alpha_{n+1}$, then \eqref{intersection algebra lambda real off diag} implies that the $(j,1)$ block of $b_1\alpha_n+b_2\alpha_{n+1}$ is nonzero and contained in the first $r-1$ columns of $b_1\alpha_n+b_2\alpha_{n+1}$, which contradicts our definition of $r$. Accordingly, if $\lambda>0$ then the $(1,j)$ block of $b_1\alpha_n+b_2\alpha_{n+1}$ containing part of the $r$ column of $b_1\alpha_n+b_2\alpha_{n+1}$ is identically zero, which implies $\lambda b_1+b_2=0$ and $\lambda b_2=0$ by \eqref{induction on r Lemma eqn}. So, if $\lambda> 0$, then we obtain the contradiction $(b_1,b_2)=(0,0)$.

On the other hand, if $\lambda= 0$ then, by Lemma \ref{matrix rep of larger index rows of tensor phi lemma},
$\mathrm{span} \{\alpha_{n+2}, \alpha_{n+3}\}$ is a $2$-dimensional subspace in $\mathscr{A}+\mathbb C (\overline{H}^{-1}_\ell\Omega^* \overline{H}_\ell)$. Similarly to the previous case, $\mathscr{A}$ and $\mathrm{span} \{\alpha_{n+2}, \alpha_{n+3}\}$ have a nontrivial intersection, that is,
there exist  $b_1,b_2\in\mathbb C$ such that  $(b_1,b_2)\neq(0,0)$ and 
\begin{align}\label{induction on r setup lambda zero eqn}
b_1\alpha_{n+2}+b_2\alpha_{n+3}\in \mathscr{A}.
\end{align}
Note that we are now redefining $b_1$ and $b_2$ because the previous definition is no longer needed, and that the $b_i$s in \eqref{induction on r setup lambda zero eqn} are not related to the $b_i$s in \eqref{induction on r setup eqn}. By \eqref{first columns of alpha are zero, two special cases} and \eqref{first columns of alpha are zero, two special cases second} the first $r-1$ columns of the matrix $b_1\alpha_{n+2}+b_2\alpha_{n+3}$ vanish and 
\begin{equation}
\label{induction on r Lemma lambda zero eqn}
(b_1 \alpha_{n+2}+b_2 \alpha_{n+3})e_r=\kappa_r\Bigl(b_1e_2+b_2 e_3\Bigr) .
\end{equation}

By applying formulas from the appendix again, we can derive a contradiction now from the assumption $\lambda=0$. For this, let $b_1\alpha_{n+2}+b_2\alpha_{n+3}$ in \eqref{induction on r setup lambda zero eqn} be partitioned as a block matrix whose diagonal blocks have the same size as the diagonal blocks of $A$. By \eqref{induction on r setup lambda zero eqn}, if $\lambda=0$ then each $(i,j)$ block of $b_1\alpha_n+b_2\alpha_{n+1}$ is either characterized by Lemma \ref{lemma for reduction to eSpaces} and identically zero or it is characterized by Lemmas \ref{Bij formula with zero lambda} and \ref{condition on C for 2-dimensional scaling component} and Corollary \ref{Bii formula} and more specifically characterized by \eqref{intersection algebra lambda zero off diag 1}, \eqref{intersection algebra lambda zero off diag 2}, \eqref{intersection algebra lambda diag}, and \eqref{diagformula}. In particular, if $\lambda= 0$ and the $(1,j)$ block of $b_1\alpha_{n+2}+b_2\alpha_{n+3}$ contains part of the $r$ column of $b_1\alpha_{n+2}+b_2\alpha_{n+3}$, and, furthermore, we assume that the $(1,j)$ block is not identically zero, then this $(1,j)$ block is either characterized by \eqref{intersection algebra lambda diag} and \eqref{diagformula} or by \eqref{intersection algebra lambda zero off diag 1} and \eqref{intersection algebra lambda zero off diag 2}. 

Considering the first possibility where the $(1,j)$ block containing part of the $r$ column of $b_1\alpha_{n+2}+b_2\alpha_{n+3}$ is characterized by \eqref{intersection algebra lambda diag} and \eqref{diagformula} (i.e., $j=1$), by \eqref{induction on r Lemma lambda zero eqn}, the first $m$ entries of $b_1e_2+b_2 e_3$ form the $r$ column of the $(1,1)$ block of $b_1\alpha_{n+2}+b_2\alpha_{n+3}$. Since we are assuming that this $(1,1)$ block is a linear combination of matrices \eqref{intersection algebra lambda diag} and \eqref{diagformula} with the latter being a diagonal matrix, noting that $r>3$, it follows that the first entry in the $r-1$ column of this $(1,1)$ block is $-b_1$ and the second entry in the $r-1$ column of this $(1,1)$ block is $-b_2$. Yet the $r-1$ column of the $(1,1)$ block of $b_1\alpha_{n+2}+b_2\alpha_{n+3}$ is zero by the definition of $r$, so we have obtained the contradiction that $(b_1,b_2)=(0,0)$.

Considering the remaining possibility, which is where the $(1,j)$ block containing part of the $r$ column of $b_1\alpha_{n+2}+b_2\alpha_{n+3}$ is characterized by \eqref{intersection algebra lambda zero off diag 1} or \eqref{intersection algebra lambda zero off diag 2}, if this $(1,j)$ block is nonzero then \eqref{intersection algebra lambda zero off diag 1} and \eqref{intersection algebra lambda zero off diag 2} imply that the $(j,1)$ block is nonzero and contained in the first $r-1$ columns of $b_1\alpha_{n+2}+b_2\alpha_{n+3}$, which contradicts the definition of $r$.

Hence, the $(1,j)$ block containing part of the $r$ column of $b_1\alpha_{n+2}+b_2\alpha_{n+3}$ must be identically zero because all other possibilities yield contradictions, and yet, by \eqref{induction on r Lemma lambda zero eqn}, setting this $(1,j)$ block equal to zero again implies the contradiction $(b_1,b_2)=(0,0)$. Therefore, there is no index $r$ such that $\kappa_r\neq 0$.
\end{proof}

\begin{lemma}\label{case 1 alphas with high index are zero with small nilpotent blocks}
In the special case of \ref{case 1 subsection} wherein \eqref{case 1 condition a} and \eqref{case 1 condition b} hold,  if there is a basis with respect to which $\boldsymbol{A}$ is represented by the matrix
\begin{align}
A=J_{0, 3}\oplus J_{1, c}\oplus A''  \quad\mbox{ for some $c>0$} \label{case 1 alphas with high index are zero with small nilpotent blocks tilde C a}
\end{align}
or
\begin{align}
A=J_{0, 2}\oplus J_{1, c}\oplus J_{1, c'}\oplus  A''   \quad \mbox{ for some $c,c^\prime>0$.} \label{case 1 alphas with high index are zero with small nilpotent blocks tilde C b}
\end{align}
then $\mathfrak{g}_{1}^{\mathrm{red}}=0$.
\end{lemma}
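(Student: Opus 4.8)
The plan is to follow the strategy of the proof of Lemma~\ref{case 1 alphas with high index are zero}, now extracting the needed nondegeneracy from the eigenvalue‑$1$ blocks present in \eqref{case 1 alphas with high index are zero with small nilpotent blocks tilde C a}--\eqref{case 1 alphas with high index are zero with small nilpotent blocks tilde C b} rather than from a large nilpotent block of $A$. Fix $\varphi\in\mathfrak g_1^{\mathrm{red}}$. Since $\mathrm{rank}\,A\ge 3$ in both cases, Lemma~\ref{case 1 lemma} gives $\varphi(1)=0$ and Lemma~\ref{case 1 lemma_new} gives that the vector $\kappa=(\kappa_1,\dots,\kappa_{n-1})$ of coefficients in \eqref{alpha_through_C_and _H} satisfies $\kappa A=0$. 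As in the proof of Lemma~\ref{case 1 alphas with high index are zero}, it then suffices to prove $\kappa=0$: by \eqref{alpha_through_C_and _H} this forces $\alpha_j=0$ for all $n\le j\le 2n-2$, Corollary~\ref{non-regular implication for alphai=0} gives \eqref{case 1 varphi(e_i) are zero for large i}, and Lemma~\ref{case 1 main result} then yields $\varphi=0$.

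So assume $\kappa\ne 0$ and let $r$ be the least index with $\kappa_r\ne 0$. Since $\kappa A=0$ and every diagonal block of $A$ (in the canonical form) with nonzero eigenvalue is invertible, the coordinates of $\kappa$ in those blocks vanish, so $r$ lies in a nilpotent block of $A$ and $\kappa_i=0$ for $i<r$. Moreover, for $n\le j\le 2n-2$ equation \eqref{alpha_through_C_and _H} with $\varphi(1)=0$ reads $\alpha_j e_i=\kappa_i\,A e_{j-n+1}$, so each column of $\alpha_j$ is a multiple of the single vector $A e_{j-n+1}$; in particular $\mathrm{rank}\,\alpha_j\le1$, the first $r-1$ columns of $\alpha_j$ vanish, and its $r$th column is $\kappa_r\,A e_{j-n+1}$. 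The leverage comes from two facts: by Lemma~\ref{matrix rep of larger index rows of tensor phi lemma} every $\alpha_j$ lies in $\mathscr A_0+\mathbb C(\overline{H_\ell}^{-1}\Omega^*\overline{H_\ell})$, in which $\mathscr A_0$ has codimension at most $1$; and, by the appendix (Lemma~\ref{lemma for reduction to eSpaces}, equivalently $d(i,j)=0$ in \eqref{di,j} when $\lambda_i\ne\lambda_j$), every element of $\mathscr A$ has vanishing $(i,j)$ block, relative to the block decomposition of $A$, whenever the $i$th and $j$th diagonal blocks have different eigenvalues.

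Suppose first that $A$ has an invertible diagonal block of size $\ge2$, or two invertible blocks; this covers case \eqref{case 1 alphas with high index are zero with small nilpotent blocks tilde C b}, and also case \eqref{case 1 alphas with high index are zero with small nilpotent blocks tilde C a} whenever $c\ge2$ or $A''$ has an invertible block. Then one can choose columns $q_1\ne q_2$ of $A$ lying in invertible blocks with $A e_{q_1},A e_{q_2}$ linearly independent and supported entirely in rows of invertible blocks, so that $\mathrm{span}\{\alpha_{n+q_1-1},\alpha_{n+q_2-1}\}$ is a $2$‑dimensional subspace of $\mathscr A_0+\mathbb C(\overline{H_\ell}^{-1}\Omega^*\overline{H_\ell})$ and hence meets $\mathscr A$ nontrivially. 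A nonzero element $\beta=b_1\alpha_{n+q_1-1}+b_2\alpha_{n+q_2-1}\in\mathscr A$ has vanishing first $r-1$ columns and $r$th column $\kappa_r(b_1A e_{q_1}+b_2A e_{q_2})$, which is supported in rows of invertible blocks while lying in the block column of the nilpotent block containing $r$; since $\beta\in\mathscr A$ those cross blocks vanish, so $\kappa_r(b_1A e_{q_1}+b_2A e_{q_2})=0$, forcing $(b_1,b_2)=(0,0)$ --- a contradiction. Thus $\kappa=0$ here.

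The step I expect to be the main obstacle is the remaining configuration of \eqref{case 1 alphas with high index are zero with small nilpotent blocks tilde C a}: $c=1$ with $A''$ a direct sum of nilpotent Jordan blocks, so that $A$'s only invertible block is $J_{1,1}$ and the two‑column device above is unavailable. Here I would argue by a dimension count, regarding $\kappa$ as a row vector. Since $\kappa\ne0$, the assignment $u\mapsto u\,\kappa$ is injective, so $W:=\mathrm{span}\{\alpha_j : n\le j\le 2n-2\}=\{u\,\kappa : u\in\mathrm{im}\,A\}$ has dimension $\mathrm{rank}\,A$, and since $W\subset\mathscr A_0+\mathbb C(\overline{H_\ell}^{-1}\Omega^*\overline{H_\ell})$ it meets $\mathscr A$ in dimension at least $\mathrm{rank}\,A-1$. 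On the other hand, for $\beta=u\,\kappa\in\mathscr A$ with $u\in\mathrm{im}\,A$, relation \eqref{firstalgebra} holds automatically (using $\kappa A=0$ and the $H_\ell$‑self‑adjointness of $A$), while \eqref{secondalgebra} forces $(A^{T}H_\ell+\overline{H_\ell}A)u=0$; a block‑by‑block inspection identifies $\mathrm{im}\,A\cap\ker(A^{T}H_\ell+\overline{H_\ell}A)$ with a space of dimension equal to the number of nilpotent Jordan blocks of $A$ of size at least $2$, which --- because $A$ contains a size‑$3$ nilpotent block and only a single, one‑dimensional invertible block --- is strictly less than $\mathrm{rank}\,A-1$. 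The resulting contradiction again gives $\kappa=0$, so $\mathfrak g_1^{\mathrm{red}}=0$ in all cases.
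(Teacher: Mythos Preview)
Your argument is correct and takes a genuinely different route from the paper's. A notational point first: the blocks written $J_{1,c}$ in the statement are a typo for $J_{c,1}$, i.e.\ $1\times1$ blocks with eigenvalue $c>0$ (the paper's own proof permutes to $\widetilde A=J_{c,1}\oplus\cdots$ and uses $\alpha_n e_r=\kappa_r c\,e_1$). Under the intended reading your case split should simply be ``at least two invertible blocks'' (your Sub-case~A) versus ``case~(a) with $A''$ nilpotent'' (your Sub-case~B); your arguments go through verbatim for these. The paper instead permutes the invertible block(s) to the front, deduces $\kappa_1=\kappa_2=\kappa_3=0$, then in each case takes a \emph{three}-dimensional span of $\alpha_j$'s, finds a two-dimensional intersection with $\mathscr A$, and derives a contradiction by a careful case analysis of the appendix block formulas \eqref{intersection algebra lambda real off diag}--\eqref{intersection algebra lambda diag}. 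Your Sub-case~A is tighter: two columns from invertible blocks already force a nonzero element $\beta\in\mathscr A$ whose first nonzero column sits in a nilpotent block column but is supported in invertible block rows, and Lemma~\ref{lemma for reduction to eSpaces} kills those cross-blocks at once. Your Sub-case~B replaces all block bookkeeping by a clean dimension count on $W\cap\mathscr A$, which is more conceptual. One correction there: the condition that \eqref{secondalgebra} forces on $u$ is not $(A^TH_\ell+\overline{H_\ell}A)u=0$ but rather $\overline A\,u=0$ (since $H_\ell\overline A$ is symmetric and $\mathrm{rank}\,A\ge3$ gives $\eta'=0$, the equation $\kappa^T(H_\ell\overline A u)^T+(H_\ell\overline A u)\kappa=0$ with $\kappa\neq0$ yields $H_\ell\overline A u=0$). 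For the real matrices in Sub-case~B one has $\ker(A^TH_\ell+H_\ell A)=\ker A=\ker\overline A$ anyway, so your identification $\dim(\mathrm{im}\,A\cap\ker(\cdot))=\#\{\text{nilpotent blocks of size}\ge2\}$ and the final strict inequality remain valid.
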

\begin{proof}
Let $\varphi\in \mathfrak{g}_{1}^{\mathrm{red}}$ and let $(\kappa_i)_{i=1}^{n-1}$ be as in \eqref{alpha_through_C_and _H}. By the same arguments as in the beginning of the proof of Lemma \ref{case 1 alphas with high index are zero} , it will suffice to show that $\kappa_i=0$ for every $1\leq i\leq n-1$.
Note that, by Lemma \ref{case 1 lemma}, in the considered cases $\varphi(1)=0$. It is more convenient to work with matrices 
\begin{align}\widetilde A= J_{c,1}\oplus J_{0, 3}\oplus A'' \label{tC_1}
\end{align}
or
\begin{align}
\widetilde A=J_{ c,1}\oplus J_{ c',1}\oplus J_{0, 2}\oplus   A'' \label{tC_2} \end{align}
instead of $A$ in \eqref{case 1 alphas with high index are zero with small nilpotent blocks tilde C a} and \eqref{case 1 alphas with high index are zero with small nilpotent blocks tilde C b}, respectively.
This can be done by an obvious permutation of the basis. Also, in the considered cases the rank assumptions of Lemma \ref{case 1 lemma_new} with $A$ replaced by $\widetilde A$ holds. Therefore, using \eqref{alpha_through_C_and _H} with $A$ replaced by $\widetilde A$ we get
\begin{equation}\label{three_a}
\kappa_1=\kappa_2=\kappa_3=0.
\end{equation}
Note that if we would not replace $A$ by $\widetilde A$ we could  conclude that  $\kappa_1=\kappa_2=\kappa_4=0$ in the case of \eqref{case 1 alphas with high index are zero with small nilpotent blocks tilde C a} and that $\kappa_1=\kappa_3=\kappa_4=0$ in the case of \eqref{case 1 alphas with high index are zero with small nilpotent blocks tilde C b}, so that is why we make this permutation of the blocks.

Assume for a proof by contradiction that there exists  $r$ such that  $\kappa_r\neq 0$ and moreover that this is the minimal  such index, that is, $\kappa_i=0$ for all $i<r$.
By \eqref{three_a}, $r>3$. 
From \eqref{alpha_through_C_and _H} with $A$ replaced by $\widetilde{A}$ it follows that in both cases the first $r-1$ columns of the  matrices $\alpha_i$ with $n\leq i\leq n+3$ vanish, 
\begin{align}\label{span generators to intersect a}
\alpha_n e_r=\kappa_r c e_1, 
\quad\mbox{ and }\quad
\alpha_{n+3} e_r=\kappa_r e_3.
\end{align}
Further, 
\begin{align}\label{span generators to intersect b}
\alpha_{n+2} e_r=\kappa_r e_2
\end{align}
if $\widetilde{A}$ satisfies \eqref{tC_1}, and 
\begin{align}\label{span generators to intersect c}
\alpha_{n+1} e_r=\kappa_rc' e_2
\end{align}
if $\widetilde{A}$ satisfies \eqref{tC_2}.
Note that,  by Lemma \ref{matrix rep of larger index rows of tensor phi lemma}, each $\alpha_i$ in these equations belongs to $\mathscr{A}+\mathbb C\left(\overline{H_\ell}^{-1}\Omega^*\overline{H_\ell}\right)$. 

Hence, using similar arguments as in the proof of Lemma \ref{case 1 alphas with high index are zero} we get that the $3$-dimensional subspace $\mathrm{span} \{\alpha_n,\alpha_{n+2} , \alpha_{n+3}\}$ in the first case and $\mathrm{span} \{\alpha_n,\alpha_{n+1} , \alpha_{n+3}\}$
in the second case has at least a two dimensional intersection with $\mathscr {A}$. Notice further that in either case, the $r$th columns of matrices in these intersections must have a two-dimensional span because the natural map from the space $\mathrm{span} \{\alpha_n,\alpha_{n+2} , \alpha_{n+3}\}$ (or $\mathrm{span} \{\alpha_n,\alpha_{n+1} , \alpha_{n+3}\}$) to $\mathbb C^{n-1}$ sending a matrix to its $r$ column in this space is injective. 

Let us now first assume that $\widetilde{A}$ satisfies \eqref{tC_1}. Let $B^{(1)}$ and $B^{(2)}$ be matrices belonging to the intersection of $\mathrm{span} \{\alpha_n,\alpha_{n+2} , \alpha_{n+3}\}$ and $\mathscr{A}$ such that the $r$ column of $B^{(1)}$ is linearly independent from the $r$ column of $B^{(2)}$. For an $(n-1)\times (n-1)$ matrix $B$, let $(B_{(i,j)})$ be a partition of $B$ into a block matrix whose diagonal blocks have the same size as the diagonal blocks of $A$. Let $j$ be the index such that $B_{(1,j)}$ contains part of the $r$ column of $B$. By Lemma \ref{lemma for reduction to eSpaces}, since $c\neq 0$ there exists $i\in\{1,2\}$ such that $B_{(i,j)}=0$ for all $B\in\mathscr{A}$, because otherwise Lemma \ref{lemma for reduction to eSpaces} implies that the $(1,1)$ and $(2,2)$ blocks of $A\overline{A}$ have the same eigenvalues. In particular, at most one of the $(1,j)$ and $(2,j)$ blocks of any linear combination of $B^{(1)}$ and $B^{(2)}$ is nonzero. It follows that, for each $k\in\{1,2\}$, $B^{(k)}_{(1,j)}=0$ and $B^{(k)}_{(2,j)}\neq 0$ because otherwise the $r$ column of each $B^{(k)}$ belongs to $\mathrm{span}\{e_1\}$, which contradicts our choice of $B^{(1)}$ and $B^{(2)}$. Moreover, by \eqref{span generators to intersect a} and \eqref{span generators to intersect b}, the first nonzero column of each block $B^{(k)}_{(2,j)}$ has zero in all but its first two entries.

Each $B^{(k)}_{(2,j)}$ is either characterized by Lemma \ref{lemma for reduction to eSpaces} and is identically zero or characterized by Lemma \ref{Bij formula with zero lambda} and Corollary \ref{Bii formula} and more specifically characterized by \eqref{intersection algebra lambda zero off diag 1}, \eqref{intersection algebra lambda zero off diag 2}, or \eqref{intersection algebra lambda diag} (with $\lambda_i=0$). If $B^{(k)}_{(2,j)}$ is characterized by \eqref{intersection algebra lambda diag} then $j=2$ and, by \eqref{intersection algebra lambda diag}, the second entry of the first nonzero column of $B^{(k)}_{(2,2)}$ is zero. If, on the other hand, $B^{(k)}_{(2,j)}$ is characterized by \eqref{intersection algebra lambda zero off diag 1} (or \eqref{intersection algebra lambda zero off diag 2}) and the second  entry of the first nonzero column of $B^{(k)}_{(2,j)}$ is nonzero, then, by \eqref{intersection algebra lambda zero off diag 2} (or respectively \eqref{intersection algebra lambda zero off diag 1}), the $B^{(k)}_{(j,2)}$ block of $B^{(k)}$ is nonzero and contained in the first $r-1$ columns of $B^{(k)}$, which contradicts our choice of $r$. Therefore if $B^{(k)}_{(2,j)}$ is nonzero then the second entry of the first nonzero column of $B^{(k)}_{(2,j)}$ is zero. Yet this contradicts our choice of $B^{(1)}$ and $B^{(2)}$ because it means that the only nonzero entry in the $r$ column of $B^{(1)}$ and $B^{(2)}$ is the second entry.

Let us now address the remaining case, that is, assume that $\widetilde{A}$ satisfies \eqref{tC_2}. Again, let $j$ be the index such that $B_{(1,j)}$ contains part of the $r$ column a given $(n-1)\times(n-1)$ matrix $B$. Let $B^{(1)}$ and $B^{(2)}$ be matrices belonging to the intersection of $\mathrm{span} \{\alpha_n,\alpha_{n+1} , \alpha_{n+3}\}$ and $\mathscr{A}$
such that the $r$ column of $B^{(1)}$ is linearly independent from the $r$ column of $B^{(2)}$. From this independence condition and the fact that nonzero entries  of these respective $r$th columns of $B^{(1)}$ and of $B^{(2)}$  appear within their first three entries (the latter is a consequence of 
\eqref{span generators to intersect a} and 
\eqref{span generators to intersect c}), it follows that there exists a matrix $B$ in $\mathrm{span}\{B^{(1)},B^{(2)}\}$
such that there exists $i\in\{1,2\}$ with $B_{(i,j)}\neq 0$ (because otherwise, the third entry is the only nonzero entry of $r$th columns of $B^{(1)}$ and $B^{(2)}$, which contradicts the independence of these columns). Since $r>3$ it follows that $j>2$. Thus, it follows from Lemma \ref{lemma for reduction to eSpaces} and Corollary \ref{Bij formula with nonzero lambda} that this nonzero $B_{(i,j)}$ with $i\in\{1,2\}$ is characterized by \eqref{intersection algebra lambda real off diag}. Yet \eqref{intersection algebra lambda real off diag} implies that the $B_{(j,i)}$ is a nonzero block contained in the first $r-1$ rows of $B$, which contradicts our choice of $r$.
\end{proof}

 \begin{lemma}\label{case 1 alphas with high index are zero with 2 by 2 nilpotent blocks}
In the special case of \ref{case 1 subsection} wherein \eqref{case 1 condition a} and \eqref{case 1 condition b} hold,  if 
\begin{align}\label{case 1 alphas with high index are zero with all but one small nilpotent block}
A=   
\overbrace{J_{0,2}\oplus \cdots \oplus J_{0,2}}^{k\, \mbox{\small copies}}\oplus J_{c,1}
\oplus J_{0,1}\oplus \cdots \oplus J_{0,1},
\end{align}
for some integer $k$ and some $c>0$ then $\mathfrak{g}_{1}^{\mathrm{red}}=0$.
\end{lemma}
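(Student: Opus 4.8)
The plan is to follow the general strategy of the preceding lemmas: for an arbitrary $\varphi\in\mathfrak{g}_1^{\mathrm{red}}$, first show $\varphi(1)=0$, then show $\varphi(e_i)=0$ for all $i\geq n$, after which Lemma~\ref{case 1 main result} gives $\varphi=0$. The matrix $A$ in \eqref{case 1 alphas with high index are zero with all but one small nilpotent block} has rank $k+1$; if $k=0$ then $A$ has rank $1$ and $A\overline{A}A\in\mathbb{C}A$, so the symbol would be regular by Lemma~\ref{omega in A implies regularity}, contrary to the standing assumption, and hence $k\geq 1$ and $\mathrm{rank}\,A\geq 2$. Consequently Lemma~\ref{case 1 lemma} applies and yields $\varphi(1)=0$, and Lemmas~\ref{matrix rep of larger index rows of tensor phi lemma} and~\ref{matrix rep of smaller index rows of tensor phi lemma} apply as well. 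Since in the canonical basis one has $Ae_l=0$ exactly when $e_l$ spans the kernel line of one of the $k$ nilpotent $2\times 2$ blocks or of one of the $1\times 1$ zero blocks, \eqref{alpha_through_C_and _H} together with $\varphi(1)=0$ forces $\alpha_j=0$ for every $j\geq n$, with the sole exceptions that, writing $\kappa:=(\kappa_1,\dots,\kappa_{n-1})$, the matrix $\alpha_{n+2l-1}$ has its $(2l-1)$st row equal to $\kappa$ and all other rows zero for $l=1,\dots,k$, while $\alpha_{n+2k}$ has its $(2k+1)$st row equal to $c\kappa$ and all other rows zero. By Corollary~\ref{non-regular implication for alphai=0}, $\varphi(e_j)=0$ whenever $\alpha_j=0$, so it is enough to prove $\kappa=0$.

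\textbf{Reducing to $\mathscr{A}_0$ and decoupling the $J_{c,1}$-block.} Assume for contradiction $\kappa\neq 0$. By Lemma~\ref{matrix rep of larger index rows of tensor phi lemma} the $k+1$ linearly independent matrices $\alpha_{n+1},\alpha_{n+3},\dots,\alpha_{n+2k-1},\alpha_{n+2k}$ span a subspace $W$ of $\mathscr{A}_0+\mathbb{C}\bigl(\overline{H_\ell}^{-1}\Omega^*\overline{H_\ell}\bigr)$, in which the subspace $\mathscr{A}_0$ has codimension at most $1$, so $\dim(W\cap\mathscr{A}_0)\geq k$. Now $A\overline{A}=A^2$ is block diagonal, its block attached to $J_{c,1}$ being the scalar $c^2>0$ and all other diagonal blocks nilpotent; hence Lemma~\ref{lemma for reduction to eSpaces} forces, for every $B\in\mathscr{A}\supseteq\mathscr{A}_0$, the vanishing of every block linking the $J_{c,1}$ block row or column to the remaining ones. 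Applying this to elements of $W\cap\mathscr{A}_0$: if such an element had nonzero coefficient on $\alpha_{n+2k}$, then its $(2k+1)$st row (a nonzero multiple of $\kappa$) would be supported only in the $J_{c,1}$ column, forcing $\kappa$ to be supported only there, and then a short direct computation with the two defining equations of $\mathscr{A}$ (using that $AH_\ell^{-1}$ has a nonzero $(1,1)$ block and a nonzero $J_{c,1}$-block entry) shows $W\cap\mathscr{A}_0=0$, contradicting $\dim(W\cap\mathscr{A}_0)\geq k\geq 1$. Therefore $W\cap\mathscr{A}_0\subseteq\operatorname{span}\{\alpha_{n+1},\dots,\alpha_{n+2k-1}\}$, and since the latter is $k$-dimensional we get equality, so in particular each $\alpha_{n+2l-1}$ lies in $\mathscr{A}_0\subseteq\mathscr{A}$.

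\textbf{Killing $\kappa$ via transpose symmetry and \eqref{orthogonal algebra 2 condition}.} Fix $l\in\{1,\dots,k\}$. The matrix $\alpha_{n+2l-1}\in\mathscr{A}$ has its only nonzero row, row $2l-1$, in the $l$th (nilpotent) block row. For each block column $j\neq l$: if the $j$th diagonal block of $A\overline{A}$ has an eigenvalue different from that of the $l$th one, Lemma~\ref{lemma for reduction to eSpaces} makes the $(l,j)$ block of $\alpha_{n+2l-1}$ vanish; otherwise (both nilpotent) the off-diagonal formulas \eqref{intersection algebra lambda zero off diag 1}--\eqref{intersection algebra lambda zero off diag 2} of Lemma~\ref{Bij formula with zero lambda} relate the $(l,j)$ and $(j,l)$ blocks via the reversal matrices $S_m$, so that the $(l,j)$ block vanishes as soon as the $(j,l)$ block does --- which it does, since $\alpha_{n+2l-1}$ has all rows other than row $2l-1$ equal to zero. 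Hence the nonzero row of $\alpha_{n+2l-1}$ is supported inside block column $l$, i.e.\ $\kappa_i=0$ for every index $i$ outside the $l$th block $\{2l-1,2l\}$. Moreover $\alpha_{n+2l-1}AH_\ell^{-1}$ is then a scalar multiple of the matrix with a single nonzero entry (equal to $\kappa_{2l-1}$) at position $(2l-1,2l-1)$, so it is symmetric of rank $\leq 1$; but $AH_\ell^{-1}$ has a nonzero diagonal entry at a position different from $(2l-1,2l-1)$ (coming from another block, since there are $k+1\geq 2$ such nonzero diagonal entries), so comparing that entry in \eqref{orthogonal algebra 2 condition} forces the scalar $\eta$ there to be $0$, whence $\kappa_{2l-1}=0$. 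Thus $\kappa$ is supported only at the single index $2l$. If $k\geq 2$, combining this for $l=1$ (support $\subseteq\{2\}$) and $l=2$ (support $\subseteq\{4\}$) gives $\kappa=0$, the desired contradiction; then $\alpha_j=0$ for all $j\geq n$, so $\varphi(e_j)=0$ for $j\geq n$ by Corollary~\ref{non-regular implication for alphai=0} and $\varphi=0$ by Lemma~\ref{case 1 main result}.

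\textbf{The case $k=1$ and the main obstacle.} For $k=1$ the above only yields $\kappa=\kappa_2 e_2$, and ruling out $\kappa_2\neq 0$ is the crux of the proof, since the rank/eigenvalue bookkeeping that worked for $k\geq 2$ is no longer available (there is no block column disjoint from $\{2\}$ whose data could be used). The plan here is to exploit the involution $\sigma$: since \eqref{conj on g1} defines an involution of $\mathfrak{g}_1^{\mathrm{red}}$, the element $\sigma(\varphi)$ again lies in $\mathfrak{g}_1^{\mathrm{red}}$ and, running the argument above on $\sigma(\varphi)$, its associated tuple $\kappa^\sigma$ is likewise forced to equal $\kappa^\sigma_2 e_2$. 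One then uses the identification $\sigma(\varphi)(e_{n+2})=\sigma(\varphi(e_3))$ to compute the matrix $\alpha_3$ appearing in the representation \eqref{matrix rep of smaller index rows of tensor phi} of $\varphi(e_3)$, finding that $\alpha_3$ is a scalar multiple (proportional to $\overline{\kappa^\sigma_2}$) of a matrix with a single nonzero entry in block row $1$ (the $J_{0,2}$ block) and block column $2$ (the $J_{c,1}$ block). Since $\varphi(e_3)\in\mathfrak{g}_0^{\mathrm{red}}=\mathfrak{g}_{0,0}^{\mathrm{red}}\oplus\mathfrak{g}_{0,-}^{\mathrm{red}}\oplus\mathfrak{g}_{0,+}^{\mathrm{red}}$ lies in $\mathfrak{g}_{0,0}^{\mathrm{red}}\oplus\mathfrak{g}_{0,+}^{\mathrm{red}}$ by Lemma~\ref{matrix rep of smaller index rows of tensor phi lemma}, and its upper-right block is $c_3A=\kappa_3A=0$ (as $\kappa_3=0$ here), the $\mathfrak{g}_{0,+}^{\mathrm{red}}$-component of $\varphi(e_3)$ drops out by \eqref{Xzero2}, so by \eqref{g00 subalg. representation} the matrix $\alpha_3$ differs from an element of $\mathscr{A}_0$ by a scalar multiple of $I$; but both $\mathscr{A}_0\subseteq\mathscr{A}$ and $I$ have zero block linking block row $1$ to block column $2$ (Lemma~\ref{lemma for reduction to eSpaces}), forcing the coefficient to vanish, i.e.\ $\kappa^\sigma_2=0$. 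Then $\kappa^\sigma=0$, so all large-index $\alpha$-matrices of $\sigma(\varphi)$ vanish, so $\sigma(\varphi)(e_j)=0$ for $j\geq n$ by Corollary~\ref{non-regular implication for alphai=0}; together with $\sigma(\varphi)(1)=\sigma(\varphi(1))=0$ this lets Lemma~\ref{case 1 main result} applied to $\sigma(\varphi)$ conclude $\sigma(\varphi)=0$, hence $\varphi=0$, contradicting $\kappa\neq 0$. (Here the use of $\mathscr{A}_0$ rather than $\mathscr{A}$ throughout, and the fact that $\Omega\notin\mathscr{A}$ by Lemma~\ref{omega in A implies regularity} and non-regularity, ensure that the membership $\alpha_{n+2l-1}\in\mathscr{A}_0$ in the second step and the vanishing of the $\mathfrak{g}_{0,+}^{\mathrm{red}}$-component in the last step are legitimate.) The main difficulty to anticipate is exactly this last step: one must transfer information between the upper and lower halves of $\varphi$ through $\sigma$ and couple it with the eigenvalue-separation structure of $\mathscr{A}$, as no argument internal to the large-index part of $\varphi$ suffices to eliminate $\kappa_2$ when $k=1$.
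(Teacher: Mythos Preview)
Your argument has a genuine gap in the ``Killing $\kappa$'' step. You assert that for $B\in\mathscr{A}$ and two nilpotent blocks $l,j$ with $\lambda_l=\lambda_j=0$, the vanishing $B_{(j,l)}=0$ forces $B_{(l,j)}=0$ via the linkage in Lemma~\ref{Bij formula with zero lambda}. This is false. For two $J_{0,2}$ blocks the paper itself records in \eqref{case 1.3 intersection eqn a} that $B_{(i,j)}=\epsilon_i\begin{pmatrix}b&c\\0&d\end{pmatrix}$ and $B_{(j,i)}=-\epsilon_j\begin{pmatrix}b&e\\0&d\end{pmatrix}$, so $B_{(j,i)}=0$ gives $b=d=e=0$ but leaves $c$ free; likewise for a $J_{0,2}$--$J_{0,1}$ pair \eqref{case 1.3 intersection eqn b} shows $B_{(i,j)}=\begin{pmatrix}a\\0\end{pmatrix}$ and $B_{(j,i)}=\begin{pmatrix}0&b\end{pmatrix}$ with $a,b$ independent. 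Consequently, knowing that $\alpha_{n+2l-1}\in\mathscr{A}$ has all rows but row $2l-1$ equal to zero only yields $\kappa_{2j-1}=0$ for $j=1,\dots,k$ and $\kappa_{2k+1}=0$; the entries $\kappa_{2},\kappa_4,\dots,\kappa_{2k}$ and all $\kappa_i$ with $i>2k+1$ remain unconstrained. These constraints are identical for every choice of $l$, so intersecting over $l$ gives nothing new, and your conclusion $\kappa\subset\{2l\}$ (and hence $\kappa=0$ for $k\ge2$) does not follow. One also checks directly that $\alpha_{n+2l-1}AH_\ell^{-1}=0$ and $H_\ell\overline{A}\,\alpha_{n+2l-1}=0$ once $\kappa_{\mathrm{odd}}=\kappa_{2k+1}=0$, so \eqref{orthogonal algebra 2 condition} yields no further information.

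The paper's proof takes an entirely different route. Rather than trying to constrain $\kappa$ through membership of the $\alpha_{n+2l-1}$ in $\mathscr{A}$, it observes that $\alpha_{n+2k}$ cannot lie in $\mathscr{A}_0\setminus\{0\}$ (by \eqref{case 1.3 intersection eqn d}), so either $\kappa=0$ or $\overline{H_\ell}^{-1}\Omega^*\overline{H_\ell}\in\mathscr{A}_0+\mathbb{C}\alpha_{n+2k}$. In the latter case it writes $\Omega=\Omega_0+\overline{s}\,\overline{H_\ell}^{-1}\alpha_{n+2k}^*\overline{H_\ell}$ with $\Omega_0\in\mathscr{A}_0$, and then exploits items (iii) and (iv) of the system \eqref{system} together with Corollary~\ref{nonnilp_cor} to derive $A\overline{A}+|s|^2[\alpha_{n+2k},\overline{H_\ell}^{-1}\alpha_{n+2k}^*\overline{H_\ell}]\in\mathscr{A}_0$, and from there eventually $\Omega\in\mathscr{A}_0$, contradicting Lemma~\ref{omega in A implies regularity}. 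The essential point you are missing is that this case genuinely requires the modified-symbol conditions on $\Omega$ in \eqref{system}; the block structure of $\mathscr{A}$ alone is not enough. (A smaller issue: in your $k=1$ step the identification ``$c_3A=\kappa_3A$'' conflates the coefficient $c$ in \eqref{matrix rep of smaller index rows of tensor phi} with $\kappa_3$ from \eqref{alpha_through_C_and _H}; these are unrelated.)
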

\begin{proof}
Let $\varphi\in \mathfrak{g}_{1}^{\mathrm{red}}$ and let $(\kappa_i)_{i=1}^{n-1}$ be as in \eqref{alpha_through_C_and _H}. By the same arguments as in the beginning of the proof of Lemma \ref{case 1 alphas with high index are zero} , it will suffice to show that $\kappa_i=0$ for every $1\leq i\leq n-1$.
We work with $(H_\ell, A)$ in the canonical form of Theorem \ref{simultaneous canonical form theorem}, so $H_\ell$ is as in \eqref{simultaneous canonical form theorem eqn}, that is
\begin{align}\label{special subcase H in case 1}
H_{\ell}=
\epsilon_{1}N_{0,2}\oplus \cdots \oplus \epsilon_{k}N_{0,2}\oplus \epsilon_{k+1}N_{c,1}\oplus \cdots \oplus \epsilon_{\gamma}N_{0,1}
\end{align}
for some coefficients $\epsilon_i=\pm 1$.

For a matrix $B$ in $\mathscr{A}$,  let $(B_{(i,j)})$ be a partition of $B$ into a block matrix whose diagonal blocks have the same size as the diagonal blocks of $A$. By Lemma \ref{Bij formula with zero lambda} and Corollary \ref{Bii formula} (in the appendix below), we have
\begin{align}\label{case 1.3 intersection eqn a}
B_{(i,j)}=
\epsilon_{i}\left(
\begin{array}{cc}
b & c\\
0&d
\end{array}
\right)
\quad\mbox{ and }\quad
B_{(j,i)}=-\epsilon_j
\left(
\begin{array}{cc}
b & e\\
0&d
\end{array}
\right)
\quad\quad\forall\, i,j\leq k
\end{align}
and
\begin{align}\label{case 1.3 intersection eqn b}
B_{(i,j)}=
\left(
\begin{array}{c}
a \\
0
\end{array}
\right)
\quad\mbox{ and }\quad
B_{(j,i)}=
\left(
\begin{array}{cc}
0 & b
\end{array}
\right)
\quad\quad\forall\, i\leq k<j
\end{align}
for some $b,c,  d,e\in \mathbb C$ that depend on $(i,j)$. By Corollary \ref{Bii formula} and Lemma \ref{condition on C for 2-dimensional scaling component} (in the appendix below),
\begin{align}\label{case 1.3 intersection eqn c}
B_{1,1}=B_{2,2}=\cdots=B_{2k+1,2k+1},
\end{align}
 
where here $B_{i,j}$ denotes the $(i,j)$ entry of $B$ rather than the $(i,j)$ block $B_{(i,j)}$.
By Lemma \ref{lemma for reduction to eSpaces} and Corollary \ref{Bii formula} (in the appendix below),
\begin{align}\label{case 1.3 intersection eqn d}
B_{(i,k+1)}=0
\quad\mbox{ and }\quad
B_{(k+1,i)}=0
\quad\quad\forall \, i\neq k.
\end{align}

Since, by Lemma \ref{case 1 lemma_new}, $(\kappa_1,\ldots, \kappa_{n-1})A=0$, we have 
\begin{align}\label{some alphas are zero case 1 final subcase}
\kappa_i=0\quad\quad\mbox{ whenever $i$ is odd and }i\leq 2k+1.
\end{align}
From \eqref{alpha_through_C_and _H} and Lemma \ref{case 1 lemma} it follows that, for $0\leq p\leq n-1$, the $i$ column of the matrix $\alpha_{n+p}$ is equal to $\kappa_i$ times the $p+1$ column of $A$. In particular, the $(i,j)$ entry of $\alpha_{n+2k}$ is
\begin{align}\label{case 1.3 alpha element eqn}
\left(\alpha_{n+2k}\right)_{i,j}=\kappa_j c\delta_{i,2k+1}.
\end{align}
Since, by Lemma \ref{matrix rep of larger index rows of tensor phi lemma}, each $\alpha_{n+p}$ belongs to $\mathscr{A}_0+\mathbb C\left(\overline{H_\ell}^{-1}\Omega^*\overline{H_\ell}\right)$ and $\alpha_{n+2k}$ does not belong to $\mathscr{A}_0\setminus\{0\}$, which can be seen by contrasting \eqref{case 1.3 intersection eqn d} and \eqref{case 1.3 alpha element eqn}, it follows that 
\[
\mbox{ either }\quad\alpha_{n+2k}=0\quad\mbox{ or }\quad\overline{H_\ell}^{-1}\Omega^*\overline{H_\ell}\in \mathscr{A}_0+\text{span}_{\mathbb C}\{\alpha_{n+2k}\}.
\]
But $\alpha_{n+2k}=0$ if and only if $\kappa_1=\cdots=\kappa_{n-1}=0$, which is equivalent to what we want to show, so let us proceed assuming 
\begin{align}\label{calculating omega case 1 final subcase}
\overline{H_\ell}^{-1}\Omega^*\overline{H_\ell}\in \mathscr{A}_0+\text{span}_{\mathbb C}\{\alpha_{n+2k}\}
\end{align}
in order to produce a contradiction. Accordingly, let $\Omega_0\in \mathscr{A}_0$ and $s\in\mathbb C$ be such that 
\begin{align}\label{case 1.3 omega decomp a}
\overline{H_\ell}^{-1}\Omega^*\overline{H_\ell}=\overline{H_\ell}^{-1}\Omega_0^*\overline{H_\ell}+s\alpha_{n+2k},
\end{align}
or, equivalently,
\begin{align}\label{case 1.3 omega decomp b}
\Omega=\Omega_0+\overline{s}\overline{H_\ell}^{-1}\alpha_{n+2k}^*\overline{H_\ell}.
\end{align}

Here we will apply another result from the appendix (below), namely Corollary \ref{nonnilp_cor}, which states that for $B\in\mathscr{A}$, since $A$ is not nilpotent, if 
$\left(H_\ell\overline{A} B\right)^T+H_\ell\overline{A} B=\mu H_\ell\overline{A} $ then $B AH_\ell^{-1} +   AH_\ell^{-1}B^T  =\mu AH_\ell^{-1}$. Noting that, by \eqref{some alphas are zero case 1 final subcase} and \eqref{case 1.3 alpha element eqn}, $\overline{A}\overline{H_\ell}^{-1}\alpha_{n+2k}^*\overline{H_\ell}=0$, item (iii) in \eqref{system} and \eqref{case 1.3 omega decomp b} imply that 
\begin{align}\label{case 1.3 omega zero coef}
\left(H_\ell\overline{A} \Omega_0\right)^T+H_\ell\overline{A} \Omega_0=\mu H_\ell\overline{A},
\end{align}
and hence Corollary \ref{nonnilp_cor} implies that 
\begin{align}\label{case 1.3 omega zero eta}
\eta_{\Omega_0}=\mu,
\end{align}
where this notation $\eta_{\Omega_0}$ refers to the coefficient with that label in items (i) and (ii) or \eqref{system}. 

Since the matrix equation $\left(H_\ell\overline{A} X\right)^T+H_\ell\overline{A} X=\mu H_\ell\overline{A}$ is equivalent to 
\[
\left(\overline{H}_\ell^{-1}X^*\overline{H}_\ell\right) AH_\ell^{-1}+AH_\ell^{-1}\left(\overline{H}_\ell^{-1}X^*\overline{H}_\ell\right)^T=\overline{\mu}AH_\ell^{-1},
\]
\eqref{case 1.3 omega zero coef} implies
\begin{align}\label{case 1.3 omega zero adjoint eta}
\eta_{\overline{H_\ell}^{-1}\Omega_0^*\overline{H_\ell}}=\overline{\mu}.
\end{align}
By \eqref{case 1.3 omega zero adjoint eta}, items (i) and (ii) in \eqref{system} imply
\begin{align}\label{case 1.3 omega zero adjoint condition 2}
\left[\Omega, \overline{H_\ell}^{-1}\Omega_0^*\overline{H_\ell}\right]+\overline{\mu}\Omega\in\mathscr{A}_0,
\end{align}
and applying the transformation $X\mapsto \overline{H_\ell}^{-1}X^*\overline{H_\ell}$ to the matrix in \eqref{case 1.3 omega zero adjoint eta} yields
\begin{align}\label{case 1.3 omega zero condition 2}
\left[\overline{H_\ell}^{-1}\Omega^*\overline{H_\ell}, \Omega_0\right]-\mu\overline{H_\ell}^{-1}\Omega_0^*\overline{H_\ell}\in\mathscr{A}_0.
\end{align}

Now we analyze item (iv) of \eqref{system}. Using \eqref{case 1.3 omega decomp a}, \eqref{case 1.3 omega decomp b}, and lastly \eqref{case 1.3 omega zero adjoint condition 2}, we have 
\begin{align}
\left[\overline{H_\ell}^{-1}\Omega^*\overline{H_\ell}, \Omega\right]&= \left[\overline{H_\ell}^{-1}\Omega_0^*\overline{H_\ell}, \Omega\right]+ \left[s\alpha_{n+2k},\Omega_0\right]+|s|^2\left[\alpha_{n+2k},\overline{H_\ell}^{-1}\alpha_{n+2k}^*\overline{H_\ell}\right] \\
&\equiv\overline \mu \Omega  +\left[s\alpha_{n+2k},\Omega_0\right]+|s|^2\left[\alpha_{n+2k},\overline{H_\ell}^{-1}\alpha_{n+2k}^*\overline{H_\ell}\right]
\pmod{\mathscr{A}_0}.
\end{align}
Substituting the last equation into  item (iv) of \eqref{system} we get, after the obvious cancellation, that
\begin{align}\label{case 1.3 omega zero adjoint condition 4}
\left[s\alpha_{n+2k},\Omega_0\right]+|s|^2\left[\alpha_{n+2k}, \overline{H_\ell}^{-1}\alpha_{n+2k}^*\overline{H_\ell}\right]+A\overline{A}-\mu\overline{H_\ell}^{-1}\Omega^*\overline{H_\ell}\in\mathscr{A}_0.
\end{align}
Similarly, \eqref{case 1.3 omega decomp a}, \eqref{case 1.3 omega decomp b}, and then \eqref{case 1.3 omega zero condition 2}  yields
\begin{align}
\left[\overline{H_\ell}^{-1}\Omega^*\overline{H_\ell}, \Omega\right]&=\left[\overline{H_\ell}^{-1}\Omega^*\overline{H_\ell}, \Omega_0\right]+\left[\overline{H_\ell}^{-1}\Omega_0^*\overline{H_\ell},\overline{s}\overline{H_\ell}^{-1}\alpha_{n+2k}^*\overline{H_\ell}\right]+|s|^2\left[\alpha_{n+2k},\overline{H_\ell}^{-1}\alpha_{n+2k}^*\overline{H_\ell}\right]\\
&\equiv \mu\overline{H_\ell}^{-1}\Omega_0^*\overline{H_\ell}+\left[\overline{H_\ell}^{-1}\Omega_0^*\overline{H_\ell},\overline{s}\overline{H_\ell}^{-1}\alpha_{n+2k}^*\overline{H_\ell}\right]+|s|^2\left[\alpha_{n+2k},\overline{H_\ell}^{-1}\alpha_{n+2k}^*\overline{H_\ell}\right],
\end{align}
where the equivalence is modulo $\mathscr{A}_0$. Substituting the last equation into  item (iv) of \eqref{system} we get
\begin{align}\label{case 1.3 omega zero condition 4}
\left[\overline{H_\ell}^{-1}\Omega_0^*\overline{H_\ell},\overline{s}\overline{H_\ell}^{-1}\alpha_{n+2k}^*\overline{H_\ell}\right]+|s|^2\left[\alpha_{n+2k},\overline{H_\ell}^{-1}\alpha_{n+2k}^*\overline{H_\ell}\right]+A\overline{A}-\overline{\mu}\Omega\in\mathscr{A}_0.
\end{align}
On the other hand, again from \eqref{case 1.3 omega decomp a} , \eqref{case 1.3 omega decomp b}, and using that   $\left[\overline{H_\ell}^{-1}\Omega_0^*\overline{H_\ell},\Omega_0\right]\in\mathscr{A}_0$, we can write
\begin{equation*}
\label{commutator Omega cong Omega}
\left[\overline{H_\ell}^{-1}\Omega^*\overline{H_\ell}, \Omega\right]\equiv \left[s\alpha_{n+2k},\Omega_0\right]+\left[\overline{H_\ell}^{-1}\Omega_0^*\overline{H_\ell},\overline{s}\overline{H_\ell}^{-1}\alpha_{n+2k}^*\overline{H_\ell}\right]+|s|^2\left[\alpha_{n+2k},\overline{H_\ell}^{-1}\alpha_{n+2k}^*\overline{H_\ell}\right],
\end{equation*}
where here again the equivalence is modulo $\mathscr{A}_0$. 
By subtracting the matrix in item (iv) of \eqref{system} from the sum of the matrices in \eqref{case 1.3 omega zero adjoint condition 4} and  \eqref{case 1.3 omega zero condition 4} and using the last relation, we get
\[
A\overline{A}+|s|^2\left[\alpha_{n+2k},\overline{H_\ell}^{-1}\alpha_{n+2k}^*\overline{H_\ell}\right]\in\mathscr{A}_0,
\]
or, equivalently,
\begin{align}\label{case 1.3 A squared in condition 4}
\left(A\overline{A}+|s|^2\alpha_{n+2k}\overline{H_\ell}^{-1}\alpha_{n+2k}^*\overline{H_\ell}\right)-|s|^2\overline{H_\ell}^{-1}\alpha_{n+2k}^*\overline{H_\ell}\alpha_{n+2k}\in\mathscr{A}_0.
\end{align}
Notice that the first two terms in \eqref{case 1.3 A squared in condition 4}, grouped together by parentheses, are matrices whose only potentially nonzero entry is the $(2k+1,2k+1)$ entry, whereas the other term has the same value in the first $2k+1$ entries of its main diagonal. By \eqref{case 1.3 intersection eqn c}, each matrix in $\mathscr{A}_0$ also has the same values in the first $2k+1$ entries of its main diagonal. Moreover, the $(2k+1,2k+1)$ entry of $A\overline{A}$ is nonzero. Therefore, by \eqref{case 1.3 A squared in condition 4},
\begin{align}\label{case 1.3 A squared rewritten}
A\overline{A}=-|s|^2\alpha_{n+2k}\overline{H_\ell}^{-1}\alpha_{n+2k}^*\overline{H_\ell}.
\end{align}
Defining
\begin{align}\label{case 1.3 A squared in condition 4 simplified a}
\alpha:=|s|^2\overline{H_\ell}^{-1}\alpha_{n+2k}^*\overline{H_\ell}\alpha_{n+2k},
\end{align}
\eqref{case 1.3 A squared in condition 4} and \eqref{case 1.3 A squared rewritten} imply that $\alpha$ is in $\mathscr{A}_0$.

It is straightforward to check that, with this definition for $\alpha$, $\eta_{\alpha}=0$ in the notation of item (i) of \eqref{system} (by calculating, for example, the $(1,1)$ entries of the terms in item (i)), and hence items (i) and (ii) of \eqref{system} yield $[\Omega,\alpha]\in\mathscr{A}_0$. Or, equivalently, by \eqref{case 1.3 omega decomp b}, noting that $[\Omega_0,\alpha]\in\mathscr{A}_0$,
\begin{align}\label{case 1.3 A squared in condition 4 simplified b}
\overline{s}\left[\overline{H_\ell}^{-1}\alpha_{n+2k}^*\overline{H_\ell},\alpha\right]\in\mathscr{A}_0.
\end{align}
Notice that $\overline{H_\ell}^{-1}\alpha_{n+2k}^*\overline{H_\ell}\alpha=0$ because $\left(\overline{H_\ell}^{-1}\alpha_{n+2k}^*\overline{H_\ell}\right)^2=0$, and hence \eqref{case 1.3 A squared in condition 4 simplified b} implies
\begin{align}\label{case 1.3 A squared in condition 4 simplified c}
\overline{s}|s|^2\overline{H_\ell}^{-1}\alpha_{n+2k}^*\overline{H_\ell}\left(\alpha_{n+2k}\overline{H_\ell}^{-1}\alpha_{n+2k}^*\overline{H_\ell}\right)\in\mathscr{A}_0.
\end{align}
Applying \eqref{case 1.3 A squared rewritten}, we get
\begin{align}\label{case 1.3 A squared in condition 4 simplified d}
-\frac{\overline{s}|s|^2}{|c|^2}\overline{H_\ell}^{-1}\alpha_{n+2k}^*\overline{H_\ell}\left(\alpha_{n+2k}\overline{H_\ell}^{-1}\alpha_{n+2k}^*\overline{H_\ell}\right)&=\frac{\overline{s}}{|c|^2}\overline{H_\ell}^{-1}\alpha_{n+2k}^*\overline{H_\ell}\left(A\overline{A}\right)\\
&=\overline{s}\overline{H_\ell}^{-1}\alpha_{n+2k}^*\overline{H_\ell},
\end{align}
where this last equality follows easily from \eqref{case 1.3 alpha element eqn}. 

By \eqref{case 1.3 omega decomp b}, \eqref{case 1.3 A squared in condition 4 simplified c}, and \eqref{case 1.3 A squared in condition 4 simplified d}, we get that $\Omega$ is in $\mathscr{A}_0$, but this contradicts Lemma \ref{omega in A implies regularity}. Therefore, the assumption that $\alpha_{n+2k}\neq 0$ must be false, which in turn implies that $a_1=\cdots=a_{n-1}=0$, completing this proof.
\end{proof}

\subsection{The second special case: }\label{case 2 subsection} In this subsection, \ref{case 2 subsection}, we consider the special case where we have some integer $1\leq m\leq n-1$ such that 
\begin{align}\label{case 2 condition a}
 H_\ell=
\left(
\begin{array}{c|c}
S_{2m}
& 
0
\\\hline
0
&H_\ell^\prime
\end{array}
\right),
\end{align}
where $H_\ell^\prime$ is an arbitrary nondegenerate Hermitian matrix, and
\begingroup\setlength{\abovedisplayskip}{16pt}
\begin{align}\label{case 2 condition b}
A=
\left(
\begin{array}{c|c}
\bovermat{$2m$ columns}{\mbox{
$\begin{array}{c|c} 0 & J_{m,\lambda} \\\hline I & 0\end{array}$
}}
& 
0
\\\hline
0 & A^\prime
\end{array}
\right)
\begin{array}{c}
\left.\phantom{\mbox{$\begin{matrix}0\\0 \end{matrix}$}}\right\}\text{\scriptsize $2m$ rows}\\\phantom{0}
\end{array},
\quad\quad\quad
  \mbox{for some  $\lambda\in\mathbb C\setminus\{x\in\R\,|\, x\geq0\}$,}
\end{align}
\endgroup
where $A^\prime$ is a matrix such that $(\ell, \boldsymbol{A})$ is represented by $(H_\ell,A)$.
The analysis in \ref{case 2 subsection} is similar to that of \ref{case 1 subsection}, but some formulas differ.

By Lemma \ref{matrix rep of larger index rows of tensor phi lemma prereq} there exist coefficients $\kappa_{1},\ldots,\kappa_{n-1}$, given in \eqref{alpha_through_C_and _H}, such that, first,
\[
\alpha_{n+m}e_i=-\delta_{i,m-1}\big(\varphi(1)\big)_{+}+\lambda \kappa_i e_{1},
\]
second, for any nonnegative integer  $p<m$,
\[
\alpha_{n+p}e_i=-\delta_{i,2m-p}\big(\varphi(1)\big)_{+}+\lambda \kappa_i e_{m+p},
\]
and, third, if $0<p<m$ then
\[
\alpha_{n+m+p}e_i=-\delta_{i,2m-p}\big(\varphi(1)\big)_{+}+ \kappa_i e_{p}+\lambda \kappa_i e_{p+1},
\]
which we use to obtain the following formulas. For $0\leq p<m$, we have
\begin{align}\label{case 2 alpha times CH^(-1) first eqn first}
(\alpha_{n+p}AH_\ell^{-1})e_i=(\kappa_{m-i}\lambda-\kappa_{m+1-i}\lambda^{2})e_{m+p+1}
\quad\quad\forall\, i\in\{1,\ldots, m-1\},
\end{align}
\begin{align}\label{case 2 alpha times CH^(-1) first eqn second}
(\alpha_{n+p}AH_\ell^{-1})e_m=\kappa_1\lambda^2e_{m+p+1},
\end{align}
\begin{align}\label{case 2 alpha times CH^(-1) first eqn third}
(\alpha_{n+p}AH_\ell^{-1})e_i=\kappa_{3m+1-i}\lambda e_{m+p+1}-\delta_{i,m+p+1}\big(\varphi(1)\big)_{+}
\quad\quad\forall i\in\{m+1,\ldots, 2m\},
\end{align}
and
\begin{align}\label{case 2 alpha times CH^(-1) first eqn fourth}
(\alpha_{n+p}AH_\ell^{-1})e_i\in\mathrm{span}\{e_{m+p+1}\}
\quad\quad\forall i>2m.
\end{align}

For any nonnegative integer  $p<m$ and $1\leq i\leq 2m$
\begin{align}\label{case 2 H^(-1)alpha_n^T H first}
\left(H_\ell^{-1}\alpha_{n+p}^T H_\ell\right) e_i \equiv -\varphi_{2m+1-i}e_{p+1}+\delta_{i,m-p}\sum_{j=1}^{2m}\kappa_{2m+1-j}\lambda e_{j} \pmod{\mathrm{span}\{e_{k}\}_{k=2m+1}^{n-1}}
\end{align}
and, moreover, this equivalence modulo $\mathrm{span}\{e_{k}\}_{k=2m+1}^{n-1}$ can be replaced with ordinary strict equivalence whenever $\delta_{i,m-p}=0$. Also, for $1\leq i\leq 2m$,
\begin{align}\label{case 2 H^(-1)alpha_n^T H second}
\left(H_\ell^{-1}\alpha_{n+m}^T H_\ell\right) e_i \equiv -\varphi_{2m+1-i}e_{m+1}+\delta_{i,2m}\sum_{j=1}^{2m}\kappa_{2m+1-j}\lambda e_{j} \pmod{\mathrm{span}\{e_{k}\}_{k=2m+1}^{n-1}},
\end{align}
where equivalence modulo $\mathrm{span}\{e_{k}\}_{k=2m+1}^{n-1}$ can be replaced with ordinary strict equivalence whenever $\delta_{i,2m-1}=0$.
For any $0<p<m$ and $0<i<2m+1$,
\begin{align}\label{case 2 H^(-1)alpha_n+m+p^T H first}
\left(H_\ell^{-1}\alpha_{n+m+p}^T H_\ell\right) e_i = -\varphi_{2m+1-i}e_{m+p+1}& +\delta_{i,2m-p}\left(\sum_{j=1}^{2m}\kappa_{2m+1-j}\lambda e_{j}+\sum_{k=2m+1}^{n-1}\kappa_{k}\lambda e_{k}\right) \\
& + \delta_{i,2m-p+1}\left(\sum_{j=1}^{2m}\kappa_{2m+1-j} e_{j}+\sum_{k=2m+1}^{n-1}\kappa_{k} e_{k}\right),
\end{align}
and for any $0<p<m$ and $2m<i<n$
\begin{align}\label{case 2 H^(-1)alpha_n+m+p^T H second}
\left(H_\ell^{-1}\alpha_{n+m+p}^T H_\ell\right) e_i = -\varphi_{i}e_{m+p+1}.
\end{align} 
Lastly, for all $i\geq 2m$
\begin{align}\label{case 2 H^(-1)alpha_n+p^T H large p second}
(H_\ell^{-1}\alpha_{n+p}^T H_\ell)e_i=-\left(\sum_{j=2m+1}^{n-1} (H_\ell)_{j,i} \varphi_j\right)e_{p+1}
\quad\quad\forall\, 0\leq p<m
\end{align}
and
\begin{align}\label{case 2 H^(-1)alpha_n+p^T H large p}
\left(H^{-1}_\ell\alpha_{n+p}^TH_\ell\right)e_{i}\subset \mathrm{span}\{e_{2m+1}, \ldots, e_{n-1}\}
\quad\quad\forall\, 2m\leq p.
\end{align}

\begin{lemma}\label{case 2 lemma}
In the special case of \ref{case 2 subsection} wherein \eqref{case 2 condition a} and \eqref{case 2 condition b} hold
\begin{equation}
\label{phi_1=0 case 2}
\varphi(1)=0.
\end{equation} 
\end{lemma}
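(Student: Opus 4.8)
\emph{Proof strategy.} The plan is to follow the same two-stage approach used in the proof of Lemma \ref{case 1 lemma}: first establish that $\bigl(\varphi(1)\bigr)_+=0$ for every $\varphi\in\mathfrak{g}_1^{\mathrm{red}}$, and then deduce $\bigl(\varphi(1)\bigr)_-=0$ using the antilinear involution $\sigma$ on $\mathfrak{g}_1^{\mathrm{red}}$ from \eqref{conj on g1}. Since $\sigma$ preserves $\mathfrak{g}_1^{\mathrm{red}}$ and $\varphi$ is arbitrary, applying the first stage to $\sigma(\varphi)$ gives $\bigl(\sigma(\varphi)(1)\bigr)_+=0$, which, as $\sigma(1)=1$, reads $\sigma\bigl((\varphi(1))_-\bigr)=0$; hence $(\varphi(1))_-=0$ and \eqref{phi_1=0 case 2} follows.

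For the first stage the main tool is the identity \eqref{HAH>n}, i.e.\ $\bigl(H_\ell^{-1}\alpha_i^TH_\ell\bigr)e_{j-n+1}=\bigl(H_\ell^{-1}\alpha_j^TH_\ell\bigr)e_{i-n+1}$ for $n\le i,j\le 2n-2$, fed with the explicit formulas \eqref{case 2 H^(-1)alpha_n^T H first}--\eqref{case 2 H^(-1)alpha_n+p^T H large p} for the action of $H_\ell^{-1}\alpha_{n+p}^TH_\ell$ on basis vectors. In those formulas every coefficient $\varphi_k$ enters linearly through terms of the form $-\varphi_{2m+1-i}e_{p+1}$ (or $-\varphi_i e_{m+p+1}$ when $i>2m$), while the only other contributions are the Kronecker-delta-controlled $\kappa$-terms $\sum_j\kappa_{2m+1-j}\lambda e_j$ and $\sum_j\kappa_{2m+1-j}e_j$. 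Comparing the $e_{p+1}$- and $e_{q+1}$-components of the two sides of \eqref{HAH>n} for well-chosen pairs $(i,j)=(n+p,n+q)$ therefore isolates individual $\varphi_k$. Concretely: taking $i=n$ with $j=n+q$, $0<q<m$, together with the ``shifted'' relations from the $\alpha_{n+m+p}$ via \eqref{case 2 H^(-1)alpha_n+m+p^T H first}, should kill the coefficients $\varphi_2,\dots,\varphi_{2m}$ indexed away from the delta-obstructed positions; taking $i=n$ with $j=n+q$, $q\ge 2m$, and using \eqref{case 2 H^(-1)alpha_n+p^T H large p second}, \eqref{case 2 H^(-1)alpha_n+p^T H large p}, and the nonsingularity of $H_\ell'$, should kill $\varphi_{2m+1},\dots,\varphi_{n-1}$; and the residual coefficients (namely $\varphi_1$ and the borderline indices where a $\kappa$-term blocked the earlier comparisons) are then handled by the comparison at the indices $i=m-p$ and $i=2m-p,\,2m-p+1$, where the $\sum\kappa_{2m+1-j}\lambda e_j$ terms appear, exactly as in the third case of the proof of Lemma \ref{case 1 lemma}. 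This yields $\bigl(\varphi(1)\bigr)_+=0$, completing the first stage.

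The step I expect to be the main obstacle is the index bookkeeping in this first stage. Because $\lambda$ is not a nonnegative real, the relevant block of $A$ has size $2m\times 2m$ and involves $J_{\lambda^2,m}$, so the formulas \eqref{case 2 H^(-1)alpha_n^T H first}--\eqref{case 2 H^(-1)alpha_n+p^T H large p} split into more cases than their counterparts in \ref{case 1 subsection}, and one must select the pairs $(p,q)$ so that the Kronecker-delta terms and the $\kappa$-terms do not overlap the particular $\varphi_k$ being isolated. Once the right pairs are chosen the deductions are routine linear algebra, and the passage from $(\varphi(1))_+=0$ to $\varphi(1)=0$ via $\sigma$ is immediate.
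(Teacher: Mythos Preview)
Your proposal is correct and follows essentially the same approach as the paper: the paper also first proves $(\varphi(1))_+=0$ by analyzing \eqref{HAH>n} in several index ranges (namely $i=n$, $j=n+p$ with $0\le p<m$; then $i=n+m$, $j=n+m+p$ with $0\le p<m$; then the borderline pair $i=n$, $j=n+m$; and finally $i=n$, $j=n+p$ with $p\ge 2m$ together with nonsingularity of $H_\ell'$), and then concludes $(\varphi(1))_-=0$ via the involution $\sigma$ exactly as you describe. Your identification of the index bookkeeping as the main difficulty is accurate, and the specific pairings you sketch match the paper's four cases up to minor reorganization.
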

\begin{proof}
By the same argument applied at the end of the proof of Lemma \ref{case 1 lemma}, it will suffice to show that $\big(\varphi(1)\big)_{+}=0$. Similar to the proof of Lemma \ref{case 1 lemma}, this proof consists of analysis of equation \eqref{HAH>n} in four cases:

{\bf 1.}\emph{ Equation \eqref{HAH>n} for $i=n$ and $j=n+p$ with $0\leq p < m$ and $m\neq 1$.} By \eqref{case 2 H^(-1)alpha_n^T H first} replacing $p$ with $0$ and replacing $i$ with $p+1$,
\begin{align}\label{case 2 lemma eq1}
\left(H^{-1}_\ell\alpha_{n}^TH_\ell\right)e_{p+1}=-\varphi_{2m-p}e_1 \quad\quad\forall\,\, 0\leq p< m-1,
\end{align}
and, by \eqref{case 2 H^(-1)alpha_n^T H first} with $i=1$,
\begin{align}\label{case 2 lemma eq2}
\left(H^{-1}_\ell\alpha_{n+p}^TH_\ell\right)e_{1}=-\varphi_{2m}e_{p+1}\quad\quad\forall\,\, 0\leq p<m-1.
\end{align}
Applying \eqref{HAH>n}, \eqref{case 2 lemma eq1}, and \eqref{case 2 lemma eq2} we get $\varphi_{m+2}=\varphi_{m+3}=\cdots=\varphi_{2m}=0$.
Furthermore, by \eqref{case 2 H^(-1)alpha_n^T H first} with $p=0$ and $i=m$,
\begin{align}\label{case 2 lemma eq1 a}
\left(H^{-1}_\ell\alpha_{n}^TH_\ell\right)e_{m}\equiv -\varphi_{m+1}e_{1}+\sum_{j=1}^{2m}\kappa_{2m+1-j}\lambda e_{j} \pmod{\mathrm{span}\{e_{k}\}_{k=2m+1}^{n-1}}
\end{align}
whereas, by \eqref{case 2 H^(-1)alpha_n^T H first} with $p=m-1$ and $i=1$,
\begin{align}\label{case 2 lemma eq2 a}
\left(H^{-1}_\ell\alpha_{n+m-1}^TH_\ell\right)e_{1}\equiv-\varphi_{2m}e_{m}+\sum_{j=1}^{2m}\kappa_{2m+1-j}\lambda e_{j} \pmod{\mathrm{span}\{e_{k}\}_{k=2m+1}^{n-1}}.
\end{align}
Applying \eqref{HAH>n}, \eqref{case 2 lemma eq2 a}, and \eqref{case 2 lemma eq1 a} yields $\varphi_{m+1}=0$ so, altogether, we have shown 
\begin{align}\label{case 2 coeff phi a}
\varphi_{m+1}=\cdots=\varphi_{2m}=0.
\end{align}

{\bf 2.}\emph{ Equation \eqref{HAH>n} for $i=n+m$ and $j=n+m+p$ with $0\leq p < m$ and $m\neq 1$.} By \eqref{case 2 H^(-1)alpha_n+m+p^T H first} replacing $p$ with $0$ and replacing $i$ with $m+p+1$,
\begin{align}\label{case 2 lemma eq1 second}
\left(H^{-1}_\ell\alpha_{n+m}^TH_\ell\right)e_{m+p+1}=-\varphi_{m-p}e_{m+1} \quad\quad\forall\,\, 0< p< m-1,
\end{align}
and, by \eqref{case 2 H^(-1)alpha_n+m+p^T H first} with $i=m+1$,
\begin{align}\label{case 2 lemma eq2 second}
\left(H^{-1}_\ell\alpha_{n+m+p}^TH_\ell\right)e_{m+1}=-\varphi_{m}e_{m+p+1}
\quad\quad\forall\,\, 0< p<m-1.
\end{align}
Applying \eqref{HAH>n}, \eqref{case 2 lemma eq1 second}, and \eqref{case 2 lemma eq2 second} we get $\varphi_{2}=\varphi_{3}=\cdots=\varphi_{m}=0$.
Furthermore, by \eqref{case 2 H^(-1)alpha_n+m+p^T H first} with $p=0$ and $i=2m$,
\begin{align}\label{case 2 lemma eq1 second a}
\left(H^{-1}_\ell\alpha_{n+m}^TH_\ell\right)e_{2m}=-\varphi_{1}e_{m+1}& +\left(\sum_{j=1}^{2m}\kappa_{2m+1-j}\lambda e_{j}+\sum_{k=2m+1}^{n-1}\kappa_{k}\lambda e_{k}\right) 
\end{align}
and, by \eqref{case 2 H^(-1)alpha_n^T H first} with $p=m-1$ and $i=m+1$,
\begin{align}\label{case 2 lemma eq2 second a}
\left(H^{-1}_\ell\alpha_{n+2m-1}^TH_\ell\right)e_{m+1}=-\varphi_{m}e_{2m}+\left(\sum_{j=1}^{2m}\kappa_{2m+1-j}\lambda e_{j}+\sum_{k=2m+1}^{n-1}\kappa_{k}\lambda e_{k}\right).
\end{align}
Applying \eqref{HAH>n}, \eqref{case 2 lemma eq1 second a}, and \eqref{case 2 lemma eq2 second a} yields $\varphi_{1}=\varphi_{m}=0$ so, altogether, noting \eqref{case 2 coeff phi a}, we have shown
\begin{align}\label{case 2 coeff phi second a large m}
\varphi_{1}=\cdots=\varphi_{2m}=0 \quad\mbox{ if }m>1.
\end{align}

{\bf 3.}\emph{ Equation \eqref{HAH>n} for $i=n$ and $j=n+m$.} By \eqref{case 2 H^(-1)alpha_n^T H first} and \eqref{case 2 H^(-1)alpha_n^T H second}
\begin{align}\label{case 2 lemma eq3.0} 
\left(H^{-1}_\ell\alpha_{n}^TH_\ell\right)e_{m+1}=-\varphi_{m}e_1
\quad\mbox{ and }\quad
\left(H^{-1}_\ell\alpha_{n+m}^TH_\ell\right)e_{1}=-\varphi_{2m}e_{m+1}.
\end{align}
By \eqref{HAH>n}, $\left(H^{-1}_\ell\alpha_{n}^TH_\ell\right)e_{m+1}=\left(H^{-1}_\ell\alpha_{n+m}^TH_\ell\right)e_{1}$, and hence \eqref{case 2 lemma eq3.0} implies $\varphi_{m}=\varphi_{2m}$. This is true in particular when $m=1$, which together with \eqref{case 2 coeff phi second a large m} yields the general result
\begin{align}\label{case 2 coeff phi second a}
\varphi_{1}=\cdots=\varphi_{2m}=0.
\end{align}

{\bf 4.}
\emph{ Equation \eqref{HAH>n} for $i=n$ and $j=n+p$ with $p\geq 2m$.} By \eqref{case 2 H^(-1)alpha_n+p^T H large p second} we get that 
\begin{align}\label{case 2 lemma eq3} 
\left(H^{-1}_\ell\alpha_{n}^TH_\ell\right)e_{p+1}=\left(\sum_{j=2m+1}^{n-1} (H'_\ell)_{j, i} \varphi_j\right)e_1.
\end{align}
Using \eqref{HAH>n} again, from \eqref{case 2 H^(-1)alpha_n+p^T H large p} and \eqref{case 2 lemma eq3} it follows that $\left(H^{-1}_\ell\alpha_{n}^TH_\ell\right)e_{p+1}=0$ or, equivalently, 
\begin{align}\label{case 2 lemma eq4} 
\sum_{j=1}^{n-1-m} (H'_\ell)_{j, i} \varphi_j=0, \quad \forall\, 1\leq i\leq n-1-2m.
\end{align}
Since the matrix $H_\ell^\prime$ is nonsigular, \eqref{case 2 lemma eq4} implies $\varphi_{2m+1}=\cdots=\varphi_{n-1}=0$, which together with \eqref{case 2 coeff phi second a} yields $\varphi_{1}=\cdots=\varphi_{n-1}=0$, that is, $\big(\varphi(1)\big)_{+}=0$.
\end{proof}

\begin{lemma}\label{case 2 lemma alt}
In the special case of \ref{case 2 subsection} wherein \eqref{case 2 condition a} and \eqref{case 2 condition b} hold, $(\kappa_1,\ldots, \kappa_{n-1})A=0$.
\end{lemma}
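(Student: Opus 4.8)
The plan is to mirror the proof of Lemma \ref{case 1 lemma_new}, substituting the case-2 data \eqref{case 2 condition a}--\eqref{case 2 condition b} for the case-1 data; the argument is in fact a little more uniform in the present situation because the vector $Ae_1$ is automatically nonzero, so no analogue of the $\lambda=0$ subcase of Lemma \ref{case 1 lemma_new} is required.

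First I would record that $\mathrm{rank}\,A\ge 2m\ge 2$, since the $2m\times 2m$ block in \eqref{case 2 condition b} has determinant $\pm\lambda^m\ne 0$ (using $\lambda\ne 0$); in particular $\mathrm{rank}\,A>1$, so Lemma \ref{matrix rep of larger index rows of tensor phi lemma} and \eqref{alpha_through_C_and _H} apply. Combining this with Lemma \ref{case 2 lemma}, which yields $\varphi(1)=0$, relation \eqref{alpha_through_C_and _H} collapses to $\alpha_n e_i=\kappa_i\,Ae_1$ for all $1\le i\le n-1$; equivalently, $\alpha_n$ is the rank-at-most-one matrix $(Ae_1)(\kappa_1,\dots,\kappa_{n-1})$. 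Reading the first column of $A$ off of \eqref{case 2 condition b} gives $Ae_1=e_{m+1}$, so the only row of $\alpha_n$ that can be nonzero is row $m+1$.

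Next I would substitute $\alpha_n$ into \eqref{orthogonal algebra 2 condition} with $i=n$, obtaining $\alpha_n AH_\ell^{-1}+\bigl(\alpha_n AH_\ell^{-1}\bigr)^T=\eta\,AH_\ell^{-1}$ for some $\eta\in\mathbb C$. Since $\alpha_n AH_\ell^{-1}$ has its only possibly nonzero row in position $m+1$, every entry of the left-hand side that lies outside row $m+1$ and outside column $m+1$ vanishes, so $\eta\,(AH_\ell^{-1})_{i,j}=0$ whenever $m+1\notin\{i,j\}$. A short computation from \eqref{case 2 condition a}--\eqref{case 2 condition b} shows $(AH_\ell^{-1})_{1,m}=\lambda\ne 0$ (the $m$-th column of the $2m\times 2m$ block of $AH_\ell^{-1}$ is $\lambda e_1$), and since $m+1\notin\{1,m\}$ this forces $\eta=0$. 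Hence $\alpha_n AH_\ell^{-1}$ is skew-symmetric; a skew-symmetric matrix whose nonzero entries all lie in one row is $0$, so $\alpha_n AH_\ell^{-1}=0$, and therefore $\alpha_n A=0$ because $H_\ell$ is invertible.

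Finally, writing $\alpha_n A=(Ae_1)\bigl((\kappa_1,\dots,\kappa_{n-1})A\bigr)$ and recalling $Ae_1=e_{m+1}\ne 0$, vanishing of this outer product forces $(\kappa_1,\dots,\kappa_{n-1})A=0$, which is the assertion. The only step that needs a little care is the deduction $\eta=0$: in contrast with Lemma \ref{case 1 lemma_new}, here there is no rank hypothesis on $A$, so a crude comparison of ranks across \eqref{orthogonal algebra 2 condition} would be inconclusive when $\mathrm{rank}\,A=2$ (the case $m=1$, $A'=0$); the entrywise argument above circumvents this by pinpointing a nonzero entry of $AH_\ell^{-1}$ lying off of row and column $m+1$.
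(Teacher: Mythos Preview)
Your proof is correct and follows essentially the same strategy as the paper's: show that $\alpha_n A H_\ell^{-1}$ is skew-symmetric with rank at most one, hence zero, and read off $(\kappa_1,\dots,\kappa_{n-1})A=0$ from the outer-product factorisation $\alpha_n=(Ae_1)(\kappa_1,\dots,\kappa_{n-1})$.

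The one genuine difference is in how the coefficient $\eta$ in \eqref{orthogonal algebra 2 condition} is killed. The paper treats $m=1$ and $m>1$ separately: for $m=1$ it compares the $(1,1)$ entries (the $(1,1)$ entry of $\alpha_n AH_\ell^{-1}$ vanishes while that of $AH_\ell^{-1}$ equals $\lambda\neq0$), and for $m>1$ it uses a rank comparison ($\mathrm{rank}\,A\ge 2m\ge4$ versus rank of the left side at most $2$). Your argument instead pinpoints the single entry $(AH_\ell^{-1})_{1,m}=\lambda\neq0$, which lies off row and column $m+1$ for every $m\ge1$, and thereby handles both cases at once. This is a modest but clean improvement: it avoids the case split and the rank-counting, and it makes explicit why no extra hypothesis like $\mathrm{rank}\,A>2$ (as in Lemma~\ref{case 1 lemma_new}) is needed here.
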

\begin{proof}
First we want to show that $\alpha_{n}AH_\ell^{-1}$ is skew symmetric, and we do so by considering two separate cases. 

First, consider the case where $m=1$. By \eqref{case 2 alpha times CH^(-1) first eqn second}, the $(1,1)$ entry of $\alpha_{n}AH_\ell^{-1}$ is zero. But the $(1,1)$ entry of $AH_\ell^{-1}$ is nonzero, so \eqref{orthogonal algebra 2 condition} implies that $\alpha_{n}AH_\ell^{-1}$ is skew symmetric.

Now let us consider the second case, which is where $m>1$. The right side of  \eqref{orthogonal algebra 2 condition} is either zero or its right side has rank equal to $\mathrm{rank}\,A$ (which is at least 4 because $m>1$). On the other hand, using formulas \eqref{case 2 alpha times CH^(-1) first eqn first}, \eqref{case 2 alpha times CH^(-1) first eqn second}, \eqref{case 2 alpha times CH^(-1) first eqn third}, and \eqref{case 2 alpha times CH^(-1) first eqn fourth} for the matrix $\alpha_{n}AH_\ell^{-1}$ together with Lemma \ref{case 2 lemma}, we can see that the matrix $\alpha_{n}AH_\ell^{-1}$ has rank at most $1$. Therefore the matrix on the left side of \eqref{orthogonal algebra 2 condition}  (when setting $i=n$) has rank at most $2$, and hence the matrix $\alpha_{n}AH_\ell^{-1}$ appearing in \eqref{case 2 alpha times CH^(-1) first eqn first} must be skew symmetric if $m>1$. 

So, for all values of $m$, we have shown that $\alpha_{n}AH_\ell^{-1}$ is skew symmetric and of rank at most $1$. Thus it is identically zero, which implies that the rows of $\alpha_n$ are in the left kernel of $AH_\ell^{-1}$. In particular,  $(\kappa_1,\ldots, \kappa_n)AH_\ell^{-1}=0$, which completes this proof because $H_\ell^\prime$ is nonsingular.
\end{proof}

\begin{lemma}\label{case 2 alphas with high index are zero}
In the special case of \ref{case 2 subsection} wherein \eqref{case 2 condition a} and \eqref{case 2 condition b} hold,  if $A$ corresponds to a non-regular CR structure then $\mathfrak{g}_1^{\mathrm{red}}=0$.
\end{lemma}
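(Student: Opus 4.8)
The plan is to mirror the proof of Lemma \ref{case 1 alphas with high index are zero}, exploiting the fact that in the setting of \ref{case 2 subsection} the distinguished $2m\times 2m$ block of $A$ in \eqref{case 2 condition b} is invertible (because $\lambda\neq 0$). This removes the nilpotent complications that forced the three-way split in \ref{case 1 subsection}, so a single argument will dispose of every non-regular symbol of ``case 2 type''.

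First I would fix an arbitrary $\varphi\in\mathfrak{g}_1^{\mathrm{red}}$ and the coefficients $(\kappa_i)_{i=1}^{n-1}$ of \eqref{alpha_through_C_and _H}. As at the start of the proof of Lemma \ref{case 1 alphas with high index are zero}, it suffices to show $\kappa_i=0$ for all $i$: together with $\varphi(1)=0$ from Lemma \ref{case 2 lemma}, relation \eqref{alpha_through_C_and _H} then forces $\alpha_j=0$ for all $n\le j\le 2n-2$, Corollary \ref{non-regular implication for alphai=0} (using non-regularity) gives $\varphi(e_j)=0$ for $j\ge n$, and Lemma \ref{case 1 main result} yields $\varphi=0$. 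To this end, Lemma \ref{case 2 lemma alt} gives $(\kappa_1,\dots,\kappa_{n-1})A=0$; since the distinguished block is invertible, the left kernel of $A$ is contained in $\mathrm{span}\{e_{2m+1},\dots,e_{n-1}\}$, so $\kappa_1=\cdots=\kappa_{2m}=0$.

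Next I would argue by contradiction: assume some $\kappa_r\neq 0$ and take $r$ minimal, so $r>2m$. By \eqref{alpha_through_C_and _H} and $\varphi(1)=0$, each $\alpha_{n+p}$ has vanishing first $r-1$ columns and $r$-th column equal to $\kappa_r$ times the $(p+1)$-st column of $A$; since columns $1$ and $2$ of the invertible distinguished block are linearly independent, the $r$-th columns of $\alpha_n$ and $\alpha_{n+1}$ are linearly independent and supported in $\mathrm{span}\{e_1,\dots,e_{2m}\}$. Hence $\mathrm{span}\{\alpha_n,\alpha_{n+1}\}$ is $2$-dimensional, and by Lemma \ref{matrix rep of larger index rows of tensor phi lemma} (valid as $\mathrm{rank}\,A\ge 2m\ge 2$) it lies in $\mathscr{A}_0+\mathbb C(\overline{H_\ell}^{-1}\Omega^*\overline{H_\ell})$, which --- since $\Omega\notin\mathscr{A}$ by non-regularity and Lemma \ref{omega in A implies regularity} --- has codimension one over $\mathscr{A}$. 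Just as in Lemma \ref{case 1 alphas with high index are zero}, there is then a nonzero $B=b_1\alpha_n+b_2\alpha_{n+1}\in\mathscr{A}$ whose first $r-1$ columns vanish and whose $r$-th column is a nonzero vector supported in rows $1,\dots,2m$.

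Finally I would block-partition $B$ compatibly with \eqref{simultaneous canonical form theorem eqn} and let $j$ be the block-column containing column $r$; as $r>2m$, $j\ge 2$. The $(1,j)$ block of $B$ carries the nonzero part of column $r$, hence is nonzero, so it cannot be one of the blocks that Lemma \ref{lemma for reduction to eSpaces} forces to vanish; the diagonal blocks of $A\overline A$ in positions $(1,1)$ and $(j,j)$ therefore share an eigenvalue, necessarily one of the nonzero eigenvalues of the distinguished block, and so by Corollary \ref{Bij formula with nonzero lambda} --- the analog in the present setting of the formula \eqref{intersection algebra lambda real off diag} used in the proof of Lemma \ref{case 1 alphas with high index are zero} --- a nonzero $(1,j)$ block is paired with a nonzero $(j,1)$ block. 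But the $(j,1)$ block lies in columns $1,\dots,2m$, all of which are among the first $r-1$ columns of $B$, contradicting the vanishing of those columns. Hence no such $r$ exists, every $\kappa_i=0$, and $\mathfrak{g}_1^{\mathrm{red}}=0$. I expect the main obstacle to be this last step: identifying precisely which of the appendix's off-diagonal block formulas governs the block joining the distinguished complex-eigenvalue block to block $j$, and confirming in the two remaining eigenvalue cases ($\lambda$ negative real and $\lambda$ non-real) that a nonzero $(1,j)$ block always produces a nonzero $(j,1)$ block supported in the first $2m$ columns.
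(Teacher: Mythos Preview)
Your proposal is correct and follows essentially the same strategy as the paper's proof: reduce to showing all $\kappa_i=0$ via Lemma \ref{case 2 lemma}, Corollary \ref{non-regular implication for alphai=0} and Lemma \ref{case 1 main result}; use Lemma \ref{case 2 lemma alt} to kill initial $\kappa_i$'s; then intersect a two-dimensional span of $\alpha$'s with $\mathscr{A}$ and derive a contradiction from the appendix block formulas. The only notable difference is that the paper works with $\alpha_{n+m},\alpha_{n+m+1}$ (so the $r$th column is supported in $\{e_1,e_2\}$) and uses only $\kappa_1=\kappa_2=0$, hence must separately rule out the case $j=1$ via Corollary \ref{Bii formula}; your observation that the distinguished $2m\times 2m$ block is invertible yields $\kappa_1=\cdots=\kappa_{2m}=0$ and $r>2m$ directly, so $j\geq 2$ is automatic and that case never arises --- a small but genuine streamlining. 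Your anticipated obstacle is not one: in both relevant cases of Corollary \ref{Bij formula with nonzero lambda} (items (2) and (3), covering $\lambda_1^2\notin\R$ and $\lambda_1^2<0$) the blocks $B_{(1,j)}$ and $B_{(j,1)}$ are written in the same independent parameters, so one vanishes if and only if the other does.
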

\begin{proof}
Let $\varphi\in \mathfrak{g}_{1}^{\mathrm{red}}$ and let $(\kappa_i)_{i=1}^{n-1}$ be as in \eqref{alpha_through_C_and _H}. By the same arguments as in the beginning of the proof of Lemma \ref{case 1 alphas with high index are zero} , it will suffice to show that $\kappa_i=0$ for every $1\leq i\leq n-1$. 

To produce a contradiction, let us assume there exists an index $i$ such that $\kappa_{i}\neq0$, and let $r$ be the smallest such index.
Since, by Lemma \ref{case 2 lemma alt}, $(\kappa_1,\ldots, \kappa_n)A=0$, we have $\kappa_1=\kappa_2=0$, and hence $2<r$. Also,
\begin{align}\label{first columns of alpha are zero, two special cases, case 2}
\alpha_{n+m}e_i=\delta_{i,r}\kappa_r\lambda e_1
\quad\mbox{ and }\quad
\alpha_{n+m+1}e_i=\delta_{i,r}(\kappa_r e_1+\kappa_r\lambda e_2)
\quad\quad\forall\,  i\leq r.
\end{align}

 By Lemma \ref{matrix rep of larger index rows of tensor phi lemma}, $\mathrm{span} \{\alpha_{n+m}, \alpha_{n+m+1}\}$ is a $2$-dimensional subspace in $\mathscr{A}+\mathbb C (\overline{H}^{-1}_\ell\Omega^* \overline{H}_\ell)$. Since $\mathscr{A}$ is a subspace in $\mathscr{A}+\mathbb C (\overline{H}^{-1}_\ell\Omega^* \overline{H}_\ell)$ of codimension at most  $1$ it has a nontrivial intersection with $\mathrm{span} \{\alpha_{n+m}, \alpha_{n+m+1}\}$, and hence there exist  $b_1,b_2\in\mathbb C$ such that  $(b_1,b_2)\neq(0,0)$ and 
\begin{equation}
\label{case 2 induction on r setup eqn}
b_1\alpha_{n+m}+b_2\alpha_{n+m+1}\in \mathscr{A}.
\end{equation}
By \eqref{first columns of alpha are zero, two special cases, case 2} the first $r-1$ columns of the matrix $b_1\alpha_n+b_2\alpha_n$ vanish and 
\begin{equation}
\label{case 2 induction on r Lemma eqn}
(b_1 \alpha_n+b_2 \alpha_{n+1})e_r=\kappa_r\Bigl ((\lambda b_1+b_2)e_1+\lambda b_2 e_2\Bigr).
\end{equation}

Using results from the appendix (Section \ref{Matrix representations of intersection algebra general formula} below), we can now derive a contradiction as follows. Let $b_1\alpha_n+b_2\alpha_{n+1}$ be partitioned as a block matrix whose diagonal blocks have the same size as the diagonal blocks of $A$. By \eqref{case 2 induction on r setup eqn}, each $(i,j)$ block of $b_1\alpha_n+b_2\alpha_{n+1}$ is either characterized by Lemma \ref{lemma for reduction to eSpaces} and identically zero or it is characterized by Corollaries \ref{Bij formula with nonzero lambda} and \ref{Bii formula} and more specifically characterized by \eqref{intersection algebra lambda nonreal off diag a}, \eqref{intersection algebra lambda nonreal off diag b}, \eqref{intersection algebra lambda negative off diag a}, \eqref{intersection algebra lambda negative off diag b}, and \eqref{intersection algebra lambda diag}. 
Notice that if this $(1,j)$ block of $b_1\alpha_n+b_2\alpha_{n+1}$ is characterized by \eqref{intersection algebra lambda diag} then $j=1$, and clearly no matrix of the form in \eqref{intersection algebra lambda diag} can have nonzero values in either of the first two entries of its first nonzero column, which shows that this $(1,j)$ block of $b_1\alpha_n+b_2\alpha_{n+1}$ containing part of the $r$ column of $b_1\alpha_n+b_2\alpha_{n+1}$ must be zero if it is characterized by \eqref{intersection algebra lambda diag}. 

If, on the other hand, the $(1,j)$ block of $b_1\alpha_n+b_2\alpha_{n+1}$ is characterized by \eqref{intersection algebra lambda nonreal off diag a} or \eqref{intersection algebra lambda nonreal off diag b} (respectively \eqref{intersection algebra lambda negative off diag a} or \eqref{intersection algebra lambda negative off diag b}), is nonzero, and contains part of the $r$ column of $b_1\alpha_n+b_2\alpha_{n+1}$, then \eqref{intersection algebra lambda nonreal off diag a} and \eqref{intersection algebra lambda nonreal off diag b} (respectively \eqref{intersection algebra lambda negative off diag a} and \eqref{intersection algebra lambda negative off diag b}) imply that the $(j,1)$ block of $b_1\alpha_n+b_2\alpha_{n+1}$ is nonzero and contained in the first $r-1$ columns of $b_1\alpha_n+b_2\alpha_{n+1}$, which contradicts our definition of $r$. Therefore, the $(1,j)$ block of $b_1\alpha_n+b_2\alpha_{n+1}$ containing part of the $r$ column of $b_1\alpha_n+b_2\alpha_{n+1}$ is identically zero, which, by \eqref{case 2 induction on r Lemma eqn}, implies that $\lambda b_1+b_2=0$ and $\lambda b_2=0$. Yet this yields the contradiction $(b_1,b_2)=(0,0)$. 
\end{proof}

\subsection{The third special case:}\label{case 3 subsection} In this subsection, \ref{case 3 subsection}, we consider the special case where $(H_\ell, A)$ corresponds to a non-regular CR structure and $A$ is  diagonal. Working in the normal form of Theorem \ref{simultaneous canonical form theorem}, $H_\ell$ is diagonal too. Since $A$ corresponds to a non-regular CR structure, the matrix $A\overline{A}$ has at least two distinct nonzero eigenvalues, so we can assume without loss of generality that there are numbers $\lambda_1,\ldots, \lambda_{n-1},\in\mathbb C$  and $\epsilon_1,\ldots, \epsilon_{n-1}\in\{1,-1\}$ such that $|\lambda_1|\neq|\lambda_2|$, $\lambda_1\neq0$,  $\lambda_2\neq0$, and
\[
A=\mathrm{diag}\,(\lambda_1, \ldots, \lambda_{n-1})
\quad\mbox{ and }\quad 
H_\ell=\mathrm{diag}\,(\epsilon_1, \ldots, \epsilon_{n-1})
.
\]
Accordingly, by \eqref{alpha_through_C_and _H},
\begin{align}\label{alpha_n formulas for diagonal case}
\alpha_{n+p}e_i=\kappa_i \lambda_{p+1}  e_{p+1}-\delta_{i, p+1} \varphi(1) 
\quad\quad\forall\, 0\leq p<n,
\end{align}
\begin{align}\label{alpha C H inverse formula for diagonal case}
\alpha_nAH_\ell^{-1} e_i=\lambda_i \varepsilon_i \kappa_i\big(\lambda_1  e_1-\delta_{i, 1} \varphi(1)\bigr),
\end{align}
\begin{align}\label{alpha_n formulas for diagonal case b}
H^{-1}\alpha_{n+p}^T H e_1=\pm \varphi_1 e_{p+1}
\quad\quad\forall 0\leq p<n,
\end{align}
and 
\begin{align}\label{alpha_n formulas for diagonal case c}
H^{-1}\alpha_{n}^T H e_{p+1}=\pm \varphi_{p+1} e_{1}
\quad\quad\forall 0<p<n.
\end{align}

By \eqref{HAH>n}, we can equate
$H^{-1}\alpha_{n}^T H e_{p+1}$, and hence \eqref{alpha_n formulas for diagonal case b} and \eqref{alpha_n formulas for diagonal case c} yields
\begin{align}\label{varphi first coefficients are zero in diagonal case}
\varphi_1=\varphi_2=\cdots=\varphi_{n-1}=0.
\end{align}

Formula in \eqref{alpha C H inverse formula for diagonal case} now simplifies giving that $\alpha_nAH_\ell^{-1}$ is a matrix with at most 1 nonzero row, and hence the left side of \eqref{orthogonal algebra 2 condition} (when setting $i=n$) cannot be a diagonal matrix of rank greater than one. Yet the right side of  \eqref{orthogonal algebra 2 condition} is a diagonal matrix that is either zero or of rank greater than 1, so the right side of  \eqref{orthogonal algebra 2 condition} must be zero for the equation to hold. Since the left side of \eqref{orthogonal algebra 2 condition} is zero, \eqref{alpha C H inverse formula for diagonal case} and \eqref{varphi first coefficients are zero in diagonal case} imply that 
\begin{align}\label{a coefficients are zero in diagonal case}
\lambda_1\kappa_1=\lambda_2\kappa_2=\cdots=\lambda_{n-1}\kappa_{n-1}=0
\end{align}
because $\lambda_1\neq 0$. In particular,
\begin{align}\label{a coefficients are zero in diagonal case a}
\kappa_1=\kappa_2=0
\end{align}
because $\lambda_1$ and $\lambda_2$ are both nonzero.

\begin{lemma}\label{subcase 3 alpha i is zero}
If $(H_\ell, A)$ corresponds to a non-regular CR structure and $A$ is  diagonal then $\mathfrak{g}_1^{\mathrm{red}}=0$.
\end{lemma}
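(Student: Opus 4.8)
The plan is to mimic the structure of the proof of Lemma~\ref{case 1 alphas with high index are zero}. Fix an arbitrary $\varphi\in\mathfrak g_1^{\mathrm{red}}$; I will show $\varphi=0$, which suffices. As in the other cases, the strategy is to prove first that $\varphi(1)=0$, and then that $\varphi(e_i)=0$ for all $i\ge n$; once both hold, Lemma~\ref{case 1 main result} gives $\varphi=0$, hence $\mathfrak g_1^{\mathrm{red}}=0$. For the vanishing $\varphi(1)=0$: the equality $\big(\varphi(1)\big)_+=0$ is exactly \eqref{varphi first coefficients are zero in diagonal case}, and since the derivation of \eqref{varphi first coefficients are zero in diagonal case} only used \eqref{HAH>n} and the explicit diagonal forms of $A$ and $H_\ell$, it applies verbatim to $\sigma(\varphi)\in\mathfrak g_1^{\mathrm{red}}$, yielding $\big(\sigma(\varphi)(1)\big)_+=0$; exactly as at the end of the proof of Lemma~\ref{case 1 lemma}, $\sigma(1)=1$ then forces $\big(\varphi(1)\big)_-=0$, so $\varphi(1)=0$.

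Since $\varphi(1)=0$, formula \eqref{alpha_n formulas for diagonal case} simplifies to $\alpha_{n+p}e_i=\kappa_i\lambda_{p+1}e_{p+1}$, where $(\kappa_i)_{i=1}^{n-1}$ are the coefficients from \eqref{alpha_through_C_and _H}. Thus if all $\kappa_i$ vanish then $\alpha_{n+p}=0$ for $0\le p\le n-2$, and Corollary~\ref{non-regular implication for alphai=0} (applicable because $\mathrm{rank}\,A\ge 2$, as $\lambda_1,\lambda_2\ne 0$, and the symbol is non-regular) gives $\varphi(e_j)=0$ for all $j\ge n$, completing the proof along the lines above. So the crux is to show $\kappa_i=0$ for every $i$.

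From \eqref{a coefficients are zero in diagonal case} we already have $\kappa_i=0$ whenever $\lambda_i\ne 0$; in particular $\kappa_1=\kappa_2=0$ by \eqref{a coefficients are zero in diagonal case a}. Suppose for contradiction that some $\kappa_i\ne 0$ and let $r$ be minimal such; then $\lambda_r=0$ and $r>2$. By the formula $\alpha_{n+p}e_i=\kappa_i\lambda_{p+1}e_{p+1}$ and minimality of $r$, the first $r-1$ columns of each $\alpha_{n+p}$ vanish, the $r$-th column of $\alpha_n$ equals $\kappa_r\lambda_1 e_1\ne 0$, and the $r$-th column of $\alpha_{n+1}$ equals $\kappa_r\lambda_2 e_2\ne 0$. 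By Lemma~\ref{matrix rep of larger index rows of tensor phi lemma}, $\mathrm{span}\{\alpha_n,\alpha_{n+1}\}$ is a $2$-dimensional subspace of $\mathscr A_0+\mathbb C(\overline{H_\ell}^{-1}\Omega^*\overline{H_\ell})$; since non-regularity forces $\overline{H_\ell}^{-1}\Omega^*\overline{H_\ell}\notin\mathscr A$ by Lemma~\ref{omega in A implies regularity}, the intersection of $\mathscr A$ with this ambient space has codimension at most one there, so, exactly as in the passage around \eqref{induction on r setup eqn}, there exist $(b_1,b_2)\ne(0,0)$ with $B:=b_1\alpha_n+b_2\alpha_{n+1}\in\mathscr A$. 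Now $A\overline A=\mathrm{diag}(|\lambda_1|^2,\ldots,|\lambda_{n-1}|^2)$, whose eigenvalue at positions $1$ and $2$ is nonzero while at position $r$ it is $|\lambda_r|^2=0$; hence Lemma~\ref{lemma for reduction to eSpaces} forces the $(1,r)$ and $(2,r)$ entries of any element of $\mathscr A$ to vanish. But the description of the $r$-th columns of $\alpha_n$ and $\alpha_{n+1}$ above gives $B_{1,r}=b_1\kappa_r\lambda_1$ and $B_{2,r}=b_2\kappa_r\lambda_2$, so $b_1=b_2=0$, a contradiction. Therefore $\kappa_i=0$ for all $i$, which completes the proof.

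The only delicate point is the last step: matching the hypotheses of the appendix's Lemma~\ref{lemma for reduction to eSpaces} to the present diagonal setting, where its blocks degenerate to single entries and the relevant distinguished subspace is the $0$-eigenspace of $A\overline A$. This is a routine specialization, and everything else is bookkeeping with the explicit formulas \eqref{alpha_n formulas for diagonal case}--\eqref{a coefficients are zero in diagonal case a} already assembled in this subsection.
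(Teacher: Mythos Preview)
Your argument is correct and follows essentially the same route as the paper's proof: both first establish $\varphi(1)=0$ via \eqref{varphi first coefficients are zero in diagonal case} and the conjugation trick, reduce to showing all $\kappa_i=0$, and then derive a contradiction from a nonzero $\kappa_r$ by intersecting $\mathrm{span}\{\alpha_n,\alpha_{n+1}\}$ with $\mathscr{A}$ and invoking the appendix description of $\mathscr{A}$. The only cosmetic difference is that you make explicit the observation $\lambda_r=0$ and pin down the specific entries $B_{1,r},B_{2,r}$ that must vanish by Lemma~\ref{lemma for reduction to eSpaces}, whereas the paper phrases the contradiction more globally (any matrix in $\mathscr{A}$ with nonzero entries in its first two rows must have nonzero entries in its first two columns); your version is arguably cleaner. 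One small remark: the appeal to non-regularity to force $\overline{H_\ell}^{-1}\Omega^*\overline{H_\ell}\notin\mathscr{A}$ is not actually needed for the codimension-at-most-one step, which holds automatically.
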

\begin{proof}

Let $\varphi\in \mathfrak{g}_{1}^{\mathrm{red}}$ and let $(\kappa_i)_{i=1}^{n-1}$ be as in \eqref{alpha_through_C_and _H}. Recall that $\big(\varphi(1)\big)_{+}=0$ implies $\varphi(1)=0$, by the same argument applied at the end of the proof of Lemma \ref{case 1 lemma}, and hence $\varphi(1)=0$ by \eqref{varphi first coefficients are zero in diagonal case}. Accordingly, by the same arguments as in the beginning of the proof of Lemma \ref{case 1 alphas with high index are zero}, it will suffice to show that $\kappa_i=0$ for every $1\leq i\leq n-1$.

Assume that there exists $r$ such that $\kappa_r\neq 0$ and $r$ is the minimal index with this property. By \eqref{a coefficients are zero in diagonal case a} we have that $r>2$. Noting \eqref{alpha_n formulas for diagonal case}, by Lemma \ref{matrix rep of larger index rows of tensor phi lemma},  $\kappa_r\neq 0$ implies
$\mathrm{span} \{\alpha_{n}, \alpha_{n+1}\}$ is a $2$-dimensional subspace in $\mathscr{A}+\mathbb C (\overline{H}^{-1}_\ell\Omega^* \overline{H}_\ell)$. Accordingly, $\kappa_r\neq0$ yields that   $\mathrm{span} \{\alpha_{n}, \alpha_{n+1}\}$ and $\mathscr{A}$ have at least a $1$-dimensional intersection. By \eqref{a coefficients are zero in diagonal case a} and\eqref{alpha_n formulas for diagonal case}, nonzero entries in the matrices in $\mathrm{span} \{\alpha_{n}, \alpha_{n+1}\}$ can only appear in their first two rows and moreover they do not appear in their first two columns. Yet, in the appendix (Section \ref{Matrix representations of intersection algebra general formula} below), we describe the matrices in $\mathscr{A}$ explicitly. In particular, given that $H_\ell$ and $A$ are diagonal, the description of $\mathscr{A}$ in the appendix implies that every matrix in $\mathscr{A}$ with nonzero entries in its first two rows also has nonzero entries in its first two columns, which implies that $\mathrm{span} \{\alpha_{n}, \alpha_{n+1}\}$ and $\mathscr{A}$ have a trivial intersection, a clear contradiction. 
\end{proof}

By combining the results of Lemmas \ref{case 1 alphas with high index are zero}, \ref{case 1 alphas with high index are zero with small nilpotent blocks}, \ref{case 1 alphas with high index are zero with 2 by 2 nilpotent blocks}, \ref{case 2 alphas with high index are zero} , and \ref{subcase 3 alpha i is zero}, we finish the proof of item (1) of Theorem \ref{first prolongation for non-regular structures is zero}, because these lemmas account for all non-regular symbols.

To prove item (2) of Theorem \ref{first prolongation for non-regular structures is zero} note that by \eqref{g00 subalg. representation}  and Lemma \ref {dimension_cor},  for the reduced modified CR symbol corresponding to a non-regular symbol,
\[
\dim\,\mathfrak{g}_{0,0}^{\mathrm{red}}=\dim\,\mathscr A+1< n^2-4n+7.
\] 
Therefore, from item (1) of the theorem  under consideration and the fact that  
$\dim \,\mathfrak{g}_{0}^{\mathrm{red}}=\dim \,\mathfrak{g}_{0,0}^{\mathrm{red}}+2$ and $\dim \,\mathfrak g_-=2n-1$, it follows that
\[
\dim \, \mathfrak u(\mathfrak{g}^{0, \mathrm{red}})=\dim\,\mathfrak{g}^{0, \mathrm{red}}<(2n-1)+ (n^2-4n+7)+2=(n-1)^2+7,
\]  
which together with Theorem \ref{main theorem for non-regular symbols} completes the proof of item (2) of Theorem \ref{first prolongation for non-regular structures is zero}. Item (3) of Theorem \ref{first prolongation for non-regular structures is zero} follows from item (1) of Theorem \ref{first prolongation for non-regular structures is zero} and the parallelism construction referred to in \cite[Theorem 6.2]{SZ2020}.

\section{Appendix: Matrix representations of the algebra  \texorpdfstring{$\mathscr{A}$}{A}}\label{Matrix representations of intersection algebra general formula}

In this appendix we give a general formula for matrices in the algebra $\mathscr{A}$ defined in \eqref{intersection algebra} together with an outline for how the formula can be verified. {The complete formula is presented in several parts in Lemmas \ref{lemma for reduction to eSpaces}, \ref{Bij formula with zero lambda}, and \ref{condition on C for 2-dimensional scaling component} and Corollaries \ref{Bij formula with nonzero lambda}, \ref{Bii formula}, and  \ref{nonnilp_cor}. We use this explicit formula to derive upper bounds for the dimension of $\mathscr{A}$ given in Lemma \ref{dimension_cor}, which is essential for proving item (2) in Theorem \ref{first prolongation for non-regular structures is zero}. These upper bounds also immediately lead to the previously stated Theorem \ref{more precise bound}, which gives more precise bounds than those in Theorem \ref{first prolongation for non-regular structures is zero}. Furthermore, the matrix representation formula presented in this section plays a fundamental role in the proof of item (1) in Theorem \ref{first prolongation for non-regular structures is zero} given in Section \ref{First prolongation of non-regular symbols}.}

Naturally, it is easier to verify the formula than to derive it, and, since the formula is ancillary to this paper's topic, we omit the analysis used to derive it. The formula depends on the matrices $H_\ell$ and $A$ representing the pair $(\ell,\boldsymbol{A})$.

In the sequel we assume that  $H_\ell$ and $A$ are in the canonical form prescribed by Theorem \ref{simultaneous canonical form theorem}, namely as given in \eqref{simultaneous canonical form theorem eqn}. We will also use the notation of Section \ref{Matrix representations of local invariants}, and, in particular, we let $\lambda_1,\ldots, \lambda_\gamma$, $m_1,\ldots, m_\gamma$, $\epsilon_1,\ldots, \epsilon_\gamma$, $M_{\lambda_i,m_i}$ and $N_{\lambda_i,m_i}$ as in Theorem \ref{simultaneous canonical form theorem}. Recall that, in particular, this means the real and imaginary parts of each $\lambda_i$ are both nonnegative.

Define the \emph{bi-orthogonal subalgebra} of $\mathscr{A}$ to be
\[
\mathscr{A}^o:=\{B\in\mathscr{A}\, | \, B AH_\ell ^{-1} +   AH_\ell ^{-1}B^T = B^TH_\ell \overline{A}+H_\ell \overline{A} B=0\},
\]
where this name is reflecting the observation that $\mathscr{A}^o$ is analogous to an intersection of two orthogonal algebras. In this appendix, we first obtain a formula describing the elements in $\mathscr{A}^o$ and then obtain a formula for a subspace $\mathscr{A}^s\subset \mathscr{A}$ complementary to $\mathscr{A}^o$, that is,
such that 
\begin{align}\label{decomp of intersection algebra}
\mathscr{A}=\mathscr{A}^o\oplus\mathscr{A}^s.
\end{align}
Such a space $\mathscr{A}^s$ is spanned by elements that we call \emph{conformal scaling} elements of $\mathscr{A}$,  referring to the observation that these are analogous to non-orthogonal elements in an intersection of two conformally orthogonal algebras.

To begin, let $B$ be an $(n-1)\times (n-1)$ matrix in $\mathscr{A}^o$ and partition $B$ into blocks $\{B_{(i,j)}\}_{i,j=1}^\gamma$ 
where the number of rows in  $B_{(i,j)}$ is the same  as in the  matrix $M_{\lambda_i,m_i}$ and the number of columns in $B_{(i,j)}$ is the same  as in the  matrix $M_{\lambda_j,m_j}$.
Similarly, we partition $H_\ell\overline{A}B$ and $BAH_\ell^{-1}$ into blocks $\{(H_\ell\overline{A}B)_{(i,j)}\}_{i,j=1}^\gamma$ and $\{(BAH_\ell^{-1})_{(i,j)}\}_{i,j=1}^\gamma$ whose sizes are the same as in the partition of $B$. 

Let us now derive a relationship between the blocks $B_{(i,j)}$ and $B_{(j,i)}$. To simplify formulas, we assume $\epsilon_i=\epsilon_j$. To treat the more general case where possibly $\epsilon_i\neq\epsilon_j$, one can simply replace $N_{\lambda_i,m_i}$ (or $N_{\lambda_j,m_j}$) with $\epsilon_iN_{\lambda_i,m_i}$ (or $\epsilon_jN_{\lambda_j,m_j}$) in all of the subsequent formulas.

We have
\[
(BAH_\ell^{-1})_{(i,j)}=B_{(i,j)} M_{\lambda_j,m_j}N_{\lambda_j,m_j}
\quad\mbox{ and }\quad
(H_\ell\overline{A}B)_{(i,j)}=N_{\lambda_i,m_i}\overline{M_{\lambda_i,m_i}}B_{(i,j)},
\]
so, since $B\in \mathscr{A}$, 
\[
\left(M_{\lambda_i,m_i}N_{\lambda_i,m_i}\right)^TB_{(j,i)}^T=- B_{(i,j)} M_{\lambda_j,m_j}N_{\lambda_j,m_j}
\]
and
\[
B_{(j,i)}^T \left(N_{\lambda_j,m_j}\overline{M_{\lambda_j,m_j}}\right)^T=-N_{\lambda_i,m_i}\overline{M_{\lambda_i,m_i}}B_{(i,j)}.
\]
Since $\boldsymbol{A}$ is $\ell$-self-adjoint, each matrix $N_{\lambda_k,m_k}\overline{M_{\lambda_k,m_k}}$ and $M_{\lambda_k,m_k}N_{\lambda_k,m_k}$ is symmetric (one can also verify this by directly using the canonical form), and hence
\begin{align}\label{Bij block for nonequal eVals a}
M_{\lambda_i,m_i}N_{\lambda_i,m_i}B_{(j,i)}^T=-B_{(i,j)} M_{\lambda_j,m_j}N_{\lambda_j,m_j},
\end{align}
and
\begin{align}\label{Bij block for nonequal eVals b}
B_{(j,i)}^T N_{\lambda_j,m_j}\overline{M_{\lambda_j,m_j}}=-N_{\lambda_i,m_i}\overline{M_{\lambda_i,m_i}}B_{(i,j)}.
\end{align}
Multiplying both sides of \eqref{Bij block for nonequal eVals b} by $M_{\lambda_j,m_j}N_{\lambda_j,m_j}$ from the right and then applying \eqref{Bij block for nonequal eVals a} yields
\begin{align}\label{Bij block for nonequal eVals c}
B_{(j,i)}^T N_{\lambda_j,m_j}\overline{M_{\lambda_j,m_j}}M_{\lambda_j,m_j}N_{\lambda_j,m_j}&=-N_{\lambda_i,m_i}\overline{M_{\lambda_i,m_i}}B_{(i,j)}M_{\lambda_j,m_j}N_{\lambda_j,m_j}\\
&=N_{\lambda_i,m_i} \overline{M_{\lambda_i,m_i}}M_{\lambda_i,m_i}N_{\lambda_i,m_i}B_{(j,i)}^T.
\end{align}
Multiplying \eqref{Bij block for nonequal eVals c} by $N_{\lambda_i,m_i}$ from the left and by $N_{\lambda_j,m_i}$ from the right yields
\begin{align}\label{Bij block for nonequal eVals d}
\left(N_{\lambda_i,m_i}B_{(i,j)}^T N_{\lambda_j,m_j}\right)\overline{M_{\lambda_j,m_j}}M_{\lambda_j,m_j}=\overline{M_{\lambda_i,m_i}}M_{\lambda_i,m_i}\left(N_{\lambda_i,m_i}B_{(i,j)}^TN_{\lambda_j,m_j} \right).
\end{align}
Notice that \eqref{Bij block for nonequal eVals a} is also equivalent to
\begin{align}\label{Bij block for nonequal eVals e}
N_{\lambda_i,m_i}M_{\lambda_i,m_i}\left(N_{\lambda_j,m_j}B_{(j,i)}N_{\lambda_i,m_i}\right)^T=-\left(N_{\lambda_i,m_i}B_{(i,j)} N_{\lambda_j,m_j}\right)N_{\lambda_j,m_j}M_{\lambda_j,m_j}.
\end{align}

Equation \eqref{Bij block for nonequal eVals d} gives us all restrictions on the general form of $B_{(i,j)}$ that are not coming from the relationship between $B_{(i,j)}$ and other blocks in the matrix $B$. Equation \eqref{Bij block for nonequal eVals e}, on the other hand, gives us the restrictions on the general form of $B_{(i,j)}$ coming from its relationship with $B_{(j,i)}$. Moreover, if \eqref{Bij block for nonequal eVals d} and \eqref{Bij block for nonequal eVals e} are satisfied for $i$ and $j$ then $B$ is in $\mathscr{A}^o$ because \eqref{Bij block for nonequal eVals a} and \eqref{Bij block for nonequal eVals b} hold. In other words, our present goal is to solve the system of matrix equations in \eqref{Bij block for nonequal eVals d} and \eqref{Bij block for nonequal eVals e}, and whenever $(\lambda_i,\lambda_j)\neq(0,0)$, this exercise is equivalent to first solving the matrix equation
\begin{align}\label{Bij block for nonequal eVals f}
X\overline{M_{\lambda_j,m_j}}M_{\lambda_j,m_j}=\overline{M_{\lambda_i,m_i}}M_{\lambda_i,m_i}X,
\end{align}
and then, for the case where $i=j$, solving the system of equations consisting of \eqref{Bij block for nonequal eVals f} and
\begin{align}\label{Bij block for nonequal eVals g}
N_{\lambda_i,m_i}M_{\lambda_i,m_i}X^T=-XN_{\lambda_i,m_i}M_{\lambda_i,m_i}.
\end{align}
The case where $\lambda_i=\lambda_j=0$ requires special treatment because, in this case, contrary to the case where $(\lambda_i,\lambda_j)\neq(0,0)$, even if $i\neq j$ solutions for $B_{(i,j)}$ in \eqref{Bij block for nonequal eVals d} need not satisfy \eqref{Bij block for nonequal eVals e} for any matrix $B_{(j,i)}$.

Equation \eqref{Bij block for nonequal eVals f} is of the form analyzed in \cite[Chapter 8]{gantmakher1953theory}. In fact, an explicit solution to \eqref{Bij block for nonequal eVals f} is given in \cite[Chapter 8]{gantmakher1953theory}, but the solution is expressed in terms of a basis with respect to which $\overline{M_{\lambda_i,m_i}}M_{\lambda_i,m_i}$ and $\overline{M_{\lambda_j,m_j}}M_{\lambda_j,m_j}$ have their Jordan normal forms. On the other hand, the transition matrix from the initially considered basis to a basis of the Jordan normal form is block-diagonal with the blocks corresponding to the Jordan blocks. Hence, the following lemma can be obtained from the solution in \cite[Chapter 8]{gantmakher1953theory}.
\begin{lemma}\label{lemma for reduction to eSpaces}
If $\lambda_i\neq \lambda_j$ then $B_{(i,j)}=0$.
\end{lemma}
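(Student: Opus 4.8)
The plan is to reduce the matrix equation \eqref{Bij block for nonequal eVals f} to a statement about commuting operators with disjoint spectra and then invoke the classical theory of the Sylvester-type equation $XC_j = C_iX$ from \cite[Chapter 8]{gantmakher1953theory}. First I would set $C_i := \overline{M_{\lambda_i,m_i}}M_{\lambda_i,m_i}$ and $C_j := \overline{M_{\lambda_j,m_j}}M_{\lambda_j,m_j}$, so that \eqref{Bij block for nonequal eVals f} reads $XC_j = C_iX$. The key computation is to identify the spectra of $C_i$ and $C_j$: a direct computation with the canonical forms \eqref{antilinear op canon form} shows that $M_{\lambda_i,m_i}$ represents the restriction of $\boldsymbol A$ to a generalized eigenspace, and hence $C_i = \overline{M_{\lambda_i,m_i}}M_{\lambda_i,m_i}$ represents $\boldsymbol{A}^2$ restricted to that space; when $\lambda_i\in\mathbb R$ one has $C_i = J_{\lambda_i,m_i}^2$ with sole eigenvalue $\lambda_i^2$, and when $\lambda_i\notin\mathbb R$ one has $C_i = \left(\begin{smallmatrix}0 & J_{\lambda_i^2,m_i}\\ I & 0\end{smallmatrix}\right)^2 = J_{\lambda_i^2,m_i}\oplus J_{\lambda_i^2,m_i}$, again with sole eigenvalue $\lambda_i^2$. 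In all cases the only eigenvalue of $C_i$ is $\lambda_i^2$, with $C_i - \lambda_i^2 I$ nilpotent.

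Next I would recall the standard fact: if $C_i$ and $C_j$ are square matrices with $\mathrm{spec}(C_i)\cap\mathrm{spec}(C_j)=\emptyset$, then the only solution of $XC_j = C_iX$ is $X = 0$. (One way to see this: the Sylvester operator $X\mapsto C_iX - XC_j$ has spectrum $\{\alpha - \beta \mid \alpha\in\mathrm{spec}(C_i),\ \beta\in\mathrm{spec}(C_j)\}$, which avoids $0$ under the disjointness hypothesis, so the operator is invertible; this is precisely the content of the analysis in \cite[Chapter 8]{gantmakher1953theory}.) Applying this with $\mathrm{spec}(C_i)=\{\lambda_i^2\}$ and $\mathrm{spec}(C_j)=\{\lambda_j^2\}$, I must check that $\lambda_i\neq\lambda_j$ forces $\lambda_i^2\neq\lambda_j^2$. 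Here I would use the normalization recorded just before Theorem \ref{simultaneous canonical form theorem} and reiterated in this appendix: each $\lambda_k$ is the principal square root of the eigenvalue $\lambda_k^2$ of $\boldsymbol A^2$, so $\lambda_k$ lies in the closed right half-plane (real and imaginary parts nonnegative), and the principal square root is injective on that region. Hence $\lambda_i^2 = \lambda_j^2$ would give $\lambda_i = \lambda_j$, contrary to assumption; therefore $\lambda_i^2\neq\lambda_j^2$, the spectra are disjoint, and \eqref{Bij block for nonequal eVals f} forces $N_{\lambda_i,m_i}B_{(i,j)}^TN_{\lambda_j,m_j} = 0$. Since $N_{\lambda_i,m_i}$ and $N_{\lambda_j,m_j}$ are invertible (they are anti-diagonal permutation-type matrices), this yields $B_{(i,j)} = 0$, as claimed.

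The main obstacle I anticipate is bookkeeping rather than conceptual: one must be careful that \eqref{Bij block for nonequal eVals f} is genuinely the right reduction of the original block relations \eqref{Bij block for nonequal eVals d}–\eqref{Bij block for nonequal eVals e}. Tracing back, \eqref{Bij block for nonequal eVals d} is exactly \eqref{Bij block for nonequal eVals f} with $X = N_{\lambda_i,m_i}B_{(i,j)}^TN_{\lambda_j,m_j}$, so once $X=0$ is established the conclusion $B_{(i,j)}=0$ is immediate from invertibility of the $N$'s; there is no need to separately engage \eqref{Bij block for nonequal eVals e} or the special $\lambda_i=\lambda_j=0$ case, since the hypothesis $\lambda_i\neq\lambda_j$ excludes it. The one genuinely substantive point to get right is the spectral identification of $\overline{M_{\lambda_k,m_k}}M_{\lambda_k,m_k}$ in the non-real case, where the $2m_k\times 2m_k$ block structure must be squared explicitly; but this is a short direct calculation with the explicit form in \eqref{antilinear op canon form}.
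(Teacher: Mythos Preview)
Your approach is essentially the paper's own: reduce to the Sylvester equation $XC_j=C_iX$ with $C_k=\overline{M_{\lambda_k,m_k}}M_{\lambda_k,m_k}$ and invoke \cite[Chapter~8]{gantmakher1953theory} once the spectra are shown disjoint. There is, however, a slip in your spectral computation in the non-real case. You write $C_i=\left(\begin{smallmatrix}0 & J_{\lambda_i^2,m_i}\\ I & 0\end{smallmatrix}\right)^2$, i.e.\ $M_{\lambda_i,m_i}^2$, but $C_i=\overline{M_{\lambda_i,m_i}}M_{\lambda_i,m_i}$ and the conjugation matters: the correct product is
\[
\overline{M_{\lambda,m}}\,M_{\lambda,m}=\begin{pmatrix}0 & J_{\overline{\lambda^2},m}\\ I & 0\end{pmatrix}\begin{pmatrix}0 & J_{\lambda^2,m}\\ I & 0\end{pmatrix}=J_{\overline{\lambda^2},m}\oplus J_{\lambda^2,m},
\]
so $\mathrm{spec}(C_i)=\{\lambda_i^2,\overline{\lambda_i^2}\}$ rather than $\{\lambda_i^2\}$ alone (this is exactly the formula the paper records later as \eqref{mBar m with two blocks}). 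Consequently your disjointness check is incomplete: besides $\lambda_i^2\neq\lambda_j^2$ you must also rule out $\lambda_i^2=\overline{\lambda_j^2}$. This is still easy from the first-quadrant normalization of the $\lambda_k$: since $\lambda_i^2$ lies in the closed upper half-plane and $\overline{\lambda_j^2}$ in the closed lower half-plane, equality forces both to be real, whence $\lambda_j^2=\overline{\lambda_j^2}=\lambda_i^2$, and your principal-square-root injectivity then gives $\lambda_i=\lambda_j$. With this correction the argument goes through and matches the paper's proof.
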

\begin{proof}
Since the real and imaginary parts of $\lambda_i$ and  $\lambda_j$ are all nonnegative, if $\lambda_i\neq \lambda_j$ then the eigenvalues of $\overline{M_{\lambda_i,m_i}}M_{\lambda_i,m_i}$ all differ from the eigenvalues of $\overline{M_{\lambda_j,m_j}}M_{\lambda_j,m_j}$. Accordingly, by \cite[Chapter 8, Theorem 1 and  Equation (11)]{gantmakher1953theory}, the matrix $X$ in \eqref{Bij block for nonequal eVals f} is zero.
\end{proof}

Given Lemma \ref{lemma for reduction to eSpaces}, all that remains is to find the general formula for $B_{(i,j)}$ when $\lambda_i=\lambda_j$. We will say that a Toeplitz $p\times q$ matrix is an \emph{upper-triangular Toeplitz matrix}, if the only nonzero entries appear on or above the main diagonal in their right-most $p\times p$ block if $p\leq q$, and the top-most $q\times q$ block if $p\geq q$ (in the terminology of \cite[Chapter 8]{gantmakher1953theory} they are called regular upper-triangular, but we avoid this terminology because the term ``regular'' is already assigned in the present paper to another concept).   
\begin{lemma}\label{equal eValue off diag dimension}
Suppose $\lambda_i= \lambda_j$ and $m_{i}\leq m_j$. The dimension of the space of solutions of \eqref{Bij block for nonequal eVals f} is equal to 
\begin{enumerate}
    \item $m_i$ if $\lambda_i>0$;
    \item $2m_i$ if $\lambda_i^2\not\in\R$;
    \item $4m_i$ if $\lambda_i^2<0$.
\end{enumerate}
\end{lemma}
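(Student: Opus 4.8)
The plan is to reduce equation \eqref{Bij block for nonequal eVals f} to a Sylvester-type equation between matrices in Jordan normal form and then to invoke the classical dimension count for intertwiners of Jordan blocks. Set $\lambda:=\lambda_i=\lambda_j$ and $\mu:=\lambda^2$; note that the three listed cases are mutually exclusive and, since the real and imaginary parts of $\lambda$ are nonnegative, together they account for exactly the possibilities with $\lambda\neq 0$ (the case $\lambda=0$, where $\mu=0\in\R$, is excluded here and handled separately in Lemma \ref{Bij formula with zero lambda}).

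The first step is to compute $\overline{M_{\lambda,m}}M_{\lambda,m}$ from \eqref{antilinear op canon form}. When $\lambda>0$ the matrix $M_{\lambda,m}=J_{\lambda,m}$ is real, so $\overline{M_{\lambda,m}}M_{\lambda,m}=J_{\lambda,m}^2$; writing $J_{\lambda,m}=\lambda I+U$ with $U$ the nilpotent superdiagonal shift, the perturbation $2\lambda U+U^2$ is nilpotent of index exactly $m$ because $\lambda\neq 0$, so $J_{\lambda,m}^2$ is similar to the single Jordan block $J_{\mu,m}$. When $\lambda\notin\R$ (that is, $\mu\notin\R$ or $\mu<0$) the matrix $M_{\lambda,m}$ is $2m\times 2m$ of the form $\begin{pmatrix}0&J_{\mu,m}\\ I&0\end{pmatrix}$, and since $\overline{J_{\mu,m}}=J_{\overline\mu,m}$ a direct multiplication gives
\[
\overline{M_{\lambda,m}}M_{\lambda,m}=\begin{pmatrix}0&J_{\overline\mu,m}\\ I&0\end{pmatrix}\begin{pmatrix}0&J_{\mu,m}\\ I&0\end{pmatrix}=J_{\overline\mu,m}\oplus J_{\mu,m}.
\]
Hence $\overline{M_{\lambda_i,m_i}}M_{\lambda_i,m_i}$ is similar to $J_{\mu,m_i}$ if $\lambda>0$; to $J_{\overline\mu,m_i}\oplus J_{\mu,m_i}$, a sum of Jordan blocks with distinct eigenvalues, if $\mu\notin\R$; and to $J_{\mu,m_i}\oplus J_{\mu,m_i}$, a sum of Jordan blocks with equal eigenvalue (as $\overline\mu=\mu$ when $\mu<0$), if $\mu<0$; and likewise with $i$ replaced by $j$.

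Next, because $X\mapsto SXT^{-1}$ is a linear isomorphism between the solution space of $XB=AX$ and that of $X(TBT^{-1})=(SAS^{-1})X$, the dimension sought is unchanged if $\overline{M_{\lambda_i,m_i}}M_{\lambda_i,m_i}$ and $\overline{M_{\lambda_j,m_j}}M_{\lambda_j,m_j}$ are replaced by the Jordan forms above. Partitioning $X$ conformally with these block decompositions, equation \eqref{Bij block for nonequal eVals f} decouples blockwise, and I would apply the classical fact (see \cite[Chapter 8]{gantmakher1953theory}) that the space of $p\times q$ matrices $Y$ with $YJ_{\nu',q}=J_{\nu,p}Y$ has dimension $\min(p,q)$ if $\nu=\nu'$ and is $\{0\}$ otherwise. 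For $\lambda>0$ there is a single block, of dimension $\min(m_i,m_j)=m_i$; for $\mu\notin\R$ the two off-diagonal blocks vanish (distinct eigenvalues) while the two diagonal blocks each have dimension $m_i$, giving $2m_i$; for $\mu<0$ all four blocks pair Jordan blocks with the common eigenvalue $\mu$ and each has dimension $m_i$, giving $4m_i$. The computations are routine; the only points demanding care are the bookkeeping around the principal-square-root convention — checking that the three cases are exhaustive for $\lambda\neq 0$ and that $\mu<0$ forces $\overline\mu=\mu$ — and the verification that $2\lambda U+U^2$ is nilpotent of index exactly $m$ when $\lambda\neq 0$, which is precisely what makes $J_{\lambda,m}^2$ a single Jordan block rather than a sum of smaller ones.
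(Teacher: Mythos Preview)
Your proposal is correct and follows essentially the same approach as the paper: both reduce \eqref{Bij block for nonequal eVals f} via similarity to an intertwining equation between matrices in Jordan form and then invoke the classical dimension count from \cite[Chapter 8]{gantmakher1953theory}. Your treatment is slightly more explicit in two places---you justify why $J_{\lambda,m}^2$ is a single Jordan block when $\lambda>0$ (the paper simply asserts similarity to $J_{\lambda^2,m}$), and you spell out that the three cases exhaust $\lambda\neq 0$---but the substance is identical; the only cosmetic discrepancy is that your direct computation gives $\overline{M_{\lambda,m}}M_{\lambda,m}=J_{\overline\mu,m}\oplus J_{\mu,m}$ while the paper records the blocks in the opposite order, which of course does not affect the dimension count.
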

\begin{proof}
We use \cite[Chapter 8, Theorem 1]{gantmakher1953theory} again for each of the cases.

Suppose first that $\lambda_i>0$. If $\lambda>0$ then $\overline{M_{\lambda,m}}M_{\lambda,m}$ is similar to the Jordan matrix $J_{\lambda^2,m}$. Let $U_i$ and $U_j$ be invertible matrices such that $U_j\overline{M_{\lambda_j,m_j}}M_{\lambda_j,m_j}U_j^{-1}=J_{\lambda_j^2,m_j}$ and $U_i\overline{M_{\lambda_i,m_i}}M_{\lambda_i,m_i}U_i^{-1}=J_{\lambda_i^2,m_i}$. For a matrix $X$ satisfying \eqref{Bij block for nonequal eVals f}, set $\widetilde{X}=U_j^{-1}XU_i$ so that, by \eqref{Bij block for nonequal eVals f}, 
\begin{align}\label{Bij block for nonequal eVals f new basis}
\widetilde{X} J_{\lambda_j^2,m_j}=J_{\lambda_i^2,m_i} \widetilde{X}.
\end{align}
It is shown in \cite[Chapter 8, Theorem 1]{gantmakher1953theory} that the space of solutions of \eqref{Bij block for nonequal eVals f new basis} consists of upper-triangular Toeplitz matrices. Therefore, the space of solutions of \eqref{Bij block for nonequal eVals f new basis} has dimension $m_i$, which shows item (1) because $X\mapsto U_j^{-1}XU_i$ gives an isomorphism between the space of solutions of \eqref{Bij block for nonequal eVals f new basis} and the space of solutions of \eqref{Bij block for nonequal eVals f}.

Let us now suppose $\lambda_i^2\not\in\R$ or $\lambda_i^2<0$. If $\lambda^2\not\in\R$ or $\lambda^2<0$ then
\begin{align}\label{mBar m with two blocks}
\overline{M_{\lambda,m}}M_{\lambda,m}=J_{\lambda^2,m}\oplus J_{\overline{\lambda}^2,m}.
\end{align}
For a matrix $X$ satisfying \eqref{Bij block for nonequal eVals f}, consider the $2\times 2$ block matrix partition $(X_{(r,s)})_{r,s\in\{1,2\}}$ of $X$ whose blocks are all $m_i\times m_j$ matrices. It is shown in \cite[Chapter 8, Theorem 1]{gantmakher1953theory} that the space of solutions of \eqref{Bij block for nonequal eVals f} with $\overline{M_{\lambda_i,m_i}}M_{\lambda_i,m_i}$ and $\overline{M_{\lambda_j,m_j}}M_{\lambda_j,m_j}$ of the form in \eqref{mBar m with two blocks} consists of matrices $(X_{(r,s)})_{r,s\in\{1,2\}}$ for which each $X_{(r,s)}$ is an upper-triangular Toeplitz matrix, where, moreover, if $\lambda_i^2\neq \overline{\lambda_i}^2$ then $X_{(1,2)}=X_{(2,1)}=0$. Accordingly, if $\lambda_i^2\not\in \R$ (respectively $\lambda_i^2<0$) then solutions to \eqref{Bij block for nonequal eVals f} are determined by two (respectively four)  upper-triangular Toeplitz $m_i\times m_i$ matrices. Items (2) and (3) follow because each upper-triangular Toeplitz $m_i\times m_i$ is determined by $m_i$ variables.
\end{proof}
\begin{corollary}\label{Bij formula with nonzero lambda}
If $m_i\leq m_j$, $\lambda_i=\lambda_j=\lambda$ and $\lambda\neq0$ then the matrices $B_{(i,j)}$ and $B_{(j,i)}$ are described by one of three formulas, where the correct formula depends on $\lambda$. In the formulas below , as before, $T_m$ denotes the $m\times m$ nilpotent Jordan block $J_{0, m}$.

\begin{enumerate} 
\item If $\lambda>0$ then $B_{(i,j)}$ and $B_{(j,i)}$ respectively equal
\begingroup\setlength{\abovedisplayskip}{18pt}
\begin{equation}\label{intersection algebra lambda real off diag}
\left(
\begin{array}{c|c}
\bovermat{\parbox{2cm}{\linespread{.6}\selectfont\centering$m_j-m_i$ columns}}{\mbox{
$\begin{matrix}
0&\cdots&0\\
\vdots&&\vdots\\
\vdots&&\vdots\\
0&\cdots&0\end{matrix}$
}}
&
\sum\limits_{k=0}^{m_i-1} b_k T_{m_i}^k
\end{array}
\right)
,\quad \mbox{ and} \quad
-\epsilon_i\epsilon_j\left(
\begin{array}{c}
\sum\limits_{k=0}^{m_i-1} b_k T_{m_i}^k\\\hline
\begin{matrix}
0&\cdots&0\\
\vdots&&\vdots\\
0&\cdots&0\end{matrix}
\end{array}
\right)
\begin{array}{c}
\vphantom{0}\\
\vphantom{0}\\
\left.\vphantom{\begin{matrix}0\\0\\0\end{matrix}}
\right\}\parbox{1.6cm}{\scriptsize $m_j-m_i$ rows,}
\end{array}
\end{equation}
\endgroup
for some coefficients $\{b_k\}$. 
\item 
 If  $\lambda^2\not\in\R$ then
\begin{align}\label{intersection algebra lambda nonreal off diag a}
\phantom{\begin{array}{c}\vdots\\\vdots\\\vdots\\\vdots\\\vdots\\a \end{array}}
B_{(i,j)}=
\left(
\begin{array}{c|c|c|c}
\bovermat{\parbox{2cm}{\linespread{.6}\selectfont\centering$m_j-m_i$ columns}}{\mbox{
$\begin{matrix}
0&\cdots&0\\
\vdots&&\vdots\\
\vdots&&\vdots\\
0&\cdots&0\end{matrix}$
}}
&
\begin{matrix}
\sum\limits_{k=0}^{m_i-1} a_k T_{m_i}
\\\hline
0\vphantom{\sum\limits_{k=0}^{m_i}}
\end{matrix}
&
\bovermat{\parbox{2cm}{\linespread{.6}\selectfont\centering$m_j-m_i$ columns}}{\mbox{
$\begin{matrix}
0&\cdots&0\\
\vdots&&\vdots\\
\vdots&&\vdots\\
0&\cdots&0\end{matrix}$
}}
&
\begin{matrix}
0\vphantom{\sum\limits_{k=0}^{m_i}}
\\\hline
\sum\limits_{k=0}^{m_i} b_k T_{m_i}
\end{matrix}
\end{array}
\right),
\end{align}
and
\begin{align}\label{intersection algebra lambda nonreal off diag b}
B_{(j,i)}=
-\epsilon_i\epsilon_j\left(
\begin{array}{c}
\begin{array}{c|c}
\sum\limits_{k=0}^{m_i} a_k T_{m_i}&\hspace{.7cm}0\hspace{.7cm}
\end{array}
\\\hline
\begin{matrix}
0&\cdots\cdots\cdots\cdots\cdots&0\\
\vdots&&\vdots\\
0&\cdots\cdots\cdots\cdots\cdots&0
\end{matrix}
\\\hline
\begin{array}{c|c}
\hspace{.7cm}0\hspace{.7cm}&\sum\limits_{k=0}^{m_i} b_k T_{m_i}
\end{array}
\\\hline
\begin{matrix}
0&\cdots\cdots\cdots\cdots\cdots&0\\
\vdots&&\vdots
\end{matrix}
\end{array}
\right)
\begin{array}{c}
\vphantom{\sum\limits_{k=0}^{m_i} }\\
\left.\vphantom{\begin{matrix}0\\\vdots\\0\end{matrix}}
\right\}\parbox{1.1cm}{\scriptsize $m_j-m_i$ rows}\\
\vphantom{\sum\limits_{k=0}^{m_i} }\\
\left.\vphantom{\begin{matrix}0\\\vdots\end{matrix}}
\right\}\parbox{1.1cm}{\scriptsize $m_j-m_i$ rows,}
\end{array}
\end{align}
for some coefficients $\{a_k,b_k\}$.
\item
If  $\lambda^2<0$ then 
\begin{align}\label{intersection algebra lambda negative off diag a}
\phantom{\begin{array}{c}\vdots\\\vdots\\\vdots\\\vdots\\\vdots\\a \end{array}}
B_{(i,j)}=
\left(
\begin{array}{c|c|c|c}
\bovermat{\parbox{2cm}{\linespread{.6}\selectfont\centering$m_j-m_i$ columns}}{\mbox{
$\begin{matrix}
0&\cdots&0\\
\vdots&&\vdots\\
\vdots&&\vdots\\
0&\cdots&0\end{matrix}$
}}
&
\begin{matrix}
\sum\limits_{k=0}^{m_i-1} a_k T_{m_i}^k
\\\hline
\sum\limits_{k=0}^{m_i-1} \left(\sum_{r=0}^kc_r\right) T_{m_i}^k
\end{matrix}
&
\bovermat{\parbox{2cm}{\linespread{.6}\selectfont\centering$m_j-m_i$ columns}}{\mbox{
$\begin{matrix}
0&\cdots&0\\
\vdots&&\vdots\\
\vdots&&\vdots\\
0&\cdots&0\end{matrix}$
}}
&
\begin{matrix}
\sum\limits_{k=0}^{m_i} b_k T_{m_i}^k
\\\hline
\sum\limits_{k=0}^{m_i} d_k T_{m_i}^k
\end{matrix}
\end{array}
\right),
\end{align}
and
\begin{align}\label{intersection algebra lambda negative off diag b}
B_{(j,i)}=
\epsilon_i\epsilon_j\left(
\begin{array}{c}
\begin{array}{c|c}
-\sum\limits_{k=0}^{m_i} a_k T_{m_i}^k
&
\sum\limits_{k=0}^{m_i} c_k T_{m_i}^k
\end{array}
\\\hline
\begin{matrix}
0&\cdots\cdots\cdots\cdots&0\\
\vdots&&\vdots\\
0&\cdots\cdots\cdots\cdots&0
\end{matrix}
\\\hline
\begin{array}{c|c}
\sum\limits_{k=0}^{m_i} \left(\sum_{r=0}^kb_r\right) T_{m_i}^k
&
-\sum\limits_{k=0}^{m_i} d_k T_{m_i}^k
\end{array}
\\\hline
\begin{matrix}
0&\cdots\cdots\cdots\cdots&0\\
\vdots&&\vdots
\end{matrix}
\end{array}
\right)
\begin{array}{c}
\vphantom{\sum\limits_{k=0}^{m_i} }\\
\left.\vphantom{\begin{matrix}0\\\vdots\\0\end{matrix}}
\right\}\parbox{1.1cm}{\scriptsize $m_j-m_i$ rows}\\
\vphantom{\sum\limits_{k=0}^{m_i} }\\
\left.\vphantom{\begin{matrix}0\\\vdots\end{matrix}}
\right\}\parbox{1.1cm}{\scriptsize $m_j-m_i$ rows,}
\end{array}
\end{align}
for some coefficients $\{a_k,b_k,c_k,d_k\}$. 
\end{enumerate}
\end{corollary}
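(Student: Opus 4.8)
The plan is to reduce everything to the single matrix equation \eqref{Bij block for nonequal eVals f} together with the ``transpose'' relation \eqref{Bij block for nonequal eVals e}, and then to quote the classical structure theory of solutions of Sylvester-type equations $XJ_1 = J_2 X$ from \cite[Chapter 8]{gantmakher1953theory}. By Lemma \ref{lemma for reduction to eSpaces} we may assume $\lambda_i=\lambda_j=\lambda\neq 0$ and $m_i\leq m_j$, so the starting point is already that $B_{(i,j)}$ ranges over the solution space of \eqref{Bij block for nonequal eVals f}, whose dimension is pinned down by Lemma \ref{equal eValue off diag dimension} to be $m_i$, $2m_i$, or $4m_i$ according to whether $\lambda>0$, $\lambda^2\notin\R$, or $\lambda^2<0$. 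The strategy is therefore: (i) write down the general solution of \eqref{Bij block for nonequal eVals f} as a block matrix of upper-triangular Toeplitz blocks, using the change of basis $U_i,U_j$ that puts $\overline{M_{\lambda_k,m_k}}M_{\lambda_k,m_k}$ into Jordan form together with the fact that this change of basis is block-diagonal compatible with the $N$-matrices; (ii) feed this general solution into \eqref{Bij block for nonequal eVals e} to solve for $B_{(j,i)}$ in terms of $B_{(i,j)}$, which will produce exactly the $-\epsilon_i\epsilon_j$-twisted, anti-triangular-conjugated formulas displayed in \eqref{intersection algebra lambda real off diag}--\eqref{intersection algebra lambda negative off diag b}; and (iii) verify the parameter count matches Lemma \ref{equal eValue off diag dimension} so that no solutions are missed.

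Concretely, for case (1) ($\lambda>0$) the matrix $\overline{M_{\lambda,m}}M_{\lambda,m}$ is similar to a single Jordan block $J_{\lambda^2,m}$, so after conjugating by $U_i,U_j$ the solutions of \eqref{Bij block for nonequal eVals f new basis} are the upper-triangular Toeplitz $m_i\times m_j$ matrices; undoing the conjugation and keeping track of $N_{\lambda_k,m_k}=S_{m_k}$, a Toeplitz matrix in the $J$-basis corresponds to a matrix of the form $\big(0\ \big|\ \sum_{k=0}^{m_i-1}b_k T_{m_i}^k\big)$ in the original basis since $S_m T_m S_m = T_m^T$ converts lower-triangular-anti-Toeplitz structure to upper-triangular Toeplitz structure (one checks the padding of zero columns appears on the left because $m_i\leq m_j$). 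Then \eqref{Bij block for nonequal eVals e}, which reads $N_{\lambda_i,m_i}M_{\lambda_i,m_i}(N_{\lambda_j,m_j}B_{(j,i)}N_{\lambda_i,m_i})^T = -(N_{\lambda_i,m_i}B_{(i,j)}N_{\lambda_j,m_j})N_{\lambda_j,m_j}M_{\lambda_j,m_j}$, is a linear equation for $B_{(j,i)}$ given $B_{(i,j)}$; since $N_{\lambda_k,m_k}M_{\lambda_k,m_k}$ is invertible (here $\lambda\neq 0$), it has a unique solution, which a direct computation identifies with the second matrix in \eqref{intersection algebra lambda real off diag} (the sign $-\epsilon_i\epsilon_j$ comes from restoring the $\epsilon$'s that were set equal for convenience, exactly as flagged in the text). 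Cases (2) and (3) are identical in spirit but use the two-block decomposition $\overline{M_{\lambda,m}}M_{\lambda,m}=J_{\lambda^2,m}\oplus J_{\overline\lambda^2,m}$ from \eqref{mBar m with two blocks}: when $\lambda^2\notin\R$ the two Jordan blocks have distinct eigenvalues so the off-diagonal blocks of the $2\times 2$ partition vanish, giving two independent Toeplitz parameter families $\{a_k\},\{b_k\}$ and hence the block-diagonal-looking formula \eqref{intersection algebra lambda nonreal off diag a}; when $\lambda^2<0$ the two blocks share the same (real negative) eigenvalue so all four $2\times 2$ sub-blocks survive, giving four families $\{a_k\},\{b_k\},\{c_k\},\{d_k\}$ and the coupled formula \eqref{intersection algebra lambda negative off diag a} with the cumulative sums $\sum_{r=0}^k c_r$, $\sum_{r=0}^k b_r$ appearing precisely because \eqref{Bij block for nonequal eVals e} mixes the $T$-Toeplitz structure under transposition. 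In each case $B_{(j,i)}$ is then the unique solution of \eqref{Bij block for nonequal eVals e}, recovering \eqref{intersection algebra lambda nonreal off diag b} and \eqref{intersection algebra lambda negative off diag b}.

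The main obstacle I anticipate is purely bookkeeping rather than conceptual: correctly tracking how the anti-diagonal matrices $N_{\lambda_k,m_k}=S_{m_k}$ (or $S_{2m_k}$) interact with the Toeplitz/Jordan structure when one translates Gantmacher's solution (stated in a Jordan basis) back to the canonical basis of Theorem \ref{simultaneous canonical form theorem}, and in case (3) getting the cumulative-sum coefficients and the sign pattern $\epsilon_i\epsilon_j$ versus $-\epsilon_i\epsilon_j$ exactly right. A secondary subtlety is the zero-padding convention: because $m_i\leq m_j$, the $m_i\times m_j$ block $B_{(i,j)}$ has its free parameters in its rightmost $m_i\times m_i$ sub-block while $B_{(j,i)}$, an $m_j\times m_i$ block, has them in its topmost $m_i\times m_i$ sub-block, and the remaining rows/columns are forced to be zero by \eqref{Bij block for nonequal eVals f}; verifying this forcing is where one uses that upper-triangular Toeplitz matrices in the relevant sense have no components below the main diagonal of their maximal square sub-block. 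Finally, consistency of the dimension count in Lemma \ref{equal eValue off diag dimension} (namely $m_i$, $2m_i$, $4m_i$) with the number of free parameters in \eqref{intersection algebra lambda real off diag}--\eqref{intersection algebra lambda negative off diag b} (one, two, four Toeplitz $m_i\times m_i$ families, each carrying $m_i$ coefficients) confirms that these formulas exhaust $\mathscr{A}^o$'s off-diagonal blocks, completing the verification; I would state this consistency check explicitly as the closing sentence of the proof.
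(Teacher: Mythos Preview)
Your approach is correct but takes a more laborious route than the paper. You propose to \emph{derive} the formulas by transporting Gantmacher's Toeplitz solutions of $\widetilde{X}J_1=J_2\widetilde{X}$ back through the change-of-basis matrices $U_i,U_j$, and you rightly flag the bookkeeping hazards this entails (tracking $S_m$-conjugation, zero-padding conventions, cumulative-sum coefficients in case (3)). The paper sidesteps all of this: it simply \emph{verifies} by direct substitution that each displayed formula for $B_{(i,j)}$ satisfies \eqref{Bij block for nonequal eVals f}, then invokes the dimension count of Lemma \ref{equal eValue off diag dimension} (which you also cite) to conclude that no solutions are missed, and finally plugs the known $B_{(i,j)}$ into \eqref{Bij block for nonequal eVals e} to read off $B_{(j,i)}$. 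The paper even remarks explicitly, just before the appendix proofs, that ``it is easier to verify the formula than to derive it, and \ldots\ we omit the analysis used to derive it.''

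The trade-off: your derivation explains \emph{where} the formulas come from and would be necessary if the answer were not already known; the paper's verification is shorter, immune to the sign and padding errors you anticipate, and is all that is logically required once the formulas are on the page. One minor over-complication in your sketch: in case (1) with $\lambda>0$, since $J_{\lambda,m}$ is a polynomial in $J_{\lambda,m}^2$ (as $\lambda\neq 0$), the intertwining equation $XJ_{\lambda,m_j}^2=J_{\lambda,m_i}^2X$ is already equivalent to $XJ_{\lambda,m_j}=J_{\lambda,m_i}X$, so no change of basis $U_i,U_j$ is actually needed there and the Toeplitz form \eqref{intersection algebra lambda real off diag} appears directly.
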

\begin{proof}
Using the formula for $B_{(i,j)}$ given in \eqref{intersection algebra lambda real off diag}, \eqref{intersection algebra lambda nonreal off diag a}, and \eqref{intersection algebra lambda negative off diag a}, it is straightforward to check that \eqref{Bij block for nonequal eVals f} holds with $X=B_{(i,j)}$. Moreover, this formula for $B_{(i,j)}$ is the most general formula with this property because, by Lemma \ref{equal eValue off diag dimension}, it has the maximum number of parameters possible. Lastly, the formula for $B_{(j,i)}$ given in \eqref{intersection algebra lambda real off diag}, \eqref{intersection algebra lambda nonreal off diag b}, and \eqref{intersection algebra lambda negative off diag b} is obtained through another straightforward calculation by applying \eqref{Bij block for nonequal eVals e} directly to the formula for $B_{(i,j)}$.
\end{proof}

To simplify notation in the following lemma, for an integer $q$, we let $[q]_2$ denote the residue of $q$ modulo 2, that is, $[q]_2=0$ if $q$ is even and $[q]_2=1$ if $q$ is odd.
\begin{lemma}\label{Bij formula with zero lambda}
If $m_{i}\leq m_j$ and $\lambda_i=\lambda_j=0$ then
\begin{align}\label{intersection algebra lambda zero off diag 1}
\phantom{\begin{array}{c}\vdots\\\vdots\\\vdots\\\vdots\\\vdots\\\vdots \end{array}}
B_{(i,j)}=
\left(
\begin{array}{c|c}
\bovermat{\parbox{2cm}{\linespread{.6}\selectfont\centering$m_j-m_i$ columns}}{\mbox{
$\begin{matrix}
0&\cdots&0\\
\vdots&&\vdots\\
\vdots&&\vdots\\
\vdots&&\vdots\\
0&\cdots&0\end{matrix}$
}}
&
\begin{matrix}
c^{1}_1&c^{1}_2&\cdots&&\cdots&c^{1}_{m_i}\\
0&c^{0}_1&c^{0}_2&\cdots&\cdots&c^{0}_{m_i-1}\\
0&0&c^{1}_1&c^{1}_2&\cdots&c^{1}_{m_i-2}\\
\vdots&&\rdots{4}{-4pt}&c^{0}_1&\cdots&c^{0}_{m_i-3}\\
\vdots&&&\rdots{4}{-4pt}&\rdots{4}{-4pt}&\vdots\\
0&\cdots&&\cdots&0&c_1^{[m_i]_2}
\end{matrix}
\end{array}
\right),
\end{align}
and
\begin{align}\label{intersection algebra lambda zero off diag 2}
B_{(j,i)}=-\epsilon_i\epsilon_j\left(
\begin{array}{c}
\begin{matrix}
c^{[m_i+1]_2}_1&c^{[m_i+2]_2}_2&\cdots&&\cdots&c^{[2m_i]_2}_{m_i}\\
0&c^{[m_i+2]_2}_1&c^{[m_i+3]_2}_2&\cdots&\cdots&c^{[2m_i]_2}_{m_i-1}\\
0&0&c^{[m_i+3]_2}_1&c^{[m_i+4]_2}_2&\cdots&c^{[2m_i]_2}_{m_i-2}\\
\vdots&&\rdots{14}{-4pt}&c^{[m_i+4]_2}_1&\cdots&c^{[2m_i]_2}_{m_i-3}\\
\vdots&&&\rdots{14}{-4pt}&\rdots{14}{-4pt}&\vdots\\
0&\cdots&\cdots&\cdots&0&c_1^{[2m_i]_2}
\end{matrix}
\\\hline
\left.\begin{matrix}
\quad \,\,0& \cdots\,\quad\cdots\,\quad\cdots\,\quad\cdots\,\quad\cdots&0\\
\quad \,\,\vdots&&\vdots\\
\quad \,\,0& \cdots\,\quad\cdots\,\quad\cdots\,\quad\cdots\,\quad\cdots&0\end{matrix}\right\}\parbox{2cm}{\scriptsize $m_j-m_i$ rows}
\end{array}
\right)
\end{align}
for some coefficients $\{a_k,b_k,c_k^{1},c_k^{0}\}$.
\end{lemma}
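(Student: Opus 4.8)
\emph{Proof proposal.} The plan is to specialize the discussion preceding the lemma to $\lambda_i=\lambda_j=0$, where $M_{0,m}=T_m$ is real, $N_{0,m}=S_m$, $\overline{M_{0,m}}\,M_{0,m}=T_m^{2}$, and $N_{0,m}M_{0,m}=S_mT_m$. With these substitutions, equation \eqref{Bij block for nonequal eVals d} (the $\lambda=0$ analogue of \eqref{Bij block for nonequal eVals f}) reduces to an intertwining relation between the nilpotent operators $T_{m_i}^{2}$ and $T_{m_j}^{2}$. The feature that distinguishes this case from the one treated in Corollary \ref{Bij formula with nonzero lambda} is that $T_m^{2}$ is not a single Jordan block: separating the standard basis $e_1,\dots,e_m$ by the parity of the index gives a permutation realizing the isomorphism $T_m^{2}\cong T_{\lceil m/2\rceil}\oplus T_{\lfloor m/2\rfloor}$. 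Under this permutation the unknown block $B_{(i,j)}$ becomes a $2\times2$ block matrix intertwining $T_{\lceil m_j/2\rceil}\oplus T_{\lfloor m_j/2\rfloor}$ with $T_{\lceil m_i/2\rceil}\oplus T_{\lfloor m_i/2\rfloor}$, so by the theory of such equations for Jordan forms (\cite[Chapter 8, Theorem 1]{gantmakher1953theory}, as used already in Lemmas \ref{lemma for reduction to eSpaces} and \ref{equal eValue off diag dimension}) each of its four blocks is an upper-triangular Toeplitz matrix. Pulling this back to the standard basis says precisely that the restriction of $B_{(i,j)}$ to its odd-indexed rows is an upper-triangular Toeplitz matrix and its restriction to the even-indexed rows is another one — the ``checkerboard Toeplitz'' shape of \eqref{intersection algebra lambda zero off diag 1}, though a priori with more free parameters than the lemma allows.

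Next I would impose the coupling relation \eqref{Bij block for nonequal eVals e}, which links $B_{(i,j)}$ to $B_{(j,i)}$; this is the step responsible for the special behaviour flagged before the lemma, since in the nilpotent case a solution of \eqref{Bij block for nonequal eVals d} need not extend to a solution of the full system. Conjugation by the exchange matrices $S_{m_i}$ and $S_{m_j}$ reverses the order of the basis vectors, hence interchanges ``first rows/columns'' with ``last rows/columns'' and shifts the parity of an index by $[m_i]_2$ or $[m_j]_2$. Substituting the checkerboard-Toeplitz form of $B_{(i,j)}$ into \eqref{Bij block for nonequal eVals e} and using $m_i\le m_j$, one checks that (i) the equation is solvable for $B_{(j,i)}$ and its solution is uniquely determined, equal to \eqref{intersection algebra lambda zero off diag 2}, with the parity shifts $[m_i+1]_2,[m_i+2]_2,\dots$ being exactly the composition of the parity splitting with the order reversal by $S$; and (ii) consistency kills the ``extra'' parameters coming out of the first step, leaving precisely the family displayed in \eqref{intersection algebra lambda zero off diag 1}.

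Finally, as in the proof of Corollary \ref{Bij formula with nonzero lambda}, the cleanest way to close the argument is by direct verification: a short matrix computation shows that the pair \eqref{intersection algebra lambda zero off diag 1}--\eqref{intersection algebra lambda zero off diag 2} satisfies both \eqref{Bij block for nonequal eVals d} and \eqref{Bij block for nonequal eVals e}, while the two previous paragraphs show that no strictly larger solution family exists, so this is the general solution. The hard part will be the middle step: the bookkeeping needed to determine which parameters of the checkerboard-Toeplitz matrix survive \eqref{Bij block for nonequal eVals e}, and in particular to pin down the precise parity-shift pattern $[m_i+k]_2$ appearing in \eqref{intersection algebra lambda zero off diag 2}, is genuinely more delicate here than in the case $\lambda\ne0$, precisely because $\overline{M_{0,m}}\,M_{0,m}$ splits into two Jordan blocks rather than one.
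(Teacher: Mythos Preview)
Your plan has a genuine gap: the pair of equations you intend to solve, \eqref{Bij block for nonequal eVals d} together with \eqref{Bij block for nonequal eVals e}, is strictly weaker than the defining pair \eqref{Bij block for nonequal eVals a}--\eqref{Bij block for nonequal eVals b} when $\lambda_i=\lambda_j=0$. Equation \eqref{Bij block for nonequal eVals e} is only a rewriting of \eqref{Bij block for nonequal eVals a}, while \eqref{Bij block for nonequal eVals d} was obtained by right-multiplying \eqref{Bij block for nonequal eVals b} by $M_{\lambda_j,m_j}N_{\lambda_j,m_j}$ and then substituting \eqref{Bij block for nonequal eVals a}; since $M_{0,m}=T_m$ is nilpotent, that multiplication loses information and cannot be reversed. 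Concretely, take $m_i=m_j=2$: then $T_2^2=0$, so \eqref{Bij block for nonequal eVals d} is vacuous, and your steps solve only \eqref{Bij block for nonequal eVals a}. That gives a five-parameter family of pairs $(B_{(i,j)},B_{(j,i)})$, whereas the lemma gives the correct four-parameter family; the missing constraint $d=-s$ on the $(2,2)$ entries comes precisely from \eqref{Bij block for nonequal eVals b}. So your claim that \eqref{Bij block for nonequal eVals e} ``uniquely determines'' $B_{(j,i)}$ is false, and your final verification against \eqref{Bij block for nonequal eVals d} and \eqref{Bij block for nonequal eVals e} would not establish that the displayed family exhausts $\mathscr{A}^o$.

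The paper's proof sidesteps this issue by never passing to \eqref{Bij block for nonequal eVals d}: it works directly with \eqref{Bij block for nonequal eVals a} and \eqref{Bij block for nonequal eVals b}, iterating them row-by-row and column-by-column to force the entries below and to the left of the ``reference diagonal'' (the main diagonal of the upper right $m_i\times m_i$ block) to vanish, and then uses the observation that \eqref{Bij block for nonequal eVals a} and \eqref{Bij block for nonequal eVals b} each identify a shifted $(m_i-1)\times(m_i-1)$ subblock of $B_{(i,j)}$ with the secondary transpose of a corresponding subblock of $B_{(j,i)}$. Your Gantmacher/parity-splitting idea does give a clean structural reason for the two-sequence checkerboard pattern and could be made to work, but in step~2 you must impose \emph{both} \eqref{Bij block for nonequal eVals a} and \eqref{Bij block for nonequal eVals b}, not just \eqref{Bij block for nonequal eVals e}.
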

\begin{proof}
Let us refer to the main diagonal of the upper right $m_i\times m_i$ block in each matrix $B_{(i,j)}$ and $B_{(j,i)}$ as that matrix's reference diagonal.

Notice that equations \eqref{Bij block for nonequal eVals a} and \eqref{Bij block for nonequal eVals b} hold in the present context with $\lambda_i=\lambda_j=0$ and $\epsilon_i=\epsilon_j$. Let us assume $\epsilon_i=\epsilon_j$, noting that for the other case, where $\epsilon_i\neq \epsilon_j$, we would first change the sign of the right side of \eqref{Bij block for nonequal eVals a}  and \eqref{Bij block for nonequal eVals b} and then proceed with exactly the same calculations.

Applying  \eqref{Bij block for nonequal eVals a}, we  find that the last row of $B_{(i,j)}$ contains only zeros below the reference diagonal, and, applying \eqref{Bij block for nonequal eVals b}, we find that the first column of $B_{(i,j)}$ contains only zeros to the left of the reference diagonal. Similarly, by \eqref{Bij block for nonequal eVals a} and  \eqref{Bij block for nonequal eVals b}, the first column and last row of   $B_{(j,i)}$ contain zeros in their entries that are below or to the left of the reference diagonal. After substituting 0 in for those entries, applying  \eqref{Bij block for nonequal eVals a} again, we now find that the second to last row of $B_{(i,j)}$ (or of $B_{(j,i)}$) contains only zeros below (or to the left of) the reference diagonal, whereas, by applying \eqref{Bij block for nonequal eVals b} again, we find that the second column of $B_{(i,j)}$ (or of $B_{(j,i)}$) contains only zeros to the left of (or below) the reference diagonal. Repeating this analysis, we eventually find that all entries in $B_{(i,j)}$ and $B_{(j,i)}$ that are below or to the left of the reference diagonal are zero.

Let us now calculate the restrictions that  \eqref{Bij block for nonequal eVals a} and  \eqref{Bij block for nonequal eVals b} impose on the remaining nonzero entries in $B_{(i,j)}$ and $B_{(j,i)}$. For the next observations, we use the term \emph{secondary transpose} to refer to the transformation of square matrices described by reflecting their entries over the secondary diagonal, that is, sending the $(i,j)$ entry of an $m\times m$ matrix to the $(m+1-j,m+1-i)$ entry. Applying \eqref{Bij block for nonequal eVals a}, we see that upper left $(m_i-1)\times (m_i-1)$ block of the upper right $m_i\times m_i$ block of $B_{(i,j)}$ is equal to $-1$ (or $-\epsilon_i\epsilon_j$ in the general case) times the secondary transpose of the upper left $(m_i-1)\times (m_i-1)$ block of $B_{(j,i)}$. Similarly, applying \eqref{Bij block for nonequal eVals b}, we see that lower right $(m_i-1)\times (m_i-1)$ block of the upper right $m_i\times m_i$ block of $B_{(i,j)}$ is equal to $-1$ (or $-\epsilon_i\epsilon_j$ in the general case) times the secondary transpose of the lower right $(m_i-1)\times (m_i-1)$ block of $B_{(j,i)}$. These last two observations, taken together, complete this proof.
\end{proof}

\begin{corollary}\label{Bii formula}
For all $i\in\{1,\ldots, \gamma\}$,
\begin{align}\label{intersection algebra lambda diag}
B_{(i,i)}=
\begin{cases}
\left(\sum_{k=1}^{\lceil m_i/2\rceil} a_kT_{m_i}^{m_i-2k+1}\right) I_{\mathrm{alt},m_i} &\mbox{ if $\lambda_i=0$}
\\
\left(
\begin{array}{cc}
0&\sum\limits_{k=0}^{m_i-1} a_kT_{m_i}^{k}\\
\sum\limits_{k=0}^{m_i-1} \left(\sum_{r=0}^ka_r\right)T_{m_i}^{k}&0
\end{array}
\right)
&\mbox{ if $\lambda_i^2<0$}
\\
0&\mbox{ otherwise,}
\end{cases}
\end{align}
where $I_{\mathrm{alt},m}$ denotes the $m\times m$ diagonal matrix with a 1 in its odd columns and a -1 in its even columns.
\end{corollary}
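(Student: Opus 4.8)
The plan is to obtain Corollary \ref{Bii formula} as a direct specialization of Corollary \ref{Bij formula with nonzero lambda} and Lemma \ref{Bij formula with zero lambda} to the diagonal case $i=j$. For $B\in\mathscr{A}^o$, the block $B_{(i,i)}$ is constrained precisely by equations \eqref{Bij block for nonequal eVals a} and \eqref{Bij block for nonequal eVals b} with $j$ replaced by $i$. For $i\neq j$ and $m_i\leq m_j$ those equations were already solved, giving explicit formulas for the pair $(B_{(i,j)},B_{(j,i)})$ in which $B_{(i,j)}$ carries the free parameters and $B_{(j,i)}$ is then determined. Setting $i=j$ forces $m_i=m_j$ (so the ``$m_j-m_i$ columns/rows'' blocks of zeros in those formulas disappear) and, crucially, forces $B_{(i,j)}=B_{(j,i)}$ since both denote the same block of $B$; moreover $\epsilon_i\epsilon_j=1$ automatically. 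Thus the proof reduces to writing down the $i=j$ specialization of the relevant off-diagonal formula, equating its ``free'' and ``determined'' expressions for the block, and solving the resulting linear relations among the parameters.

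Carrying this out case by case is short. If $\lambda_i>0$ or $\lambda_i^2\notin\R$, the specialized $B_{(i,j)}$ in \eqref{intersection algebra lambda real off diag} (resp.\ \eqref{intersection algebra lambda nonreal off diag a}) is an upper-triangular Toeplitz matrix (resp.\ a block-diagonal pair of such), while the specialized $B_{(j,i)}$ in \eqref{intersection algebra lambda real off diag} (resp.\ \eqref{intersection algebra lambda nonreal off diag b}) equals $-\epsilon_i\epsilon_j=-1$ times the same matrix; hence $B_{(i,i)}=-B_{(i,i)}=0$, which is the ``otherwise'' line of \eqref{intersection algebra lambda diag}. If $\lambda_i^2<0$, equating the specializations of \eqref{intersection algebra lambda negative off diag a} and \eqref{intersection algebra lambda negative off diag b} forces, in the notation there, $a_k=d_k=0$ and $b_k=c_k$ for all $k$, which leaves exactly the anti-block-diagonal form with the cumulative-sum coefficient pattern displayed in \eqref{intersection algebra lambda diag}, and exactly $m_i$ free parameters. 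If $\lambda_i=\lambda_j=0$, equating the specializations of \eqref{intersection algebra lambda zero off diag 1} and \eqref{intersection algebra lambda zero off diag 2} entrywise yields, for the entry in row $r$ and column $r+k-1$, the relation $c^{[r]_2}_k=-c^{[m_i+r+k-1]_2}_k$. A short parity analysis of $[m_i+r+k-1]_2$ against $[r]_2$ then shows that the coefficients with $k\not\equiv m_i\pmod 2$ vanish and that $c^0_k=-c^1_k$ for $k\equiv m_i\pmod 2$, so $B_{(i,i)}$ is supported on the diagonals $\{m_i-1,m_i-3,\ldots\}$ with an alternating sign along each of them; this is precisely $\bigl(\sum_k a_kT_{m_i}^{m_i-2k+1}\bigr)I_{\mathrm{alt},m_i}$ after relabelling constants, with $\lceil m_i/2\rceil$ free parameters, matching $d(i,i)$ in \eqref{di,j}.

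The only step that needs genuine care is the index bookkeeping in the $\lambda_i=0$ case: reading off the superscript/subscript pattern of \eqref{intersection algebra lambda zero off diag 1} and \eqref{intersection algebra lambda zero off diag 2} correctly, running the parity argument, and reconciling the resulting ``alternating-sign Toeplitz'' description with the compact expression $\bigl(\sum_k a_kT_{m_i}^{m_i-2k+1}\bigr)I_{\mathrm{alt},m_i}$ — the latter requiring the observation that left and right multiplication by $I_{\mathrm{alt},m_i}$ differ only by the signs $(-1)^d$ on the $d$th diagonal, which can be absorbed into the $a_k$. Alternatively, in keeping with the appendix's philosophy of verifying rather than deriving, one can bypass the reduction and simply check directly that the matrices in \eqref{intersection algebra lambda diag} satisfy both \eqref{Bij block for nonequal eVals f} and \eqref{Bij block for nonequal eVals g} with $X=B_{(i,i)}$ (equivalently \eqref{Bij block for nonequal eVals a} and \eqref{Bij block for nonequal eVals b} with $i=j$); the specialization route has the advantage of making the parameter count automatic.
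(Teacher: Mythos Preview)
Your approach is exactly the paper's: the authors' proof is the single sentence ``This follows immediately from the formulas in Corollary~\ref{Bij formula with nonzero lambda} and Lemma~\ref{Bij formula with zero lambda} with $i=j$,'' and you have simply carried out that specialization explicitly in each case. Your case-by-case analysis (including the parity bookkeeping for $\lambda_i=0$) is correct and fills in the details the paper omits.
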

\begin{proof}
This follows immediately from the formulas in Corollary \ref{Bij formula with nonzero lambda} and Lemma \ref{Bij formula with zero lambda} with $i=j$.
\end{proof}

The previous results provide a general formula for matrices in $\mathscr{A}^o$. We now focus on obtaining a general formula of a subspace $\mathscr{A}^s$ satisfying \eqref{decomp of intersection algebra}. 
\begin{lemma}\label{dimension of scalling component}
Either $\dim(\mathscr{A})-\dim(\mathscr{A}^o)=1$ or $\dim(\mathscr{A})-\dim(\mathscr{A}^o)=2$, and the latter case occurs if and only if there exists a matrix $X$ in $\mathscr{A}$ satisfying
\begin{align}\label{intersection condition 0 simplified}
~&X AH_\ell ^{-1} +   AH_\ell ^{-1}X^T =  2AH_\ell ^{-1}\Leftrightarrow \left(X-I\right)^T H_\ell A^{-1}+H_\ell A^{-1} \left(X-I\right)=0,\\
~&X^TH_\ell \overline{A}+H_\ell \overline{A} X=0.
\end{align}
\end{lemma}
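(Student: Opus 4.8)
The plan is to exhibit $\dim\mathscr{A}-\dim\mathscr{A}^o$ as the rank of an explicit $\mathbb{C}$-linear map from $\mathscr{A}$ to $\mathbb{C}^2$ and then analyze its image. First I would note that for $\alpha\in\mathscr{A}$ the scalars $\eta,\eta^\prime$ appearing in \eqref{intersection algebra} are uniquely determined by $\alpha$: since $\boldsymbol{A}\neq 0$ (which holds because $H$ is $2$-nondegenerate) and $H_\ell$ is invertible, both $AH_\ell^{-1}$ and $H_\ell\overline{A}$ are nonzero matrices, so two scalars playing the role of $\eta$ (resp.\ of $\eta^\prime$) must agree. Writing $\eta_\alpha,\eta^\prime_\alpha$ for these scalars, the map $L\colon\mathscr{A}\to\mathbb{C}^2$, $L(\alpha):=(\eta_\alpha,\eta^\prime_\alpha)$, is linear because the left-hand sides of the two relations in \eqref{intersection algebra} depend linearly on $\alpha$ (this also shows $\mathscr{A}$ is a linear subspace), and by definition $\ker L=\mathscr{A}^o$. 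Hence $\dim\mathscr{A}-\dim\mathscr{A}^o=\dim L(\mathscr{A})\le 2$.

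Next I would pin down the image. The identity matrix lies in $\mathscr{A}$, and since $I^T=I$ one computes directly that $I\,AH_\ell^{-1}+AH_\ell^{-1}I^T=2AH_\ell^{-1}$ and $I^TH_\ell\overline{A}+H_\ell\overline{A}\,I=2H_\ell\overline{A}$, so $L(I)=(2,2)\neq(0,0)$; this already forces $\dim\mathscr{A}-\dim\mathscr{A}^o\in\{1,2\}$, which is the asserted dichotomy. Moreover, because $(2,2)\in L(\mathscr{A})$ and the vectors $(2,2)$ and $(2,0)$ form a basis of $\mathbb{C}^2$, we have $\dim L(\mathscr{A})=2$ if and only if $L(\mathscr{A})=\mathbb{C}^2$ if and only if $(2,0)\in L(\mathscr{A})$, i.e.\ if and only if there exists $X\in\mathscr{A}$ with $\eta_X=2$ and $\eta^\prime_X=0$. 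Unravelling the definitions of $\eta_X$ and $\eta^\prime_X$, this is exactly the existence of $X\in\mathscr{A}$ satisfying \eqref{intersection condition 0 simplified}. Finally, for the parenthetical reformulation of the first relation I would rewrite $X\,AH_\ell^{-1}+AH_\ell^{-1}X^T=2AH_\ell^{-1}$ as $(X-I)AH_\ell^{-1}+AH_\ell^{-1}(X-I)^T=0$ and, when $A$ is invertible, left- and right-multiply by $(AH_\ell^{-1})^{-1}=H_\ell A^{-1}$ to obtain the equivalent equation $(X-I)^TH_\ell A^{-1}+H_\ell A^{-1}(X-I)=0$.

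There is no substantial difficulty in this argument; the only two points that require a little attention are the well-definedness of the coefficient map $L$ (which is precisely where $\boldsymbol{A}\neq 0$ is used) and the reduction of the condition ``$\dim L(\mathscr{A})=2$'' to the single membership test $(2,0)\in L(\mathscr{A})$, which hinges on having already produced the element $(2,2)=L(I)$ of the image. One should also keep in mind that the second form of the first equation in \eqref{intersection condition 0 simplified} presupposes $A$ invertible; in the general case the condition is read in its first form involving $AH_\ell^{-1}$.
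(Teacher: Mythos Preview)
Your argument is correct. It is essentially equivalent to the paper's proof but is packaged differently: the paper introduces the two ``orthogonal'' subalgebras $\mathscr{A}^o_1=\{X:XAH_\ell^{-1}+AH_\ell^{-1}X^T=0\}$ and $\mathscr{A}^o_2=\{X:X^TH_\ell\overline{A}+H_\ell\overline{A}X=0\}$, observes that $\mathscr{A}^o=\mathscr{A}^o_1\cap\mathscr{A}^o_2$ and $\mathscr{A}=(\mathscr{A}^o_1+\mathbb{C}I)\cap(\mathscr{A}^o_2+\mathbb{C}I)$, and then uses the inclusion--exclusion dimension formulas to conclude that $\dim\mathscr{A}-\dim\mathscr{A}^o$ equals $1$ or $2$ according to whether $I\notin\mathscr{A}^o_1+\mathscr{A}^o_2$ or $I\in\mathscr{A}^o_1+\mathscr{A}^o_2$; the latter is then unwound to the existence of $X\in\mathscr{A}^o_2$ with $I-X\in\mathscr{A}^o_1$. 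Your coefficient map $L:\mathscr{A}\to\mathbb{C}^2$ is the dual viewpoint: rank--nullity replaces the subspace bookkeeping, and the test ``$I\in\mathscr{A}^o_1+\mathscr{A}^o_2$'' becomes ``$(2,0)\in L(\mathscr{A})$''. Your route is a touch more direct and avoids introducing the auxiliary spaces $\mathscr{A}^o_i$; the paper's route makes the analogy with conformal orthogonal algebras more explicit. Your caveat about the second form of the first equation requiring $A$ invertible is appropriate and matches how the paper uses that reformulation only in later lemmas under additional hypotheses.
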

\begin{proof}
Define
\[
\mathscr{A}^o_1:=\left\{X\,\left|\, XAH_\ell^{-1}+AH_\ell^{-1}X^T=0\right.\right\}
\quad\mbox{ and }\quad
\mathscr{A}^o_2:=\left\{X\,\left|\, X^TH_\ell\overline{A}+H_\ell\overline{A}X=0\right.\right\}.
\]
Since $\mathscr{A}^o=\mathscr{A}^o_1\cap \mathscr{A}^o_2$,
\begin{align}\label{orthogonal intersection dimensions}
\dim(\mathscr{A}^o)+\dim(\mathscr{A}^o_1+ \mathscr{A}^o_2)=\dim(\mathscr{A}^o_1)+\dim( \mathscr{A}^o_2),
\end{align}
and, letting $\mathbb C I$ denote $\mathrm{span}\{I\}$, since $\mathscr{A}=\left(\mathscr{A}^o_1+\mathbb C I\right)\cap \left(\mathscr{A}^o_2+\mathbb C I\right)$,
\begin{align}\label{intersection algebra dimensions}
\dim(\mathscr{A})+\dim\left(\mathscr{A}^o_1+\mathscr{A}^o_2+\mathbb C I\right)
&=
\dim\left(\mathscr{A}^o_1+\mathbb C I\right)+\dim \left(\mathscr{A}^o_2+\mathbb C I\right)\\
&=\dim(\mathscr{A}^o_1)+\dim( \mathscr{A}^o_2)+2\\
&=\dim(\mathscr{A}^o)+\dim(\mathscr{A}^o_1+ \mathscr{A}^o_2)+2,
\end{align}
where this last equation holds by \eqref{orthogonal intersection dimensions}.
Therefore,
\[
\dim(\mathscr{A})-\dim(\mathscr{A}^o)=\dim(\mathscr{A}^o_1+ \mathscr{A}^o_2)-\dim\left(\mathscr{A}^o_1+\mathscr{A}^o_2+\mathbb C I\right)+2,
\]
and hence
\begin{align}\label{dimension of scaling component formula}
\dim(\mathscr{A})-\dim(\mathscr{A}^o)=
\begin{cases}
1 & \mbox{ if } I\not\in\mathscr{A}^o_1+ \mathscr{A}^o_2\\
2 & \mbox{ if } I\in\mathscr{A}^o_1+ \mathscr{A}^o_2.
\end{cases}
\end{align}
In particular, $\dim(\mathscr{A})-\dim(\mathscr{A}^o)=2$ if and only if there exists $X\in \mathscr{A}^o_2$ such that $(I-X)\in \mathscr{A}^o_1$, which is equivalent to \eqref{intersection condition 0 simplified}.
\end{proof}

\begin{lemma}\label{condition on C for 1-dimensional scaling component}
If $A=M_{m,\lambda}$ and $\lambda\neq 0$ then $\dim(\mathscr{A})-\dim(\mathscr{A}^o)=1$.
\end{lemma}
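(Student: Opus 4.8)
The plan is to invoke Lemma~\ref{dimension of scalling component}, which already reduces the statement to showing that the ``$2$-dimensional'' alternative does not occur, i.e.\ that there is \emph{no} matrix $X$ satisfying the system \eqref{intersection condition 0 simplified}: $X AH_\ell^{-1}+AH_\ell^{-1}X^T = 2 AH_\ell^{-1}$ together with $X^T H_\ell\overline{A}+H_\ell\overline{A}X = 0$. I will in fact argue that no square matrix $X$ whatsoever solves this system, which a fortiori rules out one lying in $\mathscr{A}$.

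The core of the argument is a short spectral obstruction. Suppose such an $X$ exists, and abbreviate $P:=AH_\ell^{-1}$ and $Q:=H_\ell\overline{A}$, so that the two equations read $XP+PX^T=2P$ and $X^TQ+QX=0$. The hypothesis $\lambda\neq 0$ forces $A=M_{m,\lambda}$ to be invertible: if $\lambda\in\R$ this is because $A=J_{\lambda,m}$ has the single nonzero eigenvalue $\lambda$, and if $\lambda\notin\R$ because $A$ is block-triangular with invertible diagonal-type blocks $J_{\lambda^2,m}$ and $I$ (note $\lambda^2\neq 0$). Consequently $\overline{A}$, $P$, $Q$, and $A\overline{A}$ are all invertible. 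From the second equation $X^T=-QXQ^{-1}$, and substituting into the first gives $XP - PQXQ^{-1}=2P$; right-multiplying by $Q$ and using $PQ=AH_\ell^{-1}H_\ell\overline{A}=A\overline{A}$ yields
\[
X(A\overline{A}) - (A\overline{A})X = 2\,A\overline{A}.
\]
Left-multiplying by $(A\overline{A})^{-1}$ gives $(A\overline{A})^{-1}X(A\overline{A}) = X+2I$, so $X$ is conjugate to $X+2I$. This is impossible for any finite square matrix, since it would make the (finite) spectrum of $X$ invariant under the translation $z\mapsto z+2$ (concretely, an eigenvalue of maximal real part would then have to be one less than another eigenvalue). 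This contradiction shows the system \eqref{intersection condition 0 simplified} has no solution, and Lemma~\ref{dimension of scalling component} then gives $\dim(\mathscr{A})-\dim(\mathscr{A}^o)=1$.

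The only step that genuinely uses the hypothesis rather than being formal manipulation is the invertibility of $A\overline{A}$, which is exactly where $\lambda\neq 0$ enters; for nilpotent single blocks this invertibility fails, consistent with the fact that the $2$-dimensional case does occur there (cf.\ Lemma~\ref{condition on C for 2-dimensional scaling component}). So I expect no real obstacle here beyond keeping the substitution bookkeeping straight and confirming the invertibility claims in both the real and non-real $\lambda$ cases.
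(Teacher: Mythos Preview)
Your argument is correct and is in fact considerably shorter than the paper's. You reduce via Lemma~\ref{dimension of scalling component} to ruling out a solution of \eqref{intersection condition 0 simplified}, then combine the two equations into the single commutator identity $[X,A\overline{A}]=2A\overline{A}$; invertibility of $A\overline{A}$ (which is exactly where $\lambda\neq 0$ is used) turns this into a conjugacy $(A\overline{A})^{-1}X(A\overline{A})=X+2I$, and the finite-spectrum observation finishes it. (Your parenthetical ``one less'' should read ``two less,'' but the idea is clear.)

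The paper takes a quite different path. It interprets \eqref{intersection condition 0 simplified} as saying that $X$ lies in the orthogonal Lie algebra of the symmetric form $Q_2=H_\ell\overline{A}$ while $X-I$ lies in that of $Q_1=H_\ell A^{-1}$, passes to a $Q_1$-orthonormal basis, invokes the Gantmacher canonical form for the pair $(Q_1,Q_2)$ to put the operator $L$ representing $\boldsymbol{A}^2$ into a specific shape, and then uses the classical description of matrices commuting with a Jordan block (upper-triangular Toeplitz) together with a symmetry computation to force $X'=I$ on a diagonal block, contradicting the second equation. Your approach bypasses all of this structure theory: it is basis-free and works uniformly for $\lambda>0$, $\lambda^2<0$, and $\lambda^2\notin\R$ without case analysis. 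What the paper's route buys is an explicit link to the canonical-form machinery already in use throughout the appendix; what yours buys is brevity and a transparent reason why nilpotent $A$ is the only exception.
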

\begin{proof}
We assume that $(H_\ell, A)$ is in the canonical form of Theorem \ref{simultaneous canonical form theorem}, so $H_\ell=S_m$, where $S_m$ is defined in \eqref{Matrix representations first notations}. Fix a subspace $\mathscr{A}^s$ of $\mathscr{A}$ satisfying \eqref{decomp of intersection algebra}.
To produce a contradiction, let us assume that $\dim(\mathscr{A})-\dim(\mathscr{A}^o)\neq 1$. By Lemma \ref{dimension of scalling component}, we can assume that 
there exists a matrix $X$ in $\mathscr{A}^s$ satisfying \eqref{intersection condition 0 simplified}.
Since $H_\ell A^{-1}$ and $H_\ell \overline{A}$ are symmetric, condition \eqref{intersection condition 0 simplified} is fundamentally related to the two symmetric forms $Q_1$ and $Q_2$ defined by
\[
Q_1(v,w):=w^TH_\ell A^{-1} v
\quad\mbox{ and }\quad
Q_2(v,w):=w^TH_\ell \overline{A} v.
\]
Note that
\[
Q_2(v,w)=Q_1\left(A\overline{A}v,w\right)=Q_1\left(A^2v,w\right),
\]
where $\boldsymbol{A}$ is, again, the antilinear operator represented by $A$.

Let us now work instead with respect to a basis that is orthonormal with respect to $Q_1$, that is, letting $L$ denote the matrix representing the linear operator $A^2$ in this basis, we have
\[
Q_1(v,w)=w^Tv
\quad\mbox{ and }\quad 
Q_2=w^TLv
\]
in this new basis. By \cite[Chapter 11.3, Corollary 2]{gantmakher1953theory}, we can assume without loss of generality that
\begin{align}\label{normalizing Q2 first}
L=
\begin{cases}
\frac{1}{2}(I+iS_{m})J_{\lambda,m}(I-iS_{m}) &\mbox{ if }\lambda^2>1 \\
\frac{1}{2}(I+iS_{m})J_{\lambda,m}(I-iS_{m})\oplus \frac{1}{2}(I+iS_{m})J_{\lambda,m}(I-iS_{m})&\mbox{ otherwise. }
\end{cases}
\end{align}
%

The second equation in \eqref{intersection condition 0 simplified} implies that $X$ is in the Lie algebra of the transformation group that preserves $Q_2$, whereas the first equation of \eqref{intersection condition 0 simplified} implies that $X-I$ is in the Lie algebra of the transformation group that preserves $Q_1$. That is, with respect to the new basis, $(X-I)=-(X-I)^T$ and $X^TL+L X=0$,which is equivalent to
\begin{align}\label{intersection condition 0 simplified new}
(X-I)=-(X-I)^T
\quad\mbox{ and }\quad
[X,L]=0.
\end{align}
Defining the pair of matrices $(S,J)$ by
\begin{align}\label{change of basis for symmetric to jordan}
(S,J)=
\begin{cases}
\left(I+iS_{m},J_{\lambda,m}\right)&\mbox{ if }\lambda^2>1 \\
\left((I+iS_{m})\oplus(I+iS_{m}),J_{\lambda,m}\oplus J_{\lambda,m}\right) &\mbox{ otherwise, }
\end{cases}
\end{align}
the condition $[X,L]=0$ is equivalent to
\begin{align}\label{intersection condition 0 simplified new 2}
\left[S^{-1}XS,J\right]=0.
\end{align}
Solving for the matrix $X$ in $[X,L]=0$ is a classical problem of Frobenious whose general solution is given in \cite[Chapter 8]{gantmakher1953theory}. In \cite[Chapter 8]{gantmakher1953theory}, a formula is given for matrices that commute with a Jordan matrix such as $J$, so we have rewritten $[X,L]=0$ as in \eqref{intersection condition 0 simplified new 2}, in order to apply the solution of \cite[Chapter 8]{gantmakher1953theory} directly. The formula in \cite[Chapter 8]{gantmakher1953theory} gives that, after partitioning the matrix $S^{-1}XS$ into size $m\times m$ blocks, each block of $S^{-1}XS$ in this partition is an upper-triangular Toeplitz matrix.
If $X$ is a Toeplitz matrix then $(I+iS_{m}) X (I-iS_{m})$ is symmetric because $S_mX$ and $XS_m$ are both symmetric whereas $X^T=S_mXS_m$. Accordingly, letting $X^\prime$  denote the upper left $m\times m$ block of $X$, since $(I-iS_{m}) (X^\prime-I) (I+iS_{m})$ is Toeplitz,
\begin{align}\label{intersection condition 0 condtradiction eqn}
X^\prime-I &=\frac{1}{4}(I+iS_{m}) \big[(I-iS_{m}) (X^\prime-I) (I+iS_{m})\big] (I-iS_{m})\\
&=\left(\frac{1}{4}(I+iS_{m}) \big[(I-iS_{m}) (X^\prime-I) (I+iS_{m})\big] (I-iS_{m})\right)^T=(X^\prime-I)^T.
\end{align}
By \eqref{intersection condition 0 simplified new} and \eqref{intersection condition 0 condtradiction eqn}, $X^\prime=I$, which contradicts the upper left $m\times m$ block of the second matrix equation in \eqref{intersection condition 0 simplified}.
\end{proof}

With Lemmas \ref{dimension of scalling component} and \ref{condition on C for 1-dimensional scaling component} established we now give a general formula for a subspace $\mathscr{A}^s$ of $\mathscr{A}$ satisfying \eqref{decomp of intersection algebra}.
\begin{lemma}\label{condition on C for 2-dimensional scaling component}
For a subspace $\mathscr{A}^s$ of $\mathscr{A}$ satisfying \eqref{decomp of intersection algebra}, $\dim(\mathscr{A}^s)=2$ if and only if $A$ is nilpotent. In particular, if
\begin{align}\label{C mat with two scaling elements}
A=J_{0,m_1}\oplus\ldots\oplus J_{0,m_\gamma}
\end{align}
then, to satisfy \eqref{decomp of intersection algebra}, we can take the subspace $\mathscr{A}^s$ of $\mathscr{A}$ spanned by the identity matrix and the matrix 
\begin{align}\label{scaling element formula}
\bigoplus_{i=1}^{\gamma}D_{m_i}, 
\end{align}
where, for an integer $m$, $D_{m}$ denotes the $m\times m$ diagonal matrix defined by
\begin{equation}
\label{diagformula}    
D_{m}:=
\mathrm{Diag}\left(\frac{m}{2},\frac{m}{2}-1,\ldots, \frac{m}{2}-m+1\right).
\end{equation}
\end{lemma}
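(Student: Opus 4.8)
The plan is to establish the two assertions separately: first the dimension count $\dim(\mathscr{A}^s)=2 \Leftrightarrow A$ is nilpotent, and then the explicit description of a valid complement $\mathscr{A}^s$ in the nilpotent case. For the first part I would start from Lemma \ref{dimension of scalling component}, which reduces the claim to deciding whether $I\in\mathscr{A}^o_1+\mathscr{A}^o_2$, equivalently whether there is $X\in\mathscr{A}$ satisfying \eqref{intersection condition 0 simplified}. One direction is handled by the canonical form: decompose $A=\bigoplus_i M_{\lambda_i,m_i}$ (Theorem \ref{simultaneous canonical form theorem}) and observe that both equations in \eqref{intersection condition 0 simplified} respect the block-diagonal structure only up to the constraints already derived in Lemmas \ref{lemma for reduction to eSpaces}, \ref{Bij formula with zero lambda}, and Corollaries \ref{Bij formula with nonzero lambda}, \ref{Bii formula}; but crucially, the \emph{inhomogeneous} equation $XAH_\ell^{-1}+AH_\ell^{-1}X^T=2AH_\ell^{-1}$ forces a nonzero diagonal contribution from $X$ on each block. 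If some $\lambda_i\neq 0$, restricting to the corresponding diagonal block reduces exactly to the situation of Lemma \ref{condition on C for 1-dimensional scaling component}, which shows no such $X$ exists on that block; hence $A$ not nilpotent implies $\dim(\mathscr{A}^s)=1$, and contrapositively $\dim(\mathscr{A}^s)=2$ implies $A$ nilpotent.

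For the converse direction — $A$ nilpotent implies $\dim(\mathscr{A}^s)=2$ — and simultaneously for the explicit formula, I would exhibit the candidate complement directly. Take $A=J_{0,m_1}\oplus\cdots\oplus J_{0,m_\gamma}$ and the matrix $D:=\bigoplus_i D_{m_i}$ with $D_{m}=\mathrm{Diag}(\tfrac m2,\tfrac m2-1,\ldots,\tfrac m2-m+1)$ as in \eqref{diagformula}. The key computation is to verify that $D$ satisfies a relation of the form \eqref{firstalgebra} and \eqref{secondalgebra} with a \emph{nonzero} proportionality constant, i.e.\ $DAH_\ell^{-1}+AH_\ell^{-1}D^T=\eta AH_\ell^{-1}$ and $D^TH_\ell\overline A+H_\ell\overline A D=\eta' H_\ell\overline A$ with $\eta,\eta'\neq 0$, which certifies $D\in\mathscr{A}$ but $D\notin\mathscr{A}^o$, and moreover $D\notin\mathbb C I+\mathscr{A}^o$. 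Because $H_\ell=\bigoplus_i\epsilon_i N_{0,m_i}=\bigoplus_i\epsilon_i S_{m_i}$ is block-antidiagonal within each block and $A$ is the nilpotent shift, one computes blockwise: on a single block of size $m$, $A S_m^{-1}=T_m S_m$ shifts, and the entries of $D_m$ were chosen precisely so that $D_m T_m - T_m D_m = T_m$ (each superdiagonal entry of $D_m T_m$ exceeds that of $T_m D_m$ by one), from which $D_m(T_mS_m)+(T_mS_m)D_m^T$ is a scalar multiple of $T_mS_m$ falls out after using $S_m D_m^T S_m = -D_m + (m-1)I$ or the analogous secondary-transpose identity. The same identity, transposed, handles the $H_\ell\overline A$ equation since $\overline A=A$ here. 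This shows $D$ spans, together with $I$, a $2$-dimensional subspace transverse to $\mathscr{A}^o$, and by Lemma \ref{dimension of scalling component} this forces $\dim(\mathscr{A})-\dim(\mathscr{A}^o)=2$, so $\mathrm{span}_{\mathbb C}\{I,D\}$ is a valid choice of $\mathscr{A}^s$.

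The remaining point is to confirm $D\notin\mathbb C I+\mathscr{A}^o$, equivalently that no scalar shift $D-cI$ lies in $\mathscr{A}^o$; this is immediate because $D_m$ has $m\geq 1$ distinct eigenvalues when $m\geq 2$ and in any case the sum of diagonal entries of $D_m$ is $0$ (or can be read off directly), so $D-cI$ has nonzero diagonal for every $c$, contradicting the homogeneous equation which, as seen in the first part, kills the diagonal. Assembling: the first part gives $\dim(\mathscr{A}^s)\le 1$ when $A$ is not nilpotent, the construction gives $\dim(\mathscr{A}^s)=2$ when $A$ is nilpotent, and the displayed matrix \eqref{scaling element formula}–\eqref{diagformula} together with $I$ furnishes the asserted explicit complement.

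\textbf{Main obstacle.} I expect the main difficulty to be the bookkeeping in verifying that $D=\bigoplus_i D_{m_i}$ genuinely lies in $\mathscr{A}$ with \emph{both} proportionality constants nonzero: one must track how $D_{m_i}$ interacts not only with its own diagonal block $M_{0,m_i}N_{0,m_i}$ but with every off-diagonal block $B_{(i,j)}$ permitted by Lemma \ref{Bij formula with zero lambda}, checking that the defining relations \eqref{firstalgebra}–\eqref{secondalgebra} are preserved there too (off-diagonal blocks of $D$ vanish, so the relevant relations reduce to compatibility of $D_{m_i}$ and $D_{m_j}$ with the Toeplitz structure, which is where the specific arithmetic-progression choice of diagonal entries is forced). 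The transversality claim $D\notin\mathbb C I+\mathscr{A}^o$ is comparatively easy. One subtlety worth flagging: when $\epsilon_i\neq\epsilon_j$ one must insert the sign factors as in the remarks preceding Lemma \ref{lemma for reduction to eSpaces}, but since $D$ is block-diagonal these signs cancel and do not affect the computation.
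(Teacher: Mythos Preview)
Your overall strategy matches the paper's: use Lemma~\ref{dimension of scalling component} to reduce both directions to the solvability of \eqref{intersection condition 0 simplified}, invoke Lemma~\ref{condition on C for 1-dimensional scaling component} on each diagonal block to handle the non-nilpotent case, and in the nilpotent case exhibit $D=\bigoplus_i D_{m_i}$ explicitly. However, there are two concrete errors in your execution that you should fix.

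First, your claim that $D$ satisfies \eqref{firstalgebra}--\eqref{secondalgebra} with \emph{both} $\eta,\eta'\neq 0$ is wrong. A direct blockwise computation (using $A H_\ell^{-1}=T_m S_m$ with $(T_mS_m)_{ij}=\delta_{i+j,m}$ and $H_\ell\overline A=S_mT_m$ with $(S_mT_m)_{ij}=\delta_{i+j,m+2}$) gives $\eta=2$ and $\eta'=0$. This is not a nuisance but the whole point: the pair $(\eta,\eta')=(2,0)$ is \emph{exactly} the condition \eqref{intersection condition 0 simplified}, so Lemma~\ref{dimension of scalling component} immediately yields $\dim(\mathscr A^s)=2$. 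This also makes your separate transversality argument unnecessary.

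Second, that transversality argument is incorrect as stated: you assert that membership in $\mathscr A^o$ ``kills the diagonal,'' but Corollary~\ref{Bii formula} shows that for odd $m_i$ the diagonal block $B_{(i,i)}$ may contain the term $a\,T_{m_i}^0 I_{\mathrm{alt},m_i}$, which has nonzero diagonal. The clean route is to avoid this argument entirely and read transversality off the map $B\mapsto(\eta,\eta')$: since $I\mapsto(2,2)$ and $D\mapsto(2,0)$, no nontrivial combination $aI+bD$ lies in $\mathscr A^o$.

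Finally, your ``main obstacle'' about off-diagonal compatibility is a non-issue: $D$, $A$, and $H_\ell$ are all block-diagonal in the canonical form, so the off-diagonal blocks of both sides of \eqref{firstalgebra}--\eqref{secondalgebra} vanish identically and there is nothing to check there.
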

\begin{proof}
Suppose that $(H_\ell, A)$ is in the canonical form of Theorem \ref{simultaneous canonical form theorem}, specifically such that
\begin{align}\label{lambda_is are zero init}
A=J_{\lambda_1,m_1}\oplus\cdots\oplus J_{\lambda_\gamma,m_\gamma}
,
\end{align}
and suppose that $\dim(\mathscr{A}^s)=2$. As is shown in the proof of Lemma \ref{dimension of scalling component}, we can assume without loss of generality that
there exists a matrix $X$ in $\mathscr{A}^s$  satisfying \eqref{intersection condition 0 simplified}.
In particular, partitioning $X$ into a block matrix whose diagonal blocks $
X_{(i,i)}$ are size $m_{i}\times m_i$, the blocks $X_{(i,i)}$ satisfy
\begin{align}\label{intersection condition 0 simplified ii a}
X_{(i,i)} M_{m_i,\lambda_i}N_{m_i,\lambda_i} +   M_{m_i,\lambda_i}N_{m_i,\lambda_i}X_{(i,i)}^T =  2M_{m_i,\lambda_i}N_{m_i,\lambda_i}
\end{align}
and
\begin{align}\label{intersection condition 0 simplified ii b}
X_{(i,i)}^TN_{m_i,\lambda_i}\overline{M_{m_i,\lambda_i}}+N_{m_i,\lambda_i}\overline{M_{m_i,\lambda_i}} X_{(i,i)}=0.
\end{align}
Lemma \ref{condition on C for 1-dimensional scaling component} implies that \eqref{intersection condition 0 simplified ii a} and \eqref{intersection condition 0 simplified ii a} are consistent if and only if $\lambda_i=0$, and hence if $\mathscr{A}^s=2$ then $A$ is nilpotent.

Conversely, if $A$ is nilpotent then $\lambda_1=\cdots=\lambda_\gamma=0$. Hence, 
by \eqref{antilinear op canon form} and \eqref{N_def} the relations \eqref{intersection condition 0 simplified ii a} and \eqref{intersection condition 0 simplified ii b} can be rewritten as
\begin{equation}
\label{Diag_eq}
 X_{(i,i)} J_{0,m_i}S_{m_i} +   J_{0,m_i}S_{m_i}X_{(i,i)}^T =  2J_{0,m_i}S_{m_i}
\quad\mbox{ and }\quad
X_{(i,i)}^TS_{m_i}J_{0,m_i}+S_{m_i}J_{0,m_i}X_{(i,i)}=0
\end{equation}
for each $i$ individually. Assuming that $B_{(i,i)}=\mathrm{Diag}\left(x_1^i, \ldots x_{m_i}^i\right)$, by comparing the entries of \eqref{Diag_eq} with the help of the expressions for matrices  $J_{0, m_1}$ and $S_{m_i}$ from \eqref{Matrix representations first notations}, one gets that \eqref{Diag_eq} is equivalent to 
\begin{align}
\label{entrywise}
&x_j^i+x_{m_i-j}^i=2 \quad\quad \forall\, 1\leq j\leq m-1,\\
&x_j^i+x_{m-j+2}^i=0 \quad\quad \forall\, 2\leq j\leq m.
\end{align}
Finally, it is clear that taking $X_{(i,i)}=D_{m_i}$, where $D_{m_i}$ is as in \eqref{diagformula}, satisfies \eqref{entrywise} which completes the proof. 
\end{proof}
As a direct consequence of the previous lemmas, since for non-nilpotent $A$ we have $\mathscr A=\mathscr A^o+ \mathbb C I$, one gets immediately the following
\begin{corollary}
\label{nonnilp_cor}
If $A$ is not nilpotent then in \eqref{intersection algebra} one can take $\eta'=\eta$.
\end{corollary}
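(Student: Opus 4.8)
The plan is to deduce the statement directly from the decomposition $\mathscr{A}=\mathscr{A}^o\oplus\mathscr{A}^s$ of \eqref{decomp of intersection algebra}, once one knows that the complement $\mathscr{A}^s$ can be taken to be $\mathbb C I$ when $A$ is not nilpotent. First I would record that $I\in\mathscr{A}$: substituting $\alpha=I$ into the two relations in \eqref{intersection algebra} gives $I\,AH_\ell^{-1}+AH_\ell^{-1}I^T=2AH_\ell^{-1}$ and $I^TH_\ell\overline{A}+H_\ell\overline{A}\,I=2H_\ell\overline{A}$, so $I$ belongs to $\mathscr{A}$ with $\eta=\eta'=2$. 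Since $A\neq0$ and $H_\ell$ is invertible, $AH_\ell^{-1}\neq0$, hence $I\notin\mathscr{A}^o$. By Lemma \ref{dimension of scalling component}, $\dim(\mathscr{A})-\dim(\mathscr{A}^o)$ equals $1$ or $2$, and by Lemma \ref{condition on C for 2-dimensional scaling component} (together with Lemma \ref{condition on C for 1-dimensional scaling component}) it equals $2$ precisely when $A$ is nilpotent. So for non-nilpotent $A$ we have $\dim(\mathscr{A})-\dim(\mathscr{A}^o)=1$, and therefore $\mathscr{A}=\mathscr{A}^o\oplus\mathbb C I$.

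Next I would take an arbitrary $\alpha\in\mathscr{A}$ and write it uniquely as $\alpha=\beta+cI$ with $\beta\in\mathscr{A}^o$ and $c\in\mathbb C$. Substituting into the first relation of \eqref{intersection algebra} and using $\beta\,AH_\ell^{-1}+AH_\ell^{-1}\beta^T=0$ (which holds because $\beta\in\mathscr{A}^o$) gives $\alpha\,AH_\ell^{-1}+AH_\ell^{-1}\alpha^T=2c\,AH_\ell^{-1}$, so the coefficient attached to the first relation is $\eta=2c$. Substituting into the second relation and using $\beta^TH_\ell\overline{A}+H_\ell\overline{A}\,\beta=0$ gives $\alpha^TH_\ell\overline{A}+H_\ell\overline{A}\,\alpha=2c\,H_\ell\overline{A}$, so the coefficient attached to the second relation is $\eta'=2c$. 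Thus $\eta=\eta'=2c$ for every element of $\mathscr{A}$, which is exactly the assertion that in \eqref{intersection algebra} one may take $\eta'=\eta$ when $A$ is not nilpotent.

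I do not expect any genuine obstacle here. The only points requiring attention are the bookkeeping showing $\dim(\mathscr{A})-\dim(\mathscr{A}^o)=1$ for non-nilpotent $A$, which is already packaged in Lemmas \ref{dimension of scalling component}, \ref{condition on C for 1-dimensional scaling component}, and \ref{condition on C for 2-dimensional scaling component}, and the elementary verification that $I\in\mathscr{A}\setminus\mathscr{A}^o$; the rest is a one-line linear-algebra substitution.
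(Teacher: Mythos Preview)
Your argument is correct and is essentially the same as the paper's: the paper notes that for non-nilpotent $A$ one has $\mathscr{A}=\mathscr{A}^o+\mathbb C I$ as a direct consequence of the preceding lemmas, from which the corollary is immediate. You have simply spelled out the verification that $I\in\mathscr{A}\setminus\mathscr{A}^o$ and the substitution $\alpha=\beta+cI$ explicitly, which the paper leaves to the reader.
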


Now we prove one more result.
\begin{lemma}
\label{dimension_cor}
If $H_\ell$ and $A$ are in the canonical form prescribed by Theorem \ref{simultaneous canonical form theorem} and $A\neq 0$ then 
\begin{align}\label{max dim of intersection}
\dim(\mathscr{A})\leq n^2-4n+6.
\end{align}
Moreover, this bound is attained if and only if $(\ell,\boldsymbol{A})$  can be represented by the pair $(H_\ell, A)$ in the canonical form of Theorem \ref{simultaneous canonical form theorem} with
\begin{align}\label{cMat with max symmetries}
A=J_{0,2}\oplus\overbrace{J_{0,1}\oplus\cdots\oplus J_{0,1}}^{n-3\mbox{\small $\,$ copies}}.
\end{align}
\end{lemma}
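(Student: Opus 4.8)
The plan is to prove Lemma~\ref{dimension_cor} by combining the explicit block-wise description of $\mathscr{A}$ obtained in Lemmas~\ref{lemma for reduction to eSpaces}, \ref{equal eValue off diag dimension}, \ref{Bij formula with zero lambda}, Corollaries~\ref{Bij formula with nonzero lambda}, \ref{Bii formula}, and Lemma~\ref{condition on C for 2-dimensional scaling component} with a careful dimension count organized by the Jordan structure of $A$. First I would fix the canonical form $A=\bigoplus_{i=1}^\gamma M_{\lambda_i,m_i}$, $H_\ell=\bigoplus_{i=1}^\gamma \epsilon_i N_{\lambda_i,m_i}$ and recall that $\dim(\mathscr{A})=\dim(\mathscr{A}^o)+\dim(\mathscr{A}^s)$ with $\dim(\mathscr{A}^s)\in\{1,2\}$ by Lemma~\ref{condition on C for 2-dimensional scaling component} (the value $2$ occurring exactly when $A$ is nilpotent). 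For $\dim(\mathscr{A}^o)$, by Lemma~\ref{lemma for reduction to eSpaces} the only blocks $B_{(i,j)}$ that can be nonzero are those with $\lambda_i=\lambda_j$, so $\mathscr{A}^o$ splits as a direct sum over the distinct eigenvalues of $\boldsymbol{A}^2$, and the dimension contributed by each such eigenvalue class is a sum of the quantities $d(i,j)$ from \eqref{di,j} (this is precisely why the $d(i,j)$ are defined the way they are); thus $\dim(\mathscr{A}^o)=d_{\mathrm{total}}$ and $\dim(\mathscr{A})=d_{\mathrm{total}}+1$ or $d_{\mathrm{total}}+2$.

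The core of the argument is then the purely combinatorial optimization: over all partitions $m_1,\dots,m_\gamma$ of $n-1$ (equivalently, over all choices of a nilpotent/non-nilpotent structure with the eigenvalue data), maximize $d_{\mathrm{total}}+\dim(\mathscr{A}^s)$. I would first argue that the maximum is attained in the nilpotent case, so $\dim(\mathscr{A}^s)=2$ and all $\lambda_i=0$; intuitively, non-nilpotent blocks either cost us the extra scaling dimension or, when they pair up to give large $d(i,j)=4\min\{m_i,m_j\}$ contributions, this is offset by the loss elsewhere — I would check this by comparing against the all-nilpotent competitor. In the nilpotent case $d(i,j)=2\min\{m_i,m_j\}$ for $i\neq j$ and $d(i,i)=\lceil m_i/2\rceil$, so
\begin{equation}
d_{\mathrm{total}}=\sum_{i<j}2\min\{m_i,m_j\}+\sum_i\left\lceil\tfrac{m_i}{2}\right\rceil
\end{equation}
subject to $\sum_i m_i=n-1$. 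I would show this is maximized by taking as many blocks as possible of the smallest sizes, i.e.\ one block $J_{0,2}$ and $n-3$ blocks $J_{0,1}$: an exchange/convexity argument (splitting a block of size $\geq 2$ into smaller pieces can only increase the off-diagonal $\sum 2\min$ terms while the diagonal ceiling terms behave favorably, with the single size-$2$ block needed because $\lceil 2/2\rceil=1>0=\lceil 1/2\rceil$ is wrong — rather, one size-$2$ block contributes a useful $2\min$ with every size-$1$ block) pins down the extremizer. For $A$ as in \eqref{cMat with max symmetries} a direct count gives $d_{\mathrm{total}}=2(n-3)+1\cdot 1 + \text{(diagonal terms)} = \ldots = n^2-4n+4$, hence $\dim(\mathscr{A})=n^2-4n+6$, matching \eqref{max dim of intersection}.

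The main obstacle I anticipate is making the extremal combinatorial argument fully rigorous and confirming uniqueness of the maximizer: one must rule out all competing partitions, including mixed nilpotent/non-nilpotent configurations and configurations with several repeated nonzero eigenvalues (where the $4\min$ and $2\min$ terms can be large), and one must verify that the claimed extremizer \eqref{cMat with max symmetries} genuinely beats, e.g., $J_{0,1}^{\oplus(n-1)}$ (which gives $d_{\mathrm{total}}=n-1$, far smaller) and $J_{0,2}^{\oplus k}\oplus J_{0,1}^{\oplus\cdots}$ for $k\geq 2$. The cleanest route is probably to write $f(m_1,\dots,m_\gamma)=d_{\mathrm{total}}+\dim(\mathscr A^s)$ explicitly, observe it depends only on the multiset of block sizes together with which eigenvalues coincide, reduce to the all-nilpotent case by a short lemma, and then run a finite exchange argument showing any block of size $\geq 3$ can be broken up without decreasing $f$ and that at most one block of size $2$ is present at the optimum; the boundary-case bookkeeping for the $\lceil m_i/2\rceil$ terms is where the calculation, though elementary, needs care. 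Once the maximizer is identified, the value $n^2-4n+6$ and the "if and only if" follow by the evaluation above and by noting every step in the exchange argument is strict unless the configuration is already the claimed one.
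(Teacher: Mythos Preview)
Your overall strategy matches the paper's: decompose $\dim\mathscr{A}=\dim\mathscr{A}^o+\dim\mathscr{A}^s$, compute $\dim\mathscr{A}^o=d_{\mathrm{total}}$ blockwise, reduce to the nilpotent case, then optimize over partitions of $n-1$. Your endpoint computation $d_{\mathrm{total}}=(n-2)^2$ for $A=J_{0,2}\oplus J_{0,1}^{\oplus(n-3)}$ is correct.

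The execution differs in two places where the paper is sharper than your sketch. First, for the reduction to the nilpotent case the paper does \emph{not} attempt to compare $d(i,j)$ term by term against a nilpotent competitor as you propose; instead it argues structurally. If the $\lambda_i$ are not all equal, Lemma~\ref{lemma for reduction to eSpaces} forces a block-diagonal splitting with an $s\times(n-1-s)$ zero block, yielding the crude bound $\dim\mathscr{A}\le (n-1)^2-2s(n-1-s)\le n^2-4n+5$. If all $\lambda_i$ equal some $\lambda\neq 0$, Corollaries~\ref{Bij formula with nonzero lambda} and~\ref{Bii formula} show every $B\in\mathscr{A}^o$ is determined by its strictly upper-triangular entries, so $\dim\mathscr{A}\le \tfrac{(n-1)(n-2)}{2}+1$. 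Both are below the target, so only the all-nilpotent case survives. This avoids the delicate bookkeeping you anticipate with the $4\min\{m_i,m_j\}$ terms for negative eigenvalues. Second, for the nilpotent optimization the paper orders $m_1\ge\cdots\ge m_\gamma$, writes $\dim\mathscr{A}^o=\sum_k\bigl(\lceil m_k/2\rceil+2(k-1)m_k\bigr)$, and performs a single concrete replacement: take the last block $J_{0,m_r}$ of size $\ge 2$ and shatter it into $m_r$ copies of $J_{0,1}$, computing directly that $\dim\mathscr{A}^o$ increases by $m_r^2-\lceil m_r/2\rceil\ge 0$ with equality iff $m_r=1$. This reduces to the one-parameter family $A=J_{0,m_1}\oplus J_{0,1}^{\oplus(n-1-m_1)}$, where $\dim\mathscr{A}^o=\lceil m_1/2\rceil+(n-m_1)^2-1$ is maximized at $m_1=2$. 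This monotone replacement is cleaner than a general exchange/convexity argument and handles the $\lceil m_i/2\rceil$ terms automatically. (Incidentally, $\lceil 1/2\rceil=1$, not $0$; the reason $J_{0,1}^{\oplus(n-1)}$ is excluded is simply that it gives $A=0$.)
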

\begin{proof}

Assume that 
\begin{align}\label{max dim of intersection assumption}
\dim(\mathscr{A})\geq n^2-4n+6,
\end{align}
and that $(H_\ell, A)$ are in the canonical form of Theorem \ref{simultaneous canonical form theorem}. We will still use the notation of \eqref{simultaneous canonical form theorem eqn}, in particular referring to the sequence $(\lambda_1,\ldots, \lambda_\gamma)$. 

Suppose that the $\lambda_i$s are not all the same. Without loss of generality, we can assume that $(\lambda_1,\ldots, \lambda_\gamma)$ is enumerated so that there exists an integer $k$ such that 
\begin{align}\label{number of equal eValues}
\lambda_1=\ldots=\lambda_k
\quad\mbox{ and }\quad
\lambda_j\neq \lambda_1
\quad\quad\forall j>k.
\end{align}

Define
\[
s=\sum_{i=1}^k[\mbox{number of rows in }M_{\lambda_i,m_i}]
\]
where $k$ is as in \eqref{number of equal eValues}. By Lemma \ref{lemma for reduction to eSpaces}, for every matrix $B$ in $\dim(\mathscr{A}^o+\mathrm{span}\{I\})$, the upper right $(s)\times(n-1-s)$ block and the lower left $(n-1-s)\times(s)$ block of $B$ is zero. Moreover, since the $\lambda_i$s are not all zero, there is at least one index $i$ such that $B_{(i,i)}$ has zeros on its main diagonal. Accordingly, if the $\lambda_i$s are not all the same, then
\[
\dim(\mathscr{A}^o)+1=\dim(\mathscr{A}^o+\mathrm{span}\{I\}) \leq (n-1)^2-2s(n-1-s).
\]
Since
\[
2n-4\leq 2j(n-1-j)
\quad\quad\forall\, 1\leq j<n-1,
\]
it follows that
\[
\dim(\mathscr{A})=\dim(\mathscr{A}^o)+1\leq (n-1)^2-2s(n-1-s)\leq (n-1)^2-2n+4=n^2-4n+5,
\]
where the identity $\dim(\mathscr{A})=\dim(\mathscr{A}^o)+1$ follows from Lemma \ref{condition on C for 2-dimensional scaling component} and the assumption that the $\lambda_i$s are not all the same. Clearly, this contradicts \eqref{max dim of intersection assumption}, so if \eqref{max dim of intersection assumption} holds then there exists a value $\lambda\in \mathbb C$ such that 
\begin{align}\label{lambda_is are the same}
\lambda=\lambda_i
\quad\quad\forall\,i.
\end{align}

If \eqref{lambda_is are the same} holds with $\lambda\neq 0$ then Corollaries \ref{Bij formula with nonzero lambda} and \ref{Bii formula} imply that each matrix $B$ in $\mathscr{A}^o$ is fully determined by its entries above the main diagonal, and hence, applying Lemma \ref{condition on C for 2-dimensional scaling component}, 
\begin{align}\label{lambda_is are the same nonzer dim A bound}
\dim(\mathscr{A}) \leq\cfrac{(n-1)(n-2)}{2}+1<n^2-4n+6, \quad \forall n\geq 2
\end{align}
Therefore, if \eqref{lambda_is are the same} holds with $\lambda\neq 0$ then our assumption \eqref{max dim of intersection assumption} fails. 

In other words, -- assuming for a moment that \eqref{max dim of intersection assumption} can be satisfied, which we will prove below by giving an explicit example -- if $\dim(\mathscr{A})$ is maximized then we can assume without loss of generality that 
\begin{align}\label{lambda_is are zero}
A=J_{0,m_1}\oplus\cdots\oplus J_{0,m_\gamma}
\quad\mbox{ with }\quad
m_1\geq\cdots \geq m_{\gamma}.
\end{align}
For $B$ in $\mathscr{A}^o$, let us partition $B$ as is done in Lemma \ref{Bij formula with zero lambda}. By Lemma \ref{Bij formula with zero lambda}, for $i<j$ the $B_{(i,j)}$ and $B_{(j,i)}$ blocks are together determined by $2m_{j}$ parameters, whereas, by Corollary \ref{Bii formula}, the $B_{(i,i)}$ block is determined by $\lceil \tfrac{m_i}{2}\rceil$ parameters, where $\lceil \tfrac{m_i}{2}\rceil$ denotes the ceiling function, i.e. the smallest integer not less than $\tfrac{m_i}{2}$. Hence, by counting the number of parameters determining $B$, Lemma \ref{Bij formula with zero lambda} and Corollary \ref{Bii formula} imply that if \eqref{lambda_is are zero} holds then
\begin{align}\label{lambda_is are zero dimension count}
\dim(\mathscr{A}^o)=\sum_{k=1}^\gamma\left( \left\lceil \frac{m_k}{2}\right\rceil+2(k-1)m_k\right).
\end{align}

Let $r\in\{1,\ldots, \gamma\}$ be an integer such that 
\[
m_i=1\quad\quad\forall\, i>  r,
\]
and to compare with $A$, let us also consider the matrix 
\begin{align}\label{lambda_is are zero alt}
A^\prime=J_{0,m_{1}}\oplus \cdots\oplus J_{0,m_{r-1}}\oplus J_{0,1}\oplus\cdots\oplus J_{0,1}.
\end{align}
In other words, $A^\prime$ is obtained from $A$ by replacing the last nonzero block on the diagonal of $A$ with zeros.
We will compute the dimension of $\mathscr{A}^o$ corresponding to the case where $A=A^\prime$, but, since are going to compare this to the sum in \eqref{lambda_is are zero dimension count}, for clarity let $\mathscr{A}^\prime$ denote the algebra that we would otherwise denote by $\mathscr{A}^o$ corresponding to this case where $A=A^\prime$, and let $\mathscr{A}^o$ still denote the algebra refered to in \eqref{lambda_is are zero dimension count}.

Notice that the $k$th summand in \eqref{lambda_is are zero dimension count} counts the number of parameters determining the blocks $B_{(i,j)}$ of a matrix $B$ in $\mathscr{A}^o$ for which $\max\{i,j\}=k$. If we compare the general formula for a matrix $B$ in $\mathscr{A}^o$ to that of a matrix $B^\prime$ in $\mathscr{A}^\prime$, the only difference appears in the blocks $B_{(i,j)}$ of $B$ for which $\max\{i,j\}=r$, and hence a formula for $\dim(\mathscr{A}^\prime)$ should match the formula in \eqref{lambda_is are zero dimension count}, except that the $r$th summand will change. Using Lemma \ref{Bij formula with zero lambda} and Corollary \ref{Bii formula}, it is however straightforward to work out exactly how this $r$th summand of \eqref{lambda_is are zero dimension count}.

Specifically, in replacing the formula for $B$ with the formula for $B^\prime$, the $B_{(r,r)}$ block is replaced with the $m_r\times m_r$ matrix having $m_r^2$ independent parameters, whereas, for all $i<r$, $B_{(i,r)}$ (respectively $B_{(r,i)}$) is replaced with a matrix having $m_r$ independent parameters in its first row (respectively column) and zeros elsewhere. Accordingly,
\begin{align}\label{improved bound}
\dim(\mathscr{A}^\prime)&=\dim(\mathscr{A}^o)-\left(\left\lceil \frac{m_r}{2}\right\rceil+2(r-1)m_r\right)+m_r^2+2(r-1)m_r
\geq\dim(\mathscr{A}^o).
\end{align}
Since equality holds in \eqref{improved bound} if and only if $m_r=1$, the dimension of $\mathscr{A}^o$ is maximized with $A$ as in \eqref{lambda_is are zero} if and only if
\begin{align}\label{cMat with almost max symmetries}
A=J_{0,m_1}\oplus\overbrace{J_{0,1}\oplus\cdots\oplus J_{0,1}}^{n-1-m_1\mbox{\small $\,$ copies}},
\end{align}
in which case, by \eqref{lambda_is are zero dimension count}, 
\begin{align}\label{almost max dimension count}
\dim{\mathscr{A}^o}=\left\lceil \frac{m_1}{2}\right\rceil+\sum_{k=2}^{n-m_1}(2k-1)= \left\lceil \frac{m_1}{2}\right\rceil+ (n-m_1)^2-1.
\end{align}

Since $A\neq 0$, this last sum is maximized with $A$ as in \eqref{cMat with almost max symmetries} if and only if $A$ is as in \eqref{cMat with max symmetries}, in which case applying \eqref{almost max dimension count} with $m_1=2$ yields \eqref{max dim of intersection} because, by Lemma \ref{condition on C for 2-dimensional scaling component}, if $A$ is as in \eqref{cMat with almost max symmetries} then $\dim{\mathscr{A}}=\dim{\mathscr{A}^o}+2$.
\end{proof}







\end{document}